\numberwithin{equation}{section}
\newtheorem{theorem}{Theorem}[section]
\newtheorem{proposition}[theorem]{Proposition}
\newtheorem{lemma}[theorem]{Lemma}
\theoremstyle{definition}
\theoremstyle{remark}
\newcommand{\R}{\mathbb{R}}
\newcommand{\N}{\mathbb{N}}
\newcommand{\Z}{\mathbb{Z}}
\newcommand{\C}{\mathbb{C}}
\renewcommand{\hat}{\widehat}
\renewcommand{\tilde}{\widetilde}
\newcommand{\eps}{\varepsilon}
\newcommand{\scriptA}{\mathcal{A}}
\newcommand{\scriptB}{\mathcal{B}}
\newcommand{\scriptC}{\mathcal{C}}
\newcommand{\scriptF}{\mathcal{F}}
\newcommand{\scriptH}{\mathcal{H}}
\newcommand{\scriptJ}{\mathcal{J}}
\newcommand{\scriptK}{\mathcal{K}}
\newcommand{\scriptL}{\mathcal{L}}
\newcommand{\scriptM}{\mathcal{M}}
\newcommand{\scriptP}{\mathcal{P}}
\newcommand{\scriptS}{\mathcal{S}}
\newcommand{\scriptW}{\mathcal{W}}
\newcommand{\scriptX}{\mathcal{X}}
\newcommand{\qtq}[1]{\quad\text{#1}\quad}
\newcommand{\ctc}[1]{\,\,\text{#1}\,\,}
\DeclareMathOperator*{\supp}{supp}
\DeclareMathOperator*{\diam}{diam}
\DeclareMathOperator*{\dist}{dist}
\begin{document}
\title[Endpoint Lebesgue estimates for averages on curves]{Endpoint Lebesgue estimates for weighted averages on polynomial curves}
\author{Michael Christ}
\address{Department of Mathematics, University of California, Berkeley}
\email{mchrist@berkeley.edu}
\author{Spyridon Dendrinos}
\address{School of Mathematical Sciences, University College Cork, Cork, Ireland}
\email{sd@ucc.ie}
\author{Betsy Stovall}
\address{Department of Mathematics, University of Wisconsin--Madison}
\email{stovall@math.wisc.edu}
\author{Brian Street}
\address{Department of Mathematics, University of Wisconsin--Madison}
\email{street@math.wisc.edu}

\begin{abstract}
We establish optimal Lebesgue estimates for a class of generalized Radon transforms defined by averaging functions along polynomial-like curves.  The presence of an essentially optimal weight allows us to prove uniform estimates, wherein the Lebesgue exponents are completely independent of the curves and the operator norms depend only on the polynomial degree.  Moreover, our weighted estimates possess rather strong diffeomorphism invariance properties, allowing us to obtain uniform bounds for averages on curves satisfying natural nilpotency and nonoscillation hypotheses.
\end{abstract}

\maketitle


\section{Introduction}


Let $(P_1,g_1)$ and $(P_2,g_2)$ be two smooth Riemannian manifolds of dimension $n-1$, with $n \geq 2$.  In \cite{TW}, Tao--Wright established near-optimal Lebesgue estimates for local averaging operators of the form
\begin{equation} \label{E:linear formulation}
Tf(x_2) := \int f(\gamma_{x_2}(t)) a(x_2,t) |\gamma_{x_2}'(t)|_{g_1} dt, \qquad f \in C^0(P_1),
\end{equation}
with $a$ continuous and compactly supported, under the hypothesis that the map $(x_2,t) \mapsto \gamma_{x_2}(t) \in P_1$ is a smooth submersion on the support of $a$.  

Our goal in this article is to sharpen the Tao--Wright theorem to obtain optimal Lebesgue space estimates, without the cutoff, under an additional polynomial-like hypothesis on the map $\gamma$.  We replace the Riemannian arclength with a natural generalization of affine arclength measure; this enables us to prove estimates wherein the Lebesgue exponents are independent of the manifolds and curves involved (provided $\gamma$ is polynomial-like), and operator norms for a fixed exponent pair and fixed polynomial degree are uniformly bounded.  Our results are strongest at the Lebesgue endpoints, where the generalized affine arclength measure is essentially the largest measure for which these estimates can hold and, moreover, the resulting inequalities are invariant under a variety of coordinate changes.  

By duality, bounding the operator $T$ in \eqref{E:linear formulation} is equivalent to bounding the bilinear form
\begin{equation} \label{E:bilinear formulation}
\scriptB(f_1,f_2) := \int_M f_1(\gamma_{x_2}(t)) f_2(x_2) a(x_2,t) |\gamma_{x_2}'(t)|_{g_1} d\nu_2(x_2) dt,
\end{equation}
where $M := P_2 \times \R$.  For the remainder of the article, we will focus on the problem of bounding such bilinear forms.  

\subsection{The Euclidean case}

The Tao--Wright theorem, being local, may be equivalently stated in Euclidean coordinates.  Though we will obtain more general results on manifolds (and also in Euclidean space) by applying diffeomorphism invariance of our operator and basic results from Lie group theory, the Euclidean version is, in some sense, our main theorem.

Let $\pi_1,\pi_2:\R^n \to \R^{n-1}$ be smooth mappings.  Define vector fields
\begin{equation} \label{E:def X}
X_j := \star(d\pi_j^1 \wedge \cdots \wedge d\pi_j^{n-1}),
\end{equation}
where $\star$ denotes the map from $n-1$ forms to vector fields obtained by composing the Riemannian Hodge star with the natural identification of 1 forms with vector fields given by the Euclidean metric.  The geometric significance of the $X_j$ is that they are tangent to the fibers of the $\pi_j$, and their magnitude arises in the coarea formula:
\begin{equation} \label{E:coarea}
|\Omega| = \int_{\pi_j(\Omega)} \int_{\pi_j^{-1}(y)} \chi_\Omega(t) |X_j(t)|^{-1}\, d\scriptH^1(t)\, dy, \qquad \Omega \subseteq \{X_j \neq 0\},
\end{equation}
where $\scriptH^1$ denotes 1-dimensional Hausdorff measure.  

We define a map $\Psi:\R^n \times \R^n \to \R^n$ by 
\begin{equation} \label{E:Psi}
\Psi_x(t) := e^{t_nX_n} \circ \cdots \circ e^{t_1 X_1}(x), 
\end{equation}
where we are using the cyclic notation $X_j := X_{j \!\!\mod 2}$, $j=3,\ldots,n$.  Given a multiindex $\beta$, we define 
\begin{gather}
\label{E:def b}
b = b(\beta) := (\sum_{j \, \rm{odd}}1+\beta_j, \sum_{j\, \rm{even}}1+\beta_j)\\
\label{E:def rho}
\rho_\beta(x) := \bigl|\bigl(\partial_t^\beta \det D_t \Psi_x\bigr)(0)\bigr|^{\frac1{b_1+b_2-1}}\\
\label{E:def p}
(p_1,p_2) = (p_1(b),p_2(b)) := \bigl(\tfrac{b_1+b_2-1}{b_1},\tfrac{b_1+b_2-1}{b_2}\bigr).
\end{gather}

Our main theorem is the following.   

\begin{theorem} \label{T:main}
Let $n \geq 3$, let $N$ be a positive integer, and let $\beta$ be a multiindex.  Assume that the maps $\pi_j$ and associated vector fields $X_j$, defined in \eqref{E:def X} satisfy the following:\\
\emph{(i)} The $X_j$ generate a nilpotent Lie algebra $\mathfrak g$ of step at most $N$, and for each $X \in \mathfrak g$, the map $(t,x) \mapsto e^{tX}(x)$ is a polynomial of degree at most $N$;\\
\emph{(ii)} For each $j=1,2$ and a.e.\ $y \in \R^{n-1}$, $\pi_j^{-1}(\{y\})$ is contained in a single integral curve of $X_j$.\\
Then with $\rho_\beta$ satisfying \eqref{E:def rho} and $p_1,p_2$ as in \eqref{E:def p},
\begin{equation} \label{E:main}
\bigl|\int_{\R^n} f_1 \circ \pi_1(x) \, f_2 \circ \pi_2(x)\, \rho_\beta(x)\, dx\bigr| \leq C_N \|f_1\|_{p_1}\|f_2\|_{p_2},
\end{equation}
 for some constant $C_N$ depending only on the degree $N$.
\end{theorem}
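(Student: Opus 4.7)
The plan is to deduce Theorem \ref{T:main} from a restricted weak-type inequality
\begin{equation*}
\int_{\pi_1^{-1}(E_1)\cap\pi_2^{-1}(E_2)}\rho_\beta(x)\, dx\leq C_N|E_1|^{1/p_1}|E_2|^{1/p_2}
\end{equation*}
for measurable $E_j\subseteq\R^{n-1}$, then to upgrade to the strong-type bound \eqref{E:main} via a Marcinkiewicz-style real-interpolation argument that exploits the range of restricted weak-type estimates produced by varying $\beta$.

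The geometric heart is an iterated-flow inflation lemma of Christ/Tao--Wright type. Set $\Omega:=\pi_1^{-1}(E_1)\cap\pi_2^{-1}(E_2)$ and $\alpha_j:=|E_j|$. Under hypothesis (i), the flow map $(x,t)\mapsto\Psi_x(t)$ in \eqref{E:Psi} is a polynomial of degree bounded in $N$, and under (ii), each $\pi_j$ is constant along $X_j$-flow trajectories. I would select, for each $x$ in a substantial subset of $\Omega$, a product box $T_x=\prod_{k=1}^n I_k(x)\subseteq\R^n$ with the property that for every $t\in T_x$ and every $k$, the intermediate flow point $e^{t_kX_k}\circ\cdots\circ e^{t_1X_1}(x)$ lies in $\Omega$, and with the side lengths $|I_k|$ as large as possible. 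The selection is inductive in $k$: given the earlier flows, the coarea formula \eqref{E:coarea} bounds the measure of bad $t_k$-values in terms of $\alpha_1$ or $\alpha_2$ (depending on the parity of $k$), and a Chebyshev/pigeonhole step extracts an interval $I_k(x)$ of definite size. The polynomial hypothesis is essential here: Warren-type bounds on connected components of sublevel sets of polynomials of bounded degree let this extraction run with only $N$-dependent loss, in contrast to the dyadic pigeonholing of Tao--Wright, which costs logarithmic factors that ruin the endpoint.

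Once $T_x$ is in hand, one combines two polynomial facts. The containment $\Psi_x(T_x)\subseteq\Omega$ together with a polynomial multiplicity bound $M_N$ yields $\int_{T_x}|\det D_t\Psi_x|\, dt\leq M_N|\Omega|$. On the other hand, since $t\mapsto\det D_t\Psi_x(t)$ is a polynomial of degree bounded in $N$, a quantitative Taylor-type lemma provides
\begin{equation*}
\int_{T_x}|\det D_t\Psi_x(t)|\, dt\gtrsim_N|\partial_t^\beta\det D_t\Psi_x(0)|\cdot\prod_{k=1}^n|I_k(x)|^{1+\beta_k}=\rho_\beta(x)^{b_1+b_2-1}\prod_{k=1}^n|I_k(x)|^{1+\beta_k}.
\end{equation*}
Combining these bounds produces a pointwise estimate on $\rho_\beta(x)$, which when integrated over $\Omega$ and rearranged using the definitions of $b_1,b_2,p_1,p_2$ gives the restricted weak-type inequality; the odd/even parity split in $\beta$ matches the split of the side-length contributions between $\alpha_1$ and $\alpha_2$ that arises in the inductive selection.

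The principal obstacle is the inductive selection of $T_x$ with the right constants and the right balance between odd- and even-parity contributions. Carrying this out with constants depending only on $N$ requires a careful combination of the coarea formula with Warren-type sign-change counts for polynomials along flow lines, and careful accounting of the contributions of $\alpha_1$ and $\alpha_2$ at each step so that $p_1$ and $p_2$ come out sharply. A subsidiary issue is the interpolation step: one must check that there are enough nearby restricted weak-type bounds (from other choices of $\beta$, or by rescaling) to justify the interpolation to the single strong-type bound at $(p_1,p_2)$.
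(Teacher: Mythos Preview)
Your restricted weak-type step is essentially correct and matches the paper's Section~\ref{S:RWT}: the iterated-flow inflation plus polynomial lower bounds on $\int_{T_x}|\det D\Psi_x|$ is exactly the Gressman/Tao--Wright mechanism, and the paper reproves it (Proposition~\ref{P:RWT}) with a sharpened polynomial lemma (Proposition~\ref{P:poly refinement}) in place of your Warren-type count.

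The genuine gap is the passage from restricted weak type to strong type. Marcinkiewicz interpolation across a family of restricted weak-type estimates indexed by $\beta$ cannot yield the strong-type bound at the \emph{endpoint} $(p_1(\beta),p_2(\beta))$ itself: each $\beta'$ comes with its own weight $\rho_{\beta'}$, so you are not interpolating a single operator but a family of different weighted operators, and even if you could, interpolation produces strong-type bounds only in the \emph{interior} of the region, never at a vertex. The point $(p_1,p_2)$ is an extreme point of the achievable region (cf.\ Theorem~\ref{T:sharp extreme}), so there is no open segment of estimates through it to interpolate along. Rescaling does not help either, since the inequality \eqref{E:main} is already scale-invariant.

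The paper's route from restricted weak type to strong type is substantially harder and occupies Sections~\ref{S:CC balls}--\ref{S:ST full}. On a single torsion scale $\{\rho_\beta\sim 1\}$, one first proves an inverse theorem (Proposition~\ref{P:quasiex}): any pair $(E_1,E_2)$ that nearly saturates the restricted weak-type bound must be covered, up to $\eps$-losses, by boundedly many Carnot--Carath\'eodory balls $B^{I_\sigma}(x;\alpha)$ with a fixed minimal word-tuple $I$. The strong-type bound on that scale (Proposition~\ref{P:local strong type}) then follows from a ``trilinear'' combinatorial argument (Lemmas~\ref{L:exist disjoint}--\ref{L:M balls}) showing that a single $E_2^k$ cannot be quasiextremal against many $E_1^j$ of widely disparate sizes; the key geometric input is that the projected curve $\gamma(t)=D\tilde\pi_1\,X_2(e^{tX_1}y)$ cannot be transverse to $\gamma'$ at too many well-separated scales (Lemma~\ref{L:gamma || gamma'}), and this step uses $n\geq 3$ through the minimality of $I$ forcing both $(2)$ and $(1,2)$ to appear. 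Finally, Section~\ref{S:ST full} sums the torsion scales by a second, more delicate version of the same trilinear argument, showing that quasiextremal balls at many different torsion levels cannot share a common $\pi_1$-projection. None of this is accessible by interpolation.
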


No explicit nondegeneracy (i.e.\ finite type) hypothesis is needed, because the weight $\rho_\beta$ is identically zero in the degenerate case.  

The weights $\rho_\beta$ were introduced in \cite{BSapde}, wherein local, non-endpoint Lebesgue estimates were proved in the $C^\infty$ case for a multilinear generalization.  In Section~\ref{S:Examples}, we give examples showing that the endpoint estimate \eqref{E:main} may fail in the multilinear case, and that it may also fail in the bilinear case when Hypothesis (i), Hypothesis (ii), or the dimensional restriction $n \geq 3$ is omitted.  

Theorem~\ref{T:main} uniformizes, makes global, and sharpens to Lebesgue endpoints the  Tao--Wright theorem for averages along curves, under our additional hypotheses.  (As the Tao--Wright theorem is stated in terms of the spanning of elements from $\mathfrak g$, not the non-vanishing of $\rho$, the relationship between the results will take some explanation, which will be given in Section~\ref{S:TW}.)  Moreover, our result generalizes to the fully translation non-invariant case the results of \cite{DLW, DSjfa, Laghi,  BSlms, BSjfa}, wherein endpoint Lebesgue estimates were established for convolution and restricted X-ray transforms along polynomial curves with affine arclength measure.  

\subsection{Averages on curves in manifolds and other generalizations}
As our results are global and uniform, it is natural to ask whether they lead to global results in the more general setting described at the outset, wherein operators are defined for functions on manifolds.  This is the content of our Theorem~\ref{T:mfold version}.  Roughly speaking, this theorem allows one to compute the $X_j$ and $\rho_\beta$ in local coordinates and removes the polynomial hypothesis in (i) of Theorem~\ref{T:main}.  We leave the precise statement for later because it requires some additional terminology.

Another natural question is the extent to which one can relax hypothesis (i).  In this article, we prove a local result (Proposition~\ref{P:local version}) for the mild generalization considered in \cite{GressmanPoly}, wherein it is only assumed that there exist vector fields tangent to the $X_j$ generating a nilpotent Lie algebra.  A number of counter-examples to other possible generalizations are given in Section~\ref{S:Examples}.

\subsection{Background and sketch of proof}
We turn to an outline of the proof of Theorem~\ref{T:main}, and a discussion of the context in the recent literature.  

We begin with the proof on a single torsion scale $\{\rho_\beta \sim 2^m\}$.  By uniformity, it suffices to consider the case when $m=0$, and thus the restricted weak type version of \eqref{E:main} is equivalent to the generalized isoperimetric inequality
\begin{equation} \label{E:RWT intro}
|\Omega| \lesssim |\pi_1(\Omega)|^{\frac1{p_1}}|\pi_2(\Omega)|^{\frac1{p_2}}, \qquad \Omega \subseteq \{\rho_\beta \sim 1\}.
\end{equation}
With $b$ and $p$ as in \eqref{E:def b} and \eqref{E:def p}, $b = \bigl(\tfrac{1/p_1}{1/p_1+1/p_2-1}, \tfrac{1/p_2}{1/p_1+1/p_2-1}\bigr)$, so \eqref{E:RWT intro} is, after a bit of arithmetic, equivalent to the lower bound 
\begin{equation} \label{E:alpha^b}
\alpha_1^{b_1} \alpha_2^{b_2} \lesssim |\Omega|, \qquad \alpha_j := \tfrac{|\Omega|}{|\pi_j(\Omega)|}.
\end{equation}

To establish \eqref{E:alpha^b}, Tao--Wright \cite{TW}, and later Gressman \cite{GressmanPoly}, used a version of the iterative approach from \cite{ChCCC}.  Roughly speaking, for a typical point $x_0 \in \Omega$, the measure of the set of times $t$ such that $e^{tX_j}(x_0) \in \Omega$ is $\alpha_j$.  Iteratively flowing along the vector fields $X_1,X_2$ gives a smooth map, $\Psi_{x_0}$ (recall \eqref{E:Psi}), from a measurable subset $F \subseteq \R^n$ into $\Omega$.  The containment $\Psi_{x_0}(F) \subseteq \Omega$ must then be translated into a lower bound on the volume of $\Omega$.  

Tao--Wright deduce from linear independence of a fixed $n$-tuple $Y_1,\ldots,Y_n \in \mathfrak g$ (the Lie algebra generated by $X_1,X_2$) a lower bound on some fixed derivative $\partial^\beta$ of the Jacobian determinant $\det D \Psi_{x_0}$.  For typical points $t \in \R^n$, we have a lower bound $|\det D\Psi_{x_0}(t)| \gtrsim |t^\beta||\partial^\beta \det D\Psi_{x_0}(0)|$, and this we should be able to use in estimating the volume of $\Omega$:  
$$
|\Omega| \geq |\Psi_{x_0}(F)| \,\text{`$\gtrsim$'}\, \int_F |\det D\Psi_{x_0}(t)|\, dt\, \text{`$\gtrsim$'}\, |F| |\partial^\beta \det D\Psi_{x_0}(0)| \max_{t \in F} |t^\beta| \gtrsim \alpha^b.
$$
Unfortunately, the failure of $\Psi_{x_0}$ to be polynomial in the Tao--Wright case and the fact that $F$ is not simply a product of intervals means that this deduction is not so straightforward; in particular, the inequalities surrounded by quotes in the preceding inequality are false in the general case.  More precisely, if $\Psi_{x_0}$ is merely $C^\infty$, we cannot uniformly bound the number of preimages in $F$ of a typical point in $\Omega$ (so the first inequality may fail), and even for polynomial $\Psi_{x_0}$, if $F$ is not an axis parallel rectangle, then the inequality $|\det D\Psi_{x_0}(t)| \gtrsim |t^\beta|$ may fail for most $t \in F$.  

In the nonendpoint case of \cite{TW}, it is enough to prove \eqref{E:alpha^b} with a slightly larger power of $\alpha$ on the left; this facilitates an approximation of $F$ by a small, axis parallel rectangle centered at 0, and (using the approximation of $F$) an approximation of $\Psi_{x_0}$ by a polynomial.  These approximations are sufficiently strong that $\Psi_{x_0}$ is nearly finite-to-one on $F$ (see also \cite{ChRL}) and $\det D\Psi_{x_0}$ grows essentially as fast on $F$ as its derivative predicts, giving \eqref{E:alpha^b}.  In \cite{GressmanPoly}, wherein the Lie algebra $\mathfrak g$ is assumed to be nilpotent, the map $\Psi_{x_0}$ is lifted to a polynomial map in a higher dimensional space, abrogating the need for the polynomial approximation.  This leaves the challenge of producing a suitable approximation of $F$ as a product of intervals, and Gressman takes a different approach from Tao--Wright, which avoids the secondary endpoint loss.

In Section~\ref{S:RWT}, we reprove Gressman's single scale restricted weak type inequality.  A crucial step is an alternate approach to approximating one-dimensional sets by intervals.
This alternative approach gives us somewhat better lower bounds for the integrals of polynomials on these sets, and these improved bounds will be useful later on.  

An advantage of the positive, iterative approach to bounding generalized Radon transforms has been its flexibility, particularly relative to the much more limited exponent range that seems to be amenable to Fourier transform methods.  A disadvantage of this approach is that it seems best suited to proving restricted weak type, not strong type estimates.  Let us examine the strong type estimate on torsion scale 1.  By positivity of our bilinear form, it suffices to prove
$$
\sum_{j,k} 2^{j+k} \int_{\{\rho_\beta \sim 1\}} \chi_{E^j_1} \circ \pi_1(x) \chi_{E^k_2}\circ \pi_2(x)\, dx \lesssim (\sum_j 2^{jp_1}|E^j_1|)^{\frac1{p_1}}(\sum_k 2^{kp_2}|E^k_2|)^{\frac1{p_2}},
$$
for measurable sets $E^j_1,E^k_2 \subseteq \R^{n-1}$, $j,k \in \Z$.  Thus a scenario in which we might expect the strong type inequality to fail is when there is some large set $\scriptJ$ and some set $\scriptK$ such that the $2^j \chi_{E^j_1}$, $j \in \scriptJ$, evenly share the $L^{p_1}$ norm of $f_1$, the $2^k \chi_{E^k_2}$, $k \in \scriptK$ evenly share the $L^{p_2}$ norm of $f_2$, and the restricted weak type inequality is essentially an equality
\begin{equation} \label{E:heuristic quasiex}
\int_{\{\rho_\beta \sim 1\}} \chi_{E^j_1} \circ \pi_1(x) \chi_{E^k_2}\circ \pi_2(x)\, dx \sim |E^j_1|^{\frac1{p_1}}|E^k_2|^{\frac1{p_2}},
\end{equation}
for each $(j,k) \in \scriptJ \times \scriptK$.  

In \cite{ChQex} a technique was developed for proving strong type inequalities by defeating such enemies, and this approach was used to reprove Littman's bound \cite{Littman} for convolution with affine surface measure on the paraboloid.  This approach was later used \cite{DLW, DSjfa, DSjfa2, Laghi, BSlms, BSjfa} to prove optimal Lebesgue estimates for translation invariant and semi-invariant averages on various classes of curves with affine arclength measure.  Key to these arguments was what was called a `trilinear' estimate in \cite{ChQex}, which we now describe.  We lose if one $E^k_2$ interacts strongly, in the sense of \eqref{E:heuristic quasiex} with many sets $E^j_1$ of widely disparate sizes.  Suppose that $E^k_2$ interacts strongly with two sets $E^{j_i}_1$, $i=1,2$.  Letting 
$$
\Omega_i:=\pi_1^{-1}(E^{j_i}_1) \cap \pi_2^{-1}(E^k_2) \cap \{\rho_\beta \sim 1\},
$$
our hypothesis \eqref{E:heuristic quasiex} and the restricted weak type inequality imply that $\pi_2 (\Omega_i)$ must have large intersection with $E^k_2$ for $i=1,2$; let us suppose that $E^k_2=\pi_2(\Omega_1)=\pi_2(\Omega_2)$.  Assuming that every $\pi_2$ fiber is contained in a single $X_2$ integral curve, for a typical $x_0 \in \Omega_i$, the set of times $t$ such that $e^{tX_2}(x_0) \in \Omega_{i'}$ must have measure about $\alpha_2^{i'}:= \tfrac{|\Omega_{i'}|}{|E^k_2|}$; thus we have $\Psi_{x_0}(F_i) \subseteq \Omega_{i}$ for measurable sets $F_i$, which are not well-approximated by products of intervals centered at 0.  In all of the above mentioned articles \cite{ChQex, DLW, DSjfa, DSjfa2, Laghi, BSlms, BSjfa}, rather strong pointwise bounds on the Jacobian determinant $\det D\Psi_{x_0}$ were then used to derive mutually incompatible inequalities relating the volumes of the three sets, $E^{j_1}_1, E^{j_2}_1,E^k_2$ (whence the descriptor `trilinear').  In generalizing this approach, we encounter a number of difficulties.  First, we lack explicit lower bounds on the Jacobian determinant.  We can try to recover these using our estimate $1 \sim |\partial^\beta \det D\Psi_{x_0}(0)|$, but this is difficult to employ on the sets $F_i$, since it is impossible to approximate these sets using products of intervals centered at 0.  Finally, in the translation invariant case, it is natural to decompose the bilinear form in time,
$$
\scriptB(f_1,f_2) = \sum_j \int_{\R^{n-1}}\int_{t \in I_j} f_1(x-\gamma(t)) f_2(x)\, \rho_\beta(t)\, dt\, dx,
$$
and, thanks to the geometric inequality of \cite{DW}, there is a natural choice of intervals $I_j$ that makes the trilinear enemies defeatable.  It is not clear to the authors that an analogue of this decomposition in the general polynomial-like case is feasible.

Our solution is to dispense entirely with the pointwise approach.  In Section~\ref{S:Quasiex}, we prove that if the set $\Omega$ nearly saturates the restricted weak type inequality \eqref{E:RWT intro}, then $\Omega$ can be very well approximated by Carnot--Carath\'eodory balls.  Thus, if $E_1$ and $E_2$ interact strongly, then $E_1$ and $E_2$ can be well-approximated by projections (via $\pi_1,\pi_2$) of Carnot--Carath\'eodory balls.  The proof of this inverse result relies on the improved polynomial approximation mentioned above, as well as new information, proved in Section~\ref{S:CC balls}, on the structure of Carnot--Carath\'eodory balls generated by nilpotent families of vector fields.  In Section~\ref{S:ST scale 1}, we prove that it is not possible for a large number of Carnot--Carath\'eodory balls with widely disparate parameters to have essentially the same projection; thus one set $E^k_2$ cannot interact strongly with many $E^j_1$, and so the strong type bounds on a single torsion scale hold.  In Section~\ref{S:ST full}, we sum up the torsion scales.  In the non-endpoint case considered in \cite{BSapde}, this was simply a matter of summing a geometric series, but here we must control the interaction between torsion scales.  The crux of our argument is that many Carnot--Carath\'eodory balls at different torsion scales cannot have essentially the same projection.  

Section~\ref{S:Nilpotent} gives relevant background on nilpotent Lie groups which will be used in deducing from Theorem~\ref{T:main} more general results, including the above-mentioned result on manifolds.  The results of this section are essentially routine deductions from known results in the theory of nilpotent Lie groups, but the authors could not find elsewhere the precise formulations needed here.  In Section~\ref{S:Extensions}, we prove extensions of our result to the nilpotent case and other generalizations.  In Section~\ref{S:Examples}, we give counter-examples to a few ``natural'' generalizations of our main theorem, discuss its optimality at Lebesgue endpoints, and recall the impossibility of an optimal weight away from Lebesgue endpoints.  The appendix, Section~\ref{S:Polynomials}, contains various useful lemmas on polynomials of one and several variables.  Some of these results are new and may be useful elsewhere.  

\subsection*{Acknowledgements}  The authors would like to thank Melanie Matchett--Wood for explaining how one manipulates intersections of sets parametrized by polynomials into varieties, Daniel Erman for suggesting the reference for Theorem~\ref{T:quantitative ag}, and Terence Tao for advice and encouragement in the very early stages of this article.  
Christ was supported by NSF DMS-1363324.
Dendrinos was supported by DFG DE 1517/2-1.
Stovall was supported by NSF DMS-1266336 and DMS-1600458.  
Street was supported by NSF DMS-1401671 and DMS-1764265.
Part of this work was completed at MSRI during the 2017 Harmonic Analysis program, which was funded in part by NSF DMS-1440140.
Finally, the authors would like to thank the anonymous referee, whose thoughtful comments led to substantial improvements in the article.   

\subsection*{Notation}  Constants are allowed to depend on $N$ and may change from line-to-line.  Constants may depend on those that come logically before.  Thus constants in conclusions depend on those arising in proofs (or in lemmas used in the proofs), which in turn depend on $N$ and the constants in hypotheses.  Further subscripts will be used to denote other parameters on which constants depend.  Capital letters (usually $C$) will typically be used to denote large constants and lower case letters (usually $c$) to denote small ones.  

We will use the now-standard $\lesssim, \gtrsim, \sim$ and the non-standard $\lessapprox, \gtrapprox, \approx$.  We describe their use using two nonnegative quantities $A$ and $B$.  When found in the hypothesis of a statement, $A \lesssim B$ means that the conclusion holds whenever $A \leq C B$ for any $C$ (with constants in the conclusion allowed to depend on $C$).  In the conclusion, $A \lesssim B$ means that $A \leq C B$ for some $C$.  Later on, we will introduce a small parameter $0 < \eps \leq 1$, and many quantities depend on $\eps$ in some way as well.  We will use $A \lessapprox B$ to mean that $A \leq C \eps^{-C}B$ for $C$ quantified in the same way as the implicit constant in the $\lesssim$ notation.  (In Section~\ref{S:ST full}, this notation will depend instead on a small parameter $\delta >0$.)   Finally $A \sim B$ means $A \lesssim B$ and $B \lesssim A$, and $A \approx B$ means $A \lessapprox B$ and $B \lessapprox A$.  We will occasionally subscript these symbols to indicate their dependence on parameters other than $N$.


\section{The restricted weak type inequality on a single scale} \label{S:RWT}


This section is devoted to a proof, or, more accurately, a reproof, of the restricted weak type inequality on the region where $\rho_\beta \sim 1$.  The following result is due to Gressman in \cite{GressmanPoly}.  (Uniformity is not explicitly claimed in \cite{GressmanPoly}, but the arguments therein may easily be adapted.)

\begin{proposition} \label{P:RWT} \cite{GressmanPoly}
For each pair $E_1,E_2 \subseteq \R^{n-1}$ of measurable sets,
\begin{equation} \label{E:RWT}
|\{\rho_\beta \sim 1\} \cap \pi_1^{-1}(E_1) \cap \pi_2^{-1}(E_2)| \lesssim |E_1|^{1/p_1} |E_2|^{1/p_2}
\end{equation}
holds uniformly, with definitions and hypotheses as in Theorem~\ref{T:main}.
\end{proposition}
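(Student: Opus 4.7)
The plan is to reduce Proposition~\ref{P:RWT} to the isoperimetric inequality
\begin{equation*}
\alpha_1^{b_1}\alpha_2^{b_2}\lesssim|\Omega|,\qquad \Omega:=\{\rho_\beta\sim 1\}\cap\pi_1^{-1}(E_1)\cap\pi_2^{-1}(E_2),\qquad \alpha_j:=\tfrac{|\Omega|}{|\pi_j(\Omega)|},
\end{equation*}
exactly as sketched in the introduction. The first step is the standard Tao--Wright--Christ iterative refinement: by pigeonholing we pass to a comparable subset of $\Omega$ on which each fiber of $\pi_j$ meets $\Omega$ in a set of measure $\sim\alpha_j$. Then, choosing a base point $x_0$ in this refined set and alternately flowing along $X_1$ and $X_2$, we build a measurable fibered set $F\subseteq\R^n$ with $\Psi_{x_0}(F)\subseteq\Omega$, in which the one-dimensional slice in each successive coordinate $t_j$ has measure $\sim\alpha_{j\bmod 2}$.

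By hypothesis (i) the map $\Psi_{x_0}$ is polynomial of degree bounded in terms of $N$ and $n$, and by hypothesis (ii) its fibers lie along integral curves of the $X_j$. Together these bound the multiplicity of $\Psi_{x_0}|_F$ by a constant $C_N$ via a B\'ezout-type count, giving
\begin{equation*}
|\Omega|\geq|\Psi_{x_0}(F)|\gtrsim\int_F|q(t)|\,dt,\qquad q(t):=\det D_t\Psi_{x_0}(t).
\end{equation*}
The polynomial $q$ has degree $O_N(1)$, and by \eqref{E:def rho} the normalization $\rho_\beta(x_0)\sim 1$ is exactly $|\partial^\beta q(0)|\sim 1$. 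Everything thus reduces to showing that this pointwise derivative bound at the origin forces
\begin{equation*}
\int_F|q(t)|\,dt\gtrsim\prod_{j=1}^n\alpha_{j\bmod 2}^{1+\beta_j}=\alpha_1^{b_1}\alpha_2^{b_2}.
\end{equation*}

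The main obstacle is that the fibers of $F$ are arbitrary measurable sets of measure $\sim\alpha_{j\bmod 2}$: they need not be intervals, and there is no a priori reason for them to lie near the origin, where the only derivative information on $q$ lives. In particular, the naive bound $|q(t)|\gtrsim|t^\beta|$ which would handle a product of intervals centered at $0$ is false on large subsets of $F$. To circumvent this I would isolate a one-variable integration lemma of the form
\begin{equation*}
\int_E|p(t)|\,dt\gtrsim_D|p^{(k)}(0)|\,|E|^{k+1}
\end{equation*}
for polynomials $p$ of degree $\leq D$ and measurable sets $E\subseteq\R$ satisfying a mild structural condition relating $E$ to an interval about $0$; the inputs are a Remez-type sublevel-set estimate for $p$ together with the norm equivalence $\|p\|_{L^\infty([-r,r])}\gtrsim_D|p^{(k)}(0)|r^k$ from finite dimensionality. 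This is the ``alternative approach to approximating one-dimensional sets by intervals'' advertised in the introduction, and producing it in a form robust enough to be iterated is the part of the argument that actually does the work.

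Iterating this lemma via Fubini, integrating out $t_n$ first and then $t_{n-1},\ldots,t_1$, converts $|q(t)|$ into successive partial derivatives $|\partial_{t_n}^{\beta_n}\cdots\partial_{t_j}^{\beta_j}q(t_1,\ldots,t_{j-1},0,\ldots,0)|$, each multiplied by a factor $\alpha_{j\bmod 2}^{1+\beta_j}$; after $n$ steps the surviving derivative is $|\partial^\beta q(0)|\sim 1$, yielding $\alpha_1^{b_1}\alpha_2^{b_2}$. The delicate point is to carry out further pigeonhole refinements along the way so that at each stage the relevant fiber is structured enough for the one-variable lemma to apply uniformly in the remaining parameters, while keeping the refined $\Omega$ of comparable measure; this bookkeeping is the substantive part of the argument in Section~\ref{S:RWT}.
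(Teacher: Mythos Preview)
Your proposal is correct and follows essentially the same route as the paper: the one-variable lemma you isolate is precisely Proposition~\ref{P:poly refinement} (with the ``structural condition'' being $0\in J(N,S)$, enforced by refining to $\Omega_{j-1}$ in \eqref{E:Omega j-1}), and your Fubini iteration is exactly the computation in \eqref{E:refined lb}. One minor correction: the multiplicity bound on $\Psi_{x_0}$ comes purely from the polynomial hypothesis via the B\'ezout-type Lemma~\ref{L:finite to one}, while hypothesis~(ii) is used earlier, to ensure that the coarea-formula refinement runs along single integral curves.
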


We give a complete proof of the preceding, using partially alternative methods from those in \cite{GressmanPoly}, because our approach will facilitate a resolution, in Section~\ref{S:Quasiex}, of a related inverse problem, namely, to characterize those pairs $(E_1,E_2)$ for which the inequality in \eqref{E:RWT} is reversed.  Our proof of Proposition~\ref{P:RWT} is based on the following proposition.

\begin{proposition} \label{P:poly refinement}
Let $S \subseteq \R$ be a measurable set.  For each $N$, there exists an interval $J = J(N,S)$ with $|J \cap S| \gtrsim |S|$ such that for any polynomial $P$ of degree at most $N$,
\begin{equation} \label{E:poly refinement}
\int_S|P| \, dt \gtrsim \sum_{j=0}^N \|P^{(j)}\|_{L^\infty(J)} \bigl(\tfrac{|J|}{|S|}\bigr)^{(1-\eps)j}|S|^{j+1}.
\end{equation}
\end{proposition}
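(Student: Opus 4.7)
Plan: I would prove this by first selecting the interval $J$ via a stopping-time/minimal-interval construction, and then deducing the sum bound by applying classical polynomial inequalities of Remez, Markov, and Nazarov type to each term in the sum.

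For the selection of $J$, a natural construction is to take $J$ to be a (nearly) minimal interval satisfying $|J \cap S| \geq (1-\eta)|S|$ for a small parameter $\eta = \eta(\eps, N) > 0$. This automatically gives $|J \cap S| \gtrsim |S|$. More importantly, the minimality forces useful structural information on $S$ near the endpoints of $J$: $S$ must have a definite amount of mass near both endpoints, since otherwise $J$ could be shrunk without dropping below the mass threshold. Equivalently, one can work with a Whitney-type covering of $S$ by intervals of controlled density, and take $J$ to be the smallest interval containing the union of those carrying the bulk of $|S|$.

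Given $J$, I would derive the sum bound by handling each term separately. The $j=0$ term, $\int_S |P| \gtrsim \|P\|_{L^\infty(J)}|S|$, is a Remez/Nazarov-type statement. For $j > 0$, observe that $P^{(j)}$ has degree $\leq N-j$, so applying Remez to $P^{(j)}$ on a suitable subinterval $I \subseteq J$ with $|I \cap S| \gtrsim |I|$ yields
$$\|P^{(j)}\|_{L^\infty(I)} \lesssim_N (|I|/|I \cap S|)^{N-j} \|P^{(j)}\|_{L^\infty(I \cap S)},$$
so there is a point $t^* \in S \cap J$ at which $|P^{(j)}(t^*)|$ is comparable, up to a power of $|J|/|S|$, to $\|P^{(j)}\|_{L^\infty(J)}$. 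Taylor expanding $P$ around $t^*$ and integrating over a neighborhood of $t^*$ inside $S$ (exploiting the stopping-time density information for $S$ near $t^*$) translates this pointwise derivative bound into an integral lower bound on $\int_S |P|$.

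The main obstacle is closing the gap between naive Nazarov-type estimates, which yield only $\int_S |P| \gtrsim (|S|/|J|)^N |S| \|P\|_{L^\infty(J)}$, and the sharp target bound, which improves this by the large factor $(|J|/|S|)^N$ in the $j=0$ case. Bridging this gap requires genuine use of both the stopping-time structure of $J$ (the density of $S$ near its endpoints) and the polynomial structure of $P$ on $J$ (at most $N$ sign changes of $P'$, and a decomposition into $O_N(1)$ monotonic pieces of $|P|$, allowing pointwise bounds at favorable endpoints of monotonic pieces to be transferred to integrals over nearby parts of $S$). The $\eps$-loss in the exponent $(1-\eps)j$ provides the needed flexibility to absorb sub-optimal Remez constants and to calibrate the parameter $\eta = \eta(\eps, N)$ so that the various pieces of the argument fit together.
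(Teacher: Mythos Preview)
Your proposed construction of $J$ is wrong, and with that $J$ the target inequality is actually \emph{false}, not merely difficult to prove. Take $S=[0,\delta]\cup[1,1+\delta]\cup\cdots\cup[N-1,N-1+\delta]$, so $|S|=N\delta$. Any interval $J$ with $|J\cap S|\geq(1-\eta)|S|$ for $\eta<1/N$ must have $|J|\sim N$, so your ``minimal'' $J$ satisfies $|J|/|S|\sim\delta^{-1}$. Now let $P(t)=\prod_{k=0}^{N-1}(t-k)$: on each component of $S$ one linear factor is $O(\delta)$, so $\int_S|P|\lesssim_N\delta^2$, whereas every term on the right of \eqref{E:poly refinement} is at least $c_N\,\delta^{1+\eps N}$. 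Thus the inequality fails for this $J$ once $\delta$ is small. The obstacle you identify (the factor $(|J|/|S|)^N$ gap versus Nazarov) is not a technical slack to be absorbed by the $\eps$-loss; it reflects that your $J$ is genuinely too large.

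The paper's construction is different in an essential way: it iterates the interval-selection $N$ times, producing a nested chain $J_1\supseteq J_2\supseteq\cdots\supseteq J_N$ together with \emph{separated} companion intervals $K_1,\ldots,K_N$ (each $K_i\subseteq J_{i-1}\setminus J_i$, with $|K_i|\sim|J_i|\sim\dist(K_i,J_i)$ and $|S\cap K_i|\gtrsim(|S|/|K_i|)^c|S|$). The interval in the statement is $J=J_N$, which in the example above collapses down to length $\sim\delta$. The proof then bypasses Remez/Nazarov entirely: writing $P(t)=\prod(t-\zeta_j)$, either all zeros are far from $J_N$ (and $|P|$ is essentially constant there), or some $\zeta_1$ is close to $J_N$, in which case each remaining $\zeta_j$ can be close to at most one of the $N$ pairwise-separated $K_i$, so by pigeonhole some $K_{i_0}$ avoids every zero and $|P|$ is essentially constant on $K_{i_0}$. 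Integrating over $S\cap K_{i_0}$ and using $|K_{i_0}|\geq|J_N|$ gives \eqref{E:poly refinement}. The key idea your plan is missing is this $N$-fold iteration producing $N$ separated ``test'' intervals against which to pigeonhole the $N$ zeros.
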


The key improvement of this lemma over the analogous result in \cite{GressmanPoly} is the gain $(\frac{|J|}{|S|})^{(1-\eps)j}$ in the higher order terms.  This gain will allow us to transfer control over $\int_S |P|$ into control over the length of $J$.

\begin{proof}[Proof of Proposition~\ref{P:poly refinement}]  

If $S$ has infinite measure, the left hand side of \eqref{E:poly refinement} is infinite whenever it is nonzero.  Thus we may assume that $S$ has finite measure.  Replacing $S$ by a bounded subset with comparable measure, we may assume that $S \subseteq I$ for some finite interval $I$.  Now we turn to a better approximation.

\begin{lemma} \label{L:poly refinement1}
Given $c > 0$, there exist intervals $J,K \subseteq I$ with the following properties.
\begin{itemize}
\item[i.  ] $|J| \sim |K| \sim \dist(K,J)$
\item[ii.  ] $|S \cap J| \gtrsim |S|$
\item[iii.  ] $|S \cap K| \gtrsim (\frac{|S|}{|K|})^c  |S|$.
\end{itemize}
\end{lemma}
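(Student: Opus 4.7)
The plan is to find $J$ and $K$ via a stopping-time trisection of $I$. Starting with $I_0 := I$, at step $k$ we trisect the current interval $I_k$ into three equal parts $I_k^L, I_k^M, I_k^R$. The stopping rule: stop at step $k$ and take $J$ to be the heavier of $I_k^L, I_k^R$ and $K$ the lighter, provided $\max(|S \cap I_k^L|, |S \cap I_k^R|) \gtrsim |S|$ and $\min(|S \cap I_k^L|, |S \cap I_k^R|) \gtrsim (|S|/|I_k|)^c |S|$. Then (i) is immediate from the equal-size trisection, (ii) follows from the first bound, and (iii) from the second together with $|K| \sim |I_k|$.

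When the stopping rule fails, we recurse according to which half of the conjunction has failed. If both outer thirds are light---each with $|S \cap I_k^L|, |S \cap I_k^R| \ll |S|$---then the middle third must carry the bulk of the surviving mass, and we set $I_{k+1} := I_k^M$; otherwise the heavier outer third already meets the $\gtrsim |S|$ bound but the lighter one falls below the threshold $(|S|/|I_k|)^c |S|$, and we set $I_{k+1}$ to be the union of the middle with the heavier outer third, discarding at most $(|S|/|I_k|)^c |S|$ of $S$. The first type of recursion shrinks $|I_k|$ by a factor of $3$ and incurs no loss; the second shrinks $|I_k|$ by a factor of $3/2$.

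The crucial observation is that the losses incurred by the second type of recursion form a geometric series: between successive such recursions the quantity $(|S|/|I_k|)^c$ grows by at least a factor of $(3/2)^c$, so the total loss is bounded by a constant depending only on $c$, times $|S|$. By tuning the implicit constant in the condition-(iii) threshold small enough in terms of $c$, we keep the cumulative loss below $|S|/4$, so $|S \cap I_k| \geq 3|S|/4$ throughout. The iteration must terminate in finitely many steps, since $|I_k|$ decreases geometrically while the trivial relation $|S \cap I_k| \leq |I_k|$ forces $|I_k| \geq 3|S|/4$. The main obstacle is threading the balance between the cumulative mass loss and the condition-(iii) threshold---making the latter too small lets the iteration run too long before terminating, while making it too large lets the total loss exceed $|S|$---and navigating this balance is what forces the implicit constants in the conclusion to depend on $c$.
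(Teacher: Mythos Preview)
There is a genuine gap in your treatment of the first type of recursion. You claim it ``incurs no loss'', but passing from $I_k$ to its middle third $I_k^M$ discards the mass in both outer thirds, which can be as large as $2c_1|S|$, where $c_1$ is the implicit constant in your ``light'' threshold. This is a fixed fraction of $|S|$, not a geometrically decreasing quantity, and type-1 steps can occur roughly $\log(|I|/|S|)$ times. Concretely, construct $S$ so that at every trisection level each outer third receives mass exactly $\tfrac{c_1}{2}|S|$ with the remainder in the middle. Then every step is type-1, and after $k$ steps $|S\cap I_k| = (1-kc_1)|S|$, which falls below $\tfrac34|S|$ (and eventually below zero) long before the length constraint forces termination, once $|I|/|S|$ is large enough. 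So the invariant $|S\cap I_k|\ge \tfrac34|S|$ fails, and the total loss is not bounded by a constant depending only on $c$.

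The paper's argument sidesteps this by dividing into four equal parts and using a \emph{single} threshold of the geometric type $c'(|S|/|I_i|)^c|S\cap I_i|$: an end quarter is discarded whenever it falls below this threshold, so every loss is geometric and the product $\prod_i(1-c'2^{-cm_i})$ is bounded below. Condition (ii) is then secured not through the stopping rule but by choosing $J$ to be the quarter of maximal mass, automatically giving $|S\cap J|\ge\tfrac14|S\cap I_i|\gtrsim|S|$; with four parts one can always pick $K\in\{I_i^1,I_i^4\}$ non-adjacent to $J$, so $\dist(J,K)\sim|J|$. Your trisection forces $J$ to be an outer third (otherwise neither remaining third is separated from $J$), which is exactly what drove you to the two-threshold scheme and hence to the problematic type-1 case.
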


\begin{proof}
Let $c' > 0$ be a small constant, to be determined.  

Starting from $i=0$ and $I_0 = I$, we use the following stopping time procedure.  

Let $m_i := \lceil \log_{\frac43}(\frac{|I_i|}{|S|})\rceil$.  Divide $I_i = I_i^1 \cup I_i^2 \cup I_i^3 \cup I_i^4$ into four non-overlapping intervals of equal length, arranged in order of increasing index.  

If 
$$
|S \cap I_i^j| > c' 2^{-c m_i}|S \cap I_i|,
$$
for $j=1$ and $j=4$, then stop.  Set $J=I_i^j$, where $j$ is chosen to maximize $|S \cap I_i^j|$ and set $K=I_i^k$, where $k \in \{1,4\}$ is not adjacent to $j$.  Then we are done, provided $|S \cap I_i| \gtrsim |S|$.  

If (say) $|S \cap I_i^1| \leq c' 2^{-c m_i}|S \cap I_i|$ (the case where $|S \cap I_i^4| \leq c' 2^{-c m_i}|S \cap I_i|$ being handled analogously), discard $I_i^1$ and repeat the procedure on $I_{i+1} := I_i^2 \cup I_i^3 \cup I_i^4$.  Note that $m_{i+1} = m_i-1$.  

On the one hand, $|I_i| = 2 (\frac34)^i|I_0|$ tends to zero as $i \to \infty$, while on the other hand, 
$$
|I_i| \geq |S \cap I_i| \geq \bigl[\prod_{j=0}^{i-1}(1-c' 2^{-m_jc})\bigr]|S| \gtrsim |S|,
$$
where the last inequality is valid for $c'$ sufficiently small.  Thus the process terminates after finitely many steps.  
\end{proof}

We apply Lemma~\ref{L:poly refinement1} iteratively, $N$ times, to obtain a sequence of pairs of bounded intervals $K_1,J_1 \subseteq I$, $K_{i+1},J_{i+1} \subseteq J_i$, $1 \leq i \leq N-1$, satisfying
\begin{gather*}
|K_i| = |J_i| = \dist(K_i,J_i)\\
|S \cap J_i| \gtrsim |S|\\
|S \cap K_i| \gtrsim \bigl(\tfrac{|S|}{|K_i|}\bigr)^c |S|.
\end{gather*}
Let $m_i := \log_2\bigl(\frac{|K_i|}{|S|}\bigr)$.  We observe that $m_1 \geq m_2 \geq \cdots \geq m_N$.  

It remains to prove that if $P$ is any degree $N$ polynomial,
\begin{equation} \label{E:Int S P}
\int_S |P| \gtrsim  \sum_{j=0}^N \|P^{(j)}\|_{L^\infty(J_N)} 2^{j(1-c)m_N} |S|^{j+1}.
\end{equation}

We will repeatedly use, without comment, the equivalence of all norms on the finite dimensional vector space of polynomials of degree at most $N$.  (Examples of norms that we use are $\|P\|_{L^\infty([0,1])}$, $\sum_j |P^{(j)}(\zeta_0)|$ for a fixed $\zeta_0 \in \{|\zeta| < 1\}$, $\|P\|_{L^1([0,1])}$, $\|P\|_{L^\infty([\frac23,1])}$, $\|P\|_{L^\infty(\{|z| < 1\})}$, etc.)  By scaling and translation, we can map $[0,1]$ onto any closed interval, and the norms transform accordingly.   

Multiplying $P$ by a constant if needed, we may write $P(t) = \prod_{j=1}^N(z-\zeta_j)$, where the $\zeta_j$ are the complex zeros, counted according to multiplicity. 

First, suppose that $\dist(\zeta_j,J_N) \geq \tfrac1{100}|J_N|$ for all $j$.  Then $|P(t)| \sim |P(t_0)|$ throughout $J_N$, so
$$
\int_S |P| \geq \int_{S \cap J_N}|P| \gtrsim \|P\|_{L^\infty(J_N)} |S| \sim \sum_{j=0}^N |P^{(j)}(t_0)| |J_N|^j |S|,
$$
which dominates the right side of \eqref{E:Int S P}.  

Now suppose that $\dist(\zeta_1,J_N) < \tfrac1{100}|J_N|$.  We have that 
\begin{align*}
\|P\|_{L^\infty(J_N)} &\sim \sum_{j=0}^N |P^{(j)}(\zeta_1)||J_N|^j = |J_N| \sum_{j=0}^{N-1}|(P')^{(j)}(\zeta_1)| |J_N|^j\\
&\sim |J_N|\|P'\|_{L^\infty(J_N)} \sim \sum_{j=1}^N \|P^{(j)}\|_{L^\infty(J_N)} |J_N|^j.
\end{align*}
By construction, for each $j \geq 2$, $\dist(\zeta_j,K_i) < \frac1{100}|K_i|$ can hold for at most one value of $i$.  Thus there exists $1 \leq i \leq N$ such that $\dist(\zeta_j,K_i) \geq \tfrac1{100}|K_i|$ for all $j$, so $|P(t)| \sim |P(t_i)|$, for any $t,t_i \in K_i$.  Therefore
\begin{equation} \label{E:int S}
\begin{aligned}
\int_S |P| &\gtrsim \int_{S \cap K_i} |P| \sim \|P\|_{L^\infty(K_i)} |S \cap K_i| \sim \|P\|_{L^\infty(J_i)}|S \cap K_i| \\
&\gtrsim \sum_{j=1}^N\|P^{(j)}\|_{L^\infty(J_i)}|J_i|^j|S \cap K_i| \gtrsim \sum_{j=1}^N\|P^{(j)}\|_{L^\infty(J_N)}2^{(j-c)m_i}|S|^{j+1},
\end{aligned}
\end{equation}
which is again larger than the right side of \eqref{E:Int S P}.  
\end{proof}

\begin{proof}[Proof of Proposition~\ref{P:RWT}]
We may assume that $E_1,E_2$ are open sets.  We take the now-standard approach of iteratively refining the set 
$$
\pi_1^{-1}(E_1) \cap \pi_2^{-1}(E_2) \cap \{\rho_\beta \sim 1\}.
$$
Since $X_j \neq 0$ a.e.\ on $\{\rho_\beta \sim 1\}$, $\pi_j$ is a submersion a.e.\ on $\{\rho_\beta \sim 1\}$.  By the implicit function theorem and hypothesis (ii) of Theorem~\ref{T:main}, points $x,x'$ at which $\pi_j$ is a submersion that lie on distinct $X_j$ integral curves cannot have $\pi_j(x) = \pi_j(x')$.  Thus there exists an open set $\Omega \subseteq \R^n$ with
$$
\Omega \subseteq \pi_1^{-1}(E_1) \cap \pi_2^{-1}(E_2) \cap \{\rho_\beta \sim 1\}, \qquad |\Omega| \sim |\pi_1^{-1}(E_1) \cap \pi_2^{-1}(E_2) \cap \{\rho_\beta \sim 1\}|,
$$
such that for each $y \in \R^{n-1}$, $\pi_j^{-1}(\{y\}) \cap \Omega$ is contained in a single integral curve of $X_j$.  

Define
$$
\alpha_j := \tfrac{|\Omega|}{|E_j|}, \qquad j=1,2.
$$
Our goal is to prove that
$$
|\Omega| \gtrsim \alpha_1^{b_1}\alpha_2^{b_2},
$$
where $b=b(p)$ is as in \eqref{E:def b}; after some arithmetic, this implies  \eqref{E:RWT}.  We may thus assume that $\Omega$ is a bounded set.  

We may write the coarea formula as
$$
|\Omega'| = \int_{\pi_j(\Omega')} \int \chi_{\Omega'}(e^{tX_j}(\sigma_j(y)))\, dt\, dy, \qquad \Omega' \subseteq \Omega,
$$
and we use this formula to refine iteratively, starting with $j=n$ and $\Omega_n := \Omega$.  For $x \in \Omega_j$, we define
$$
S_j(x) := \{t : e^{tX_j}(x) \in \Omega_j\}.
$$
Let $\sigma_j:\pi_j(\Omega_j) \to \Omega_j$ be a measurable section of $\pi_j$, with further properties to be determined later.  If we define $t_j(x) \in \R$ by the formula $x =: e^{t_j(x)X_j}(\sigma_j(\pi_j(x)))$, we see that $S_j(x)+t_j(x) = S_j(\sigma_j(\pi_j(x)))$; in particular, both sides depend only on $\pi_j(x)$.  We further define
$$
J_j(x) := J(N,S_j(x)+t_j(x)) - t_j(x),
$$
where $J(N,S)$ is the interval whose existence was guaranteed in Proposition~\ref{P:poly refinement}.  We choose this somewhat cumbersome definition so that $J_j(x) + t_j(x)$ depends only on $\pi_j(x)$ and $|S_j(x) \cap J_j(x)| \gtrsim |S_j(x)|$.  Finally, we set
\begin{equation} \label{E:Omega j-1}
\Omega_{j-1} := \{x \in \Omega_j : |S_j(x)| \geq C_{j}^{-1} \alpha_j, \: 0 \in J_j(x)\},
\end{equation}
with $C_{j}$ sufficiently large.  Note that $0 \in J_j(x)$ if and only if $x \in \{e^{tX_j}(x):t \in S_j(x) \cap J_j(x)\}$.  

We claim that $|\Omega_{j-1}| \sim |\Omega_j|$.  Indeed, 
\begin{align*}
|\Omega_j| &= \int_{\pi_j(\Omega_j)} |S_j(\sigma_j(y))|\, dy
\sim \int_{\pi_j(\Omega_j^g)}|S_j(\sigma_j(y))|\, dy\\
&\sim \int_{\pi_j(\Omega_j^g)}|S_j(\sigma_j(y)) \cap J_j(\sigma_j(y))|\, dy
=|\Omega_{j-1}|,
\end{align*}
where $\Omega_j^g :=\{x \in \Omega_j: |S_j(x)| > C_{j}^{-1} \alpha_j\}$ (same constant as in \eqref{E:Omega j-1}), and the second `$\sim$' uses Proposition~\ref{P:poly refinement}.  

We claim that that each $\Omega_j$ is open (possibly after a minor refinement).  Since $\Omega$ is open, it suffices to prove that $\Omega_{j-1}$ is open whenever $\Omega_j$ is open.  By deleting a set of measure much smaller than $|\Omega_j|$, we may assume that
$$
\Omega_j = \bigcup_{\alpha \in \scriptA} \{e^{tX_j}(\sigma_j(y)) : y \in B_\alpha, t \in S_\alpha\},
$$
where the $B_\alpha$ are disjoint open subsets of $\R^{n-1}$, the $S_\alpha$ are open subsets of $\R$, and $(t,y) \mapsto e^{tX_j}(\sigma_j(y))$ is a diffeomorphism on $B_\alpha \times S_\alpha$.  (We make no hypotheses on $\#\scriptA$.)  Then $S_j(e^{tX_j}(\sigma_j(y))) = S_\alpha + t$ and $J_j(e^{tX_j}(\sigma_j(y))) = J(N,S_\alpha)+t$, for each $(y,t) \in B_\alpha \times S_\alpha$.  By construction, there exists a subset $\scriptA' \subseteq \scriptA$ such that we may write
$$
\Omega_{j-1} = \bigcup_{\alpha \in \scriptA'} \{e^{tX_j}(\sigma_j(y)) : y \in B_\alpha, t \in S_\alpha \cap J(N,S_\alpha)\},
$$
a union of open sets.  

Let $x_0 \in \Omega_0$, and for $t \in \R^n$, define
\begin{equation} \label{E:def Psi}
\Psi_{x_0}(t) := e^{t_n X_n} \circ \cdots \circ e^{t_1X_1}(x_0).
\end{equation}
Define $\scriptF_1 := S_1(x_0)$, and for each $j=2,\ldots,n$,
$$
\scriptF_j := \{(t',t_j) \in \R^j : t' \in \scriptF_{j-1}, \: t_j \in S_j(\Psi_{x_0}(t',0))\}.
$$
Thus for $t \in \scriptF_j$, $\Psi_{x_0}(t,0) \in \Omega_j$, so $0 \in J_j(\Psi_{x_0}(t,0))$.  

In particular, $\Psi_{x_0}(\scriptF_n) \subseteq \Omega$, so by Lemma~\ref{L:finite to one},
$$
|\Omega| \geq |\Psi_{x_0}(\scriptF_n)| \gtrsim \int_{\scriptF_n} |\det D\Psi_{x_0}(t)|\, dt.
$$
Since $0 \in J_j(\Psi_{x_0}(t',0))$ for each $t' \in \scriptF_j$, we compute
\begin{equation} \label{E:refined lb}
\begin{aligned}
&\int_{\scriptF_n} |\det D\Psi_{x_0}(t)|\, dt = \int_{\scriptF_{n-1}} \int_{S_n(\Psi_{x_0}(t',0))}|\det D\Psi_{x_0}(t',t_n)|\, dt_n\, dt'\\
&\qquad \gtrsim \alpha_n^{\beta_n+1} \int_{\scriptF_{n-1}} |\partial_{t_n}^{\beta_n} \det D \Psi_{x_0}(t',0)|\, dt' \\
&\qquad \gtrsim \alpha_n^{\beta_n+1}\cdots\alpha_1^{\beta_1+1} |\partial_t^\beta \det D\Psi_{x_0}(0)| \sim \alpha_1^{b_1}\alpha_2^{b_2}.
\end{aligned}
\end{equation}
After a little arithmetic, we see that \eqref{E:RWT} is equivalent to $|\Omega| \gtrsim \alpha_1^{b_1}\alpha_2^{b_2}$, so the proposition is proved.
\end{proof}

We have not yet used the gain in Proposition~\ref{P:poly refinement}; we will take advantage of that in Section~\ref{S:Quasiex} when we prove a structure theorem for pairs of sets for which the restricted weak type inequality \eqref{E:RWT} is nearly reversed.  Before we state this structure theorem, it will be useful to understand better the geometry of the image under $\Psi_{x_0}$ of axis parallel rectangles.


\section{Carnot--Carath\'eodory balls associated to polynomial flows} \label{S:CC balls}


In the previous section, we proved uniform restricted weak type inequalities at a single scale.  To improve these to strong type inequalities, we need more, namely, an understanding of those sets for which the inequality \eqref{E:RWT} is nearly optimal.  In this section, we lay the groundwork for that characterization by establishing a few lemmas on Carnot--Carath\'eodory balls associated to nilpotent vector fields with polynomial flows.  Results along similar lines have appeared elsewhere, \cite{CNSW, NSW, BrianFrobenius, TW} in particular, but we need more uniformity and a few genuinely new lemmas, and, moreover, our polynomial and nilpotency hypotheses allow for simpler proofs than are available in the general case.  

We begin by reviewing our hypotheses and defining some new notation.  We have vector fields $X_1,X_2 \in \scriptX(\R^n)$ that are assumed to generate a Lie subalgebra $\mathfrak g \subseteq \scriptX(\R^n)$ that is nilpotent of step at most $N$, and such that for each $X \in \mathfrak g$, the exponential map $(t,x) \mapsto e^{tX}(x)$ is a polynomial of degree at most $N$ in $t$ and in $x$.  

\begin{lemma}\label{L:div free}
The elements of $\mathfrak g$ are divergence-free.
\end{lemma}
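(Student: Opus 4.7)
The plan is a two-step reduction. Step~1: verify that the generators $X_1, X_2$ themselves are divergence-free. Step~2: observe that the divergence-free vector fields on $\R^n$ form a Lie subalgebra of $\scriptX(\R^n)$. Since $\mathfrak g$ is by definition the Lie subalgebra generated by $X_1$ and $X_2$, these two steps together yield the lemma. (Notably, neither step requires the polynomiality of the flows posited in hypothesis (i); the claim is really a statement about vector fields of the form $\star(d\pi^1 \wedge \cdots \wedge d\pi^{n-1})$.)

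For Step~1, the definition \eqref{E:def X} unpacks to $X_j^\flat = \star_{\mathrm{Hodge}}(d\pi_j^1 \wedge \cdots \wedge d\pi_j^{n-1})$, where $\flat$ lowers indices via the Euclidean metric. Combining the standard Euclidean identity $\iota_X(dx^1 \wedge \cdots \wedge dx^n) = \star X^\flat$ with the involution $\star\star = (-1)^{n-1}\mathrm{id}$ on $(n-1)$-forms, one obtains
\[
\iota_{X_j}(dx^1 \wedge \cdots \wedge dx^n) \;=\; (-1)^{n-1}\, d\pi_j^1 \wedge \cdots \wedge d\pi_j^{n-1}.
\]
The right side is a wedge of exact (hence closed) $1$-forms, so it is closed. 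Cartan's formula then gives $\mathcal{L}_{X_j}(dx) = d(\iota_{X_j}(dx)) + \iota_{X_j}(d\,dx) = 0$, and since $\mathcal{L}_{X_j}(dx) = (\operatorname{div} X_j)\,dx$, we conclude $\operatorname{div} X_j = 0$ for $j=1,2$.

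For Step~2, a direct coordinate computation with $X = \sum X^i \partial_i$ and $Y = \sum Y^i \partial_i$ yields the identity
\[
\operatorname{div}[X,Y] \;=\; X(\operatorname{div} Y) - Y(\operatorname{div} X),
\]
after the cross terms $\sum_{i,k}(\partial_k X^i)(\partial_i Y^k)$ and $\sum_{i,k}(\partial_k Y^i)(\partial_i X^k)$ cancel by relabeling. (Alternatively, this is $\mathcal{L}_{[X,Y]} = [\mathcal{L}_X,\mathcal{L}_Y]$ applied to the volume form.) In particular, $\operatorname{div}[X,Y]=0$ whenever $\operatorname{div} X = \operatorname{div} Y = 0$. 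Since divergence is linear in the vector field, the set of divergence-free vector fields is a linear subspace of $\scriptX(\R^n)$ closed under brackets, i.e., a Lie subalgebra; by Step~1 it contains $X_1$ and $X_2$, so it contains $\mathfrak g$.

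There is no serious obstacle here: the only care required is in sign bookkeeping for the Hodge-star identification in Step~1, which can be verified in coordinates by computing $X_j^k = (-1)^{k+1}\det(\partial_i \pi_j^\ell)_{i\neq k,\ell}$ and checking $\sum_k \partial_k X_j^k = 0$ directly by expanding minors.
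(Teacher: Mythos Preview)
Your proof is correct, but it takes a genuinely different route from the paper's. The paper argues directly for an arbitrary $X\in\mathfrak g$ using the polynomial-flow hypothesis: since $(t,x)\mapsto e^{tX}(x)$ is polynomial, so is $\det D e^{tX}(x)$, and so is its multiplicative inverse $\det(De^{-tX})(e^{tX}(x))$; a polynomial whose reciprocal is also polynomial must be constant, and evaluating at $t=0$ gives the constant $1$, whence the flow is volume-preserving. Your argument instead exploits the specific form \eqref{E:def X} of the generators to see that $\iota_{X_j}(dx)$ is closed, hence $\mathcal L_{X_j}(dx)=0$, and then closes up under brackets via the identity $\operatorname{div}[X,Y]=X(\operatorname{div}Y)-Y(\operatorname{div}X)$. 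Your approach has the advantage, which you correctly flag, of not using hypothesis~(i) at all; it applies to any smooth $\pi_1,\pi_2$. The paper's approach, by contrast, handles every $X\in\mathfrak g$ in one stroke without appealing to the Lie-algebra structure or the explicit definition of the $X_j$, at the cost of leaning on the polynomial hypothesis that is already in force in this section.
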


\begin{proof}
Let $X \in \mathfrak g$.  Both $\det D e^{tX}(x)$ and its multiplicative inverse, which may be written $\det (De^{-tX})(e^{tX}(x))$, are polynomials, so both must be constant in $t$ and $x$.  Evaluating at $t=0$, we see that these determinants must equal 1, so the flow of $X$ is volume-preserving, i.e.\ $X$ is divergence-free.
\end{proof}

A \textit{word} is a finite sequence of 1's and 2's, and associated to each word $w$ is a vector field $X_w$, where $X_{(i)} := X_i$, $i=1,2$, and $X_{(i,w)} := [X_i,X_w]$.  We let $\scriptW$ denote the set of all words $w$ with $X_w \not \equiv 0$.  For $I \in \scriptW^n$, we define $\lambda_I := \det(X_{w_1},\ldots,X_{w_n})$, and we define $\Lambda := (\lambda_I)_{I \in \scriptW^n}$.  We denote by $|\Lambda|$ the sup-norm.  

As in the proof of Proposition~\ref{P:poly refinement}, we will repeatedly, and without comment, use the fact that all norms on the finite dimensional vector space of polynomials of degree at most (e.g.) $N$ are equivalent.  

Throughout this section, $c$ denotes a sufficiently small constant depending on $N$.  

\begin{lemma}\label{L:lambda t sim lambda 0}
Assume that $|\lambda_I(0)| \geq \delta|\Lambda(0)|$, for some $\delta > 0$.  Then for any $w \in \scriptW$,
$$
|\lambda_I(e^{tX_w}(0))| \sim |\lambda_I(0)|, \qquad |\lambda_I(e^{tX_w}(0))| \gtrsim \delta |\Lambda(e^{tX_w}(0))|,
$$
for all $|t| < c\delta$.  
\end{lemma}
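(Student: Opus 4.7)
The plan is to study $P(t) := \lambda_I(e^{tX_w}(0))$ as a polynomial in $t$. Since the flow $(t,x) \mapsto e^{tX_w}(x)$ is polynomial of degree $\leq N$ by hypothesis, and since each $X_{w_i}(y) = \frac{d}{ds}\big|_{s=0} e^{sX_{w_i}}(y)$ is polynomial in $y$ of degree $\leq N$, the determinant $\lambda_I(y) = \det(X_{w_1}(y),\ldots,X_{w_n}(y))$ is polynomial in $y$ of degree $\leq nN$; hence $P$ is a polynomial in $t$ of degree at most some $D = D(N)$. Its Taylor expansion at $t=0$ reads
$$P(t) = \sum_{k=0}^D \frac{t^k}{k!}(X_w)^k\lambda_I(0),$$
so the desired conclusion $|P(t)| \sim |P(0)|$ will follow once I establish the uniform bound $|(X_w)^k \lambda_{I'}(0)| \lesssim |\Lambda(0)|$ for every $I' \in \scriptW^n$ and $0 \leq k \leq D$ (in fact for every $I'$, as this will be needed for the second inequality too).

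To get this bound, I would use that $X_w \in \mathfrak g$ is divergence-free (Lemma \ref{L:div free}), so that the Lie derivative of the Euclidean volume form vanishes, giving the identity
$$X_w\det(Y_1,\ldots,Y_n) = \sum_{i=1}^n \det(Y_1,\ldots,[X_w,Y_i],\ldots,Y_n)$$
for arbitrary vector fields $Y_i$. Applying this with $Y_i=X_{w_i}$ and using the Jacobi identity together with nilpotency of step $\leq N$, each bracket $[X_w,X_{w_i}]$ is a linear combination, with universal integer coefficients bounded in terms of $N$, of the $X_{w''}$'s with $w'' \in \scriptW$. Consequently $X_w\lambda_{I'}$ equals a bounded linear combination of $\lambda_{I''}$'s with $I'' \in \scriptW^n$, and induction on $k$ extends this to $(X_w)^k \lambda_{I'}$. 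Evaluating at $0$ and using the definition of $|\Lambda(0)|$ as the sup-norm of the $\lambda_{I''}(0)$'s then yields $|(X_w)^k \lambda_{I'}(0)| \lesssim |\Lambda(0)|$ with implicit constant depending only on $N$.

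With this bound in hand, for $|t| \leq 1$ and any $I'$, the Taylor expansion gives $|\lambda_{I'}(e^{tX_w}(0))| \leq |\lambda_{I'}(0)| + C|t|\,|\Lambda(0)| \lesssim |\Lambda(0)|$, so taking the sup over $I'$ produces $|\Lambda(e^{tX_w}(0))| \lesssim |\Lambda(0)|$. For $|t| < c\delta$ (noting $\delta \leq 1$), applied to $I'=I$, we get $|P(t) - P(0)| \leq C|t|\,|\Lambda(0)| \leq Cc\delta\,|\Lambda(0)| \leq Cc|P(0)|$, which is $\leq \tfrac{1}{2}|P(0)|$ for $c$ chosen small enough in terms of $N$, giving $|P(t)| \sim |P(0)|$. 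Combining the two estimates yields
$$|\lambda_I(e^{tX_w}(0))| \sim |\lambda_I(0)| \geq \delta |\Lambda(0)| \gtrsim \delta |\Lambda(e^{tX_w}(0))|,$$
as required. The main obstacle is the bookkeeping in the second paragraph — iteratively expanding $(X_w)^k \lambda_{I'}$ in the spanning family $\{\lambda_{I''}\}_{I'' \in \scriptW^n}$ — but since nilpotency caps all intermediate word lengths at $N$, the structural constants are universal and depend only on $N$.
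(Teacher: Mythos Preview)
Your proposal is correct and follows essentially the same approach as the paper: both arguments use the divergence-free identity $X_w\lambda_{I'} = \sum_i \det(X_{w_1'},\ldots,[X_w,X_{w_i'}],\ldots,X_{w_n'})$, expand the brackets via Jacobi into bounded combinations of $\lambda_{I''}$'s, and then use the resulting derivative bound $|(d/dt)^k\lambda_{I'}(e^{tX_w}(0))|\lesssim|\Lambda(0)|$ together with polynomiality to conclude. The paper's writeup is slightly terser (it does not spell out the Jacobi step), but the argument is the same.
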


\begin{proof}
By Lemma~\ref{L:div free}, $X_w$ is divergence-free.  By the formula for the Lie derivative of a determinant, for any $I' = (w_1',\ldots,w_n') \in \scriptW^n$,
$$
X_w \lambda_{I'} = \sum_{i=1}^n \lambda_{I'_i},
$$
where $I'_i$ is obtained from $I'$ by replacing the $i$-th entry with $[X_w,X_{w_i'}]$.  Thus for each $k$,
\begin{equation} \label{E:dkLambda}
|\tfrac{d^k}{dt^k} \Lambda(e^{tX_w}(0))| \lesssim |\Lambda(0)| \lesssim \delta^{-1}|\lambda_I(0)|.
\end{equation}
As $t \mapsto \Lambda \circ e^{tX_w}(0)$ is a polynomial of bounded degree, the first inequality in \eqref{E:dkLambda} implies that $|\Lambda(e^{tX_w}(0))| \sim |\Lambda(0)|$ for $|t| < c$.  Moreover, \eqref{E:dkLambda} implies that $|\tfrac{d}{dt} \lambda_I(e^{tX_w}(0))| \lesssim \delta^{-1}|\lambda_I(0)|$, so $|\lambda_I(e^{tX_w}(0))| \sim |\lambda_I(0)|$ for $|t| < c \delta$.  The conclusion of the lemma follows.
\end{proof}

For $I = (w_1,\ldots,w_n) \in \scriptW^n$, we define a map
$$
\Phi_{x_0}^I(t_1,\ldots,t_n) := e^{t_n X_{w_n}} \circ \cdots \circ e^{t_1 X_{w_1}}(x_0).
$$

\begin{lemma} \label{L:det D Phi I}
Let $I \in \scriptW^n$, and assume that $|\lambda_I(0)| \geq \delta |\Lambda(0)|$.  Then for all $|t| < c\delta$,
\begin{equation} \label{E:det D Phi I 0}
|\det D\Phi^I_0(t)| \sim |\lambda_I \circ \Phi^I_0(t)| \sim |\lambda_I(0)|,
\end{equation}
and $|\Lambda \circ \Phi^I_0(t)| \sim |\Lambda(0)|$.
\end{lemma}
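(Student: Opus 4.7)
The plan is to first propagate the nondegeneracy of $\lambda_I$ and $\Lambda$ along the composite flow, and then compute $\det D\Phi^I_0$ column-by-column by expanding the pushforwards of the $X_{w_k}$ in the spanning set $\{X_v\}_{v\in\scriptW}$.

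I would begin by setting $x_0 := 0$ and $x_k := e^{t_k X_{w_k}}(x_{k-1})$, so that $x_n = \Phi^I_0(t)$. The pointwise identity $X_w \lambda_{I'} = \sum_i \lambda_{I'_i}$ used in the proof of Lemma~\ref{L:lambda t sim lambda 0} is valid at every point, so that argument carries over verbatim with any other base point in place of $0$. Applied inductively at $x_{k-1}$ with vector field $X_{w_k}$, it yields
\[
|\lambda_I(x_k)| \sim |\lambda_I(0)|, \qquad |\Lambda(x_k)| \sim |\Lambda(0)|, \qquad |\lambda_I(x_k)| \gtrsim \delta\, |\Lambda(x_k)|,
\]
provided $|t_j| < c\delta$ for $1 \leq j \leq k$. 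The constants deteriorate by a bounded factor at each step, but only $n$ steps are performed, so $c$ can be chosen depending only on $N$ and $n$. Taking $k = n$ yields the two claimed comparabilities $|\lambda_I \circ \Phi^I_0(t)| \sim |\lambda_I(0)|$ and $|\Lambda \circ \Phi^I_0(t)| \sim |\Lambda(0)|$.

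For the remaining comparison $|\det D\Phi^I_0(t)| \sim |\lambda_I \circ \Phi^I_0(t)|$, write $F_k := e^{t_n X_{w_n}} \circ \cdots \circ e^{t_{k+1} X_{w_{k+1}}}$, with $F_n := \mathrm{id}$. The chain rule gives $\partial_{t_k}\Phi^I_0(t) = DF_k(x_k) X_{w_k}(x_k)$, so setting $y := \Phi^I_0(t)$ and $Y_k := (F_k)_* X_{w_k}$, the $k$-th column of $D\Phi^I_0(t)$ at $y$ equals $Y_k(y)$. Because $\mathfrak{g}$ is nilpotent of step at most $N$, the adjoint action expansion
\[
Y_k = e^{-t_n \operatorname{ad} X_{w_n}} \cdots e^{-t_{k+1} \operatorname{ad} X_{w_{k+1}}} X_{w_k} = \sum_{v \in \scriptW} c_{k,v}(t) X_v
\]
terminates as a finite sum, each $c_{k,v}$ is a polynomial in $t_{k+1},\ldots,t_n$ of degree at most a function of $N$ with coefficients determined only by the free step-$N$ nilpotent Lie algebra on two generators, $c_{k,w_k}(0) = 1$, and $c_{k,v}(0) = 0$ for $v \neq w_k$.

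Expanding the determinant multilinearly in the columns,
\[
\det D\Phi^I_0(t) = \sum_{J = (v_1,\ldots,v_n) \in \scriptW^n} c_{1,v_1}(t) \cdots c_{n,v_n}(t)\, \lambda_J(y).
\]
The term $J = I$ contributes $(1 + O(|t|))\lambda_I(y)$, and every other $J$ has at least one factor $c_{k,v_k}$ vanishing at $0$, hence contributes $O(|t|)\,\lambda_J(y)$ for $|t|$ bounded. Combining with the first step gives
\[
|\det D\Phi^I_0(t) - \lambda_I(y)| \lesssim |t|\, |\Lambda(y)| \lesssim |t|\,\delta^{-1}|\lambda_I(y)|,
\]
and for $|t| < c\delta$ with $c$ small enough this forces $|\det D\Phi^I_0(t)| \sim |\lambda_I(y)|$. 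The main technical hurdle is verifying that the coefficients $c_{k,v}(t)$ are genuinely universal (bounded in terms of $N$ alone), which is exactly where the polynomial flow and bounded-step nilpotency hypotheses are used.
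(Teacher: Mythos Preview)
Your proof is correct and follows essentially the same strategy as the paper: both reduce to expressing the columns of $D\Phi_0^I(t)$ as polynomial (in $t$) combinations of the $X_v$ via the adjoint action, then expanding the determinant as a sum over $J\in\scriptW^n$ of $\lambda_J$ with polynomial coefficients whose constant term singles out $J=I$. The only cosmetic difference is the point of evaluation: the paper uses the volume-preserving property (Lemma~\ref{L:div free}) to pull each column back to $0$ and bounds $\partial_t^\beta\det D\Phi_0^I(0)$ directly by $|\Lambda(0)|$, whereas you push forward to $y=\Phi_0^I(t)$ and compare to $|\Lambda(y)|$, which you have already shown is $\sim|\Lambda(0)|$ by the inductive application of Lemma~\ref{L:lambda t sim lambda 0}.
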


\begin{proof}
By Lemma~\ref{L:lambda t sim lambda 0} and a simple induction, we have only to show that $|\det D\Phi_0^I(t)| \sim |\lambda_I(0)|$, for all $|t| < c\delta$.  Since the flow of each $X_w$ is volume-preserving, we may directly compute
$$
\det D\Phi^I_0(t) = \det(X_{w_1}(0),\phi_{t_1X_{w_1}}^* X_{w_2}(0),\ldots,\phi_{t_1X_{w_1}}^* \cdots \phi_{t_{n-1}X_{w_{n-1}}}^* X_{w_n}(0)),
$$
where 
$$
\phi_X^*Y(x) := De^{-X}(e^{X}(x))Y(e^X(x)).
$$
Since 
\begin{equation} \label{E:bracket XY}
\tfrac d{dt} \phi_{tX}^* Y = \phi_{tX}^*[X,Y],
\end{equation}
this gives
$$
|\partial_t^\beta \det D\Phi_0^I(0)| \lesssim |\Lambda(0)| \leq \delta^{-1} |\lambda_I(0)| = \delta^{-1}|\det D\Phi_0^I(0)|,
$$
for all multiindices $\beta$.  This gives us the desired bound on $|\det D\Phi_0^I(t)|$, for $|t| < c\delta$.  
\end{proof}

\begin{lemma} \label{L:Phi^I one to one}
Assume that $|\lambda_I(0)| \geq \delta|\Lambda(0)|$.  Then $\Phi_0^I$ is one-to-one on $\{|t| < c\delta\}$, and for each $w \in \scriptW$, the pullback $Y_w:= (\Phi_0^I)^* X_w$ satisfies $|Y_w(t)| \lesssim \delta^{-1}$ on $\{|t| < c\delta\}$.  
\end{lemma}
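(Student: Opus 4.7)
The plan is to establish the two conclusions in turn: the pullback bound is a Cramer's rule computation resting on the preceding lemmas, while injectivity on the cube requires a more delicate quantitative argument.

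For the estimate on $Y_w$, I would start from the defining identity $D\Phi_0^I(t)\,Y_w(t) = X_w(\Phi_0^I(t))$ and apply Cramer's rule. Each component $Y_w^j(t)$ becomes the quotient of the determinant obtained by replacing the $j$-th column of $D\Phi_0^I(t)$ with $X_w(\Phi_0^I(t))$, divided by $\det D\Phi_0^I(t)$. By Lemma~\ref{L:det D Phi I}, the denominator is comparable to $|\lambda_I(0)|$ throughout the cube. For the numerator, I would use the pushforward formula from the proof of Lemma~\ref{L:det D Phi I}, together with \eqref{E:bracket XY} and the step-$N$ nilpotency of $\mathfrak g$, to write each column of $D\Phi_0^I(t)$ as a polynomial-in-$t$ combination of vector fields from $\mathfrak g$ evaluated at $\Phi_0^I(t)$, with coefficients of size $O(1)$ on $\{|t| < c\delta\}$. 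Expanding the numerator determinant then reduces it to a polynomial-in-$t$ combination of entries of $\Lambda(\Phi_0^I(t))$, each bounded by $|\Lambda(\Phi_0^I(t))| \sim |\Lambda(0)|$ via Lemma~\ref{L:det D Phi I}. Dividing yields $|Y_w^j(t)| \lesssim |\Lambda(0)|/|\lambda_I(0)| \leq \delta^{-1}$.

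For injectivity, the Jacobian lower bound makes $\Phi_0^I$ a local diffeomorphism at every point of the cube, so the task is to rule out two distinct parameter values sharing an image. I would argue by contradiction: assuming $\Phi_0^I(t) = \Phi_0^I(s)$ for distinct $t, s \in \{|t| < c\delta\}$, Taylor's formula gives
\[
0 = D\Phi_0^I(s)(t-s) + \int_0^1 \bigl(D\Phi_0^I(s + \tau(t-s)) - D\Phi_0^I(s)\bigr)(t-s)\, d\tau.
\]
Expanding every column of $D\Phi_0^I(\xi)$ as above into an explicit polynomial-in-$\xi$ combination of $X_{w'}(\Phi_0^I(\xi))$, and then applying $(D\Phi_0^I(s))^{-1}$, I would use $|Y_{w'}(\cdot)| \lesssim \delta^{-1}$ column by column to control the integrand in the $X_w$-adapted frame. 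A careful accounting shows the integral is an $O(c)$-perturbation of the identity acting on $(t-s)$, so choosing $c$ sufficiently small depending only on $N$ forces $t = s$.

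The main obstacle is this injectivity step. The pullback bound $|Y_w| \lesssim \delta^{-1}$ is \emph{not} a Euclidean operator-norm bound on $(D\Phi_0^I)^{-1}$, because we have no a priori control on the individual sizes of the $X_{w_j}(0)$; it only controls the action of $(D\Phi_0^I)^{-1}$ on the distinguished family $\{X_w(\Phi_0^I(t))\}_{w \in \scriptW}$. Making the quantitative inverse function argument go through therefore requires expressing every vector arising in Taylor's formula in the $X_w$-adapted frame, with polynomial bookkeeping of the expansion coefficients ensured by the polynomial-flow hypothesis and the step-$N$ nilpotency; the estimate then closes by combining the uniform comparability $|\Lambda(\Phi_0^I(\xi))| \sim |\Lambda(0)|$ from Lemma~\ref{L:det D Phi I} with the column-wise pullback bound from the first part of this lemma.
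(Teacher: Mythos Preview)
Your treatment of the pullback bound $|Y_w| \lesssim \delta^{-1}$ is essentially the paper's: both routes expand the columns of $D\Phi_0^I$ as polynomial-in-$t$ combinations of the $X_w$'s, invoke Cramer's rule, and divide $|\Lambda(0)|$ by $|\lambda_I(0)|$.

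The injectivity argument, however, has a real gap. Your bound $|Y_w(t)| \lesssim \delta^{-1}$ controls $(D\Phi_0^I(t))^{-1}$ acting on $X_w(\Phi_0^I(t))$---crucially, with the base point \emph{matched}. But the Taylor remainder you write down involves $(D\Phi_0^I(s))^{-1}$ acting on columns of $D\Phi_0^I(\xi)$, and those columns are combinations of $X_w(\Phi_0^I(\xi))$ with $\xi \neq s$. Your ``column-wise pullback bound'' simply does not apply to this mixed quantity, and the phrase ``polynomial bookkeeping'' hides precisely the step that is missing: you would need to transport the frame $\{X_w(\Phi_0^I(\xi))\}$ back to $\Phi_0^I(s)$ with controlled coefficients, which is essentially as hard as the original problem. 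Without this, there is no reason the integral term is an $O(c)$-perturbation rather than $O(1)$, and the contraction does not close on the $c\delta$-scale.

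The paper avoids this difficulty by a different mechanism. It first shows that $Y_{w_i}(t)$ is not merely bounded but satisfies the sharper estimate
\[
|Y_{w_i}(t) - \tfrac{\partial}{\partial t_i}| \lesssim \delta^{-1}|t|;
\]
the factor of $|t|$ comes from the observation that $\tilde Y_{w_i} := \lambda_I(0)^{-1}(\det D\Phi_0^I)\,Y_{w_i}$ is a \emph{polynomial} in $t$ with value $\tfrac{\partial}{\partial t_i}$ at the origin, so equivalence of norms upgrades an $O(1)$ bound on it to an $O(\delta^{-1}|t|)$ bound on its deviation from $\tfrac{\partial}{\partial t_i}$. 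With this in hand, the paper shows that the auxiliary map $s \mapsto e^{s_n Y_{w_n}} \circ \cdots \circ e^{s_1 Y_{w_1}}(0)$ has differential within $O(\delta^{-1}|s|) \leq O(c)$ of the identity on $\{|s| < c\delta\}$, hence is injective by the standard contraction-mapping argument; naturality of exponentiation then transfers injectivity to $\Phi_0^I$. The two ingredients you are missing are the polynomiality of $\tilde Y_{w_i}$ (which is what buys the extra factor of $|t|$) and the passage through the auxiliary map built from the $Y$-flows, rather than a direct Taylor argument on $\Phi_0^I$ itself.
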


\begin{proof}
We write $D \Phi_0(t) = A(t,\Phi_0(t))$, where $A$ is the matrix-valued function given by
$$
A(t,x) := (\phi_{-t_nX_{w_n}}^* \cdots \phi_{-t_2X_{w_2}}^* X_{w_1}(x),\cdots,\phi_{-t_nX_{w_n}}^*X_{w_{n-1}}(x),X_{w_n}(x)).
$$
By the nilpotency hypothesis and \eqref{E:bracket XY}, each column of $A$ is polynomial in $t$, and thus may be computed by differentiating and evaluating at $t=0$.  Using the Jacobi identity, iterated Lie brackets of the $X_{w_i}$ may be expressed as iterated Lie brackets of the $X_i$, and so
\begin{equation} \label{E:multi phi as poly Xw}
\phi_{-t_nX_{w_n}}^* \cdots \phi_{-t_{i+1}X_{w_{i+1}}}^* X_{w_i} = X_{w_i} + \sum_{w \in \scriptW} p_{w_i,w}(t)X_w,
\end{equation}
where each $p_{w_i,w}$ is a polynomial in $(t_{i+1},\ldots,t_n)$, with bounded coefficients and $p_{w_i,w}(0) = 0$.  By Cramer's rule, for each $w$, 
\begin{equation} \label{E:Cramer Xw}
\lambda_I X_w = \sum_{i=1}^n \lambda_{I(w,i)} X_{w_i},
\end{equation}
where $I(w,i)$ is obtained from $I$ by replacing $X_{w_i}$ with $X_w$.  Combining \eqref{E:multi phi as poly Xw} and \eqref{E:Cramer Xw}, we may write
$$
A = (X_{w_1},\ldots,X_{w_n}) ( \mathbb I_n + \lambda_I^{-1} P),
$$
where $\mathbb I_n$ is the identity matrix and $P$ is a matrix-valued polynomial whose entries are linear combinations of the products $p_{w_i,w}\lambda_{I(w,i)}$.   Since $p_{w_i,w}$ has bounded coefficients and vanishes at zero, $|p_{w_i,w}(t)| \lesssim \delta$ on $\{|t| < c\delta\}$, and so by Lemma~\ref{L:det D Phi I}, 
\begin{equation} \label{E:P lesssim lambdaI}
|P \circ \Phi_0^I(t)| \lesssim |\lambda_I \circ \Phi_0^I(t)| \sim |\lambda_I(0)|,
\end{equation}
on $\{|t| < c\delta\}$.  

Recalling the definition of $Y_w$ in the statement of the lemma, $Y_{w_i}(0) = \tfrac{\partial}{\partial t_i}$, $1 \leq i \leq n$.  Let
\begin{equation} \label{E:def Yw tilde}
\tilde Y_w := \lambda_I(0)^{-1} (\det D\Phi_0^I ) Y_w.
\end{equation}
By Cramer's rule, $\tilde Y_w$ is a polynomial; we also have $\tilde Y_w(0) = Y_w(0)$.  We expand
\begin{align*}
Y_w(t) &= A(t,\Phi_0(t))^{-1}X_w \circ \Phi_0^I(t)\\
 &= (\mathbb I_n + \lambda_I^{-1}\Phi_0^I(t) P \circ \Phi_0^I(t))^{-1}(X_{w_1} \circ \Phi_0^I(t),\ldots,X_{w_n} \circ \Phi_0^I(t))^{-1}X_w \circ \Phi_0^I(t),
\end{align*}
which directly implies
\begin{equation} \label{E:Ywi perturbs ei}
Y_{w_i}(t) = (\mathbb I_n + \lambda_I^{-1} \circ \Phi_0^I(t) P \circ \Phi_0^I(t))^{-1} e_i.
\end{equation}

By \eqref{E:Ywi perturbs ei} and inequality \eqref{E:P lesssim lambdaI}, 
\begin{equation} \label{E:quant Ywi perturbs ei}
|Y_{w_i}-\tfrac{\partial}{\partial t_i}| \lesssim 1
\end{equation}
on $\{|t| < c\delta\}$.  By Cramer's rule, 
$$
Y_w = \sum_{i=1}^n \tfrac{\lambda_{I(i)} \circ \Phi_0^I}{\lambda_I \circ \Phi_0^I} Y_{w_i},
$$
while Lemma~\ref{L:Phi^I one to one} bounds the coefficients; combined with inequality \eqref{E:quant Ywi perturbs ei}, we obtain
$|Y_w| \lesssim \delta^{-1}$ on $\{|t|<c\delta\}$.  

By inequality \eqref{E:P lesssim lambdaI}, $|\tfrac{\det D\Phi_0^I}{\lambda_I(0)}-1| \lesssim 1$.  Therefore, 
\begin{equation} \label{E:tilde Ywi perturbs ddti}
|\tilde Y_{w_i}-\tfrac{\partial}{\partial t_i}| \lesssim 1
\end{equation}
 on $\{|t| < c\delta\}$.  The vector field $\tilde Y_{w_i}$ is a polynomial that satisfies $\tilde Y_{w_i}(0) = \tfrac{\partial}{\partial t_i}$, while \eqref{E:tilde Ywi perturbs ddti} (and the equivalence of norms) implies bounds on the coefficients of $\tilde Y_{w_i}$; taken together, these imply the stronger estimate
\begin{equation} \label{E:tildeYwi-ddti}
|\tilde Y_{w_i}(t) - \tfrac{\partial}{\partial t_i}| \lesssim \delta^{-1}|t|
\end{equation}
on $\{|t| < c\delta\}$.  Similarly, $|\tfrac{\det D\Phi_0^I(t)}{\lambda_I(0)}-1| \lesssim \delta^{-1}|t|$, whence, from the definition \eqref{E:def Yw tilde} of $\tilde Y_{w_i}$ and \eqref{E:tildeYwi-ddti},
$$
|Y_{w_i}(t)-\tfrac{\partial}{\partial t_i}| \lesssim \delta^{-1}|t|,
$$
on $\{|t|<c\delta\}$.  Therefore
$$
|D_s e^{s_nY_{w_n}} \circ \cdots \circ e^{s_1 Y_{w_1}}(0) - \mathbb I_n| \lesssim \delta^{-1}|t|, \qquad |t| <c\delta,
$$
which, by the contraction mapping proof of the Inverse Function Theorem, implies that
$$
(s_1,\ldots,s_n) \mapsto e^{s_nY_{w_n}} \circ \cdots \circ e^{s_1 Y_{w_1}}(0)
$$
is one-to-one on $\{|t| < c\delta\}$.  Finally, by naturality of exponentiation, $\Phi_0^I$ must also be one-to-one on this region.  
\end{proof}

\begin{lemma} \label{L:doubling}
Let $x_j \in \R^n$, $j=1,2$, and assume that $I_j \in \scriptW^n$ are such that $|\lambda_{I_j}(x_j)| \geq \delta |\Lambda(x_j)|$, $j=1,2$.  Let $0 < \rho < c\delta$.  If $\bigcap_{j=1}^2\Phi_{x_j}^{I_j}(\{|t| < c\delta \rho\}) \neq \emptyset$, then $\Phi_{x_1}^{I_1}(\{|t| < c\delta\rho\}) \subseteq \Phi_{x_2}^{I_2}(\{|t| < \rho\})$. 
\end{lemma}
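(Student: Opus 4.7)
The plan is to work in pullback coordinates under $\Psi := \Phi_{x_2}^{I_2}$. Applying Lemma~\ref{L:Phi^I one to one} with basepoint $x_2$, $\Psi$ is injective on the open set $U := \{|t|<c\delta\}$, and the pullback vector fields $Y_w := \Psi^* X_w$ satisfy $|Y_w|\leq C_0\delta^{-1}$ throughout $U$ for some $C_0$ depending only on $N$. Naturality of exponentiation gives $\Psi \circ e^{sY_w} = e^{sX_w}\circ \Psi$ along any $Y_w$-trajectory that stays in $U$, and hence the analogous identity for any composition of such flows.

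Let $y$ be a point of the common intersection, writing $y = \Phi_{x_j}^{I_j}(s_j)$ with $|s_j|<c\delta\rho$; in particular $s_2 = \Psi^{-1}(y)$. Given an arbitrary $z \in \Phi_{x_1}^{I_1}(\{|t|<c\delta\rho\})$, write $I_1 = (w_1,\ldots,w_n)$ and $z = e^{t_n X_{w_n}}\circ\cdots\circ e^{t_1 X_{w_1}}(x_1)$ with $|t|<c\delta\rho$. Inverting the defining flows for $y$ produces $x_1 = e^{-s_{1,1}X_{w_1}}\circ\cdots\circ e^{-s_{1,n}X_{w_n}}(y)$, so
$$z = e^{t_n X_{w_n}}\circ\cdots\circ e^{t_1 X_{w_1}}\circ e^{-s_{1,1}X_{w_1}}\circ\cdots\circ e^{-s_{1,n}X_{w_n}}(y).$$
Provided the corresponding $Y_w$-trajectory emanating from $s_2$ stays in $U$, the naturality identity yields $\Psi(u) = z$ for
$$u := e^{t_n Y_{w_n}}\circ\cdots\circ e^{t_1 Y_{w_1}}\circ e^{-s_{1,1}Y_{w_1}}\circ\cdots\circ e^{-s_{1,n}Y_{w_n}}(s_2),$$
so it suffices to establish simultaneously that this trajectory remains in $U$ and that $|u|<\rho$.

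Each of the $2n$ legs runs for time at most $c\delta\rho$, and the velocity bound $|Y_w|\leq C_0\delta^{-1}$ forces each leg to displace the running point by at most $C_0 c\rho$. Starting from $|s_2|<c\delta\rho$, the running point has sup-norm at most $(1+2nC_0)c\rho$ at every intermediate time. Using the hypothesis $\rho<c\delta$ together with $\delta\leq 1$ (automatic from $|\lambda_I|\leq|\Lambda|$), this bound is itself at most $(1+2nC_0)c^2\delta$, which is $\leq c\delta$ as soon as $(1+2nC_0)c\leq 1$. Choosing the constant $c$ in the statement sufficiently small (depending only on $N$) therefore forces both that every intermediate point lies in $U$, so that the pullback identity is valid, and that $(1+2nC_0)c<1$, so that $|u|<\rho$. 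This places $z \in \Phi_{x_2}^{I_2}(\{|u|<\rho\})$, as required. The main obstacle is just the unified choice of the constant $c$ reconciling the stay-in-$U$ condition with the final size estimate; once that is managed, the lemma is a direct consequence of Lemma~\ref{L:Phi^I one to one}.
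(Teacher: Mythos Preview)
Your proof is correct and follows essentially the same approach as the paper's: pull back via $\Phi_{x_2}^{I_2}$, use the velocity bound $|Y_w|\lesssim \delta^{-1}$ from Lemma~\ref{L:Phi^I one to one}, and control the displacement along a composition of flows. The paper writes each point of $\Phi_{x_1}^{I_1}(\{|t|<c\delta\rho\})$ as a $3n$-fold composition of flows starting at $x_2$ (i.e.\ at $0$ in the pullback), whereas you use a $2n$-fold composition starting at $s_2=\Psi^{-1}(y)$; this is only a bookkeeping difference, and your explicit verification that the trajectory stays in $U$ is a point the paper leaves implicit.
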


\begin{proof}
By assumption, each element of $\Phi_{x_1}^{I_1}(\{|t| < c\delta\rho\})$ can be written in the form
$$
e^{t_{3n}X_{w_{3n}}} \circ \cdots \circ e^{t_1X_{w_1}}(x_2),
$$
with $w_j \in \scriptW$, $j=1,\ldots,3n$, and $|t| < 3c\delta\rho$.  Setting $Y_w:=(\Phi_{x_2}^{I_2})^*X_w$, Lemma~\ref{L:Phi^I one to one} (together with the Mean Value Theorem) implies that
$$
|e^{t_{3n}Y_{w_{3n}}} \circ \cdots \circ e^{t_1Y_{w_1}}(0)| < \rho,
$$
whenever $|t|<3c\delta \rho$, and so the containment claimed in the lemma follows by applying $\Phi^{I_2}_{x_2}$ to both sides.  
\end{proof}

We recall that $\Psi_{x_0} = \Phi_{x_0}^{(1,2,1,2,\ldots)}$, and we define $\tilde\Psi_{x_0}:= \Phi_{x_0}^{(2,1,2,1,\ldots)}$.  For $\beta \in \Z_{\geq 0}^n$ a multiindex, we define
\begin{equation} \label{E:def J}
J^\beta(x_0):= \partial^\beta \det D\Psi_{x_0}(0), \qquad \tilde J^\beta(x_0) := \partial^\beta \det D\tilde\Psi_{x_0}(0).
\end{equation}

\begin{lemma} \label{L:Lambda sim J}
\begin{equation} \label{E:Lambda sim J}
|\Lambda(0)| \sim \sum_\beta |J^\beta(0)| + |\tilde J^\beta(0)|.
\end{equation}
\end{lemma}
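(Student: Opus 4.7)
The plan is to prove the two directions of the equivalence separately; the upper bound is a direct Taylor expansion, while the lower bound is the main obstacle.

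For the direction $\sum_\beta(|J^\beta(0)|+|\tilde J^\beta(0)|)\lesssim|\Lambda(0)|$, I would apply the formula for $\det D\Phi_0^I(t)$ established in the proof of Lemma~\ref{L:det D Phi I}, specialized to $\Psi_0=\Phi_0^{(1,2,1,2,\ldots)}$, to write
$$
\det D\Psi_0(t)=\det\bigl(X_1(0),\,\phi_{t_1X_1}^*X_2(0),\,\phi_{t_1X_1}^*\phi_{t_2X_2}^*X_1(0),\,\ldots\bigr).
$$
Iterating the identity $\tfrac{d}{dt}\phi_{tX}^*Y=\phi_{tX}^*[X,Y]$, which terminates by the nilpotency hypothesis, expresses each column as a polynomial in $t$ of degree $O_N(1)$ whose coefficients are bounded universal $\R$-linear combinations of the vectors $X_w(0)$, $w\in\scriptW$. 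Multilinear expansion of the determinant then exhibits each $J^\beta(0)/\beta!$ as a bounded universal linear combination of $\lambda_I(0)$, $I\in\scriptW^n$, so $|J^\beta(0)|\lesssim|\Lambda(0)|$. The same argument applies to $\tilde J^\beta(0)$, and nilpotency again ensures only $O_N(1)$ nonzero multiindices contribute.

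For the reverse direction $|\Lambda(0)|\lesssim\sum_\beta(|J^\beta(0)|+|\tilde J^\beta(0)|)$, I would fix $I=(w_1,\ldots,w_n)\in\scriptW^n$ and aim to bound $|\lambda_I(0)|$. Applying the Leibniz rule to the display above yields
$$
J^\beta(0)=\sum_{\beta^1+\cdots+\beta^n=\beta}\binom{\beta}{\beta^1,\ldots,\beta^n}\det\bigl(V^1_{\beta^1}(0),\ldots,V^n_{\beta^n}(0)\bigr),
$$
where each $V^k_{\beta^k}$ is a specific iterated Lie bracket of $X_1,X_2$ built by alternating adjoint actions in the order $X_1,X_2,X_1,\ldots$ and ending in $X_{k\bmod 2}$; the analogous identity for $\tilde J^\beta$ involves vectors $\tilde V^k_{\beta^k}$ from the reverse alternating pattern. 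The plan is to invert this linear system: having first used antisymmetry of the determinant to permute the $(w_1,\ldots,w_n)$ so that the parities of the endings of the $X_{w_i}$ match those of the slots of $\Psi$ or $\tilde\Psi$, express each $X_{w_i}(0)$ as a bounded universal $\R$-linear combination of the $V^i_{\beta^i}(0)$'s (or $\tilde V^i_{\beta^i}(0)$'s), then substitute into $\lambda_I(0)=\det(X_{w_1}(0),\ldots,X_{w_n}(0))$ and use multilinearity to write $\lambda_I(0)$ as a bounded combination of summands of $J^\beta(0)$ or $\tilde J^\beta(0)$.

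The principal obstacle is the Lie-algebraic step of showing that, for each slot $i$, the family $\{V^i_{\beta^i}(0)\}_{\beta^i}$ (or $\{\tilde V^i_{\beta^i}(0)\}_{\beta^i}$) spans all vectors in $\{X_w(0):w\in\scriptW\}$ whose representative words end in $X_{i\bmod 2}$ (resp.\ $X_{(i+1)\bmod 2}$), with a change-of-basis matrix bounded in terms of $N$. I expect this to follow from a Dynkin-type argument: every left-normalized bracket $\text{ad}(X_{j_1})\cdots\text{ad}(X_{j_{k-1}})X_{j_k}$ can be realized as some $V^i_{\beta^i}$ by allowing appropriate exponents in the alternating ad-pattern to vanish, and it is a classical fact that left-normalized brackets span the free nilpotent Lie algebra on two generators. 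A secondary issue is that individual determinants mixing $V$'s and $\tilde V$'s do not appear in either $J^\beta$ or $\tilde J^\beta$; this must be sidestepped by committing to a single family ($\Psi$ or $\tilde\Psi$) after the parity-matching permutation, which requires a case analysis on the number of $X_{w_i}$'s of each ending type.
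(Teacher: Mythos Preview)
Your easy direction is fine and matches the paper's one-line justification.

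For the hard direction, your algebraic inversion has a gap that is not a matter of missing details. Even granting the spanning claim, multilinearity would express $\lambda_I(0)$ only as a bounded combination of the \emph{individual Leibniz summands} $\det(V^1_{\gamma^1}(0),\ldots,V^n_{\gamma^n}(0))$, each of which is itself just some $\pm\lambda_{I'}(0)$. This is \emph{not} a bounded combination of the $J^\beta(0)$'s, because each $J^\beta(0)$ is a sum of several such summands with possible cancellation: already for $n=4$ and $\beta=(1,1,1,0)$ the only two nonvanishing summands are $\pm\det(X_1,X_2,X_{12},X_{112})(0)$ with opposite signs, so $J^{(1,1,1,0)}(0)=0$ identically while its summands need not vanish. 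Bounding by summands of $J^\beta$ is thus the same as bounding by $|\Lambda(0)|$, which is circular. Separately, the slot-by-slot spanning is also problematic: the slot-$k$ family $V^k_{\beta^k}=\mathrm{ad}(X_1)^{a_1}\cdots\mathrm{ad}(X_{k-1})^{a_{k-1}}X_k$ realizes only left-normalized brackets with at most $k-1$ alternating blocks, which for small $k$ (already $k=2$, where only $\mathrm{ad}(X_1)^aX_2$ arise) is a proper subspace in the free Lie algebra; since you commit to a single family $\Psi$ or $\tilde\Psi$, a simultaneous slot assignment for all $n$ columns need not exist.

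The paper avoids algebraic inversion entirely. It reduces to exhibiting a single $|t|\lesssim 1$ with $|\det D\Psi_0(t)|+|\det D\tilde\Psi_0(t)|\gtrsim|\Lambda(0)|$; equivalence of norms on polynomials then controls the Taylor coefficients from below. To find such a $t$, one pulls back by $\Phi_0^I$ for an $I$ maximizing $|\lambda_I(0)|$, so that the $Y_w:=(\Phi_0^I)^*X_w$ are bounded on a unit ball and satisfy $|\det(Y_{w_1},\ldots,Y_{w_n})|\sim 1$ there. An induction on $k$ then produces a point where the first $k$ columns of the pulled-back $\Psi$ (or $\tilde\Psi$) have wedge product $\sim 1$: if the step from $k$ to $k+1$ failed everywhere, $Y_{k+1}$ would lie within an arbitrarily small neighborhood of the $k$-dimensional span already built, and then so would every $Y_w$ by taking brackets, contradicting $|\det(Y_{w_1},\ldots,Y_{w_n})|\sim 1$.
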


\begin{proof}
The argument that follows is due to Tao--Wright, \cite{TW}; we reproduce it to keep better track of constants to preserve the uniformity that we need.

Direct computation shows that the $J^\beta$ and $\tilde J^\beta$ are linear combinations of determinants $\lambda_I$, and it immediately follows that the left side of \eqref{E:Lambda sim J} bounds the right.  

To bound the left side, it suffices to prove that there exists $|t| \lesssim 1$ such that
$$
|\Lambda(0)| \lesssim |\det D\Psi_0(t)| + |\det D\tilde\Psi_0(t)|,
$$
which is equivalent (via naturality of exponentiation and Lemma~\ref{L:Phi^I one to one}) to finding a point $|s| \lesssim 1$ such that
$$
1 \lesssim |\det D_s e^{s_n Y_n} \circ \cdots \circ e^{s_1 Y_1}(0)| + |\det D_s e^{s_n Y_{n+1}} \circ \cdots \circ e^{s_1 Y_2}(0)|,
$$
where the vector fields $Y_i$ are those defined in Lemma~\ref{L:Phi^I one to one}, the $n$-tuple $I$ having been chosen to maximize $\lambda_I(0)$. 

By Lemma~\ref{L:Phi^I one to one}, $\|Y_w\|_{C^N(\{|t| < c\})} \lesssim 1$,  for all $w \in \scriptW$.  By induction, this implies that $|Y_w(0)| \lesssim (|Y_1(0)| + |Y_2(0)|)$.  Since $|Y_{w_i}(0)| = 1$, $|Y_1(0)| + |Y_2(0)| \sim 1$.  Thus \eqref{E:1 sim Jk} holds for $k=1$, $s=0$.  Without loss of generality, we may assume that $|Y_1(0)| \sim 1$. Now we proceed inductively, proving that for each $1 \leq k \leq n$, there exists a point $|(s_1,\ldots,s_{k+1})| < c$ such that
\begin{equation} \label{E:1 sim Jk}
1 \sim |\partial_1 e^{s_k Y_k} \circ \cdots \circ e^{s_1 Y_1}(0) \wedge \cdots \wedge \partial_k e^{s_k Y_k} \circ \cdots \circ e^{s_1 Y_1}(0)|;
\end{equation}
the case $k=1$, $s=0$ having already been proved.  Assume that \eqref{E:1 sim Jk} holds for some $k < n$, $|s| = |s^0| < c$.  Then $(s_1,\ldots,s_k) \mapsto e^{s_kY_k} \circ \cdots \circ e^{s_1 Y_1}(0)$, $|s-s^0| < c'$ parametrizes a $k$-dimensional manifold $M$, and the vector fields
$$
Z_i(e^{s_k Y_k} \circ \cdots \circ e^{s_1 Y_1}(0)) := \partial_{s_i} e^{s_k Y_k} \circ \cdots \circ e^{s_1 Y_1}(0), \qquad i=1,\ldots,k,
$$
form a basis for the tangent space of $M$ at each point.  

Let us suppose that the analogue of \eqref{E:1 sim Jk} for $k+1$ fails.  Then for all $|s-s^0| < c'$, we may decompose $Y_{k+1}$ as 
\begin{equation} \label{E:Yk+1 parallel}
Y_{k+1}(e^{s_kY_k} \circ \cdots \circ e^{s_1Y_1}(0)) = \sum_{i=1}^k a_i(s) Z_i(e^{s_kY_k} \circ \cdots \circ e^{s_1 Y_1}(0)) + Y^\perp(e^{s_kY_k} \circ \cdots \circ e^{s_1 Y_1}(0)),
\end{equation}
with $\|a_i\|_{C^N(\{|s-s^0| < 1\})} \lesssim 1$, and $|\partial_s^\alpha Y^\perp| =|Z^\alpha Y^\perp| < c''$, for $c''$ as small as we like and all $|\alpha| < N$; otherwise, by equivalence of norms, the analogue of \eqref{E:1 sim Jk} for $k+1$ would hold.  By construction, $Z_k = Y_k$, thus by induction and \eqref{E:Yk+1 parallel},
$$
|Z_1 \wedge \cdots \wedge Z_k \wedge Y_w(e^{s_k^0Y_k} \circ \cdots \circ e^{s_1^0Y_1}(0))| < c'',
$$
for any word $w$ with $\deg_i(w)>0$, where $i \equiv k+1 \mod 2$.  By \eqref{E:1 sim Jk} and boundedness of the $Y_w$,
$$
|\det(Y_{w_1},\ldots,Y_{w_n})(e^{s_k^0Y_k} \circ \cdots \circ e^{s_1^0Y_1}(0))| < c'',  
$$
for an (possibly different but) arbitrarily small constant $c''$.  Thus
$$
|\lambda_I(e^{s_k^0X_k} \circ \cdots \circ e^{s_1^0X_1}(0))| < c'' |\Lambda(0)|,
$$
which, by Lemma~\ref{L:lambda t sim lambda 0}, contradicts our assumption that $|\lambda_I(0)| \sim |\Lambda(0)|$.
\end{proof}

We say that a $k$-tuple $(w_1,\ldots,w_k) \in \scriptW$ is \textit{minimal} if $w_1,w_2 \in \{(1),(2)\}$, and for $i \geq 3$, $w_i = (j,w_l)$ for some $j=1,2$ and $l < i$.  It will be important later that a minimal $n$-tuple must contain the indices $(1)$, $(2)$, and $(1,2)$.  

\begin{lemma} \label{L:minimal I}
Under the assumption that $|\Lambda(0)| \lessapprox \sum_\beta| J^\beta(0)|$, there exists a minimal $n$-tuple $I^0 \in \scriptW^n$ such that for all $\eps > 0$, 
\begin{equation} \label{E:minimal I}
\bigl|\{x \in \Psi_0(\{|t| < 1\}) : |\lambda_{I^0}(x)| \gtrapprox |\Lambda(x)|\}\bigr| \geq (1-\eps)|\Psi_0(\{|t| < 1\})|.
\end{equation}
\end{lemma}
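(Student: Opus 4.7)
My plan is to first express each $J^\beta(0)$ as a bounded linear combination of $\lambda_J(0)$ over minimal $n$-tuples $J$, then pick a minimal $I^0$ with $|\lambda_{I^0}(0)| \gtrapprox |\Lambda(0)|$, and finally propagate this pointwise comparison from $0$ to most of $\Psi_0(\{|t|<1\})$ via polynomial sublevel-set estimates.

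Starting from the formula (already used in the proof of Lemma~\ref{L:Phi^I one to one})
\[
\det D\Psi_0(t) = \det\bigl(X_{w_1}(0),\, \phi_{t_1X_{w_1}}^*X_{w_2}(0),\, \ldots,\, \phi_{t_1X_{w_1}}^*\cdots\phi_{t_{n-1}X_{w_{n-1}}}^*X_{w_n}(0)\bigr)
\]
with $w_i = i\bmod 2$, Taylor-expanding each pullback via $\phi_{sX}^*Y = \sum_{k\geq 0}\tfrac{s^k}{k!}\mathrm{ad}_X^kY$, and reading off the coefficient of $t^\beta/\beta!$, one obtains $J^\beta(0)$ as a finite linear combination (with bounds depending only on $N$) of $\lambda_{I'}(0)$ for $n$-tuples $I'$ whose entries are iterated Lie brackets of $X_1,X_2$ of depth $\leq N$ (deeper brackets vanish by nilpotency).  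Applying the Jacobi identity and multilinearity of the determinant, each such $\lambda_{I'}(0)$ can be further rewritten as a bounded linear combination of $\lambda_J(0)$ over minimal $n$-tuples $J$, since any iterated bracket of $X_1, X_2$ of bounded depth lies in the span of brackets built up by successively prepending $X_1$ or $X_2$ to a previously-constructed bracket, matching the pattern defining minimality.  Using the hypothesis together with the finiteness (both bounded in terms of $N$) of both $\{\beta\}$ and the set of minimal $n$-tuples, pigeonhole produces a minimal $I^0$ with $|\lambda_{I^0}(0)| \gtrapprox |\Lambda(0)|$.

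For the propagation, consider the pullbacks $p_I(t) := \lambda_I(\Psi_0(t))$, which are polynomials in $t$ of degree depending only on $N$ and satisfy $p_I(0) = \lambda_I(0)$, so $|p_{I^0}(0)| \gtrapprox \max_I|p_I(0)|$.  I would invoke a polynomial sublevel-set comparison estimate from Section~\ref{S:Polynomials}, of the form: for polynomials $P,Q$ on $\R^n$ of bounded degree with $|P(0)| \geq A|Q(0)|$ in a suitably normalized sense,
\[
\bigl|\bigl\{t\in\{|t|<1\} : |P(t)| < \delta|Q(t)|\bigr\}\bigr| \lesssim_N (\delta/A)^{c_N}.
\]
Applied with $P = p_{I^0}$ and $Q = p_I$ for each of the finitely many relevant $I$, and with $\delta\approx\eps^{C_N}$ chosen suitably small, summing over $I$ yields a subset of $\{|t|<1\}$ of measure $\geq 1-\eps$ on which $|p_{I^0}(t)| \gtrapprox \max_I|p_I(t)| = |\Lambda(\Psi_0(t))|$.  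Pushing this forward to $x$-coordinates using the essentially finite-to-one behavior of $\Psi_0$ (as in the proof of Proposition~\ref{P:RWT}) then yields \eqref{E:minimal I}.

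The main obstacle is the polynomial sublevel-set comparison in the propagation step.  A naive comparison of $|P|$ and $|Q|$ based only on their values at a single point is false in general (one can arrange $|P(0)|\gtrapprox|Q(0)|$ while $Q$ becomes much larger than $P$ on most of $\{|t|<1\}$), so the correct estimate must either separately control the $L^\infty$ norms of the two polynomials via a Brudnyi--Ganzburg-type splitting or exploit the determinantal structure of the $\lambda_I$ via Cramer-type identities such as~\eqref{E:Cramer Xw} to express each ratio $p_I/p_{I^0}$ as a quotient of polynomials of controlled size, reducing the comparison to genuine sublevel-set estimates on single polynomials.
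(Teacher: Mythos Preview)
Your first step contains a genuine error: it is \emph{not} true that $J^\beta(0)$ is a bounded linear combination of $\lambda_J(0)$ over \emph{minimal} $n$-tuples $J$, and consequently there need not exist any minimal $I^0$ with $|\lambda_{I^0}(0)| \gtrapprox |\Lambda(0)|$.  You have conflated a property of individual words with the definition of a minimal $n$-tuple, which constrains the whole tuple: the $i$-th entry must be obtained by prepending a letter to some \emph{earlier} entry of the same tuple, forcing $|w_i|\leq i-1$ for $i\geq 2$.  The Jacobi identity lets you express arbitrary iterated brackets as linear combinations of right-nested ones (i.e.\ $X_w$'s), but it cannot reduce the length of a word, so it cannot turn an arbitrary $n$-tuple of words into a minimal one.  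A concrete counterexample in $\R^3$: take $X_1=\partial_1$, $X_2=\partial_2+x_1^2\partial_3$.  Then $[X_1,X_2]=2x_1\partial_3$ vanishes at $0$, while $[X_1,[X_1,X_2]]=2\partial_3$ does not.  The only minimal $3$-tuples (up to sign and reordering) have third entry of length $2$, hence $\lambda_{I^0}(0)=0$ for every minimal $I^0$; yet $|\Lambda(0)|=2$ and $\sum_\beta|J^\beta(0)|\geq 2$, so the hypothesis of the lemma holds.

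What the paper does instead is exactly what circumvents this: rather than seeking $|\lambda_{I^0}(0)|\gtrapprox|\Lambda(0)|$, it finds a minimal $I^0$ with $\|\lambda_{I^0}\circ\Phi_0^I\|_{C^0(\{|t|<C\})}\sim|\Lambda(0)|$, an $L^\infty$ comparison over the whole ball.  This is achieved by an inductive Frobenius-type argument in the pullback coordinates $Y_w:=(\Phi_0^I)^*X_w$: at each stage one shows that if no extension of the current minimal $k$-tuple produces a large $(k+1)$-fold wedge in $C^0$, then by taking brackets \emph{all} $Y_w$ are nearly tangent to a $k$-plane throughout the ball, contradicting $|\det(Y_{w_1},\ldots,Y_{w_n})|\sim 1$.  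The resulting $C^0$ bound is precisely the input needed for the single-polynomial sublevel estimate (Lemma~\ref{lemmette}), and the comparison-of-two-polynomials difficulty you identified in your propagation step never arises, because $|\Lambda(x)|\sim|\Lambda(0)|$ throughout the ball by Lemma~\ref{L:lambda t sim lambda 0}.
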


We recall that the implicit constants in the conclusion can depend on the implicit constants in the hypothesis.

The proof of Lemma~\ref{L:minimal I} will utilize the following simple fact.

\begin{lemma} \label{lemmette}
Let $P$ be a polynomial of degree at most $N$ on $\R^n$.  Then for each $\eps > 0$, 
$$
|\{t \in \mathbb I^n:|P(t)| < \eps \|P\|_{C^0(\mathbb I^n)}\}| \lesssim \eps^{1/C}, \qquad \mathbb I^n:=[-1,1]^n.
$$
\end{lemma}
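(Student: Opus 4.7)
I plan to prove this by induction on the dimension $n$, using the classical one-dimensional Remez (polynomial sublevel set) inequality as the base case and a Fubini-type slicing argument for the inductive step.

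For $n=1$, after normalizing so that $\|P\|_{C^0(\mathbb I)} = 1$, the estimate $|\{t \in [-1,1] : |P(t)| < \eps\}| \lesssim \eps^{1/N}$ is classical; it follows by factoring $P$ over $\C$ and estimating contributions from linear factors, or directly from Remez's inequality.

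For the inductive step, I would normalize $\|P\|_{C^0(\mathbb I^n)} = 1$ and expand $P(t',t_n) = \sum_{k=0}^N p_k(t')\, t_n^k$, where each $p_k$ is a polynomial of degree at most $N$ on $\R^{n-1}$. By equivalence of norms on the finite-dimensional space of polynomials of degree $\leq N$ (both in the $t_n$ variable and as a $P$-dependent norm on the coefficient vector), one has $\|P(t',\cdot)\|_{C^0(\mathbb I)} \sim \max_k |p_k(t')|$ for each $t' \in \mathbb I^{n-1}$, and there exists an index $k_0$ with $\|p_{k_0}\|_{C^0(\mathbb I^{n-1})} \gtrsim 1$. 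I would then split $E := \{t \in \mathbb I^n : |P(t)| < \eps\}$ into $A := \{t \in E : \|P(t',\cdot)\|_{C^0(\mathbb I)} < \eps^{1/2}\}$ and $B := E \setminus A$.

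The set $A$ projects into $\{t' \in \mathbb I^{n-1} : |p_{k_0}(t')| \lesssim \eps^{1/2}\}$, which by the inductive hypothesis has measure $\lesssim \eps^{1/(2C_{n-1})}$; hence $|A| \lesssim \eps^{1/(2C_{n-1})}$. On $B$, each admissible slice satisfies $\|P(t',\cdot)\|_{C^0(\mathbb I)} \geq \eps^{1/2}$, so the base case applied to the one-variable polynomial $t_n \mapsto P(t',t_n)$ yields a $t_n$-section of measure $\lesssim (\eps/\eps^{1/2})^{1/N} = \eps^{1/(2N)}$, and Fubini gives $|B| \lesssim \eps^{1/(2N)}$. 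Setting $C_n := 2\max(C_{n-1}, N)$ closes the induction. No serious obstacle is anticipated beyond careful bookkeeping; the only mildly subtle point is the equivalence-of-norms step used to locate a coefficient $p_{k_0}$ of uniformly controlled sup norm.
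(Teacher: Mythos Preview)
Your proposal is correct and follows essentially the same approach as the paper: induction on $n$, with the one-dimensional base case handled by factoring, and the inductive step carried out by splitting according to whether the slice norm $\|P(t',\cdot)\|_{C^0(\mathbb I)}$ is small or large. The only difference is cosmetic: the paper packages the slice norm as the single polynomial $Q(t') := \int_{-1}^1 |P(t',t_{n+1})|^2\,dt_{n+1}$ (of degree at most $2N$) and applies the inductive hypothesis directly to $Q$, whereas you extract a dominant coefficient $p_{k_0}$ via equivalence of norms; both routes work equally well.
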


\begin{proof}[Proof of Lemma~\ref{lemmette}]
Assume that $n=1$, $N \geq 1$, and $P(t) = \prod_{i=1}^N(t-w_i)$.  We may assume $|w_i|\leq 2$ for all $i$, as for other $i$, $|t-w_i| \sim |w_i|$ on $\mathbb I$.  Thus $A:=\|P\|_{C^0(\mathbb I)} \sim 1$.  The set $\{-\eps^2 A^2 < P\overline P < \eps^2 A^2\}$ is a union of at most $2N$ bad intervals $I$ on which $|P|<\eps A$.  For any interval $I \subseteq \mathbb I$, $\|P\|_{C^0(I)} \geq |P^{(N)}||I|^N \gtrsim |I|^N$, so a bad interval has length $|I| \lesssim \eps^{1/N}$.  

Now assume that the lemma has been proved for dimensions at most $n$.  Given $P$, a polynomial of degree at most $N$ on $\R^{n+1}$, set 
$$
Q(t') :=\int_{-1}^1 |P(t',t_{n+1})|^2\, dt_{n+1}, \qquad t' \in \R^n,
$$
a polynomial of degree at most $2N$ on $\R^n$.  If $|P(t)|< \eps\|P\|_{C^0(\mathbb I^{n+1})}$, then $|Q(t')|<\eps\|P\|_{C^0(\mathbb I^{n+1})}^2$, or $|P(t',t_n)|^2 < \eps|Q(t')|$.  However, by equivalence of norms, $|Q(t')| \sim \|P(t',\cdot)\|_{C^0(\mathbb I)}^2$, so $\|Q\|_{C^0(\mathbb I^n)} \sim \|P\|_{C^0(\mathbb I^{n+1})}^2$, and the conclusion follows from the $1$ and $n$-dimensional cases.  
\end{proof}

\begin{proof}[Proof of Lemma \ref{L:minimal I}]
Fix an $n$-tuple $I = (w_1,\ldots,w_n)  \in \scriptW^n$ such that $|\tilde \lambda_I(0)| \gtrsim |\tilde \Lambda(0)|$, where $\tilde\lambda_I$ and $\tilde \Lambda$ are defined using vector fields $\tilde X_w$ generated by the $\tilde X_i := KX_i$, $i=1,2$.  (The constant $K$ allows us to apply the technical lemmas above on large balls.) By Lemma~\ref{L:Phi^I one to one}  (see also the proof of Lemma~\ref{L:doubling}), for $K$ sufficiently large,
\begin{equation} \label{E:Psi_0 in Phi_0^I}
 \Psi_0(\{|t|<1\}) \subseteq \Phi_0^I(\{|t|<C\}).
\end{equation}
With the vector fields $Y_w$ defined as in Lemma~\ref{L:Phi^I one to one} (using the $X_i$, not the $KX_i$), Lemmas~\ref{L:lambda t sim lambda 0} and~\ref{L:det D Phi I} imply 
\begin{equation} \label{E:det YI sim 1}
|\det(Y_{w_1},\ldots,Y_{w_n})| \sim 1, \qtq{throughout} \{|t|<C\},
\end{equation}
provided $K$ is sufficiently large.

We will prove that there exists a minimal $n$-tuple $I^0 = (w_1^0,\ldots,w_n^0)$ such that 
\begin{equation} \label{E:det Ys sim 1}
\|\det(Y_{w_1^0},\ldots,Y_{w_n^0})\|_{C^0(\{|t|<C\})} \sim 1.
\end{equation}
Before proving \eqref{E:det Ys sim 1}, we show that it implies inequality \eqref{E:minimal I}.  

By Lemma~\ref{L:lambda t sim lambda 0} $|\Lambda(x)| \sim |\Lambda(0)|$ for all $x \in \Phi_0^I(\{|t|<C\})$.  Thus, unwinding the definition (from Lemma~\ref{L:Phi^I one to one}) of the $Y_w$, \eqref{E:det Ys sim 1} implies that
\begin{equation} \label{E:unwind det Y}
\|\lambda_{I^0} \circ \Phi_0^I\|_{C^0\{|t|<C\}} \sim  |\Lambda(0)| \sim \|\Lambda \circ \Psi_0\|_{C^0\{|t|<1\}}. 
\end{equation}
By \eqref{E:unwind det Y} and \eqref{E:Psi_0 in Phi_0^I},
\begin{align*}
&\{x \in \Psi_0(\{|t|<1\}) : |\lambda_{I^0}(x)| < \delta |\Lambda(x)|\}
\subseteq \{x \in \Phi_0^I(\{|t|<C\}) : |\lambda_{I^0}(x)| \lesssim \delta |\Lambda(0)|\},
\end{align*}
for $\delta > 0$.  
By the change of variables formula and \eqref{E:unwind det Y}, Lemmas~\ref{L:det D Phi I} and~\ref{lemmette}, our hypothesis and the equivalence of norms, and finally the change of variables formula and Lemma~\ref{L:finite to one}, 
\begin{align*}
&|\{x \in \Phi_0^I(\{|t|<C\}) : |\lambda_{I^0}(x)| \lesssim \delta |\Lambda(0)|\}|\\
&\qquad \leq \|\det D\Phi_0^I\|_{C^0(\{|t|<C\})}|\{|t|<C : |\lambda_{I^0} \circ \Phi_0^I(t)| \lesssim \delta \|\lambda_{I^0} \circ \Phi_0^I\|_{C^0(\{|t|<C\})}\}|\\
&\qquad \lesssim \delta^{1/C} |\Lambda(0)| \lessapprox \delta^{1/C} \|\det D\Psi_0\|_{L^1(\{|t|<1\})} \lesssim \delta^{1/C}|\Psi_0(\{|t|<1\})|.
\end{align*}
Setting $\delta = c'\eps^{C'}$ with $c'$ and $C'$ sufficiently large depending on $C$ and the implicit constant in the hypothesis of the lemma yields \eqref{E:minimal I}.  

It remains to prove \eqref{E:det Ys sim 1}.  We will prove inductively that for each $1 \leq k \leq n$, there exists a minimal $k$-tuple $(w_1^0,\ldots,w_k^0)$ such that $\|Y_{w_1^0} \wedge \cdots \wedge Y_{w_k^0}\|_{C^0(\{|t|<1\})} \sim 1$.  Boundedness of the $Y$'s and our hypothesis imply that $|Y_1(0)| \sim 1$.  For the induction step, it will be useful to have two constants, $c,\delta_N>0$, depending only on $N$.  We will choose $c$ sufficiently small that the deductions below are valid, and then choose $\delta_N$ sufficiently small (depending on $c$ and various implicit constants) to derive a contradiction if the induction step fails.  

Suppose that for some $k < n$, we have found a minimal $k$-tuple $(w_1^0,\ldots,w_k^0)$, with $w_1^0 = (1)$, and some $|t^0| <1$ such that 
\begin{equation} \label{E:Yw1 through Ywk}
|Y_{w_1^0}(t^0) \wedge \cdots \wedge Y_{w_k^0}(t^0)| \sim 1.
\end{equation}
Set $\scriptW_k^0:=\{w_1^0,\ldots,w_k^0\}$.    

By \eqref{E:det YI sim 1}, we may extend $Y_{w_1^0},\ldots,Y_{w_k^0}$ to a frame on $\{|t-t^0|<c\}$ by adding vector fields $Y_{w_i}$.  Thus (after possibly reordering the $w_i$) failure of the inductive step implies that for each 
$$
w \in \scriptW_k^1 := 
\begin{cases}
\{(1),(2)\}, \qtq{if} k=1,\\
\scriptW_k^0 \cup \{(i,w):i \in \{1,2\}, \, w \in\scriptW_k^0\}, \: k > 1,
\end{cases}
$$
$|Y_{w_1^0} \wedge \cdots \wedge Y_{w_k^0} \wedge Y_w(t)| < \delta_N$ for all $t$ such that $|t-t^0|<c$.  Therefore we can write
\begin{equation} \label{E:Yw flat}
Y_w(t) = \sum_{i=1}^k a_w^i(t)Y_{w_i^0}(t) + \sum_{j=k+1}^n a_w^j(t)Y_{w_j}(t), \quad w \in \scriptW_k^1,
\end{equation}
where
\begin{equation} \label{E:a_w^i is really small}
\|a_w^i\|_{C^N(\{|t-t^0| < c\})} \lesssim 
\begin{cases}
1,\qquad &1 \leq i \leq k\\
\delta_N,\qquad & k+1 \leq j \leq n.
\end{cases}
\end{equation}
 Taking the Lie bracket of $Y_i$, $i=1,2$ (or just $Y_1$, when $k=1$), with some $Y_w$, $w \in \scriptW_k^1 \setminus \scriptW_k^0$, 
$$
[Y_i,Y_w] = \sum_{i=1}^k Y_i(a_i)Y_{w_i^0} + \sum_{i=1}^k a_i^0[Y_i,Y_{w_i^0}] + \sum_{j=k+1}^n Y_i(a_j)Y_{w_j} + \sum_{j=k+1}^n a_j[Y_i,Y_{w_j}],  
$$
and we see that (\ref{E:Yw flat}-\ref{E:a_w^i is really small}) hold for 
$$
w \in \scriptW_k^2 := \scriptW_k^1 \cup \{(i,w):i \in \{1,2\}, \, w \in\scriptW_k^1\}.
$$
By induction, (\ref{E:Yw flat}-\ref{E:a_w^i is really small}) are valid for each $Y_w$, $w \in \scriptW$, so 
$$
|\det(Y_{w_1}(t^0),\ldots,Y_{w_n}(t^0))| < \delta_N
$$ 
(because the $Y_{w_i}$ must all lie near the span of $Y_{w_1^0},\ldots,Y_{w_k^0}$), a contradiction to \eqref{E:det YI sim 1}.  
\end{proof}

For $I = (w_1,\ldots,w_n) \in \scriptW^n$ and $\sigma \in S_n$ a permutation, we set $I_\sigma := (w_{\sigma(1)},\ldots,w_{\sigma(n)})$.  

\begin{lemma} \label{E:cover by minimal Phi}
Assume that $|\Lambda(0)| \lessapprox \sum_\beta| J^\beta(0)|$. There exist $c',C'$ such that for all sufficiently small $c$ and large $C$, the following holds.   There exists a minimal $n$-tuple $I \in \scriptW^n$, which is allowed to depend on the $X_j$, such that for all $\eps > 0$, there exists a collection $\scriptA \subseteq \Psi_0(\{|t| < 1\})$, of cardinality $\#\scriptA \lessapprox_{c,C} 1$, such that\\
(i)  $|\Psi_0(\{|t| < 1\}) \cap \bigcup_{x \in \scriptA} \bigcap_{\sigma \in S_n} \Phi_x^{I_\sigma}(\{|t| < c\eps^C\})| \geq (1-\eps)|\Psi_0(\{|t| < 1\})|$,\\
and, moreover, for all $x \in \scriptA$, $\sigma,\sigma' \in S_n$, and $y \in \Phi_x^{I_\sigma}(\{|t| < c\eps^C\})$,\\
(ii) $\Phi_y^{I_{\sigma'}}$ is one-to-one on $\{|t| < c' \eps^{C'}\}$, with Jacobian determinant
$$
|\det D\Phi_y^{I_{\sigma'}}(t)| \sim |\lambda_I(y)| \sim |\lambda_I(x)| \gtrapprox |\Lambda(x)| \sim |\Lambda(\Phi_y^{I_{\sigma'}}(t))|,
$$
(iii) $\Phi_x^{I_\sigma}(\{|t| < c\eps^C\}) \subseteq \Phi_y^{I_{\sigma'}}(\{|t| < c' \eps^{C'}\})$.
\end{lemma}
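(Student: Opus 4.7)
My plan is to combine Lemma~\ref{L:minimal I} with a Vitali-style covering powered by the doubling property from Lemma~\ref{L:doubling}. First I would invoke Lemma~\ref{L:minimal I} with parameter $\eps/2$ in place of $\eps$ to produce a minimal $n$-tuple $I$ (depending only on the $X_j$) and fixed constants $c_0, C_0$ such that, writing $\delta := c_0 \eps^{C_0}$, the set
$$G := \{x \in \Psi_0(\{|t|<1\}) : |\lambda_I(x)| \geq \delta |\Lambda(x)|\}$$
satisfies $|G| \geq (1-\eps/2)|\Psi_0(\{|t|<1\})|$. Since $|\lambda_{I_\sigma}| = |\lambda_I|$ for every $\sigma \in S_n$, this is exactly the hypothesis under which Lemmas~\ref{L:det D Phi I}, \ref{L:Phi^I one to one}, and~\ref{L:doubling} apply to each $\Phi_x^{I_\sigma}$ with this $\delta$, for all $x \in G$. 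I would choose $C$ in the statement large enough (depending on $C_0$) that $c\eps^C \ll c\delta$, so those lemmas are available on balls of radius $c\eps^C$.

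The next step is a greedy Vitali selection at a still smaller scale. Set $r_s := c_* \eps^{C_*}$ with $c_*$ small and $C_* \geq C + C_0$ large, chosen so that $r_s$ is below the threshold $c\delta \cdot c\eps^C$ at which Lemma~\ref{L:doubling} turns intersection of $r_s$-balls into containment in $c\eps^C$-balls. Let $\scriptA \subseteq G$ be a maximal subset such that the balls $\Phi_x^I(\{|t|<r_s\})$, $x \in \scriptA$, are pairwise disjoint. By Lemmas~\ref{L:det D Phi I} and~\ref{L:Phi^I one to one}, each such ball has volume $\sim r_s^n |\lambda_I(x)| \gtrsim r_s^n \delta |\Lambda(0)|$, using that $|\Lambda|$ is comparable to $|\Lambda(0)|$ throughout $\Psi_0(\{|t|<1\})$ (a consequence of \eqref{E:unwind det Y} and the $K$-rescaling trick from the proof of Lemma~\ref{L:minimal I}). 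Since these balls all lie in a set of volume $\lesssim |\Lambda(0)|$, the count satisfies $\#\scriptA \lesssim (r_s^n \delta)^{-1} \lessapprox 1$.

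To verify (i), take any $y \in G$; by maximality, some $x_i \in \scriptA$ has $\Phi_y^I(\{|t|<r_s\}) \cap \Phi_{x_i}^I(\{|t|<r_s\}) \neq \emptyset$. For each $\sigma \in S_n$, Lemma~\ref{L:doubling} applied with $(x_1,I_1)=(y,I)$, $(x_2,I_2)=(x_i,I_\sigma)$, and $\rho = c\eps^C$ gives $y \in \Phi_y^I(\{|t|<r_s\}) \subseteq \Phi_{x_i}^{I_\sigma}(\{|t|<c\eps^C\})$. Intersecting over $\sigma$ yields $y \in \bigcap_\sigma \Phi_{x_i}^{I_\sigma}(\{|t|<c\eps^C\})$, so $G$ is covered and (i) follows. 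For (ii), fix $x \in \scriptA$, $\sigma \in S_n$, and $y \in \Phi_x^{I_\sigma}(\{|t|<c\eps^C\})$; iteratively applying Lemma~\ref{L:lambda t sim lambda 0} along the integral-curve legs composing $\Phi_x^{I_\sigma}$ (admissible because $c\eps^C \ll c\delta$) gives $|\lambda_I(y)| \sim |\lambda_I(x)|$ and $|\Lambda(y)| \sim |\Lambda(x)|$, so Lemmas~\ref{L:det D Phi I} and~\ref{L:Phi^I one to one} deliver the Jacobian and injectivity claims for $\Phi_y^{I_{\sigma'}}$ on $\{|t|<c'\eps^{C'}\}$ whenever $c'\eps^{C'} < c\delta$. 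For (iii), both $\Phi_x^{I_\sigma}(\{|t|<c\eps^C\})$ and $\Phi_y^{I_{\sigma'}}(\{|t|<c\eps^C\})$ contain $y$, so Lemma~\ref{L:doubling} again (with the same scale convention) embeds the former into $\Phi_y^{I_{\sigma'}}(\{|t|<c'\eps^{C'}\})$.

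The principal obstacle is not conceptual but one of bookkeeping: carefully nesting the scales $\delta$, $r_s$, $c\eps^C$, and $c'\eps^{C'}$ so that each invocation of Lemmas~\ref{L:lambda t sim lambda 0}--\ref{L:doubling} has its hypothesis satisfied and the conclusions chain together. A secondary subtlety is that Lemma~\ref{L:lambda t sim lambda 0} only compares $\Lambda$ on short pieces of integral curves, so extending comparability to all of $\Psi_0(\{|t|<1\})$ (which is used in the volume count bounding $\#\scriptA$) relies on the $K$-rescaling trick already used in the proof of Lemma~\ref{L:minimal I}.
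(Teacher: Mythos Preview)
Your proposal is correct and follows essentially the same route as the paper: invoke Lemma~\ref{L:minimal I} to produce the minimal $I$ and the good set $G$, then run a Vitali covering using the doubling/engulfing from Lemma~\ref{L:doubling}, bound $\#\scriptA$ by a volume count, and read off (ii)--(iii) from Lemmas~\ref{L:lambda t sim lambda 0}--\ref{L:doubling}. The paper carries out the Vitali argument with the intersection balls $B_x(\rho):=\bigcap_\sigma \Phi_x^{I_\sigma}(\{|t|<\rho\})$ rather than the single-$\sigma$ balls you use, but this is cosmetic since Lemma~\ref{L:doubling} sandwiches one between two scales of the other; your identification of the scale-nesting as the only real work is exactly right.
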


\begin{proof}
By Lemma~\ref{L:minimal I}, there exists $\delta \gtrapprox 1$ and a minimal $I \in \scriptW^n$ such that if
$$
G:= \{x \in \Psi_0(\{|t| < 1\}) : |\lambda_I(x)| \geq \delta |\Lambda(x)|\},
$$
then $|G| \geq (1-\eps)|\Psi_0(\{|t| < 1\})|$.  We may assume: that $c'$ is sufficiently small, that $\eps^{C'} < c'\delta$, and that $c \eps^C < c'^3\delta^2$.  Conclusions (ii) and (iii) of the lemma for any choice of such balls are direct applications of Lemmas~\ref{L:det D Phi I}, \ref{L:Phi^I one to one}, and \ref{L:doubling}.  

It remains to cover $G$ by a controllable number of balls of the form
$$
B_x(\rho) := \bigcap_{\sigma \in S_n} \Phi_x^{I_{\sigma}}(\{|t| < \rho\}), \qquad x \in G,
$$
in the special case $\rho = c \eps^C$.  
We will use the generalized version of the Vitali Covering Lemma in \cite{BigStein}, for which we need to verify the doubling and engulfing properties.  By Lemma~\ref{L:doubling}, for all $0 < \rho < c'\delta$, $\sigma \in S_n$, and $x \in G$,
\begin{equation} \label{E:size B}
\Phi_x^{I_\sigma}(\{|t| < \rho\}) \supseteq B_x(\rho) \supseteq \Phi_x^{I_\sigma}(\{|t| < c'\delta\rho\}).
\end{equation}
Hence by Lemma~\ref{L:det D Phi I}, $|B_x(\rho)| \approx |\lambda_I(x)|\rho^n \approx |\Lambda(0)|\rho^n$.  Therefore the balls are indeed doubling.  The engulfing property also follows from Lemma~\ref{L:doubling}, since $B_{x_1}(c'\delta\rho) \cap B_{x_2}(c'\delta\rho) \neq \emptyset$ implies that $B_{x_1}(c'\delta\rho) \subseteq B_{x_2}(\rho)$.  

If we choose $\scriptA \subseteq G$ so that $\{B_x(c^2\eps^C)\}_{x \in \scriptA}$ is a maximal disjoint set, then $\bigcup_{x \in \scriptA}B_x(c^2\eps^C) \subseteq \Psi_0(\{|t| < 2\})$ and $G \subseteq \bigcup_{x \in \scriptA}B_x(c\eps^C)$.  Applying \eqref{E:size B} and Lemma~\ref{L:Lambda sim J},
$$
\#\scriptA |\Lambda(0)|(c^2\eps^C)^n \lessapprox |\Psi_0(\{|t| < 2\})| \lesssim |\Lambda(0)|.
$$
\end{proof}

\section{Connection with the work of Tao--Wright} \label{S:TW}

In this section, we translate Theorem~\ref{T:main} into results more closely connected with the main theorem of \cite{TW}.  We are also able to prove variants of Theorem~\ref{T:main} with weights that are, in principle, easier to compute.  

The results of \cite{TW} are stated in terms of the Newton polytope associated to the vector fields $X_1,X_2$.  To define it (and two other, closely related, polytopes), we need some additional notation.  The degree of a word $w \in \scriptW$ is defined to be the element $\deg w \in \Z_{\geq 0}^2$ whose $i$-th entry is the number of $i$'s in $w$.  The degree of a $k$-tuple $I \in \scriptW^k$ is the sum of the degrees of the entries of $I$.  We denote by $\rm{ch}$ the operation of taking  the convex hull of a set.  For $E \subseteq \R^n$, we define
\begin{equation} \label{E:big P}
\scriptP_E^\cup := \rm{ch} \bigcup_{I \in \scriptW^n:\lambda_I \not\equiv 0 \, \text{on}\, E} \deg I + [0,\infty)^2.
\end{equation}
Given $x_0$, we may define
\begin{equation} \label{E:med P}
\scriptP_{x_0} := \scriptP_{\{x_0\}}^\cup.
\end{equation}
Finally, given a set $E \subseteq \R^n$, we may define
\begin{equation} \label{E:small P}
\scriptP_E^\cap := \bigcap_{x_0 \in E} \scriptP_{x_0}.
\end{equation}

In \cite{TW}, Tao--Wright considered bounds of the form 
\begin{equation} \label{E:TW cutoff}
|\int f_1 \circ \pi_1(x) f_2 \circ \pi_2(x)\, a(x) \, dx| \leq C_{a,\pi_1,\pi_2} \|f_1\|_{p_1}\|f_2\|_{p_2},
\end{equation}
with $a$ a continuous function with compact support, $\pi_1,\pi_2$ smooth submersions (no polynomial nor nilpotency hypothesis), and $p_1,p_2 \in [1,\infty]$.  Such bounds are easily seen to be true if $p_1^{-1}+p_2^{-1} \leq 1$.  In the case $p_1^{-1}+p_2^{-1} > 1$, we define 
\begin{equation} \label{E:b(p)}
b(p) = (b_1,b_2):=\bigl(\tfrac{p_1^{-1}}{p_1^{-1}+p_2^{-1} - 1}, \tfrac{p_1^{-1}}{p_1^{-1}+p_2^{-1} - 1} \bigr).
\end{equation}
Tao--Wright proved that \eqref{E:TW cutoff} fails if $(b_1,b_2) \not\in \scriptP_{\{a \neq 0\}}^\cap$ and holds if $(b_1,b_2) \in \rm{int} \, \scriptP_{\rm{supp}\,a}^\cap$.  

These results leave open two natural questions:  what is the role played by the behavior of $a$ near its zero set, and what happens on the boundaries of these polytopes. This article answers these questions in some special cases.  To understand how, we first recall the connection between the polytopes defined above and the weights $\rho_\beta$.  

Given a multiindex $\beta \in \Z_{\geq 0}^n$, we define 
$$
\tilde b(\beta):= \bigl(\sum_{j \,\text{even}} 1+\beta_j, \sum_{j \,\text{odd}} 1+\beta_j\bigr),
$$
and recall the definition \eqref{E:def b} of $b(\beta)$ and \eqref{E:def J} of $J_\beta$ and $\tilde J_\beta$.  Proposition~2.3 of \cite{BSapde} implies that
\begin{equation} \label{E:scriptP cap two ways}
\scriptP_{x_0} = \rm{ch}\left[ \bigl(\bigcup_{\beta:J_\beta(x_0) \neq 0} b(\beta)+[0,\infty)^2\bigr) \cup \bigl(\bigcup_{\beta:\tilde J_\beta(x_0) \neq 0} \tilde b(\beta) + [0,\infty)^2\bigr)\right],  
\end{equation}
and further that for $b$ an extreme point of $\scriptP_{x_0}$,
\begin{equation} \label{E:lambda sim J extreme}
\sum_{I:\deg I = b} |\lambda_I(x_0)|  \sim_b \sum_{\beta:b(\beta) = b} |J_\beta(x_0)| + \sum_{\beta:\tilde b(\beta) = \tilde b} |\tilde J_\beta(x_0)|.  
\end{equation}
The comparison \eqref{E:lambda sim J extreme} is thus valid everywhere on $E$ for $b$ an extreme point of $\scriptP_E^\cup$, since both sides of \eqref{E:lambda sim J extreme} are zero when $b$ is not an extreme point of $\scriptP_{x_0}$.  Combining these results with Theorem~\ref{T:main} and Proposition~2.2 of \cite{BSapde}, we obtain the following sharp result.  

\begin{theorem}  \label{T:sharp extreme}
Assume that hypotheses (i) and (ii) of Theorem~\ref{T:main} are in effect, and that $b:=b(p)$ is an extreme point of $\scriptP_{\R^n}^\cup$.  Then
\begin{equation} \label{E:sharp extreme}
\sup_{f_1,f_2: \|f_1\|_{p_1} = \|f_2\|_{p_2} = 1} \bigl| \int_{\R^n} \prod_{j=1}^2 f_j \circ \pi_j(x)\, a(x)\, dx| \sim \bigl\|\tfrac a{w_b}\bigr\|_{C^0(\{x:w_b(x) \neq 0\})}, 
\end{equation}
where $w_b$ is the weight defined by 
\begin{equation} \label{E:def wb}
w_b := \sum_{I:\deg I = b} |\lambda_I|^{\frac1{b_1+b_2-1}}.
\end{equation}
\end{theorem}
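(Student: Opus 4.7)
I would prove the equivalence $\sim$ in two directions. The upper bound on the operator norm follows from Theorem~\ref{T:main}, while the lower bound is essentially the content of Proposition~2.2 of \cite{BSapde}.

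For the upper bound, the extremality of $b = b(p)$ in $\scriptP_{\R^n}^\cup$ allows me to invoke the pointwise comparison \eqref{E:lambda sim J extreme} at every $x \in \R^n$. Combined with the elementary equivalence $(\sum_i c_i)^{1/q} \sim \sum_i c_i^{1/q}$ for finitely many non-negative reals, this gives
\[
w_b(x) \sim \sum_{\beta:\,b(\beta)=b} \rho_\beta(x) + \sum_{\beta:\,\tilde b(\beta)=b} \tilde\rho_\beta(x),
\]
where $\tilde\rho_\beta(x) := |\tilde J_\beta(x)|^{1/(b_1+b_2-1)}$ is the analog of $\rho_\beta$ built from $\tilde\Psi_x$, and each sum has cardinality $\lesssim_N 1$. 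Bounding $|a| \leq \|a/w_b\|_{C^0(\{w_b \neq 0\})}\, w_b$ pointwise on $\{w_b \neq 0\}$ (with the complement handled harmlessly by continuity of $a$), I reduce to showing each integral $\int |f_1 \circ \pi_1||f_2 \circ \pi_2|\rho_\beta\,dx$ and its tilded analog is $\lesssim \|f_1\|_{p_1}\|f_2\|_{p_2}$. Theorem~\ref{T:main} yields the first directly. For the second, I would exploit the symmetry of hypotheses (i)--(ii) under interchanging $(X_1, X_2)$---which swaps $\Psi$ with $\tilde\Psi$, $\pi_1$ with $\pi_2$, and $p_1$ with $p_2$---to reapply Theorem~\ref{T:main} in the swapped configuration; the condition $\tilde b(\beta) = b$ ensures the swapped exponents coincide with $(p_1, p_2)$.

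For the lower bound, Proposition~2.2 of \cite{BSapde} produces, for any $x_0 \in \{w_b \neq 0\}$, test functions $f_1, f_2$---essentially normalized indicators of $\pi_j$-images of flow-adapted boxes centered at $x_0$---for which the bilinear form is $\gtrsim |a(x_0)| w_b(x_0)^{-1} \|f_1\|_{p_1} \|f_2\|_{p_2}$; taking the supremum over such $x_0$ yields the claim. The main conceptual issue, and the sole place where the extreme-point hypothesis enters essentially, is the comparison \eqref{E:lambda sim J extreme}: at non-extreme $b$, the weights $J_\beta$ and $\tilde J_\beta$ can vanish while some $\lambda_I$ with $\deg I = b$ does not, so both the reduction of the upper bound to Theorem~\ref{T:main} and the sharpness of $\sim$ rely crucially on $b$ being extreme in $\scriptP_{\R^n}^\cup$.
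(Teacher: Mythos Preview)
Your proposal is correct and matches the paper's own proof essentially line for line: the upper bound via \eqref{E:lambda sim J extreme} (valid everywhere since both sides vanish when $b$ is not extreme in $\scriptP_{x_0}$) together with Theorem~\ref{T:main} and its index-swapped version, and the lower bound via Proposition~2.2 of \cite{BSapde}. Your explicit handling of the $\tilde J_\beta$ terms by swapping $(X_1,X_2)$ is exactly what the paper leaves implicit, and your observation that $\tilde b(\beta)=b$ forces the swapped exponents to coincide with $(p_1,p_2)$ is the correct bookkeeping.
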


\begin{proof}
The `$\lesssim$' direction directly follows from Theorem~\ref{T:main}, \eqref{E:lambda sim J extreme}, and the comments after \eqref{E:lambda sim J extreme}.  The `$\gtrsim$' direction is a direct application of Proposition~2.2 of \cite{BSapde} (a different notation for polytopes was used in that article).  
\end{proof}

Uniform upper bounds are also possible under slightly weaker hypotheses on $b$.  

\begin{theorem}\label{T:TW version}
Under the hypotheses (i) and (ii) of Theorem~\ref{T:main}, if $b:=b(p)$ is a minimal element of $\scriptP^\cup_{\R^n}$ under the coordinate-wise partial order on $\R^2$, then
$$
|\int_{\R^n} f_1 \circ \pi_1(x)\,  f_2 \circ \pi_2(x)\, w_b(x)\, dx| \lesssim \|f_1\|_{p_1}\|f_2\|_{p_2}.
$$
\end{theorem}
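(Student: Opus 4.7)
The plan is to derive Theorem~\ref{T:TW version} from Theorem~\ref{T:sharp extreme} by Stein complex interpolation. When $b$ is an extreme point of $\scriptP^\cup_{\R^n}$, Theorem~\ref{T:sharp extreme} already yields the conclusion, so the content lies in the case where $b$ is a minimal but non-extreme element. Such a $b$ lies in the relative interior of a bounded edge of the minimal frontier of $\scriptP^\cup_{\R^n}$ joining two extreme points $b^1, b^2$; write $b = \alpha b^1 + \beta b^2$ with $\alpha, \beta \in (0,1)$ and $\alpha + \beta = 1$.

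First I would apply Theorem~\ref{T:sharp extreme} at $b^1$ and $b^2$ and then invoke Stein's interpolation theorem on the analytic family of bilinear forms
\begin{equation*}
B_z(f_1,f_2) := \int f_1 \circ \pi_1(x) \cdot f_2 \circ \pi_2(x) \cdot w_{b^1}(x)^{1-z} w_{b^2}(x)^z\, dx,
\end{equation*}
on the strip $0 \leq \text{Re}\, z \leq 1$, with the boundary estimates supplied by Theorem~\ref{T:sharp extreme}. Interpolating yields, at each $s \in (0,1)$, a bound at exponent $(p_1^s, p_2^s)$ with $1/p_j^s = (1-s)/p_j(b^1) + s/p_j(b^2)$ and weight $w_{b^1}^{1-s} w_{b^2}^s$. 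A short algebraic calculation using $1/p_j(b) = b_j c(b)$, where $c(b) := 1/(b_1+b_2-1)$, together with $b = \alpha b^1 + \beta b^2$, determines a unique $s \in (0,1)$ with $(p_1^s, p_2^s) = (p_1(b), p_2(b))$; this $s$ satisfies $c(b) = (1-s) c(b^1) + s c(b^2)$, equivalently $(1-s)c(b^1) = \alpha c(b)$ and $s c(b^2) = \beta c(b)$.

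It then remains to show $w_b \lesssim w_{b^1}^{1-s} w_{b^2}^s$ pointwise. Since $w_b = \sum_{I: \deg I = b} |\lambda_I|^{c(b)}$, it suffices to exhibit, for each tuple $I$ with $\deg I = b$, choices of tuples $I^1, I^2$ with $\deg I^j = b^j$ satisfying the pointwise multiplicative inequality $|\lambda_I(x)| \lesssim |\lambda_{I^1}(x)|^\alpha |\lambda_{I^2}(x)|^\beta$; raising to the power $c(b)$ and using the exponent identities above gives $|\lambda_I|^{c(b)} \lesssim |\lambda_{I^1}|^{c(b^1)(1-s)} |\lambda_{I^2}|^{c(b^2) s} \leq w_{b^1}^{1-s} w_{b^2}^s$.

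The hard part will be establishing this pointwise multiplicative inequality on graded Lie-algebra determinants. It is scale-invariant under $X_i \mapsto \mu_i X_i$, a strong constraint, and I expect it to follow from an algebraic identity exploiting the bi-graded structure of $\mathfrak g$ together with the polynomial form of the flows ensured by hypothesis~(i) of Theorem~\ref{T:main}. A fallback would be to partition $\R^n$ into finitely many regions according to the pair of extreme points of $\scriptP_{x_0}$ adjacent to $b$ on the minimal frontier of $\scriptP_{x_0}$, apply \eqref{E:lambda sim J extreme} and Theorem~\ref{T:main} in each region, and recombine, reducing the multiplicative bound to its finitely many local subcases.
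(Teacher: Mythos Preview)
Your overall strategy---reduce to a pointwise weight comparison and then interpolate---is exactly the paper's plan, but the specific inequality you aim for is too strong and can fail.  You ask for $w_b \lesssim w_{b^1}^{1-s}w_{b^2}^s$, where $b^1,b^2$ are the two \emph{extreme} endpoints of the edge through $b$.  But at a particular $x_0$ one can have $\lambda_{I^1}(x_0)=0$ for every $I^1$ with $\deg I^1=b^1$ (so $w_{b^1}(x_0)=0$) while some $\lambda_I(x_0)\neq 0$ with $\deg I=b$; the polytope $\scriptP_{x_0}$ simply has a different set of vertices than $\scriptP_{\R^n}^\cup$.  So the inequality you hope to prove (and the stronger $|\lambda_I|\lesssim |\lambda_{I^1}|^\alpha|\lambda_{I^2}|^\beta$) is not true in general, and no amount of algebraic identity hunting will produce it.  Your fallback gestures in the right direction, but note that the ``adjacent extreme points of $\scriptP_{x_0}$'' need not be extreme points of $\scriptP_{\R^n}^\cup$, so Theorem~\ref{T:sharp extreme} is unavailable there.

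The paper avoids this by interpolating Theorem~\ref{T:main} itself (with the $\rho_\beta$ weights) rather than Theorem~\ref{T:sharp extreme}, and by using \emph{all} lattice points on the supporting face $\mathbb F$, not just its endpoints.  The pointwise bound comes from Lemma~\ref{L:Lambda sim J} applied with rescaled vector fields $\alpha_i X_i$: this gives $\alpha^b|\lambda_I|\lesssim \sum_{b'}\alpha^{b'}J_{b'}^\Sigma$ for every $\alpha\in(0,\infty)^2$, where $J_{b'}^\Sigma=\sum_{b(\beta)=b'}|J_\beta|+\sum_{\tilde b(\beta)=b'}|\tilde J_\beta|$.  Sending $\alpha$ along the ray normal to $\mathbb F$ kills all $b'\notin\mathbb F$; then Lemma~\ref{L:extract 2 terms} converts the remaining one-parameter polynomial inequality into $|\lambda_I|\lesssim J_b^\Sigma+\sum (J_{b^{\theta_i}}^\Sigma)^{\ldots}(J_{b^{\theta_j}}^\Sigma)^{\ldots}$, summed over pairs of lattice points on $\mathbb F$ straddling $b$.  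Each such term is handled by complex interpolation of Theorem~\ref{T:main} between the exponents $p(b^{\theta_i})$ and $p(b^{\theta_j})$, and the triangle inequality sums the finitely many pairs.  The point is that Theorem~\ref{T:main} is available at every lattice degree, extreme or not, whereas the $w_b$--$\lambda_I$ comparison you rely on only holds at extreme points.
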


\begin{proof}
Set 
$$
J_b^\Sigma := \sum_{\beta:b(\beta) = b} |J_\beta| + \sum_{\beta:\tilde b(\beta) = b} |\tilde J_\beta|.
$$
By Lemma~\ref{L:Lambda sim J}, for all $\alpha \in (0,\infty)^2$ and $x_0 \in \R^n$, and $\deg I = b$,
\begin{equation} \label{E:lambda lesssim J}
\alpha^b |\lambda_I(x_0)| \lesssim \sum_{b'} \alpha^{b'}J_{b'}^\Sigma(x_0) = \sum_{b' \in \scriptP_{\R^n}^\cup} \alpha^{b'}J_{b'}^\Sigma(x_0).
\end{equation}
By our assumption on $b$ and the definition of $\scriptP_{\R^n}^\cup$, there exists $\nu \in (0,\infty)^2$ such that $b \cdot \nu \leq b' \cdot \nu$ for all $b' \in \scriptP_{\R^n}^\cup$.  Replacing $\alpha = (\alpha_1,\alpha_2)$ with $(\delta^{\nu_1}\alpha_1,\delta^{\nu_2}\alpha_2)$ in \eqref{E:lambda lesssim J} and sending $\delta \searrow 0$, we see that
\begin{equation} \label{E:lambda face}
\alpha^b |\lambda_I(x_0)| \lesssim \sum_{b' \in \mathbb F} \alpha^{b'}J_{b'}^\Sigma(x_0), 
\end{equation}
where $\mathbb F := \{b' \in \scriptP_{\R^n}^\cup : b' \cdot \nu = b \cdot \nu\}$.  The face $\mathbb F$ is a line segment (possibly a singleton),
$$
\mathbb F = \{b^0 + t \omega : 0 \leq t \leq 1\},
$$
for some vector $\omega$ perpendicular to $\nu$.  Setting $\alpha^\omega:=\delta$, \eqref{E:lambda face} is equivalent to 
$$
\delta^{\theta_0} |\lambda_I(x_0)| \lesssim \sum_i \delta^{\theta_i} J_{b^{\theta_i}}^\Sigma(x_0), \qquad \delta > 0,
$$
where $b := b^0+\theta_0\omega$ and $\mathbb F \cap \N^2 = \{b^0 + \theta^i \omega : 1 \leq i \leq m_n\}$.  By Lemma~\ref{L:extract 2 terms}, 
$$
|\lambda_I(x_0)| \lesssim J_b^\Sigma(x_0) + \sum_{\theta_i < \theta_0 < \theta_j} (J_{b^{\theta_i}}^\Sigma(x_0))^{\frac{\theta_j-\theta_0}{\theta_j-\theta_i}}(J_{b^{\theta_j}}^\Sigma(x_0))^{\frac{\theta_0-\theta_i}{\theta_j-\theta_i}}=:(J^{\theta_0})(x_0),
$$
for all $x_0 \in \R^n$.  Finally, by Theorem~\ref{T:main}, complex interpolation, and the triangle inequality,
$$
|\int_{\R^n} f_1 \circ \pi_1(x)\, f_2 \circ \pi_2(x)\, |J^\theta_0(x)|^{\frac{1}{b_1+b_2-1}} \, dx| \lesssim \|f_1\|_{p_1}\|f_2\|_{p_2}.
$$
\end{proof}

Finally, we give the endpoint version of the main result of \cite{TW}.

\begin{theorem} \label{T:TW v2}
Let $a$ be a continuous function with compact support, and assume that $\pi_1,\pi_2$ obey the hypotheses of Theorem~\ref{T:main} and, in addition, that the $\pi_j$ are submersions throughout $\rm{supp}\,a$.  If $b(p) \in \scriptP_{\rm{supp}\,a}^\cap$, then
\begin{equation}\label{E:TW v2}
|\int_{\R^n} f_1 \circ \pi_1(x)\, f_2 \circ \pi_2(x)\, a(x) \, dx| \lesssim_{a,\pi_1,\pi_2} \|f_1\|_{p_1}\|f_2\|_{p_2}.
\end{equation}
\end{theorem}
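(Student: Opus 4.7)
The plan is to localize via a partition of unity on $\supp a$ to regions where certain weights $\rho_\beta$ are bounded below, apply Theorem~\ref{T:main} at the multi-indices guaranteed by $b(p) \in \scriptP_{\supp a}^\cap$, and then complex-interpolate to close the argument.

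Fix $x_0 \in \supp a$. Since $b := b(p) \in \scriptP_{x_0}$, formula \eqref{E:scriptP cap two ways} allows me to write $b = \sum_{i=1}^{m} \lambda_i c_i + v$ with $\lambda_i \ge 0$, $\sum_i \lambda_i = 1$, $v \in [0,\infty)^2$, where each $c_i$ equals $b(\beta_i)$ with $J_{\beta_i}(x_0) \ne 0$, or $\tilde b(\beta_i)$ with $\tilde J_{\beta_i}(x_0) \ne 0$. By continuity, each corresponding $\rho_{\beta_i}$ is $\sim 1$ on some open neighborhood $U_{x_0}$ of $x_0$. Compactness of $\supp a$ gives a finite subcover $\{U_k\}_{k=1}^K$, and a subordinate smooth partition of unity $\{\chi_k\}$ reduces the problem to bounding each local form $\int f_1 \circ \pi_1\, f_2 \circ \pi_2 \, a \chi_k\, dx$.

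On each $U_k$, fix the decomposition $b = b' + v$ with $b' := \sum_i \lambda_i c_i^{(k)}$ and multi-indices $\beta_1^{(k)},\ldots,\beta_{m_k}^{(k)}$, and consider the analytic family of bilinear forms
\[
T_z(f_1,f_2) := \int f_1 \circ \pi_1\, f_2 \circ \pi_2 \, \chi_k \prod_i \rho_{\beta_i^{(k)}}^{z_i}\, dx, \qquad z \in \C^{m_k},
\]
well-defined and analytic in $z$ because $\rho_{\beta_i^{(k)}}>0$ on $\supp \chi_k \subseteq U_k$. Theorem~\ref{T:main} gives the boundary estimates $|T_{e_j}(f_1,f_2)| \lesssim \|f_1\|_{p_1(c_j^{(k)})}\|f_2\|_{p_2(c_j^{(k)})}$ at each standard basis vector $e_j$. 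Iterated bilinear Riesz--Thorin (equivalently, Stein's multilinear complex interpolation) applied at
\[
\mu_i := \lambda_i\cdot\frac{c_{i,1}^{(k)}+c_{i,2}^{(k)}-1}{b'_1+b'_2-1}
\]
then yields $|T_\mu(f_1,f_2)| \lesssim \|f_1\|_{p_1(b')}\|f_2\|_{p_2(b')}$; a direct arithmetic check confirms $\sum_i \mu_i = 1$ and $1/p_j(b') = \sum_i \mu_i / p_j(c_i^{(k)})$, which is precisely the interpolation exponent identity.

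Since $\rho_{\beta_i^{(k)}} \sim 1$ on $U_k$, the pointwise bound $\chi_k |a| \lesssim_a \chi_k \prod_i \rho_{\beta_i^{(k)}}^{\mu_i}$ combined with the previous step gives $\bigl|\int f_1 \circ \pi_1\, f_2 \circ \pi_2\, a\chi_k\, dx\bigr| \lesssim_a \|f_1\|_{p_1(b')}\|f_2\|_{p_2(b')}$. Because $b \ge b'$ coordinate-wise forces $p_j(b) \ge p_j(b')$, and the effective integration restricts $f_j$ to the compact set $\pi_j(\supp \chi_k)$, H\"older's inequality upgrades this to $\lesssim_{a,\pi_1,\pi_2} \|f_1\|_{p_1(b)}\|f_2\|_{p_2(b)}$, and summing over $k$ proves \eqref{E:TW v2}. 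The principal obstacle is the multi-variable interpolation together with the arithmetic identifying $1/p_j(b')$ as the correct convex combination of $1/p_j(c_i^{(k)})$; the analyticity of $\rho^{z}$ and the growth bounds required by Stein's theorem (noting $|\rho^{iy}|=1$ where $\rho>0$) pose no difficulty once the localization to $U_k$ is in place.
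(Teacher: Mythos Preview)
Your argument has a genuine gap at the final step. The claim that ``$b \ge b'$ coordinate-wise forces $p_j(b) \ge p_j(b')$'' is false. With $p_j(b) = (b_1+b_2-1)/b_j$, increasing $b_1$ alone \emph{decreases} $p_1$ whenever $b_2 > 1$: for instance, $b' = (2,3)$ gives $(p_1(b'),p_2(b')) = (2,4/3)$, while $b = (3,3) = b' + (1,0)$ gives $(p_1(b),p_2(b)) = (5/3,5/3)$, so $p_1(b) < p_1(b')$. H\"older on the compact sets $\pi_j(\supp\chi_k)$ only lets you \emph{raise} each exponent, so it cannot take you from $(2,4/3)$ to $(5/3,5/3)$. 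Notice that your proof never invokes the submersion hypothesis on $\supp a$, which is a warning sign.

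The repair is exactly what the paper does: use the submersion hypothesis and the coarea formula to obtain the trivial bound
\[
\Bigl|\int_{U_k} f_1\circ\pi_1\,f_2\circ\pi_2\,dx\Bigr| \lesssim_{a,\pi_1,\pi_2} \|f_1\|_{q_1}\|f_2\|_{q_2}
\]
for every $(q_1,q_2)$ with $q_1^{-1}+q_2^{-1}\le 1$, and then interpolate between $p(b')$ and this line. The key arithmetic fact (the paper's ``elementary computation'') is that the map $s \mapsto \bigl(1/p_1(b'+sv),1/p_2(b'+sv)\bigr)$ traces the straight segment from $\bigl(1/p_1(b'),1/p_2(b')\bigr)$ to the point $(v_1/|v|,\,v_2/|v|)$ on the line $x+y=1$; at $s=1$ you land at $\bigl(1/p_1(b),1/p_2(b)\bigr)$, so ordinary Riesz--Thorin between the $p(b')$ bound and the trivial bound at $(v_1/|v|,v_2/|v|)$ yields \eqref{E:TW v2} on $U_k$. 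Apart from this, your localization and interpolation among the $c_i$ are correct (though the analytic family is unnecessary: since each $\rho_{\beta_i}\sim 1$ on $U_k$, Theorem~\ref{T:main} already gives the \emph{unweighted} local bound at each $p(c_i)$, and plain Riesz--Thorin suffices).
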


\begin{proof}
Let $x_0 \in \supp a$.  By \eqref{E:scriptP cap two ways} and our hypothesis, there exist $b^i$, $i=0,1$, and $0 \leq \theta \leq 1$ such that $b(p) \succeq b^\theta:= (1-\theta)b^0+\theta b^1$ and $J_{b^i}^\Sigma(x_0) \sim_{a,\pi_1,\pi_2} 1$.  

By continuity, $J_{b^i}^\Sigma(x) \sim_{a,\pi_1,\pi_2} 1$ for $x$ in some neighborhood $U$ of $x_0$.  By Theorem~\ref{T:main},
\begin{equation} \label{E:TW v2 p01}
|\int_{U} f_1 \circ \pi_1(x)\, f_2 \circ \pi_2(x)\, dx| \lesssim_{a,\pi_1,\pi_2} \prod_{j=1}^2 \|f_j\|_{L^{q_j}(\pi_j(U))}, \qquad q \in [1,\infty]^2,
\end{equation}
holds with $q = p^i$ computed from the $b^i$ using \eqref{E:def p}.  By interpolation, \eqref{E:TW v2 p01} also holds with $q=p^\theta$ computed from $b^\theta$ using \eqref{E:def p}.  An elementary computation shows that 
$$
(p_1^{-1},p_2^{-1}) = (1-\nu)(q^{-1},1-q^{-1}) + \nu((p_1^\theta)^{-1},(p_2^\theta)^{-1}).
$$
Our hypothesis that the $\pi_j$ are submersions on $\rm{supp}\, a$ and H\"older's inequality imply that \eqref{E:TW v2 p01} holds whenever $q_1^{-1}+q_2^{-1} \leq 1$, and hence by interpolation, \eqref{E:TW v2 p01} holds at $p$.  Inequality \eqref{E:TW v2} follows by using a partition of unity.  
\end{proof}


\section{Quasiextremal pairs for the restricted weak type inequality} \label{S:Quasiex}

The purpose of this section is to prove that pairs $E_1,E_2$ that nearly saturate inequality \eqref{E:RWT} are well approximated as a bounded union of ``balls'' parametrized by maps of the form $\Phi^I_{x}$, with $I$ a (reordering of a) minimal $n$-tuple of words.  Results of this type had been previously obtained in \cite{ChQex, BSijm} for other operators and in \cite{BiswasThesis} for a particular instance of the class considered here.  

We begin with some further notation.

\subsection*{Notation}  We recall the maps
$$
\Phi_{x_0}^I(t) := e^{t_n X_{w_n}} \circ \cdots \circ e^{t_1 X_{w_1}}(x_0), \qquad I = (w_1,\ldots,w_n) \in \scriptW^n,
$$
$\Psi_{x_0}:= \Phi_{x_0}^{(1,2,1,2,\ldots)}$, $\tilde\Psi_{x_0}(t):= \Phi_{x_0}^{(2,1,2,1,\ldots)}$ from the previous section.  For $\alpha \in (0,\infty)^2$, we define parallelepipeds
$$
Q_\alpha^I := \{t \in \R^n : |t_i| < \alpha^{\deg w_i}, \: 1 \leq i \leq n\}, \qquad I \in \scriptW^n,
$$
$Q_\alpha:= Q_\alpha^{(1,2,1,2,\ldots)}$, $\tilde Q_\alpha := Q_\alpha^{(2,1,2,1,\ldots)}$.  These give rise to families of balls,
$$
B^I(x_0;\alpha):= \Phi_{x_0}^I(Q_\alpha^I), \qquad B^n(x_0;\alpha):= \Psi_{x_0}(Q_\alpha) \cup \tilde\Psi_{x_0}(\tilde Q_\alpha).
$$
For $I=(w_1,\ldots,w_n)$ an $n$-tuple of words and $\sigma \in S_n$ a permutation, we recall that $I_\sigma:= (w_{\sigma(1)},\ldots,w_{\sigma(n)})$.  

\begin{proposition} \label{P:quasiex}
Let $c,c',C,C'$ be as described in Lemma~\ref{E:cover by minimal Phi}.  Let $E_1,E_2$ be open sets, and let $\eps > 0$.  Define
$$
\Omega:= \{\rho_\beta \sim 1\} \cap \pi_1^{-1}(E_1) \cap \pi_2^{-1}(E_2), \qquad \alpha_j:= \tfrac{|\Omega|}{|E_j|}, \: j=1,2.
$$
If 
\begin{equation}\label{E:quasiex} 
|\Omega| \geq \eps |E_1|^{\frac1{p_1}}|E_2|^{\frac1{p_2}},
\end{equation}
there exist a set $\scriptA \subseteq \Omega$ of cardinality $\#\scriptA \lessapprox_{c,C} 1$ and a minimal $n$-tuple $I \in \scriptW^n$ such that\\
(i)
$$
|\Omega \cap \bigcup_{x \in \scriptA} \bigcap_{\sigma \in S_n} B^{I_\sigma}(x;c\eps^C\alpha)| \gtrsim |\Omega|,
$$
(ii)  For every $x \in \scriptA$, $\sigma,\sigma' \in S_n$, and $y \in B^{I_\sigma}(x;c\eps^C\alpha)$, $\Phi_y^{I_{\sigma'}}$ is one-to-one with Jacobian determinant
$$
|\alpha^{\deg I} \det D\Phi_y^{I_{\sigma'}}| \sim \alpha^{\deg I}|\lambda_I(x)| \approx \alpha^b \approx |\Omega|,
$$
on $Q^{I_{\sigma'}}_{c'\eps^{C'}\alpha}$, and, moreover, $B^{I_\sigma}(x,c\eps^C\alpha) \subseteq B^{I_{\sigma'}}(y;c'\eps^{C'}\alpha)$.
\end{proposition}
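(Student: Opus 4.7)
The plan is to retrace the iterative refinement from the proof of Proposition~\ref{P:RWT}, but now exploiting the gain $(|J|/|S|)^{(1-\eps)j}$ in Proposition~\ref{P:poly refinement}, and then to transport the resulting axis-parallel rectangle picture into the Carnot--Carath\'eodory covering supplied by Lemma~\ref{E:cover by minimal Phi} via the anisotropic rescaling $\tilde X_j:=\alpha_j X_j$. Converting \eqref{E:quasiex} via the identities $|E_j|=|\Omega|/\alpha_j$ used at the end of the proof of Proposition~\ref{P:RWT}, the hypothesis translates to $\alpha^b\gtrsim\eps^{b_1+b_2-1}|\Omega|$, which together with the lower bound $|\Omega|\gtrsim\alpha^b$ produced in that proof yields $|\Omega|\approx\alpha^b$.

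I would then rerun the refinement $\Omega=\Omega_n\supseteq\cdots\supseteq\Omega_0$ verbatim, but apply the full strength of Proposition~\ref{P:poly refinement} at each step rather than only its $j=\beta_j$ term. Propagating the factor through \eqref{E:refined lb} gives, for every $x_0\in\Omega_0$,
\[
|\Omega|\gtrsim\alpha^b\,|J^\beta(x_0)|\,\prod_{i=1}^n (|J_i(x_0)|/\alpha_i)^{(1-\eps)\beta_i}.
\]
On $\Omega$ we have $|J^\beta(x_0)|=\rho_\beta(x_0)^{b_1+b_2-1}\sim 1$, so combining with $|\Omega|\approx\alpha^b$ gives $\prod_i(|J_i(x_0)|/\alpha_i)^{(1-\eps)\beta_i}\lessapprox 1$; together with the matching lower bound $|J_i(x_0)|\geq|S_i(x_0)\cap J_i(x_0)|\gtrsim\alpha_i$ from the construction, one concludes $|J_i(x_0)|\approx\alpha_i$ for each $i$ (trivially when $\beta_i=0$). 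Consequently the wavy parallelepiped $\scriptF_n\subseteq Q_{C\alpha}$, and by Lemma~\ref{L:finite to one} together with the same chain, $|\Psi_{x_0}(\scriptF_n)|\gtrsim\alpha^b\approx|\Omega|\approx|\Psi_{x_0}(Q_{C\alpha})|$, whence $|\Psi_{x_0}(Q_{C\alpha})\cap\Omega|\approx|\Psi_{x_0}(Q_{C\alpha})|$.

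Fixing such an $x_0$ and setting $\tilde X_j:=\alpha_j X_j$, one has $\tilde\Psi_{x_0}(\{|t|<1\})=\Psi_{x_0}(Q_\alpha)$ and $\tilde\lambda_{I'}=\alpha^{\deg I'}\lambda_{I'}$; the hypothesis $|\tilde\Lambda(x_0)|\lessapprox\sum_{\beta'}|\tilde J^{\beta'}(x_0)|$ required by Lemma~\ref{E:cover by minimal Phi} is automatic from Lemma~\ref{L:Lambda sim J}. Applying that lemma produces a minimal $I\in\scriptW^n$ and a Vitali set $\scriptA'$ of cardinality $\lessapprox_{c,C}1$ covering all but an $\eps$-fraction of $\Psi_{x_0}(Q_\alpha)$ by the rescaled balls $\tilde\Phi_x^{I_\sigma}(\{|t|<c\eps^C\})=\Phi_x^{I_\sigma}(\{|s_i|<c\eps^C\alpha^{\deg w_{\sigma(i)}}\})$. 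Since the proposition's ball $B^{I_\sigma}(x;c'\eps^{C'}\alpha)$ has side $(c'\eps^{C'})^{|w_{\sigma(i)}|}\alpha^{\deg w_{\sigma(i)}}$, it contains the rescaled-lemma ball once we set $c'\eps^{C'}=(c\eps^C)^{1/N}$ (so $c'=c^{1/N}$, $C'=C/N$), and the intersections over $\sigma$ then behave likewise. Choosing the Vitali centers inside the full-measure subset $G\cap\Omega\subseteq\Psi_{x_0}(Q_\alpha)$ produces $\scriptA\subseteq\Omega$ satisfying conclusion (i). Conclusion (ii) is obtained by transcribing conclusions (ii)--(iii) of Lemma~\ref{E:cover by minimal Phi} back through the rescaling, using $\alpha^{\deg I}|\lambda_I(x_0)|\approx|\tilde\Phi_{x_0}^I(\{|t|<1\})|\approx|\Psi_{x_0}(Q_\alpha)|\approx\alpha^b\approx|\Omega|$. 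The main obstacle lies in the second step: one must chase the pointwise control $|J_i(x_0)|\approx\alpha_i$ through a nested stopping-time construction in which each $J_i$ depends on all earlier refinements and on retaining the openness needed to continue the iteration, and then verify that the restriction to the large-measure subset where the product of gains is $\approx 1$ preserves enough of that structure for the rescaling argument to apply.
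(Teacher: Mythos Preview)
Your overall architecture matches the paper's: rerun the refinement from Proposition~\ref{P:RWT} under the constraint $|\Omega|\approx\alpha^b$, trap the parameter set in a box, rescale by $\alpha$, and invoke Lemma~\ref{E:cover by minimal Phi}. But your diameter-control step has a genuine gap. The product bound $\prod_i(|J_i|/\alpha_i)^{(1-\eps)\beta_i}\lessapprox 1$ tells you nothing about coordinates $i$ with $\beta_i=0$, and these are \emph{not} trivial: for such $i$ the interval $J_i$ can be arbitrarily long, and then $\scriptF_n\not\subseteq Q_{C\alpha}$ fails. (Take $n=3$, $\beta=(1,1,0)$: nothing in your inequality bounds $|J_3|$.) The paper handles this in two separate passes. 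First it bounds $|S_j|\lessapprox\alpha_j$ for \emph{every} $j$ by a contradiction argument that does not use the Proposition~\ref{P:poly refinement} gain at all: if $|S_j|\geq C'\eps^{-C'}\alpha_j$ on half of $\Omega_j$, refining to that half inserts an extra factor $C'\eps^{-C'}$ into \eqref{E:refined lb}, contradicting $|\Omega|\lessapprox\alpha^b$. Only then does it invoke the gain, and only at the single maximal odd index and the single maximal even index with $\beta_j>0$ (whose existence for $n\geq 3$ is argued separately), to upgrade $|S_j|\lessapprox\alpha_j$ to $|J_j|\lessapprox\alpha_j$ there; a restart of the refinement from that level then propagates diameter control to all coordinates.

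A smaller gap: the hypothesis $|\tilde\Lambda(x_0)|\lessapprox\sum_{\beta'}|\tilde J^{\beta'}(x_0)|$ needed for Lemma~\ref{E:cover by minimal Phi} is not ``automatic from Lemma~\ref{L:Lambda sim J}''---that lemma only gives $|\Lambda|\sim\sum|J^\beta|+\sum|\tilde J^\beta|$, and you must rule out the $\tilde J$ sum dominating after rescaling. The paper secures this via one extra layer of refinement (to $\Omega_{-1}$), so that the iterated lower bound simultaneously captures both $\Psi_{x_0}$ and $\tilde\Psi_{x_0}$; then $|\Omega|\lessapprox\alpha^b$ forces $\sum_{\beta'}\alpha^{\tilde b(\beta')}|\tilde J^{\beta'}(x_0)|\lessapprox\alpha^b\sim\alpha^b|J^\beta(x_0)|$, which is what you need.
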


By applying Lemma~\ref{E:cover by minimal Phi} with $C'\eps^{-C'}\alpha_1X_1,C'\eps^{-C'}\alpha_2X_2$ in place of $X_1,X_2$, to prove Proposition~\ref{P:quasiex}, it suffices to prove the following.

\begin{lemma} \label{L:Psi balls}
Under the hypotheses of Proposition~\ref{P:quasiex}, there exist a set $\scriptA$ of cardinality $\#\scriptA \lessapprox 1$ such that\\
(i) $|\Omega \cap \bigcup_{x \in \scriptA} B^n(x;C'\eps^{-C'}\alpha)| \gtrsim |\Omega|$\\
(ii) For every $x \in \scriptA$ and $y \in B^n(x;C'\eps^{-C'}\alpha)$, $\sum_I \alpha^{\deg I} |\lambda_I(y)| \approx \alpha^b$.
\end{lemma}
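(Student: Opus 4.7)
The plan is to extract the ball structure by exploiting quasiextremality in the proof of Proposition~\ref{P:RWT} together with the crucial gain $(|J|/|S|)^{(1-\eps)j}$ of Proposition~\ref{P:poly refinement}. After the standard arithmetic, hypothesis \eqref{E:quasiex} becomes $|\Omega| \approx \alpha^b$, so every intermediate inequality in the proof of Proposition~\ref{P:RWT} must be a quasi-equality.

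First, I would rerun the iterative refinement from Proposition~\ref{P:RWT} to produce a nested sequence $\Omega = \Omega_n \supseteq \cdots \supseteq \Omega_0$ with $|\Omega_j| \sim |\Omega|$ throughout, so that for each $x_0 \in \Omega_0$ the refined set $\scriptF_n \ni 0$ satisfies $\Psi_{x_0}(\scriptF_n) \subseteq \Omega$ and the chain \eqref{E:refined lb} gives $|\Omega| \gtrsim \alpha^b$. Inspecting \eqref{E:int S} at each stage, the hypothesis $|\Omega| \approx \alpha^b$ forces the higher-order gains to collapse: if any ratio $|J_j(x_0)|/|S_j(x_0)|$ were $\gg \eps^{-C}$, the product of the $(|J_j|/|S_j|)^{(1-\eps)\beta_j}$ gains across $j=1,\ldots,n$ would yield a lower bound on $|\Omega|$ strictly larger than $\alpha^b$, contradicting quasiextremality. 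Hence $|J_j(x_0)| \lessapprox \alpha_j$ for every $j$, which pins $\scriptF_n \subseteq Q_{C\eps^{-C}\alpha}$ (or the analogous $\tilde Q$-type parallelepiped, depending on whether the refinement began with $X_1$ or $X_2$). In particular, $B^n(x_0;C\eps^{-C}\alpha) \supseteq \Psi_{x_0}(\scriptF_n)$ meets $\Omega$ in a set of measure $\gtrsim \alpha^b \approx |\Omega|$ for every $x_0 \in \Omega_0$.

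Next I would construct $\scriptA$ via a Vitali covering argument. After renormalizing the vector fields by $X_i \mapsto C\eps^{-C}\alpha_i X_i$, the balls $B^n(x_0;C\eps^{-C}\alpha)$ become unit-scale $B^n$-balls, and Lemmas~\ref{L:doubling} and~\ref{L:det D Phi I} supply the doubling and engulfing needed for a standard Vitali-type extraction. Taking a maximal disjoint subcollection $\{B^n(x;c^2\eps^C\alpha)\}_{x \in \scriptA}$, the dilates $B^n(x;C\eps^{-C}\alpha)$ cover $\Omega_0$, establishing~(i); since each small ball has volume $\approx |\Omega|$ and all lie in a controlled enlargement of $\Omega$, the cardinality bound $\#\scriptA \lessapprox 1$ is automatic. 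For~(ii), the lower bound $\sum_I \alpha^{\deg I}|\lambda_I(y)| \gtrsim \alpha^b$ is immediate from Lemma~\ref{L:Lambda sim J}: since $\rho_\beta(y) \sim 1$ gives $|J^\beta(y)| \sim 1$ and $J^\beta$ is a linear combination of $\lambda_I$'s with $\deg I = b$, the desired terms already appear on the left. For the matching upper bound, renormalize $X_i \mapsto \alpha_i X_i$; the claim reduces to $|\tilde\Lambda(y)| \lessapprox \alpha^b$, and via Lemma~\ref{L:Lambda sim J} and equivalence of norms on polynomials of degree $\leq N$, further to $\int_{\{|t| < C\eps^{-C}\}} |\det D\tilde\Psi_y|\, dt \lessapprox \alpha^b$. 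This integral is comparable, via bounded multiplicity for polynomial maps (Lemma~\ref{L:finite to one}), to $|B^n(y;C\eps^{-C}\alpha)|$, which by doubling is comparable to a smaller Carnot--Carath\'eodory ball centered at the nearby $x \in \scriptA$, itself $\approx |\Omega| \approx \alpha^b$.

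The main obstacle is the quantitative reverse engineering in the second paragraph: making the heuristic ``quasi-equality forces $|J_j| \lessapprox \alpha_j$'' fully rigorous requires carefully propagating the $(|J_j|/|S_j|)^{(1-\eps)\beta_j}$ gains through all $n$ iterative stages and accounting for the fact that the interval $J_j(x_0)$ depends on choices made at later stages, so the compounded gain must be tracked along the refinement tree rather than at a single stage.
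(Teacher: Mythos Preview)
Your outline has the right spirit—rerun the refinement and use quasiextremality to pin down the geometry—but there is a genuine gap in the step where you conclude $|J_j(x_0)|\lessapprox\alpha_j$ for \emph{every} $j$. The gain from Proposition~\ref{P:poly refinement} at stage $j$ is $(|J_j|/|S_j|)^{(1-\eps)\beta_j}$, and this factor is identically $1$ when $\beta_j=0$. So for those indices the argument ``if $|J_j|/|S_j|$ were large, the product of gains would exceed $\alpha^b$'' gives no information whatsoever. This is not a bookkeeping nuisance; it is exactly where the hypothesis $n\ge 3$ enters. The paper first uses $n\ge 3$ (and, in the borderline case $n=3$, $\beta=(0,k,0)$, a swap $\Psi\leftrightarrow\tilde\Psi$) to guarantee that $\sum_{j\text{ odd}}\beta_j>0$ and $\sum_{j\text{ even}}\beta_j>0$, so the gain is available at \emph{one} odd and \emph{one} even index. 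That forces $|J_j|\lessapprox\alpha_1$, $|J_{j'}|\lessapprox\alpha_2$ for that pair, making the refined set thin along both $X_1$ and $X_2$ integral curves. The paper then \emph{restarts} the entire refinement from this thin set; since every subsequent $S_i$ now lives inside an interval of length $\lessapprox\alpha_i$, so does every subsequent $J_i$, and the containment $\scriptF_n\subseteq Q_{C\eps^{-C}\alpha}$ follows. You should also note the prior (easier) step of bounding $|S_j|\lessapprox\alpha_j$, which the paper does separately by the same quasiextremality contradiction using only the $|S_j|^{\beta_j+1}$ factor.

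Your argument for the upper bound in (ii) is also circular. You want $|\tilde\Lambda(y)|\lessapprox\alpha^b$ and reduce it to $|B^n(y;C\eps^{-C}\alpha)|\lessapprox|\Omega|$, but that volume bound is not yet known—the ball is not contained in $\Omega$. The paper avoids this entirely: it refines \emph{one step further} (to $\Omega_{-1}$) so that from each $x_0$ one can run the flow chain for both $\Psi_{x_0}$ and $\tilde\Psi_{x_0}$, giving
\[
|\Omega|\gtrsim\sum_{\beta'}\alpha^{b(\beta')}|J^{\beta'}(x_0)|+\alpha^{\tilde b(\beta')}|\tilde J^{\beta'}(x_0)|.
\]
Combined with $|\Omega|\lessapprox\alpha^b$ and Lemma~\ref{L:Lambda sim J}, this yields \eqref{E:Lambda alpha b} directly on the refined set, and it then propagates to the full balls via Lemma~\ref{L:lambda t sim lambda 0} after the $X_i\mapsto\alpha_iX_i$ renormalization. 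Finally, your Vitali argument is replaceable by the paper's simpler greedy extraction: each $\Psi_{x_0}(\scriptF_n)\subseteq\Omega_0\cap B^n(x_0;C'\eps^{-C'}\alpha)$ has measure $\gtrapprox|\Omega|$, so one adds centers and removes balls until $\Omega_0$ is covered, which happens after $\lessapprox 1$ steps.
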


\begin{proof}[Proof of Lemma~\ref{L:Psi balls}]
Inequality \eqref{E:quasiex} implies, after some arithmetic, that
\begin{equation} \label{E:Omega alpha b}
|\Omega| \lessapprox \alpha^b.
\end{equation}
Conversely, the conclusion of Proposition~\ref{P:RWT} is equivalent to $|\Omega| \gtrsim \alpha^b$.  We will prove this lemma by essentially repeating the proof of Proposition~\ref{P:RWT}, while keeping in mind the constraint \eqref{E:Omega alpha b}.  In the proof, we will extensively use the notations from the proof of Proposition~\ref{P:RWT}.  

In the proof of Proposition~\ref{P:RWT}, we only needed to refine the set $\Omega$ $n$ times, but here it will be useful to refine further.  Letting $x_0 \in \Omega_{-1} \subseteq \Omega_0$,
\begin{align*}
\tilde\Psi_{x_0}(t) &\in \Omega_{n-1}, \:\:\text{if}\:\: t_j \in S_{j-1}(\tilde\Psi_{x_0}(t_1,\ldots,t_{j-1},0)), \:\: j=1,\ldots,n\\
\Psi_{x_0}(t) &\in \Omega_{n}, \:\:\text{if}\:\: t_j \in S_{j}(\Psi_{x_0}(t_1,\ldots,t_{j-1},0)), \:\: j=1,\ldots,n.
\end{align*}
Thus exactly the arguments leading up to \eqref{E:refined lb} imply that
$$
|\Omega| \gtrsim \sum_{\beta'} \alpha^{b(\beta')}|\partial^{\beta'}\det D\Psi_{x_0}(0)| + \alpha^{\tilde b(\beta')}|\partial^{\beta'}\det D\tilde\Psi_{x_0}(0)|.
$$
As was observed in \eqref{E:refined lb}, the right side above is at least $\alpha^b$, and by \eqref{E:Omega alpha b}, it is at most $C'\eps^{-C'}\alpha^b$.  Let $\tilde\Omega:=\Omega_{-1}$.  We have just seen that 
$$
\sum_{\beta'} \alpha^{b(\beta')}|\partial^{\beta'}\det D\Psi_{x_0}(0)| + \alpha^{\tilde b(\beta')}|\partial^{\beta'}\det D\tilde\Psi_{x_0}(0)| \approx \alpha^b, \qquad x_0 \in \tilde\Omega,
$$
so by Lemma~\ref{L:Lambda sim J},
\begin{equation} \label{E:Lambda alpha b}
\sum_I \alpha^{\deg I} |\lambda_I(x_0)| \approx \alpha^b, \qquad x_0 \in \tilde \Omega.
\end{equation}
Moreover, by the proof of Proposition~\ref{P:RWT}, $|\tilde\Omega| \sim |\Omega| \approx \alpha^b$.  Thus the proof of our lemma will be complete if we can cover a large portion of $\tilde\Omega$ using a set $\scriptA \subseteq \tilde\Omega$.  

To simplify the notation, we will give the remainder of the argument under the assumption that \eqref{E:Lambda alpha b} holds on $\Omega$; the general case follows from the same proof, since \eqref{E:quasiex} holds with $\Omega$ replaced by $\tilde \Omega$.  Our next task is to obtain better control over the sets $\scriptF_j$, $S_j(\cdot)$ arising in the proof of Proposition~\ref{P:RWT}.  We begin by bounding the measure of these sets.

If $|S_n(x)| \geq C' \eps^{-C'} \alpha_n$ for all $x$ in some subset $\Omega' \subseteq \Omega$ with $|\Omega'| \gtrsim |\Omega|$, we could have refined so that $\Omega_{n-1} \subseteq  \Omega'$, yielding
\begin{align*}
|\Omega| &\gtrsim \alpha_n^{\beta_n} (C'\eps^{-C'} \alpha_n) \int_{\scriptF_{n-1}} |\partial_n^{\beta_n} \det D_t \Psi_{x_0}(t',0)|\, dt'\\
&\geq C' \eps^{-C'} \alpha_1^{b_1} \alpha_2^{b_2},
\end{align*}
a contradiction to \eqref{E:Omega alpha b} for $C'$ sufficiently large.  Thus we may assume that $|S_n(x)| \lessapprox \alpha_n$ on at least half of $\Omega$, and we may refine so that $|S_n(x)| \lessapprox \alpha_n$ throughout $\Omega_{n-1}$.  Similarly, we may refine so that $|S_{n-1}(x)| \lessapprox \alpha_{n-1}$ for each $x \in \Omega_{n-2}$.  Thus, by adjusting the refinement procedure at each step, we may assume that for each $1 \leq j \leq n-1$ and each $t \in \scriptF_{j-1}$,
\begin{equation} \label{E:bound Sj}
|S_j(\Psi_{x_0}(t,0))| = |\{t_j \in \R : (t,t_j) \in \scriptF_j\}| \lessapprox \alpha_j.
\end{equation}

We have not yet used the gain coming from Proposition~\ref{P:poly refinement}.  We will do so now to control the diameter of our parameter set.  The key observation is that we may assume that $\sum_{j \, \rm{odd}} \beta_j$ and $\sum_{j \, \rm{even}} \beta_j$ are both positive.  Indeed, this positivity is trivial for $n \geq 4$, because if $t_j = 0$ for any $1 < j < n$, then $\det D \Psi_{x_0}(t) = 0$.  Thus the only way our claim can fail is if $n=3$ and $\beta = (0,k,0)$, but in this case,
$$
\partial^\beta \det D \Psi_{x_0}(0) = \partial_2 \partial_1^{k-1} \det D\tilde\Psi_{x_0}(0),
$$
and we can simply interchange the roles of the indices 1 and 2 throughout the argument.

Let $j$ be the maximal odd index with $\beta_j > 0$.  Suppose that on at least half of $\Omega_j$, $|J_j(x)| \geq C'\eps^{-C'} |S_j(x)|$.  Then by adjusting our refinement procedure, we may assume that $x \in \Omega_{j-1}$ implies that $|J_j(x)| \geq C' \eps^{-C'} |S_j(x)|$; we note that this implies $|J_j(x)| \geq C'\eps^{-C'} \alpha_j$.  In view of \eqref{E:bound Sj},
\begin{align*}
|\Omega| &\gtrsim \alpha_n^{\beta_n+1} \cdots \alpha_{j+1}^{\beta_{j+1}+1} \int_{\scriptF_{j-1}} |\partial_n^{\beta_n} \cdots \partial_j^{\beta_j} \det D\Psi_{x_0}(t',0)| \\
&\qquad \times \bigl(\tfrac{|J_j(\Psi_{x_0}(t',0))|}{|S_j(\Psi_{x_0}(t',0))|})^{(1-\delta)j} |S_j(\Psi_{x_0}(t',0))|^{j+1}\, dt'\\
& \geq C' \eps^{-C'j}\alpha_1^{b_1}\alpha_2^{b_2}.
\end{align*}
For $C'$ sufficiently large, this gives a contradiction.  Thus on at least half of $\Omega_j$, $|J_j(x)| \lessapprox \alpha_j = \alpha_1$, so we may refine so that for each $x \in \Omega_{j-1}$, $|J_j(x)| \lessapprox \alpha_1$.  Repeating this argument for the maximal even index $j'$ with $\beta_{j'} > 0$, we may ensure that for each $x \in \Omega_{j'-1}$, $|J_{j'}(x)|\lessapprox \alpha_2$.  Finally, replacing $\Omega_n$ with $\Omega_{\min\{j,j'\}-1}$ and then refining, we can ensure that for $x_0 \in \Omega_0$, $1 \leq j \leq n$, and $t \in \scriptF_{j-1}$,
\begin{equation} \label{E:bound Jj}
|J_j(\Psi_{x_0}(t,0))| = |J(N,\{t_j \in \R : (t,t_j) \in \scriptF_j\})| \lessapprox \alpha_j.
\end{equation}
Refining further, we obtain a set $\Omega_{-n} \subseteq \Omega_0$, with $|\Omega_{-n}| \gtrsim |\Omega|$, such that for each $x_0 \in \Omega_{-n}$, there exists a parameter set 
$$
\scriptF_{x_0} \subseteq [-C'\eps^{-C'}\alpha_1,C'\eps^{-C'}\alpha_1] \times [-C'\eps^{-C'}\alpha_2,C'\eps^{-C'}\alpha_2] \times \cdots
$$
such that
\begin{equation} \label{E:big bite}
\begin{gathered}
\Psi_{x_0}(\scriptF_n) \subseteq \Omega_0 \cap B(x_0;C'\eps^{-C'}\alpha),\\
|\Psi_{x_0}(\scriptF_n)| \gtrapprox |B^n(x_0;C'\eps^{-C'}\alpha)|.
\end{gathered}
\end{equation}

We fix a point $x_0 \in \Omega_{-n}$ and a parameter set $\scriptF_{x_0}$ as above.  We add $x_0$ to $\scriptA$.  If (i) holds, we are done.  Otherwise, we apply the preceding to
$$
\Omega \setminus \bigcup_{x \in \scriptA} B^n(x;C'\eps^{-C'}\alpha),
$$
and find another point to add to $\scriptA$.  By \eqref{E:big bite} and $|\Omega| \lessapprox \alpha^b$, this process stops while $\#\scriptA \lessapprox 1$.  

This completes the proof of Lemma~\ref{L:Psi balls}, and thus of Proposition~\ref{P:quasiex} as well.
\end{proof}


\section{Strong-type bounds on a single scale} \label{S:ST scale 1}


This section is devoted to a proof of the following.

\begin{proposition} \label{P:local strong type}
$$
|\int_{\{\rho_\beta \sim 1\}} f_1 \circ \pi_1 \, f_2 \circ \pi_2 \, dx| \lesssim \|f_1\|_{p_1} \|f_2\|_{p_2}.
$$
\end{proposition}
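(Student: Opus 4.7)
The plan is to upgrade the single-scale restricted weak type bound of Proposition~\ref{P:RWT} to a strong type bound by executing a Christ-style defeat of quasiextremal enemies, using the inverse structural result of Proposition~\ref{P:quasiex}.  By positivity of the bilinear form, it suffices, after the dyadic decomposition $f_i = \sum_{k \in \Z} 2^k \chi_{E_i^k}$ with $E_i^k := \{2^k \leq f_i < 2^{k+1}\}$, to control
\begin{equation*}
\sum_{j,k \in \Z} 2^{j+k} c_{j,k} |E_1^j|^{1/p_1} |E_2^k|^{1/p_2}, \qquad c_{j,k} := \frac{\int_{\{\rho_\beta \sim 1\}} \chi_{E_1^j} \circ \pi_1 \, \chi_{E_2^k} \circ \pi_2\, dx}{|E_1^j|^{1/p_1}|E_2^k|^{1/p_2}},
\end{equation*}
by $\|f_1\|_{p_1}\|f_2\|_{p_2}$.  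Proposition~\ref{P:RWT} already gives $c_{j,k} \lesssim 1$; what I need is a Schur-type sparsity estimate of the form $\#\{j : c_{j,k} \geq \eta\} \lesssim \eta^{-C}$ (or similar slow growth in $\eta^{-1}$), uniformly in $k$, together with its symmetric version.  Granted such a bound, the target inequality follows by splitting the sum into a quasiextremal part (handled by H\"older and the sparsity estimate) and a residual part where $c_{j,k} < \eta$ is absorbed by choosing $\eta$ small and paying only a polynomial factor, in the manner of \cite{ChQex}.

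For a pair $(j,k)$ with $c_{j,k} \geq \eta$, Proposition~\ref{P:quasiex} applied to $\Omega_{j,k} := \{\rho_\beta \sim 1\} \cap \pi_1^{-1}(E_1^j) \cap \pi_2^{-1}(E_2^k)$ furnishes a bounded collection of centers $\scriptA_{j,k}$, a minimal $n$-tuple $I = I(j,k) \in \scriptW^n$, and scales $\alpha^{(j,k)} = (|\Omega_{j,k}|/|E_1^j|,\, |\Omega_{j,k}|/|E_2^k|)$ such that $\Omega_{j,k}$ is covered, up to a proportional error, by the union of the Carnot--Carath\'eodory balls $B^{I_\sigma}(x; c\eta^C \alpha^{(j,k)})$ over $x \in \scriptA_{j,k}$ and $\sigma \in S_n$.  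In particular, $E_2^k$ is approximated, modulo measure-comparable error, by the bounded union of the $\pi_2$-projections of these balls.  As $k$ is held fixed while $j$ varies, the ratio $\alpha_1^{(j,k)}/\alpha_2^{(j,k)} = |E_2^k|/|E_1^j|$ changes with $j$, so distinct quasiextremal values of $j$ correspond to CC balls of genuinely different shape parameters whose $\pi_2$-projections must nonetheless coincide, up to an $\eta$-controlled factor, with the single set $E_2^k$.

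The crux of the argument, and the step I expect to be the main obstacle, is therefore the geometric assertion that a fixed planar set can be $\eta$-approximately equal to $\pi_2(B^{I_\sigma}(x;\alpha))$ for only $\lessapprox 1$ distinct parameters $\alpha$ (with $x$ and $I$ also varying, subject to $|\lambda_I(x)| \sim \alpha^b/\alpha^{\deg I}$ from Proposition~\ref{P:quasiex}(ii)).  My plan to establish this is to combine the structural information about CC balls developed in Section~\ref{S:CC balls} --- particularly the comparability $|\det D\Phi^I_x(t)| \sim |\lambda_I(x)|$ on the relevant parameter box from Lemma~\ref{L:det D Phi I}, the near-equivalence $|\Lambda| \sim \sum_\beta |J^\beta| + |\tilde J^\beta|$ from Lemma~\ref{L:Lambda sim J}, and the engulfing/doubling properties of Lemma~\ref{L:doubling} --- with a contradiction argument: were many disparate parameters $\alpha$ to share a $\pi_2$-projection, one could superimpose the corresponding CC balls to produce a configuration whose volume exceeded the restricted weak type bound of Proposition~\ref{P:RWT}, with the polynomial degree $N$ bounding the complexity of the underlying algebraic constraints.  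Once this geometric sparsity lemma is secured, the bookkeeping of the first paragraph delivers Proposition~\ref{P:local strong type} with a constant depending only on $N$.
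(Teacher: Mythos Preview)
Your overall architecture---dyadic decomposition, invoke Proposition~\ref{P:quasiex} on quasiextremal pairs, and prove that many CC balls with disparate shape parameters cannot share a $\pi_i$-projection---matches the paper.  There are, however, two genuine gaps.

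First, the bookkeeping.  A bound of the form $\#\{j : c_{j,k} \geq \eta\} \lesssim \eta^{-C}$ is \emph{not} by itself enough to sum, because the bilinear form $\sum c_{j,k}a_jb_k$ must be controlled on $\ell^{p_1}\times\ell^{p_2}$ with $p_1^{-1}+p_2^{-1}>1$; a diagonal matrix $c_{j,k}=\delta_{jk}$ satisfies any such cardinality bound yet is unbounded there.  The paper does not count quasiextremal $j$'s.  It first localizes to $2^{jp_1}|E_1^j|\sim\eta_1$, $2^{kp_2}|E_2^k|\sim\eta_2$, then proves the \emph{measure} estimate $\sum_k |\pi_1(\tilde\Omega^{j,k})| \lesssim (\log\eps^{-1})|E_1^j|$ (Lemma~\ref{L:exist disjoint}), and uses this together with $|\tilde\Omega^{j,k}|\lesssim |\pi_1(\tilde\Omega^{j,k})|^{1/p_1}|\pi_2(\tilde\Omega^{j,k})|^{1/p_2}$ and H\"older at the exponents $q_i=(p_1^{-1}+p_2^{-1})p_i$ to get summable positive powers of $\eps,\eta_1,\eta_2$.

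Second, and more seriously, your proposed mechanism for the geometric sparsity lemma will not work.  A volume/restricted-weak-type contradiction does not arise: the competing balls lie in \emph{different} sets $\Omega^{j,k}$ (different $E_2^k$), so there is no single $\Omega$ whose measure you are overcounting.  The paper's argument is entirely different.  Fixing $j$ and supposing many $k$'s share a $\pi_1$-projection, one finds a single $X_1$-integral curve through points $y^k$ in each ball.  One then constructs a local projection $\tilde\pi_1$ (via $(\Phi^I_{x_0})^{-1}$) and shows, using the minimality of $I$ (so that $(2),(1,2)\in I$) and conclusion~(ii) of Proposition~\ref{P:quasiex}, that the polynomial curve $\gamma(t):=D\tilde\pi_1\, De^{-tX_1}\, X_2(e^{tX_1}(y^0))$ satisfies $|\gamma(t^k)|\approx(\alpha_2^{j,k})^{-1}$ and $|\gamma(t^k)\wedge\gamma'(t^k)|\approx|\gamma(t^k)||\gamma'(t^k)|$ at many widely separated scales.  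This contradicts the one-variable polynomial lemma (Lemma~\ref{L:gamma || gamma'}) that a degree-$N$ curve can be transverse to its own derivative at only $O_N(1)$ dyadic scales of $|\gamma|$.  The reduction to this lemma---building $\tilde\pi_1$, controlling $|D\tilde\pi_1\, X_{w_i}|$ on the balls, and extracting the wedge estimates---is the substance of the section and is not captured by a superposition/volume heuristic.
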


\begin{proof}[Proof of Proposition~\ref{P:local strong type}]
It suffices to prove the proposition in the special case 
$$
f_i = \sum_k 2^k \chi_{E_i^k}, \qquad \|f_i\|_{p_i} \sim 1, \qquad i=1,2,
$$
with the $E_1^k$ pairwise disjoint, and likewise, the $E_2^k$.  Thus we want to bound
$$
\sum_{j,k} 2^{j+k} |\Omega^{j,k}|, \qquad \Omega^{j,k} := \{\rho_\beta \sim 1\} \cap \pi_1^{-1}(E_1^j) \cap \pi_2^{-1}(E_2^k).
$$
We know from Proposition~\ref{P:RWT} that 
$$
|\Omega^{j,k}| \lesssim |E_1^j|^{1/p_1} |E_2^k|^{1/p_2}.
$$
For $0 < \eps \lesssim 1$, we define
$$
\scriptL(\eps) := \{(j,k) : \tfrac12 \eps |E_1^j|^{1/p_1} |E_2^k|^{1/p_2} \leq |\Omega^{j,k}| \leq 2\eps |E_1^j|^{1/p_1} |E_2^k|^{1/p_2}\}.
$$
We additionally define for $0 < \eta_1,\eta_2 \leq 1$,
$$
\scriptL(\eps,\eta_1,\eta_2) := \{(j,k) \in \scriptL(\eps) : 2^{jp_1}|E_1^j| \sim \eta_1, \: 2^{kp_2}|E_2^k| \sim \eta_2\}.
$$

Let $\eps,\eta_1,\eta_2 \lesssim 1$ and let $(j,k) \in \scriptL(\eps,\eta_1,\eta_2)$.  Set 
$$
\alpha^{j,k} = (\alpha^{j,k}_1,\alpha^{j,k}_2) := \bigl(\tfrac{|\Omega^{j,k}|}{|E^j_1|}, \tfrac{|\Omega^{j,k}|}{|E^k_2|}\bigr).  
$$
Proposition~\ref{P:quasiex} guarantees the existence of a minimal $I \in \scriptW^n$ and a finite set $\scriptA^{j,k} \subseteq \Omega^{j,k}$ such that (i) and (ii) of that proposition (appropriately superscripted) hold.  (Since there are a bounded number of minimal $n$-tuples, we may assume in proving the proposition that all of these minimal $n$-tuples are the same.)  Set
\begin{equation} \label{E:def tilde Omega}
\tilde\Omega^{j,k} := \Omega^{j,k} \cap \bigcup_{x \in \scriptA^{j,k}} \bigcap_{\sigma \in S_n} B^{I_\sigma}(x,c\eps^C\alpha^{j,k}).
\end{equation}
Our main task in this section is to prove the following lemma.  

\begin{lemma} \label{L:exist disjoint}
Fix $\eps,\eta_1,\eta_2 \lesssim 1$ and set $\scriptL:= \scriptL(\eps,\eta_1,\eta_2)$.  Then
\begin{align}
\label{E:disjoint E1s}
\sum_{k : (j,k) \in \scriptL} |\pi_1(\tilde\Omega^{j,k})| \lesssim (\log \eps^{-1}) |E_1^j|, \qquad j \in \Z\\
\label{E:disjoint E2s}
\sum_{j : (j,k) \in \scriptL} |\pi_2(\tilde\Omega^{j,k})| \lesssim (\log \eps^{-1}) |E_2^k|, \qquad k \in \Z.
\end{align}
\end{lemma}

We assume Lemma~\ref{L:exist disjoint} for now and complete the proof of Proposition~\ref{P:local strong type}.  It suffices to show that for each $\eps,\eta_1,\eta_2$, if $\scriptL := \scriptL(\eps,\eta_1,\eta_2)$, then
\begin{equation} \label{E:sum on L}
\sum_{(j,k) \in \scriptL} 2^{j+k} |\Omega^{j,k}| \lesssim \eps^{a_0} \eta_1^{a_1}\eta_2^{a_2},
\end{equation}
with each $a_i$ positive.  Indeed, once we have proved the preceding inequality, we can just sum on dyadic values of $\eps,\eta_1,\eta_2$. 

We turn to the proof of \eqref{E:sum on L}.  It is a triviality that $\#\scriptL(\eps,\eta_1,\eta_2) \lesssim \eta_1^{-1}\eta_2^{-1}$, so
\begin{equation} \label{E:pos eps neg eta}
\begin{aligned}
&\sum_{(j,k) \in \scriptL} 2^{j+k} |\Omega^{j,k}| \sim \eps \sum_{(j,k) \in \scriptL} 2^{j+k} |E_1|^{1/p_1} |E_2|^{1/p_2} \\
&\qquad \sim \eps (\#\scriptL) \eta_1^{1/p_1}\eta_2^{1/p_2} \lesssim \eps \eta_1^{-1/p_1'}\eta_2^{-1/p_2'}.
\end{aligned}
\end{equation}
Define
$$
q_i := (p_1^{-1}+p_2^{-1})p_i, \qquad i=1,2,
$$
then since
$$
p_1^{-1}+p_2^{-1} = \frac{b_1+b_2}{b_1+b_2-1} > 1,
$$
we have $q_i > p_i$, $i=1,2$, and $q_1 = q_2'$.  Applying Lemma~\ref{L:exist disjoint},
\begin{align*}
\sum_{(j,k) \in \scriptL} 2^{j+k} |\Omega^{j,k}| &\sim \sum_{(j,k) \in \scriptL} 2^{j+k} |\tilde\Omega^{j,k}| \lesssim \sum_{(j,k) \in \scriptL} 2^{j+k} |\pi_1(\tilde\Omega^{j,k})|^{1/p_1}|\pi_2(\tilde\Omega^{j,k})|^{1/p_2} \\
&\lesssim \bigl(\sum_{(j,k) \in \scriptL} 2^{jq_1} |\pi_1(\tilde\Omega^{j,k})|^{q_1/p_1}\bigr)^{1/q_1} \bigl(\sum_{(j,k) \in \scriptL} 2^{kq_2} |\pi_2(\tilde\Omega^{j,k})|^{q_2/p_2}\bigr)^{1/q_2}\\
&\lesssim \eta_1^{1/p_1-1/q_1}\eta_2^{1/p_2-1/q_2} \\
&\qquad\qquad \times \bigl(\sum_{(j,k) \in \scriptL} 2^{jp_1} |\pi_1(\tilde\Omega^{j,k})|\bigr)^{1/q_1} \bigl(\sum_{(j,k) \in \scriptL} 2^{kp_2} |\pi_2(\tilde\Omega^{j,k})|\bigr)^{1/q_2}\\
&\lesssim \log\eps^{-1} \eta_1^{1/p_1-1/q_1}\eta_2^{1/p_2-1/q_2} \bigl(\sum_j 2^{jp_1} |E_1^j|\bigr)^{1/q_1} \bigl(\sum_k 2^{kp_2} |E_2^k|\bigr)^{1/q_2}\\
&\lesssim \log\eps^{-1} \eta_1^{1/p_1-1/q_1}\eta_2^{1/p_2-1/q_2}.
\end{align*}
Combining this estimate with \eqref{E:pos eps neg eta} gives \eqref{E:sum on L}, completing the proof of Proposition~\ref{P:local strong type}, conditional on Lemma~\ref{L:exist disjoint}.
\end{proof}

We turn to the proof of Lemma~\ref{L:exist disjoint}.  We will only prove \eqref{E:disjoint E1s}, and we will take care that our argument can be adapted to prove \eqref{E:disjoint E2s} by interchanging the indices.  (The roles of $\pi_1$ and $\pi_2$ are not \textit{a priori} symmetric, because their roles in defining the weight $\rho$ are not symmetric.)  The argument is somewhat long and technical, so we start with a broad overview.  

Assume that \eqref{E:disjoint E1s} fails.  By Proposition~\ref{P:quasiex}, the $\tilde\Omega^{j,k}$ can be well approximated as the images of ellipsoids (the $Q^I_{\alpha^{j,k}}$) under polynomials of bounded degree (the $\Phi^I_{x^{j,k}}$).  The definition of $\scriptL$ ensures that the $\alpha^{j,k}$, and hence the radii of these ellipsoids, live at many different dyadic scales (this is where the minimality condition in Proposition~\ref{P:quasiex} will be used).  On the other hand, the projections $\pi_1(\tilde\Omega^{j,k})$ must have a large degree of overlap (otherwise, the volume of the union would bound the sum of the volumes).  In particular, we can find a large number of $\tilde\Omega^{j,k}$ that all have essentially the same projection.  These $\tilde\Omega^{j,k}$ all lie along a single integral curve of $X_1$.  The shapes of the $\tilde\Omega^{j,k}$ are determined by widely disparate parameters, the $\alpha^{j,k}$, and polynomials, the $\Phi^I_{x^{j,k}}$.  We can take $x^{j,k} = e^{t^{j,k}X_1}(x_0)$, for a fixed $x_0$, and we use the condition that the projections are all essentially the same to prove that there exists an associated polynomial $\gamma:\R \to \R^n$ that is transverse to its derivative $\gamma'$ at more scales than Lemma~\ref{L:gamma || gamma'} allows.  

We begin by making precise the assertion that many $\tilde\Omega^{j,k}$ must have essentially the same projection.  The main step is an elementary lemma.

\begin{lemma} \label{L:intersect M}
Let $\{E^k\}$ be a collection of measurable sets, and define $E:=\bigcup_k E^k$.  Then for each integer $M \geq 1$,
\begin{equation} \label{E:intersect M}
\sum_k |E^k| \lesssim_M |E| + |E|^{\frac{M-1}M}\bigl(\sum_{k_1 < \cdots < k_M} |E^{k_1} \cap \cdots \cap E^{k_M}|\bigr)^{\frac1M}.
\end{equation}
\end{lemma}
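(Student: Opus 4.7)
The plan is to introduce the multiplicity function $f(x) := \sum_k \1_{E^k}(x)$, which is supported on $E$ and takes nonnegative integer values. By Fubini (or direct expansion),
\begin{equation*}
\sum_k |E^k| = \int_E f(x)\,dx, \qquad \sum_{k_1 < \cdots < k_M} |E^{k_1} \cap \cdots \cap E^{k_M}| = \int_E \binom{f(x)}{M}\,dx,
\end{equation*}
so the task reduces to bounding $\int_E f\,dx$ by $|E|$ plus a suitable multiple of $\bigl(\int_E \binom{f}{M}\,dx\bigr)^{1/M}$.

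First I would split $E$ into $\{f \leq 2M\}$ and $\{f > 2M\}$. On the first piece, the trivial estimate $\int_{\{f \leq 2M\}} f\,dx \leq 2M|E|$ produces the first term on the right-hand side of \eqref{E:intersect M}. On the second piece, the elementary inequality $\binom{f}{M} \geq (f/2)^M/M!$, valid because $f - j \geq f/2$ for $0 \leq j \leq M-1$ whenever $f \geq 2M$, rearranges to the pointwise bound $f \lesssim_M \binom{f}{M}^{1/M}$, so
\begin{equation*}
\int_{\{f > 2M\}} f\,dx \lesssim_M \int_E \binom{f}{M}^{1/M}\,dx.
\end{equation*}
One application of H\"older's inequality with conjugate exponents $M/(M-1)$ and $M$ then bounds the right-hand side by $|E|^{(M-1)/M}\bigl(\int_E \binom{f}{M}\,dx\bigr)^{1/M}$, which, combined with the identity for the sum of $M$-fold intersections above, is precisely the second term in \eqref{E:intersect M}.

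Each step is elementary, and I do not anticipate any real obstacle. The only minor subtlety is choosing the threshold to be $2M$ rather than $M$, so that $\binom{f}{M}$ is genuinely comparable to $f^M$ on the large-multiplicity set; with this choice the implicit constants depend cleanly on $M$ alone, as the statement demands.
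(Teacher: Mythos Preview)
Your proof is correct and is genuinely different from the paper's argument. The paper proceeds by applying H\"older to $\int_E f$ to obtain $|E|^{(M-1)/M}\bigl(\int_E f^M\bigr)^{1/M}$, then expands $f^M$ as a sum over all (not just increasing) $M$-tuples $(k_1,\ldots,k_M)$; the diagonal contributions with repeated indices are $i$-fold intersections with $i<M$, and the paper disposes of these by induction on $M$, reducing them successively back to the $M$-fold term plus $|E|$. Your approach replaces this induction entirely: by recognizing at the outset that $\int_E \binom{f}{M}$ is \emph{exactly} the sum of $M$-fold intersections, you sidestep the diagonal terms, and the split at the threshold $f=2M$ cleanly separates the regime where $\binom{f}{M}\sim_M f^M$ from the regime absorbed by $|E|$. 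Your argument is shorter and more transparent about the constants; the paper's route has the minor advantage of making the $M=2$ case (the one historically used in \cite{ChQex}) explicit as a base step, but otherwise your version is at least as good.
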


\begin{proof}[Proof of Lemma~\ref{L:intersect M}]
We review the argument in the case $M=2$, which amounts to a rephrasing of an argument from \cite{ChQex}.  By Cauchy--Schwarz,
\begin{align*}
\sum_k |E^k| &= \int_E \sum_k \chi_{E_k} 
\leq |E|^{\frac12}\bigl(\int_E |\sum_k \chi_{E_k}|^2\bigr)^{\frac12} \\
&= |E|^{\frac12}\bigl(\sum_k |E_k| + 2\sum_{k_1 < k_2} |E_{k_1} \cap E_{k_2}|\bigr)^{\frac12}\\
&\leq \tfrac12 \sum_k |E_k| +\tfrac12 |E| + 2|E|^{\frac12}\bigl(\sum_{k_1 < k_2} |E^{k_1} \cap E^{k_2}|\bigr)^{\frac12},
\end{align*}
and inequality \eqref{E:intersect M} follows by subtracting $\tfrac12 \sum_k |E^k|$ from both sides.  

Now to the case of larger $M$.  Arguing analogously to the $k=2$ case implies that
\begin{equation} \label{E:intersect less M}
\sum_k |E^k| \lesssim_M |E|^{\frac{M-1}M}\bigl(\sum_{i=1}^M \sum_{k_1 < \ldots < k_i} |\bigcap_{l=1}^i E^{k_l}|\bigr)^{\frac1M}.
\end{equation}
Suppose that \eqref{E:intersect M} is proved for $2,\ldots,M-1$.  Let $1 < i < M$.  For fixed $k_1 < \cdots < k_{i-1}$, 
\begin{align*}
&\sum_{k_i} |E^{k_1} \cap \cdots \cap E^{k_i}|
 \lesssim_M |E^{k_1} \cap \cdots \cap E^{k_{i-1}}| \\
 &\qquad\qquad + |E^{k_1} \cap \cdots \cap E^{k_{i-1}}|^{\frac{M-i-1}{M-i}}\bigl( \sum_{k_i < \cdots < k_M}|E^{k_1} \cap \cdots \cap E^{k_M}|\bigr)^{\frac1{M-i-1}}\\
& \lesssim_M |E^{k_1} \cap \cdots \cap E^{k_{i-1}}| + \sum_{k_i < \cdots < k_M}|E^{k_1} \cap \cdots \cap E^{k_M}|.
\end{align*}
By induction and \eqref{E:intersect less M},
$$
\sum_k |E^k| \lesssim_M |E|^{\frac{M-1}M} \bigl(\sum_k |E^k| + \sum_{k_1 < \cdots < k_M} |E^{k_1} \cap \cdots \cap E^{k_M}|\bigr)^{\frac1M},
$$
which implies \eqref{E:intersect M}.
\end{proof}

Our next goal is to reduce the proof of Lemma~\ref{L:exist disjoint}, specifically, the proof of \eqref{E:disjoint E1s} to the following.

\begin{lemma} \label{L:M projections are disjoint}
For $M>M(N)$ sufficiently large and each $A > 0$, there exists $B > 0$ such that for all $0 < \delta \leq \eps$, if $j_0 \in \Z$ and $\scriptK \subseteq \Z$ is a $(B \log \delta^{-1})$-separated set with cardinality $\#\scriptK \geq M$ and $\{j_0\} \times \scriptK \subseteq \scriptL$, then
\begin{equation} \label{E:M projections are disj}
|\bigcap_{k \in \scriptK} \pi_1(\tilde\Omega^{j_0,k})| < A^{-1}\delta^A 2^{-j_0 p_1} \eta_1.
\end{equation}
\end{lemma}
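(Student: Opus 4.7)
The plan is to argue by contradiction. Suppose that for some $j_0$ and some $(B\log\delta^{-1})$-separated $\scriptK\subseteq\Z$ with $\#\scriptK\geq M$ and $\{j_0\}\times\scriptK\subseteq\scriptL$, the intersection $G:=\bigcap_{k\in\scriptK}\pi_1(\tilde\Omega^{j_0,k})$ has measure at least $A^{-1}\delta^A 2^{-j_0 p_1}\eta_1$. For each $y\in G$ and each $k\in\scriptK$, hypothesis (ii) of Theorem~\ref{T:main} tells us that the fiber $\pi_1^{-1}(y)$ is contained in a single $X_1$-integral curve; picking a preimage $y^k\in\tilde\Omega^{j_0,k}$ for each $k$, all the $y^k$ lie on that common curve. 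By Proposition~\ref{P:quasiex}, each $y^k$ lies in some ball $B^{I_\sigma}(x^k_\sigma,c\eps^C\alpha^{j_0,k})$ with $x^k_\sigma\in\scriptA^{j_0,k}$; the permutation-invariant containment in part~(ii) lets me reparametrize these balls around any interior point. After further pigeonholing---at multiplicative cost $\lessapprox 1$ and thus absorbable into $\delta^A$ once $B$ is large---I obtain a single $X_1$-orbit $\gamma(t):=e^{tX_1}(x_0)$ and parameters $t^k\in\R$ such that, setting $x^k:=\gamma(t^k)$, all the balls $B^I(x^k,c\eps^C\alpha^{j_0,k})$ are centered on $\gamma$ and their union still carries a subset of $\R^n$ of measure $\gtrsim|G|\cdot\min_k\alpha_1^{j_0,k}$.

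A direct computation from the definitions of $\scriptL(\eps,\eta_1,\eta_2)$ yields, for fixed $j_0$,
$$
\alpha_1^{j_0,k}\sim C_1(j_0,\eps,\eta)\,2^{-k},\qquad \alpha_2^{j_0,k}\sim C_2(j_0,\eps,\eta)\,2^{(p_2-1)k},
$$
where $p_2>1$. Hence, once $B$ is large relative to $A$ and $N$, the parameters $\alpha^{j_0,k}$, and all dyadic quantities built from them, live at scales separated by factors vastly larger than any fixed power of $\delta^{-1}$. I then analyze the projected balls $\pi_1(B^I(x^k,c\eps^C\alpha^{j_0,k}))$. Since $d\pi_1\cdot X_1\equiv 0$ on $\{\rho_\beta\sim 1\}$, the composition $\pi_1\circ\gamma$ is locally constant, so every such projection contains the common point $y_0:=\pi_1(x_0)$. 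A projected analogue of Lemmas~\ref{L:Phi^I one to one} and~\ref{L:doubling} identifies each projection as an approximate Carnot--Carath\'eodory ball in $\R^{n-1}$ centered at $y_0$, whose anisotropic scales $(\alpha^{j_0,k})^{\deg w}$ are attached to the pushforwards $d\pi_1\cdot X_w(\gamma(t^k))$ for $w\in I\setminus\{(1)\}$; minimality of $I$ guarantees $(2),(1,2)\in I$, so these directions really span.

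The heart of the argument is to convert the assumed large intersection of these projected CC-balls, at many widely disparate scales, into a transversality statement for a polynomial curve. Using the polynomial structure of $\Phi^I_{\gamma(t)}$ and pushing forward by $\pi_1$, I will construct an auxiliary polynomial map built from $\gamma$ and the iterated flows in $I$ so that, whenever two projected balls at scales $\alpha^{j_0,k}\ll\alpha^{j_0,k'}$ overlap substantially, this polynomial curve must be transverse to its own derivative (in the appropriate anisotropic sense) at the scale prescribed by $\alpha^{j_0,k}$. Aggregating across $\scriptK$ then yields such transversality at $\geq M-1$ widely separated dyadic scales, which for $M$ larger than the bound $C(N)$ from Lemma~\ref{L:gamma || gamma'} in the polynomial appendix is a contradiction. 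Choosing $M$ above that bound and $B$ large enough to make all pigeonholing losses $o(\delta^A)$ completes the argument.

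The main obstacle is the last step: precisely identifying the polynomial curve whose transversality behavior is forced by the geometric overlap of the projected CC-balls, and quantifying the dependence on $\delta$ of this transversality so that it cleanly triggers Lemma~\ref{L:gamma || gamma'}. This demands developing, on the quotient $\R^{n-1}$, a projected analogue of the CC-ball calculus in Section~\ref{S:CC balls}.
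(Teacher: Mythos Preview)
Your outline has the right architecture and the right endgame: reduce to balls centered on a common $X_1$-orbit, exploit the $k$-dependence of $\alpha_2^{j_0,k}$ to force a polynomial curve to be transverse to its derivative at many widely separated scales, and contradict Lemma~\ref{L:gamma || gamma'}. But you have correctly flagged, and not resolved, the crux: you have not produced the polynomial curve, and your proposed route to it---building a ``projected CC-ball calculus'' on $\R^{n-1}$---is both vague and not what the paper does. The obstruction is that $\pi_1$ need not be polynomial, so pushing the balls $B^I(x^k,\cdot)$ forward by $\pi_1$ gives objects with no evident algebraic structure; a direct analogue of Lemmas~\ref{L:Phi^I one to one}--\ref{L:doubling} downstairs is not available.

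The paper's solution is to manufacture a local substitute for $\pi_1$. Fix one $k_0\in\scriptK$, set $x_0=x^{j_0k_0}$, order $I$ so that $w_n=(1)$, and build a ``cylinder'' $\scriptC=\Phi_{x_0}^I(U)$ by letting the $t_n$-variable run over all of $\R$. On the relevant half $\scriptC_+$ of this cylinder, $\Phi_{x_0}^I$ is one-to-one, and one defines $\tilde\pi_1$ to be the first $n-1$ coordinates of $(\Phi_{x_0}^I)^{-1}$ (suitably rescaled). One then proves $\pi_1|_{\scriptC_+}=F\circ\tilde\pi_1$ for a bounded-to-one local diffeomorphism $F$ with $|\det DF|\approx|E_1^{j_0}|$, so the large intersection hypothesis transfers from $\pi_1$ to $\tilde\pi_1$. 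The polynomial curve is then
\[
\gamma(t):=D\tilde\pi_1(e^{tX_1}(y^0))\,X_2(e^{tX_1}(y^0))=D\tilde\pi_1(y^0)\,\bigl(De^{-tX_1}(e^{tX_1}(y^0))\,X_2(e^{tX_1}(y^0))\bigr),
\]
which is polynomial because $\tilde\pi_1$ is constant along $X_1$-flows (so $D\tilde\pi_1(y^0)$ is a fixed matrix) and $\phi_{tX_1}^*X_2$ is polynomial in $t$ by hypothesis~(i). A sequence of lemmas then shows $|\gamma(t^k)|\approx(\alpha_2^{j_0k})^{-1}$ and $|\gamma(t^k)\wedge\gamma'(t^k)|\approx|\gamma(t^k)||\gamma'(t^k)|$, yielding the contradiction. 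This factorization $\pi_1=F\circ\tilde\pi_1$ through a map built from $(\Phi_{x_0}^I)^{-1}$ is the missing idea in your proposal; without it the ``polynomial curve'' remains undefined and the argument does not close.
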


\begin{proof}[Proof of Lemma~\ref{L:exist disjoint}, conditional on Lemma~\ref{L:M projections are disjoint}]
We will only prove inequality \eqref{E:disjoint E1s}.  The obvious analogue of Lemma~\ref{L:M projections are disjoint}, which has the same proof as Lemma~\ref{L:M projections are disjoint}, implies inequality \eqref{E:disjoint E2s}.  

Fix $M=M(N)$ sufficiently large to satisfy the hypotheses of Lemma~\ref{L:M projections are disjoint} and fix $A > Mp_1$.  Now fix $B=B(M,N,A)$ as in the conclusion of Lemma~\ref{L:M projections are disjoint}.  Let $\delta := \min\{\delta_0,\eps\}$, with $\delta_0$ to be determined, and let $\scriptK_0 \subseteq \Z$ be a finite $(B \log\delta^{-1})$-separated set with $\{j_0\} \times \scriptK_0 \subseteq \scriptL$.  By Lemma~\ref{L:intersect M}, Lemma~\ref{L:M projections are disjoint}, then the approximation $\binom{\#\scriptK_0}{M} \sim_M (\#\scriptK_0)^M$ and the definition of $\scriptL$,
\begin{equation} \label{E:bound sum pi1(Omegak) intersection}
\begin{aligned}
\sum_{k \in \scriptK_0}|\pi_1(\tilde\Omega^{j_0,k})| &\lesssim_M |E_1^{j_0}| + |E^{j_0}_1|^{\frac{M-1}M}(\sum_{\scriptK \subseteq \scriptK_0; \#\scriptK=M} |\bigcap_{k \in \scriptK} \pi_1(\tilde\Omega^{j_0k})|)^{\frac1M}\\
&\lesssim_M |E^{j_0}_1| + |E^{j_0}_1|^{\frac{M-1}M} \#\scriptK_0 (A^{-1}\delta^A |E^{j_0}_1|)^{\frac1M}.
\end{aligned}
\end{equation}
Quasiextremality and the restricted weak type inequality give
$$
\delta |E^{j_0}_1|^{\frac1{p_1}}|E^k_2|^{\frac1{p_2}} \lesssim |\Omega^{j_0k}| \sim |\tilde\Omega^{j_0k}| \lesssim |\pi_1(\tilde\Omega^{j_0k})|^{\frac1{p_1}}|E^k_2|^{\frac1{p_2}}, \quad k \in \scriptK_0
$$
whence 
\begin{equation} \label{E:sum pi1(Omegak) big}
\sum_{k \in \scriptK_0} |\pi_1(\tilde\Omega^{j_0k})| \gtrsim \#\scriptK_0 \delta^{p_1}|E^{j_0}_1|.
\end{equation}
For $\delta_0 = \delta_0(p_1,A,M)$ sufficiently small, $\delta^{p_1} > C_M (A^{-1}\delta^A)^{\frac1M}$, with $C_M$ as large as we like, so inserting \eqref{E:sum pi1(Omegak) big} into \eqref{E:bound sum pi1(Omegak) intersection} implies
\begin{equation} \label{E:sum pi1(Omegak) small}
\sum_{k \in \scriptK_0}|\pi_1(\tilde\Omega^{j_0,k})| \lesssim_M |E^{j_0}_1|.
\end{equation}
Since $\scriptK_0$ was arbitrary and $p_1,M,A,B$ all ultimately depend only on $N$ alone, \eqref{E:sum pi1(Omegak) small} implies \eqref{E:disjoint E1s}.
\end{proof}

It remains to prove Lemma~\ref{L:M projections are disjoint}.  

\begin{lemma} \label{L:M balls}
For $M = M(N)$ sufficiently large and each $A > 0$, there exists $B > 0$ such that the following holds for all $0 < \delta \leq \eps$.  Fix $j_0 \in \Z$ and let $\scriptK \subseteq \Z$ be a $(B\log \delta^{-1})$-separated set with cardinality $\#\scriptK = M$ and $\{j_0\} \times \scriptK \subseteq \scriptL$.  Let $x^{j_0k} \in \tilde\Omega^{j_0k}$, $k \in \scriptK$.  Then
\begin{equation} \label{E:M balls}
|\bigcap_{k \in \scriptK} \pi_1(\bigcap_{\sigma \in S_n} B^{I_\sigma}(x^{j_0,k},c\delta^C \alpha^{j_0,k}))| < A^{-1}\delta^A 2^{-j_0p_1}\eta_1.
\end{equation}
\end{lemma}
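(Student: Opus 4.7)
I would reduce (\ref{E:M balls}) to a statement about a single polynomial curve in $\R^n$ and invoke the transversality lemma (Lemma~\ref{L:gamma || gamma'}) advertised in the introduction.

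First, I move all the centers $x^{j_0,k}$ onto a single $X_1$-integral curve. Fix a permutation $\sigma\in S_n$ with $w_{\sigma(n)}=(1)$; this is permitted because the minimal $n$-tuple $I$ must contain the word $(1)$. Since the fibers of $\pi_1$ are $X_1$-integral curves, flowing the center $x^{j_0,k}$ along $X_1$ changes neither $\pi_1(x^{j_0,k})$ nor, up to a $\delta^{-C'}$ blow-up of parameters (absorbed into the constants via Proposition~\ref{P:quasiex}(ii)), the shape of $B^{I_\sigma}(x^{j_0,k},c\delta^C\alpha^{j_0,k})$. So I may write $x^{j_0,k}=e^{t_k X_1}(x_0)$ for a common base point $x_0\in\R^n$ and real parameters $\{t_k\}_{k\in\scriptK}$.

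Second, I recast the projection as a polynomial. Because $X_1=X_{w_{\sigma(n)}}$ is the outermost flow, the map
$$
\psi^k(t')\ :=\ \pi_1\bigl(\Phi^{I_\sigma}_{x^{j_0,k}}(t',0)\bigr),\quad t'=(t_1,\ldots,t_{n-1}),
$$
is independent of $t_n$, a polynomial of bounded degree on the $(n-1)$-dimensional slice of $Q^{I_\sigma}_{c\delta^C\alpha^{j_0,k}}$ whose image is exactly $\pi_1\bigl(B^{I_\sigma}(x^{j_0,k},c\delta^C\alpha^{j_0,k})\bigr)$, with a uniformly controlled Jacobian by Proposition~\ref{P:quasiex}(ii). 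Nilpotency of $\mathfrak g$ lets me expand $e^{t_k X_1} e^{sX_w} e^{-t_k X_1}$ as a polynomial in $(t_k,s)$, so $\psi^k(t')$ can be rewritten as $\pi_1\bigl(\Gamma(t_k;t')\bigr)$ for a single polynomial $\Gamma$ of bounded degree. The $(B\log\delta^{-1})$-separation of $\scriptK$ forces the vectors $\alpha^{j_0,k}$, and hence the boxes $Q^{I_\sigma}_{c\delta^C\alpha^{j_0,k}}$, to live at $M$ pairwise $\delta^{-cB}$-separated dyadic scales.

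Third, I derive a one-dimensional contradiction. Assume (\ref{E:M balls}) fails. Pigeonholing produces a positive-measure set of common-image points $y$ with preimages $\psi^k(t'_k)=y$ for every $k$. Applying Proposition~\ref{P:poly refinement} to one-dimensional slices of $\Gamma$, and exploiting the quasi-homogeneous shape of the boxes, translates this coincidence into the statement that $\gamma(s):=\pi_1\bigl(e^{sX_2}(x_0)\bigr)$ and its derivative $\gamma'(s)$ are parallel to precision $\delta^{c_1}$ at each of $M$ distinct dyadic scales $|s|\sim\alpha^{j_0,k}_2$. A polynomial curve of degree $\leq N$ can do this on at most a bounded number (depending only on $N$) of well-separated scales by Lemma~\ref{L:gamma || gamma'}. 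Choosing $M=M(N)$ larger than that bound and $B=B(A,N)$ so that the scale separation $\delta^{-cB}$ defeats the slack $\delta^A$ gives the contradiction.

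The main obstacle is the third step: converting the multi-dimensional overlap of the polynomial images $\psi^k$ in $\R^{n-1}$ into a clean one-dimensional transversality violation along a single curve $\gamma$. The anisotropic shape of $Q^{I_\sigma}_{\alpha^{j_0,k}}$ (sides scaling like $\alpha^{\deg w_{\sigma(i)}}$) means that the $X_2$-contribution to $\psi^k$ must be carefully separated from the higher-commutator contributions, and this is precisely where the explicit nilpotent commutator expansion of the second step is essential.
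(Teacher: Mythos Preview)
Your outline has the right overall shape---reduce to a one-variable polynomial curve and invoke Lemma~\ref{L:gamma || gamma'}---but there is a genuine gap in Step~2 that propagates to Step~3: \emph{the map $\pi_1$ is not assumed to be a polynomial}. Only the exponential flows $e^{tX}$ are polynomial; nothing in the hypotheses of Theorem~\ref{T:main} forces $\pi_1$ itself to be polynomial, and the paper explicitly flags this (``The potential failure of $\pi_1$ to be a polynomial presents a technical complication''). Consequently your $\psi^k(t')=\pi_1\bigl(\Phi^{I_\sigma}_{x^{j_0,k}}(t',0)\bigr)$ need not be a polynomial of bounded degree, and your curve $\gamma(s)=\pi_1(e^{sX_2}(x_0))$ need not be polynomial either, so Lemma~\ref{L:gamma || gamma'} cannot be applied to it.

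The paper's workaround is substantial and is precisely the content of Lemmas~\ref{L:scriptC}--\ref{L:wedges approx 1}. One builds a cylinder $\scriptC=\Phi_{x_0}^I(U)$ (with $w_n=(1)$), decomposes it into pieces $\scriptC_\pm$ on which $\Phi_{x_0}^I$ is injective, and defines a \emph{substitute} projection $\tilde\pi_1$ as the first $n-1$ coordinates of $(\Phi_{x_0}^I)^{-1}$ (suitably rescaled). Then $\pi_1=F\circ\tilde\pi_1$ on $\scriptC_\pm$ for a controlled local diffeomorphism $F$ (Lemma~\ref{L:tilde pi}), which is what lets one transfer the measure bound from $\pi_1$ to $\tilde\pi_1$. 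The correct polynomial curve is then
\[
\gamma(t)\;=\;D\tilde\pi_1\bigl(e^{tX_1}(y^0)\bigr)\,X_2\bigl(e^{tX_1}(y^0)\bigr)\;=\;D\tilde\pi_1(y^0)\,De^{-tX_1}\bigl(e^{tX_1}(y^0)\bigr)\,X_2\bigl(e^{tX_1}(y^0)\bigr),
\]
which \emph{is} polynomial in $t$ because $\phi_{-tX_1}^*X_2$ is (nilpotency), and whose derivative is $\gamma'(t)=D\tilde\pi_1(e^{tX_1}(y^0))X_{12}(e^{tX_1}(y^0))$. Note this is parametrized along the $X_1$-flow, not the $X_2$-flow as in your Step~3. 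The argument then shows \emph{transversality} $|\gamma(t^k)\wedge\gamma'(t^k)|\approx|\gamma(t^k)||\gamma'(t^k)|$ at each $t^k$ (not parallelism as you wrote), with $|\gamma(t^k)|\approx(\alpha_2^{j_0k})^{-1}$ living at $M$ separated scales; Lemma~\ref{L:gamma || gamma'} then forces parallelism at some scale, a contradiction. Establishing that transversality (Lemma~\ref{L:wedges approx 1}) requires knowing that both $(2)$ and $(1,2)$ lie in the minimal $n$-tuple $I$ and that the rescaled pushforwards $D\tilde\pi_1\cdot(\alpha^{j_0k})^{\deg w_i}X_{w_i}$ are $\lessapprox 1$ on a large set (Lemma~\ref{L:tilde Gk}), which in turn rests on the cylinder decomposition. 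Your proposal skips this entire layer.
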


We note that once the lemma holds for $M=M(N)$, it immediately holds for all $M>M(N)$ as well.  

\begin{proof}[Proof of Lemma~\ref{L:M projections are disjoint}, conditional on Lemma~\ref{L:M balls}]
By definition \eqref{E:def tilde Omega}, each $\tilde\Omega^{j_0k}$ is covered by $C\eps^{-C}$ balls of the form $\bigcap_{\sigma \in S_n} B^{I_\sigma}(x,c\eps^C \alpha^{j_0,k})$; in fact, by the proof of Proposition~\ref{P:quasiex}, it is also covered by $C\delta^{-C}$ balls $\bigcap_{\sigma \in S_n} B^{I_\sigma}(x,c\delta^C \alpha^{j_0,k})$, for each $0 < \delta \leq \eps$.  Thus  $\bigcap_{k \in \scriptK} \pi_1(\tilde\Omega^{j_0,k})$ is covered by $(C\delta^{-C})^M$ $M$-fold intersections of projections of balls, so \eqref{E:M balls} (with a larger value of $A$) implies \eqref{E:M projections are disj}.
\end{proof}

The remainder of the section will be devoted to the proof of Lemma~\ref{L:M balls}.  We will give the proof when $\delta = \eps$; since an $\eps$-quasiextremal $\Omega^{j_0k}$ is also $\delta$-quasiextremal for every $0 < \delta < \eps$, all of our arguments below apply equally well in the case $\delta < \eps$.  (We recall that allowing the more general parameter $\delta$ instead of $\eps$ gave us slightly more technical flexibility in the proof of Lemma~\ref{L:exist disjoint} from Lemma~\ref{L:M projections are disjoint}.)

The potential failure of $\pi_1$ to be a polynomial presents a technical complication.  (Coordinate changes are not an option in the non-minimal case.)  By reordering the words in $I=(w_1,\ldots,w_n)$, we may assume that $w_n=(1)$.  Fix $k_0 \in \scriptK$, and set $x_0 = x^{j_0k_0}$.  We define a ``cylinder''
$$
\scriptC:= \Phi_{x_0}^I(U), \qquad U:=\{(t',t_n) : (t',0) \in Q^I_{c\eps^C\alpha^{j_0k_0}}\}.
$$
Set $U_0 := \{(t',0) \in U\}$ and define
\begin{align*}
U_+ &:= \{t \in U : t_n > 0 \ctc{and, for all} 0 < s \leq t_n,\: \Phi_{x_0}^I(t',s) \notin \Phi_{x_0}^I(\overline U_0)\},\\
U_- &:= \{t \in U : t_n < 0 \ctc{and, for all} 0 > s \geq t_n,\: \Phi_{x_0}^I(t',s) \notin \Phi_{x_0}^I(\overline U_0)\},
\end{align*}
and $\scriptC_0 := \Phi_{x_0}^I(\overline U_0)$, $\scriptC_{\pm}:=\Phi_{x_0}^I(U_{\pm})$.  

\begin{lemma} \label{L:scriptC}
The map $\Phi_{x_0}^I$ is nonsingular, with 
$$
|\det D\Phi_{x_0}^I| \approx \lambda_I(x_0) \approx (\alpha^{j_0k_0})^{b-\deg I},
$$
on $U$.  The sets $U_\pm$ are open, and $\Phi_{x_0}^I$ is one-to-one on each of them.  Finally, 
$$
\scriptC \subseteq \scriptC_+ \cup \scriptC_- \cup \scriptC_0 \cup \scriptC_\partial,
$$
where $\scriptC_\partial:=\Phi^I_{x_0}(\partial U)$.   
\end{lemma}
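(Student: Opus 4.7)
The plan is to handle the four claims in order, using throughout the fact that $X_{w_n}=X_1$ is divergence-free (Lemma~\ref{L:div free}) to exploit the cylindrical structure of $\scriptC$ along the $X_1$-flow direction.

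For the Jacobian bound, I would first observe that $\Phi_{x_0}^I(t',t_n)=e^{t_n X_1}\circ\Phi_{x_0}^I(t',0)$. A short computation, combining the invariance of $X_1$ under its own flow ($De^{t_n X_1}\cdot X_1=X_1\circ e^{t_n X_1}$) with the identity $\det De^{t_n X_1}\equiv 1$, yields
$$\det D\Phi_{x_0}^I(t',t_n)=\det D\Phi_{x_0}^I(t',0),$$
so it suffices to establish the bound on $U_0$. Since $x_0=x^{j_0k_0}\in\tilde\Omega^{j_0k_0}$ sits inside some ball $B^I(x;c\eps^C\alpha)$ with $x\in\scriptA^{j_0k_0}$, Proposition~\ref{P:quasiex}(ii) (taking $y=x_0$ and $\sigma=\sigma'=\mathrm{id}$) gives $|\det D\Phi_{x_0}^I|\approx|\lambda_I(x_0)|\approx\alpha^{b-\deg I}$ throughout $Q^I_{c'\eps^{C'}\alpha}\supseteq U_0$, where the inclusion uses that the constants in Lemma~\ref{E:cover by minimal Phi} may be chosen so that $c\eps^C\leq c'\eps^{C'}$.

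For openness of $U_\pm$, I would introduce $F:=(\Phi_{x_0}^I)^{-1}(\Phi_{x_0}^I(\overline{U_0}))$, which is closed since $\overline{U_0}$ is compact, and the first-return time $\tau(t'):=\inf\{s>0:(t',s)\in F\}$, interpreted as $+\infty$ when the set is empty. A short argument shows $\tau$ is lower semi-continuous: if $t'_k\to t'$ with $\tau(t'_k)\to L<\tau(t')$, then closedness of $F$ applied to $(t'_k,\tau(t'_k))\in F$ forces $(t',L)\in F$, contradicting the definition of $\tau(t')$. Since $F$ is closed, one then verifies
$$U_+=\{(t',t_n):(t',0)\in Q^I_{c\eps^C\alpha},\,0<t_n<\tau(t')\},$$
which is open in $\R^n$; $U_-$ is handled symmetrically.

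Injectivity on $U_+$ follows from a flow trick: if $\Phi_{x_0}^I(t',t_n)=\Phi_{x_0}^I(\tau',\tau_n)$ with $0<t_n\leq\tau_n$ both in $U_+$, apply $e^{-t_n X_1}$ to obtain $\Phi_{x_0}^I(t',0)=\Phi_{x_0}^I(\tau',\tau_n-t_n)$; if $\tau_n>t_n$, the right-hand side lies in $\Phi_{x_0}^I(\overline{U_0})$, contradicting the defining condition for $U_+$ at $(\tau',\tau_n)$; hence $\tau_n=t_n$ and the injectivity on $Q^I_{c'\eps^{C'}\alpha}$ from Proposition~\ref{P:quasiex}(ii) gives $t'=\tau'$. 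Finally, for $\scriptC\subseteq\scriptC_+\cup\scriptC_-\cup\scriptC_0\cup\scriptC_\partial$, given $y=\Phi_{x_0}^I(t',t_n)\in\scriptC$ with (WLOG) $t_n>0$, set $s^*:=\sup\{s\in(0,t_n]:(t',s)\in F\}$, or $0$ when the set is empty. If $s^*=0$ then $(t',t_n)\in U_+$; otherwise, closedness of $F$ gives $(t',s^*)\in F$, whence $\Phi_{x_0}^I(t',s^*)=\Phi_{x_0}^I(\tau',0)$ for some $\tau'$ in the closed base box, and $y=\Phi_{x_0}^I(\tau',t_n-s^*)$. Depending on whether $t_n=s^*$, $\tau'$ lies on the boundary of the base box, or $\tau'$ is interior, $y$ lands in $\scriptC_0$, $\scriptC_\partial$, or (by maximality of $s^*$) $\scriptC_+$. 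The hardest step will be this last case analysis, where the strict versus non-strict inequalities defining $U_\pm$, $U_0$, and $\partial U$ have to be reconciled with openness/closedness of $F$; everything else becomes routine once the divergence-free reduction in the first step is in place.
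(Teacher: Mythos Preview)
Your proposal is correct and follows essentially the same approach as the paper's proof: the divergence-free reduction for the Jacobian, the flow trick for injectivity, and the last-return-time case analysis for the covering are all the same.  The only minor difference is that in the covering step the paper invokes the polynomial fact that the curve $s\mapsto\Phi_{x_0}^I(t',s)$ meets $\scriptC_0$ a bounded number of times (so a maximum exists directly), whereas you use closedness of $F$ to get that the supremum is attained; both work.
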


\begin{proof}[Proof of Lemma~\ref{L:scriptC}]
Since $X_1$ is divergence-free, 
$$
\det D\Phi_{x_0}^I(t) = \det D\Phi_{x_0}^I(t',0),
$$
and thus the conclusions about the size of this Jacobian determinant follow from Proposition~\ref{P:quasiex}.

By conclusion (ii) of Proposition~\ref{P:quasiex} and continuity of $\Phi_{x_0}^I$, we see that $U_\pm$ is an open set containing
$$
\{(t',t_n) : (t',0) \in Q_{c\eps^C\alpha^{j_0k_0}}, 0 < \pm t_n < c\eps^C\alpha^{j_0k_0}_1\}.
$$

Suppose $t,u \in U_+$, $\Phi_{x_0}^I(t) = \Phi_{x_0}^I(u)$, and $u_n \leq t_n$.  Then $\Phi_{x_0}^I(t',t_n-u_n) = \Phi_{x_0}^I(u',0)$.  If $t_n=u_n$, then $t=u$, because $\Phi_{x_0}^I$ is one-to-one on $Q_{c\eps^C\alpha^{j_0k_0}} \ni (u',0),(t',0)$.  Otherwise, $(t',t_n-u_n) \in U_+$, so $\Phi_{x_0}^I(t',t_n-u_n) = \Phi_{x_0}^I(u',0)$ is impossible.  Thus $\Phi_{x_0}^I$ is indeed one-to-one on $U_\pm$.  

Finally, let $t \in U$, with $t_n > 0$ and $t \notin U_+$.  We need to show that $\Phi_{x_0}^I(t) \in \scriptC_+ \cup \scriptC_0 \cup \scriptC_\partial$.  The curve $\{\Phi_{x_0}^I(t',s):s \in \R\}$ intersects $\scriptC_0$ a bounded number of times, so, by the definition of $U_+$, there exists some maximal $0 < s \leq t_n$ such that $\Phi_{x_0}^I(t',s) = \Phi_{x_0}^I(u',0)$, for some $(u',0) \in \overline U_0$.  Thus $\Phi_{x_0}^I(t',t_n) = \Phi_{x_0}^I(u',t_n-s)$.  If $s=t_n$, $(u',0) \in \partial U$, or $(u',t_n-s) \in U_+$, we are done.  Otherwise, there exists some $0 < r < t_n-s$ and $(v',0) \in \overline U_0$ such that $\Phi_{x_0}^I(u',r) = \Phi_{x_0}^I(v',0)$, whence $\Phi_{x_0}(t',s+r) = \Phi_{x_0}^I(v',0)$, contradicting maximality of $s$.  
\end{proof}

On $U_{\pm}$, $\Phi_{x_0}^I$ has a smooth inverse, and we define a map $\tilde\pi_1$ on $\scriptC_{\pm}$ by
$$
\tilde\pi_1:=((\alpha^{j_0k_0})^{-\deg w_1} (\Phi_{x_0}^I)^{-1}_1,\ldots,(\alpha^{j_0k_0})^{-\deg w_{n-1}}(\Phi_{x_0}^I)^{-1}_{n-1}).
$$

\begin{lemma}\label{L:tilde pi}
Define 
$$
F(t'):=\pi_1 \circ \Phi_{x_0}^I \bigl((c\eps^C \alpha^{j_0k_0})^{\deg w_1}t_1,\cdots,(c\eps^C \alpha^{j_0k_0})^{\deg w_{n-1}}t_{n-1},0\bigr), \qquad |t'|<1.
$$
Then $F$ is a bounded-to-one local diffeomorphism satisfying $|\det DF| \approx |E^{j_0}_1|$ and $\pi_1|_{\scriptC_\pm} = F \circ \tilde\pi_1|_{\scriptC_\pm}$.  
\end{lemma}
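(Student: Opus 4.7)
The plan hinges on one key observation: since $w_n = (1)$, the identity $\Phi^I_{x_0}(t', t_n) = e^{t_n X_1}(\Phi^I_{x_0}(t', 0))$ places $\Phi^I_{x_0}(t', t_n)$ and $\Phi^I_{x_0}(t', 0)$ on the same $X_1$-integral curve for every $t_n$. Hypothesis~(ii) of Theorem~\ref{T:main}, together with the submersion property of $\pi_1$ on the relevant region (cf.\ the proof of Proposition~\ref{P:RWT}), forces $\pi_1$ to be constant along $X_1$-integral curves there, so $\pi_1 \circ \Phi^I_{x_0}(t', t_n) = \pi_1 \circ \Phi^I_{x_0}(t', 0)$ throughout $U$. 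Since $\Phi^I_{x_0}$ is injective on $U_\pm$ by Lemma~\ref{L:scriptC}, each $x \in \scriptC_\pm$ determines a unique preimage, and the factorization $\pi_1|_{\scriptC_\pm} = F \circ \tilde\pi_1|_{\scriptC_\pm}$ follows by direct substitution once the scaling factors in $F$ and $\tilde\pi_1$ are accounted for.

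To compute $|\det DF|$, I first evaluate the Jacobian of $F'(t') := \pi_1 \circ \Phi^I_{x_0}(t', 0)$ using the defining relation $X_1 = \star(d\pi_1^1 \wedge \cdots \wedge d\pi_1^{n-1})$, which yields the algebraic identity $\det(D\pi_1(v_1), \ldots, D\pi_1(v_{n-1})) = \det(v_1, \ldots, v_{n-1}, X_1)$ for any $v_i \in \R^n$. Taking $v_i = \partial_{t_i}\Phi^I_{x_0}(t', 0)$ and noting that $\partial_{t_n}\Phi^I_{x_0}(t', 0) = X_1(\Phi^I_{x_0}(t', 0))$ (because $w_n = (1)$) makes the two determinants coincide, so $|\det DF'(t')| = |\det D\Phi^I_{x_0}(t', 0)| \approx \alpha^{b - \deg I}$ by Lemma~\ref{L:scriptC}. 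The chain rule through the linear rescaling $t_i = (c\eps^C\alpha)^{\deg w_i} s_i$ relating $F$ to $F'$ contributes a factor $(c\eps^C\alpha)^{\deg I - (1, 0)}$ (using $w_n = (1)$ to compute $\sum_{i < n} \deg w_i = \deg I - (1,0)$), and the product simplifies to $|\det DF| \approx \alpha^{b - (1, 0)} = \alpha^b/\alpha_1 \approx |\Omega|/\alpha_1 = |E_1^{j_0}|$, the $\eps$-powers being absorbed into the $\approx$ convention.

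The nonvanishing of $\det DF$ immediately yields the local diffeomorphism property. For bounded-to-one, I note that $F(s') = F(u')$ precisely when $\Phi^I_{x_0}(\tau(s'), 0)$ and $\Phi^I_{x_0}(\tau(u'), 0)$ (writing $\tau(s')_i := (c\eps^C\alpha)^{\deg w_i} s_i$) lie on a common $X_1$-integral curve, equivalently, when $(\tau(s'), -s) \in (\Phi^I_{x_0})^{-1}(\Phi^I_{x_0}(\tau(u'), 0))$ for some $s \in \R$. Since $\Phi^I_{x_0}$ is a polynomial map of degree at most $N$ whose Jacobian is nonvanishing by Lemma~\ref{L:scriptC}, a Bezout-type argument (or Lemma~\ref{L:finite to one}) caps the cardinality of its generic fiber by a constant $C_N$, and hence $\#F^{-1}(y) \lesssim_N 1$. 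The principal subtlety to navigate is that $\pi_1$ need not itself be polynomial; the $X_1$-invariance of $\pi_1$ is what reduces the counting of $F$-preimages to a purely polynomial question about $\Phi^I_{x_0}$, where standard algebraic-geometric bounds apply.
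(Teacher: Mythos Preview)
Your proof is correct. The factorization $\pi_1 = F \circ \tilde\pi_1$ and the bounded-to-one argument match the paper's approach almost exactly: both reduce the counting of $F$-preimages to hypothesis~(ii) of Theorem~\ref{T:main} (same $\pi_1$-fiber $\Rightarrow$ same $X_1$-integral curve) and then to the polynomial preimage bound of Lemma~\ref{L:finite to one}, exploiting $\det D\Phi^I_{x_0} \neq 0$ on $U$ to ensure the relevant preimages are isolated.

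Where you genuinely diverge is in the Jacobian computation. The paper argues \emph{integrally}: it fixes $A \subseteq \{|t'|<1\}$, forms the set $B = \tilde\pi_1^{-1}(A) \cap \{|t_n| < c\eps^C\alpha_1\}$, computes $|B| \approx |\Omega^{j_0k_0}|\,|A|$ via the known size of $\det D\Phi^I_{x_0}$, and then applies the coarea formula to get $|F(A)| = |\pi_1(B)| \approx \alpha_1^{-1}|B| \approx |E_1^{j_0}|\,|A|$, from which the pointwise estimate on $\det DF$ follows by change of variables. You instead compute $\det DF$ \emph{pointwise} via the algebraic identity $\det(D\pi_1 v_1,\ldots,D\pi_1 v_{n-1}) = \pm\det(v_1,\ldots,v_{n-1},X_1)$, which is exactly the content of $X_1 = \star(d\pi_1^1 \wedge \cdots \wedge d\pi_1^{n-1})$; since $\partial_{t_n}\Phi^I_{x_0}(t',0) = X_1(\Phi^I_{x_0}(t',0))$, this collapses $|\det DF'|$ directly to $|\det D\Phi^I_{x_0}(t',0)|$. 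Your route is shorter and more transparent---it sidesteps the coarea formula entirely and makes the role of the defining relation \eqref{E:def X} explicit. The paper's route, on the other hand, is slightly more robust in that it never needs to unpack the Hodge-star definition; it only uses that $\pi_1$ is constant along $X_1$-curves and that the coarea formula \eqref{E:coarea} holds, which is closer in spirit to how these quantities are used elsewhere in the argument.
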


\begin{proof}[Proof of Lemma~\ref{L:tilde pi}]
Let 
$$
Q_{c\eps^C\alpha^{j_0k_0}}^\flat:=\{t':(t',0) \in Q_{c\eps^C\alpha^{j_0k_0}}\}.
$$
We begin by proving that $t' \mapsto \pi_1 \circ \Phi_{x_0}^I(t',0)$ is bounded-to-one on $Q^\flat_{c\eps^C\alpha^{j_0k_0}}$.  
By the implicit function theorem, the definition of $X_1$, and hypothesis (ii) of Theorem~\ref{T:main}, for every $y \in \R^{n-1}$, $\pi_1^{-1}(y)$ intersects at most one nonconstant integral curve of $X_1$.  Therefore, since $X_1$ is nonvanishing on $\scriptC$, if $\pi_1 \circ \Phi_{x_0}^I(u',0) = \pi_1\circ\Phi_{x_0}^I(t',0)$ for some $u' \neq t'$, we may assume that $\Phi_{x_0}^I(t',0) = \Phi_{x_0}^I(u',u_n)$ for some $u_n>0$.  By Lemma~\ref{L:finite to one} and $\det D\Phi_{x_0}^I \neq 0$ on $U$, given $t' \in Q_{c\eps^C\alpha^{j_0k_0}}^\flat$, there are only a bounded number of such $u'$.  

Our definition of $\tilde\pi_1$ implies that $\pi_1 = F \circ \tilde\pi_1$ on $\scriptC_0$, and hence on all of $\scriptC$ (since both sides are constant on $X_1$'s integral curves).  

Let $A \subseteq \{|t'| < 1\}$.  Then 
$$
B:= \tilde\pi_1^{-1}(A) \cap \{\Phi^I_{x_0}(t):t \in U, \: |t_n|<c\eps^C\alpha^{j_0k_0}_1\}
$$
equals the image
$$
\{\Phi_{x^0}^I(t) : ((c\eps^C \alpha^{j_0k_0})^{-\deg w_1}t_1,\ldots,(c\eps^C\alpha^{j_0k_0})^{-\deg w_{n-1}}t_n) \in A, \: |t_n| < c\eps^C\alpha^{j_0k_0}_1\},
$$
and hence, by Proposition~\ref{P:quasiex}, has volume 
\begin{equation} \label{E:vol B star}
|B| \approx (\alpha^{j_0k_0})^{\deg I} |\lambda_I(x^0)||A| \approx |\Omega^{j_0k_0}||A|.
\end{equation}
By the definition of $B$, the coarea formula, \eqref{E:vol B star}, and the definition of $\alpha^{j_0k_0}$,
$$
|F(A)| = |\pi_1(B)| \approx (\alpha^{j_0k_0}_1)^{-1}|B| \approx (\alpha^{j_0k_0}_1)^{-1}|\Omega^{j_0k_0}||A| = |E^{j_0}_1||A|.
$$
The estimate on the Jacobian determinant of $F$ follows from the change of variables formula. 
\end{proof}

The next lemma allows us to replace $\scriptC$ with the domain of $\tilde \pi_1$.

\begin{lemma}\label{L:U plus}
If \eqref{E:M balls} fails for some $M,A,B,\delta=\eps>0,j_0,\scriptK,\{x^{j_0k}\}_{k \in \scriptK}$ satisfying the hypothesis of Lemma~\ref{L:M balls}, then there exists $\scriptK' \subseteq \scriptK$, of cardinality $\#\scriptK' \sim M$, such that
\begin{equation} \label{E:U plus}
|\bigcap_{k \in \scriptK'} \tilde\pi_1(\scriptC_+ \cap \bigcap_{\sigma \in S_n} B^{I_\sigma}(x^{j_0,k},c\eps^C \alpha^{j_0k}))| \geq A^{-1}\eps^A,
\end{equation}
or such that \eqref{E:U plus} holds with `$-$' in place of `$+$.'  Here the quantity $A$ depends on the corresponding quantity in Lemma~\ref{L:M balls} and $N$.  
\end{lemma}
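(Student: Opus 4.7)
The plan is to combine Lemma~\ref{L:scriptC} (the decomposition $\scriptC = \scriptC_+ \cup \scriptC_- \cup \scriptC_0 \cup \scriptC_\partial$) with Lemma~\ref{L:tilde pi} (the factorization $\pi_1 = F \circ \tilde\pi_1$ on $\scriptC_+\cup\scriptC_-$) in order to transfer the hypothesis about $\pi_1$-intersections into the conclusion about $\tilde\pi_1$-intersections, at the cost of two rounds of pigeonhole. Write $B^{(k)} := \bigcap_{\sigma \in S_n} B^{I_\sigma}(x^{j_0,k}, c\eps^C\alpha^{j_0,k})$ and $E_\cap := \bigcap_{k\in\scriptK} \pi_1(B^{(k)})$; the definition of $\scriptL(\eps,\eta_1,\eta_2)$ gives $|E_1^{j_0}| \sim 2^{-j_0p_1}\eta_1$, so the failure of \eqref{E:M balls} reads $|E_\cap| \gtrsim \eps^A |E_1^{j_0}|$. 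The first observation is that $E_\cap \subseteq \pi_1(\scriptC)$: this is automatic because $B^{(k_0)} \subseteq \Phi^I_{x_0}(Q^I_{c\eps^C\alpha^{j_0k_0}}) \subseteq \Phi^I_{x_0}(U) = \scriptC$. Hypothesis (ii) of Theorem~\ref{T:main} then forces each fiber $\pi_1^{-1}(y)$ for $y\in E_\cap$ to sit entirely inside $\scriptC$, since it is a single $X_1$-integral curve and $\scriptC$ is $X_1$-flow-invariant.

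Next comes a sign pigeonhole. Lemma~\ref{L:scriptC} tells us that off the lower-dimensional set $\scriptC_0 \cup \scriptC_\partial$, every point of $\scriptC$ lies in $\scriptC_+$ or $\scriptC_-$, and each $B^{(k)}$ is open, so for a.e.\ $y \in E_\cap$ and every $k\in\scriptK$ the open arc $B^{(k)}\cap \pi_1^{-1}(y)$ meets one of $\scriptC_\pm$. Assign a sign $\sigma^k(y)\in\{+,-\}$ accordingly. Pigeonholing over the $2^M$ possible sign patterns produces $E_\cap' \subseteq E_\cap$ with $|E_\cap'|\geq 2^{-M}|E_\cap|$ on which the pattern is constant; the majority sign (say $+$, the other case being symmetric) then cuts out $\scriptK'\subseteq\scriptK$ with $\#\scriptK'\geq M/2$ and a point $z^k(y)\in B^{(k)}\cap\scriptC_+\cap \pi_1^{-1}(y)$ for every $y\in E_\cap'$ and every $k\in\scriptK'$.

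The step I expect to be the main obstacle is what comes next: even with the sign fixed, the points $z^k(y)$ may lie on different \emph{excursions} of $\pi_1^{-1}(y)$ through $\scriptC_+$, corresponding to distinct preimages $\tilde t' \in F^{-1}(y)$ and hence distinct $\tilde\pi_1$-values, so no single $\tilde\pi_1$-value would be common to all $\tilde\pi_1(\scriptC_+\cap B^{(k)})$. The saving fact is that Lemma~\ref{L:tilde pi} asserts $F$ is bounded-to-one, so the number $K$ of excursions is a constant depending only on $N$. A second pigeonhole over the $K^M$ excursion patterns then extracts $E_\cap''\subseteq E_\cap'$ with $|E_\cap''|\geq K^{-M}|E_\cap'|$ on which the $z^k(y)$, for every $k\in\scriptK'$, belong to a single common excursion $A^{(i(y))}(y)$.

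With both pigeonholes complete, the conclusion closes quickly. On a fixed excursion $\tilde\pi_1$ is constant (it depends only on the $t'$-coordinate of the $\Phi^I_{x_0}|_{U_+}^{-1}$-preimage), so $\tilde y := \tilde\pi_1(z^k(y))$ is a well-defined function of $y \in E_\cap''$ alone, independent of $k\in\scriptK'$; hence $\tilde y \in \bigcap_{k\in\scriptK'}\tilde\pi_1(\scriptC_+\cap B^{(k)})$. The map $y \mapsto \tilde y$ is a smooth local section of $F$, so the change of variables formula together with $|\det DF| \approx |E_1^{j_0}|$ yields
\[
\Bigl|\bigcap_{k\in\scriptK'}\tilde\pi_1(\scriptC_+\cap B^{(k)})\Bigr| \;\geq\; \bigl|\{\tilde y : y \in E_\cap''\}\bigr| \;\sim\; \tfrac{|E_\cap''|}{|E_1^{j_0}|} \;\gtrsim\; (2K)^{-M}\eps^A.
\]
Since $M$ and $K$ depend only on $N$, this is of the required form $A^{-1}\eps^A$ for a larger value of $A$, completing the plan.
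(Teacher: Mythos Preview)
Your overall plan is the same as the paper's: reduce to $\pi_1$-fibers lying in $\scriptC$, pigeonhole over the sign $\{+,-\}$ to isolate one half-cylinder, and then invoke Lemma~\ref{L:tilde pi} to pass from $\pi_1$ to $\tilde\pi_1$.  The first pigeonhole and the fiber-containment argument are fine and match the paper.  You are also right to flag the ``excursion'' issue (multiple $F$-preimages of a given $y$); the paper compresses this step into the single sentence ``Inequality~\eqref{E:U plus} then follows from Lemma~\ref{L:tilde pi},'' so your treatment is more explicit.

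There is, however, a genuine gap in your second pigeonhole.  Pigeonholing over the $K^M$ excursion patterns $(i_1(y),\ldots,i_M(y))\in\{1,\ldots,K\}^M$ produces a set $E_\cap''$ on which this tuple is a \emph{fixed} pattern $(i_1,\ldots,i_M)$---but nothing forces $i_1=\cdots=i_M$.  Thus your claim that ``the $z^k(y)$, for every $k\in\scriptK'$, belong to a single common excursion'' is not justified, and without it the map $y\mapsto\tilde y$ in the final step is not well-defined.

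The repair is easy and costs only $N$-dependent constants.  Having fixed the pattern $(i_1,\ldots,i_M)$, pigeonhole once more on the \emph{value}: some index $i_*\in\{1,\ldots,K\}$ occurs among the $i_k$ for at least $M/K$ values of $k$.  Replace $\scriptK'$ by $\scriptK'':=\{k\in\scriptK':i_k=i_*\}$; since $K=O_N(1)$ one still has $\#\scriptK''\sim M$.  Now for every $y\in E_\cap''$ the single branch $u_{i_*}(y)\in F^{-1}(y)$ lies in $\tilde\pi_1(\scriptC_+\cap B^{(k)})$ for all $k\in\scriptK''$, and your concluding change-of-variables estimate goes through with $\scriptK''$ in place of $\scriptK'$.  (A consistent local labeling of the branches $u_1(y),\ldots,u_K(y)$ is available because $F$ is a local diffeomorphism; one may first restrict $E_\cap'$ to a small ball at the cost of a further bounded constant.)
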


\begin{proof}[Proof of Lemma~\ref{L:U plus}]
For $k \in \scriptK$, set 
$$
B^k := \bigcap_{\sigma \in S_n} B^{I_\sigma}(x^{j_0k};c\eps^C\alpha^{j_0k}).
$$

Since $\pi_1(B^{k_0}) \subseteq \pi_1(B^I(x,c\eps^C\alpha^{j_0k_0}))$, our hypothesis that $\pi_1$ fibers lie on a single integral curve of $X_1$ implies that $\bigcap_{k\in\scriptK} \pi_1(B^k) = \bigcap_{k \in \scriptK} \pi_1(\scriptC \cap B^k)$.  
The projection $\pi_1(\scriptC_\partial)$ has measure zero.  For a.e. $y \in \bigcap_{k\in\scriptK} \pi_1(B^k)$, $\pi_1^{-1}(y) \cap B^k = \Phi_{x^0}^I(t_0',J)$ for some set $J \subseteq \R$ having positive measure; thus, $|\bigcap_{k\in\scriptK} \pi_1(B^k)| = |\bigcap_{k \in \scriptK} \pi_1(B^k \setminus \scriptC_0)|$.  Putting these two observations together with Lemma~\ref{L:scriptC} and using standard set manipulations,
$$
|\bigcap_{k \in \scriptK} \pi_1(\scriptC \cap B^k)| = |\bigcup_{\bullet \in \{+,-\}^\scriptK} \bigcap_{k \in \scriptK} \pi_1(\scriptC_{\bullet_k} \cap B^k)|.
$$
Thus if \eqref{E:M balls} fails, there exists a decomposition $\scriptK = \scriptK_+ \cup \scriptK_-$ such that
$$
\min\{|\bigcap_{k \in \scriptK_+}\pi_1(\scriptC_+ \cap B^k)|,|\bigcap_{k \in \scriptK_-}\pi_1(\scriptC_- \cap B^k)|\} > A^{-1}\eps^A 2^{-j_0 p_1}\eta_1.
$$
One of $\scriptK_+,\scriptK_-$ must have cardinality $\#\scriptK_\bullet \gtrsim M$; we may assume that the larger is $\scriptK_+=:\scriptK'$.  Inequality \eqref{E:U plus} then follows from Lemma~\ref{L:tilde pi} and the definition of $\scriptL$.  
\end{proof}

The next lemma verifies that a slightly enlarged version of each $B^k$ has large intersection with $\scriptC_+$.    

\begin{lemma}\label{L:Gk}
Assume that \eqref{E:U plus} holds, and let $k \in \scriptK'$, $y^{j_0k} \in B^k$, with $B^k$ defined as in the proof of Lemma~\ref{L:U plus}.  Set 
$$
G^k:=\scriptC_+ \cap \bigcap_{\sigma \in S_n} B^{I_\sigma}(y^{j_0k};c'\eps^{C'}\alpha^{j_0k}).
$$
Then $|G^k| \gtrapprox A^{-1}\eps^A|B^k|$ and $B^k \cap \scriptC_+ \subseteq G^k$.  
\end{lemma}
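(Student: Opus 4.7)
The second containment $B^k \cap \scriptC_+ \subseteq G^k$ is immediate from Proposition~\ref{P:quasiex}(ii) applied at $y^{j_0k} \in B^k$: that proposition gives $B^k \subseteq B^{I_\sigma}(y^{j_0k}; c'\eps^{C'}\alpha^{j_0k})$ for every $\sigma \in S_n$, and intersecting with $\scriptC_+$ yields the claim. Turning to the volume bound, the plan is to compute $|G^k|$ by coarea through $\tilde\pi_1$. Because $\Phi^I_{x_0}$ is a diffeomorphism on $U_+$ (Lemma~\ref{L:scriptC}) with nearly constant Jacobian $\approx |\lambda_I(x_0)| \approx (\alpha^{j_0k_0})^{b-\deg I}$, and $\tilde\pi_1 \circ \Phi^I_{x_0}$ is just a rescaled projection onto the first $n-1$ coordinates (since $w_n = (1)$), fiber-slicing gives
$$|G^k| \approx |E_1^{j_0}| \int_{\tilde\pi_1(G^k)} L(\tilde y)\,d\tilde y,$$
where $L(\tilde y)$ is the length of the $t_n$-fiber of $(\Phi^I_{x_0})^{-1}(G^k) \cap U_+$ over $t'(\tilde y)$, and the prefactor arises from $(\alpha^{j_0k_0})^{b-\deg I}\prod_{i<n}(\alpha^{j_0k_0})^{\deg w_i} = (\alpha^{j_0k_0})^{b - (1,0)} = |\Omega^{j_0k_0}|/\alpha^{j_0k_0}_1 = |E_1^{j_0}|$. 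By \eqref{E:U plus} and the containment proved above, $|\tilde\pi_1(G^k)| \geq |\tilde\pi_1(B^k \cap \scriptC_+)| \geq A^{-1}\eps^A$.

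The key technical step is a pointwise fiber-length bound $L(\tilde y) \gtrapprox \alpha^{j_0k}_1$ for every $\tilde y \in \tilde\pi_1(B^k \cap \scriptC_+)$. Given such a $\tilde y$, pick $z = \Phi^I_{x_0}(t', t_n^0) \in B^k \cap \scriptC_+$ projecting to $\tilde y$, and let $\sigma_0$ be the permutation with $w_n = (1)$ last, so that $X_1$-flow from $z$ corresponds to incrementing the last $\Phi^{I_{\sigma_0}}$-coordinate. Since both $z, y^{j_0k} \in B^k$, Lemma~\ref{L:doubling} applied to the pair $(z, y^{j_0k})$ with suitably small constants $c_2, C_2$ shows $e^{\tau X_1}(z) \in \bigcap_\sigma B^{I_\sigma}(y^{j_0k}; c'\eps^{C'}\alpha^{j_0k})$ for $|\tau| \leq c_2\eps^{C_2}\alpha^{j_0k}_1$. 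The $\scriptC_+$-membership is more delicate: restricting to $\tau \in [0, c_2\eps^{C_2}\alpha^{j_0k}_1]$ ensures $(t', t_n^0 + \tau) \in U$ automatically (the cylinder is unbounded in $t_n$), and $(t', t_n^0 + \tau)$ stays in $U_+$ unless the polynomial map $s \mapsto \Phi^I_{x_0}(t', s)$ re-intersects $\overline{\scriptC_0}$ on $(t_n^0, t_n^0 + c_2\eps^{C_2}\alpha^{j_0k}_1]$. Because that map has degree $\leq N$, the set of crossings is finite, and the polynomial lemmas of the appendix supply a uniform quantitative lower bound on the gap to the first such crossing.

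Combining these ingredients yields $|G^k| \gtrapprox A^{-1}\eps^A \alpha^{j_0k}_1 |E_1^{j_0}|$; comparing with $|B^k| \lesssim \eps^{Cn}|\Omega^{j_0k}| = \eps^{Cn}\alpha^{j_0k}_1 |E_1^{j_0}|$ from Proposition~\ref{P:quasiex}(ii) gives $|G^k| \gtrapprox A^{-1}\eps^A |B^k|$, since the $\gtrapprox$ symbol absorbs any fixed power of $\eps$. The main obstacle I anticipate is the $\scriptC_+$-side of the fiber bound: controlling how quickly the $X_1$-flow from an arbitrary point of $B^k \cap \scriptC_+$ can exit $\scriptC_+$ requires exploiting the polynomial degree bound on $\Phi^I_{x_0}(t', \cdot)$ to rule out arbitrarily close self-intersections with $\overline{\scriptC_0}$, and this is the one place where the polynomial hypothesis (i) of Theorem~\ref{T:main} enters this lemma in an essential way.
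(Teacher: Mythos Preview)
Your overall architecture (containment via Proposition~\ref{P:quasiex}(ii), then a coarea/fiber-length lower bound) matches the paper's, and the ball-containment half of the fiber argument is fine.  The gap is precisely where you flagged it: the $\scriptC_+$-membership.  You assert that ``the polynomial lemmas of the appendix supply a uniform quantitative lower bound on the gap to the first such crossing,'' but no such bound exists---the appendix lemmas (Lemmas~\ref{L:finite to one}, \ref{L:intersect cylinder}, \ref{L:proj gamma}) bound only the \emph{number} of crossings, never their spacing, and indeed a polynomial curve can re-intersect $\scriptC_0$ arbitrarily soon after $t_n^0$.  Consequently the $t_n$-fiber of $(\Phi_{x_0}^I|_{U_+})^{-1}(G^k)$ over a fixed $t'$ can be arbitrarily short, and your pointwise bound $L(\tilde y)\gtrapprox \alpha_1^{j_0k}$ fails.

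The paper sidesteps this by not asking that $(t',t_n^0+\tau)$ remain in $U_+$, only that the \emph{image point} $e^{\tau X_1}(z)=\Phi_{x_0}^I(t',t_n^0+\tau)$ lie in $\scriptC_+$.  The last paragraph of the proof of Lemma~\ref{L:scriptC} shows that for $t_n^0+\tau>0$ this image lies in $\scriptC_+\cup\scriptC_0\cup\scriptC_\partial$; the curve meets $\scriptC_0$ at only finitely many $\tau$, and $\pi_1(\scriptC_\partial)$ has measure zero, so for a.e.\ starting point $z$ one gets $e^{\tau X_1}(z)\in\scriptC_+$ for \emph{all but finitely many} $\tau\in(0,c\eps^C\alpha_1^{j_0k})$.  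Feeding this into the coarea formula for $\pi_1$ (in $\R^n$, not via pullback to $U_+$) gives
\[
|G^k|\gtrapprox \alpha_1^{j_0k}\,|\pi_1(B^k\cap\scriptC_+)|\approx A^{-1}\eps^A\alpha_1^{j_0k}|E_1^{j_0}|\approx A^{-1}\eps^A|B^k|.
\]
In short: replace the $U_+$-pullback slicing by the manifold-side coarea formula, and replace the (unavailable) gap bound by the measure-zero observation that $\scriptC_+$ swallows the positive-time flow except at finitely many instants.
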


\begin{proof}[Proof of Lemma~\ref{L:Gk}]
By conclusion (ii) of Proposition~\ref{P:quasiex}, $B^k\cap \scriptC_+ \subseteq G^k$.  Let $x \in B^k \cap \scriptC_+$.  So long as $x \notin \scriptC_\partial$ (which has measure zero), $e^{tX_1}(x) \in \scriptC_+$ for all except finitely many positive values of $t$ (i.e.\ except for those $t$ for which $e^{tX_1}(x) \in \scriptC_0$).  Additionally,
$$
e^{tX_1}(x) \in \bigcap_{\sigma \in S_n} B^{I_\sigma}(y^{j_0k},c'\eps^{C'}\alpha^{j_0k}),
$$
for all $|t|<c\eps^C\alpha^{j_0k}$, so $e^{tX_1}(x) \in G^k$ for $t$ in a set of measure $\gtrapprox \alpha^{j_0k}$.  By the coarea formula, then Lemma~\ref{L:tilde pi} and \eqref{E:U plus}, the definition of $\alpha^{j_0k}$, and finally Proposition~\ref{P:quasiex},
$$
|G^k| \gtrapprox \alpha^{j_0k}_1|\pi_1(B^k \cap \scriptC_+)| \approx A^{-1}\eps^A \alpha_1^{j_0k}|E^{j_0}_1| \approx A^{-1}\eps^A|\Omega^{j_0k}| \approx A^{-1}\eps^A |B^k|.
$$
\end{proof}

To motivate the next lemma, we recall that our goal is to show that a certain inequality holds at many points of the form $e^{t^{jk}X_1}(x_0) \in B^k$.  This will be possible because the set of $y^{jk} \in B^k$ at which the inequality fails must be very small, and hence have small projection.  

\begin{lemma}\label{L:tilde Gk}
Under the hypotheses and notation of Lemma~\ref{L:Gk}, there exists a subset $\tilde G^k \subseteq G^k$ such that $|G^k \setminus \tilde G^k| < D^{-1}\eps^D|G^k|$, with $D=D(N,A)$ sufficiently large for later purposes, such that for all $x \in \tilde G^k$, 
\begin{equation} \label{E:Xw lesssim 1}
|D\tilde\pi_1(x) (\alpha^{j_0k})^{\deg w_i}X_{w_i}(x)| \lessapprox_A 1, \qquad 1 \leq i \leq n-1.
\end{equation}
\end{lemma}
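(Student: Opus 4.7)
The plan is to express the quantity in question via the chain rule in a rescaled chart adapted to $B^k$, and then bound it using a Markov-type inequality for polynomials.

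Set $T^{(k)}:\R^n\to\R^n$ to be the diagonal rescaling with $(T^{(k)})_{ll} = (\alpha^{j_0k})^{\deg w_l}$ and $\hat\Phi_k(u) := \Phi_{y^{j_0k}}^I(T^{(k)}u)$; then $\hat\Phi_k$ parametrizes $B^I(y^{j_0k},c'\eps^{C'}\alpha^{j_0k}) \supseteq G^k$ on $\{|u| < c'\eps^{C'}\}$. I would apply Lemma~\ref{L:Phi^I one to one} to the rescaled vector fields $\tilde X_w^{(k)} := (\alpha^{j_0k})^{\deg w}X_w$, whose associated quantities satisfy $|\tilde\lambda_I^{(k)}|\sim|\tilde\Lambda^{(k)}|\sim\alpha^b$ on $G^k$ by Proposition~\ref{P:quasiex}(ii) together with Lemma~\ref{L:Lambda sim J}, obtaining $|\hat Y_{w_i}^{(k)}(u)|\lesssim 1$ for the pullbacks $\hat Y_{w_i}^{(k)} := (D\hat\Phi_k)^{-1}\tilde X_{w_i}^{(k)}\circ\hat\Phi_k$. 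Writing $\varphi := \tilde\pi_1\circ\hat\Phi_k$, the chain rule gives
$$D\tilde\pi_1(x)(\alpha^{j_0k})^{\deg w_i}X_{w_i}(x)=D\varphi(u)\,\hat Y_{w_i}^{(k)}(u), \qquad x=\hat\Phi_k(u)\in G^k,$$
so it suffices to bound $\|D\varphi(u)\|\lessapprox_A 1$ at (most) points $u$ in the preimage of $G^k$.

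The key observation is that $\varphi$ is a polynomial of degree $\lesssim_N 1$ whose image lies in $\{|s|<c\eps^C\}$. The polynomial nature follows because $\hat\Phi_k$ is a composition of polynomial flows, and on $\scriptC_\pm$ the map $\tilde\pi_1 = (\alpha^{j_0k_0})^{-\deg w'}(\Phi_{x_0}^I)^{-1}_{1:n-1}$ is itself polynomial: since $\det D\Phi_{x_0}^I\equiv \lambda_I(x_0)$ is a nonzero constant by Lemma~\ref{L:div free}, and the underlying Lie group is nilpotent with polynomial action, the inverse $(\Phi_{x_0}^I)^{-1}$ may be computed by iteratively unwinding the flows $e^{t_iX_{w_i}}$ via Baker--Campbell--Hausdorff, expressing each $t_i$ as a polynomial in $x$ of degree $\lesssim_N 1$. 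The boundedness of the image is immediate from $|\tilde\pi_1|<c\eps^C$ on $\scriptC$, which is direct from the definitions of $\scriptC$ and $\tilde\pi_1$.

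With (a) and (b) in hand, the equivalence of norms on the finite-dimensional space of polynomials of bounded degree (polynomial Markov) then yields
$$\|D\varphi\|_{L^\infty(\{|u|<c'\eps^{C'}/2\})}\lesssim_N \|\varphi\|_{L^\infty}/(c'\eps^{C'}) \lesssim \eps^{C-C'},$$
which is $\lessapprox_A 1$ provided $C'-C \geq A$. Since the bound is pointwise throughout $G^k$, we may take $\tilde G^k := G^k$, so that the exceptional set is (up to a measure-zero boundary tail) empty. The hard part is verifying the polynomial structure of $(\Phi_{x_0}^I)^{-1}$ in the requisite uniformity—while this follows from the nilpotent polynomial-flow hypotheses via iterated BCH, the combinatorial bookkeeping to certify the degree bound with constants depending only on $N$ is the principal technical content; the remaining steps are routine applications of the rescaled Jacobian estimates from Section~\ref{S:CC balls}.
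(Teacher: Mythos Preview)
Your approach has a genuine gap: the claim that $\varphi=\tilde\pi_1\circ\hat\Phi_k$ is a polynomial of bounded degree is not justified, and is in fact generally false under the hypotheses of Theorem~\ref{T:main}.

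Two specific points.  First, the assertion that $\det D\Phi_{x_0}^I\equiv\lambda_I(x_0)$ is incorrect.  Lemma~\ref{L:div free} says each individual flow $e^{tX_w}$ is volume-preserving, but $\Phi_{x_0}^I$ is not a flow; its Jacobian (see the formula in the proof of Lemma~\ref{L:det D Phi I}) genuinely depends on $t'$---only the dependence on the final coordinate $t_n$ (the $X_1$ variable) drops out, as noted in Lemma~\ref{L:scriptC}.  Second, and more fundamentally, the Baker--Campbell--Hausdorff argument does not give a polynomial inverse for $\Phi_{x_0}^I$.  BCH lets you write $\Phi_{x_0}^I(t)=e^{P(t)}(x_0)$ with $P$ a Lie polynomial in $t$, but recovering $t$ from $x$ would require inverting $W\mapsto e^W(x_0)$ from (a subspace of) $\mathfrak g$ to $\R^n$, and nothing in hypothesis~(i) of Theorem~\ref{T:main} makes that polynomial.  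Indeed, $\Phi_{x_0}^I$ need not even be globally one-to-one: Lemma~\ref{L:scriptC} proves injectivity only on $U_+$ and $U_-$ separately, and the whole $\scriptC_\pm$ apparatus exists precisely because global injectivity can fail.  A global polynomial inverse would render all of that unnecessary.

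The paper's proof circumvents this entirely.  It does not attempt to make $\tilde\pi_1$ polynomial.  Instead, for each $i$ it flows a point $x$ along $X_{w_i}$ and uses that $t\mapsto\tilde\pi_1(e^{tX_{w_i}}(x))$ stays in the unit ball.  This curve is \emph{not} polynomial, but Lemmas~\ref{L:intersect cylinder} and~\ref{L:proj gamma} (variety arguments bounding sign changes of expressions like $(R\circ P^{-1}\circ\gamma)\cdot(Q\circ\gamma)$) show its derivative changes sign only boundedly often.  The fundamental theorem of calculus then gives $\int_{J_x}|\tfrac{d}{dt}\tilde\pi_1|\lesssim 1$ over an interval $J_x$ of length $\gtrapprox_A(\alpha^{j_0k})^{\deg w_i}$, yielding the claimed bound at the vast majority of times.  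This is why a nonempty exceptional set $G^k\setminus\tilde G^k$ genuinely appears in the statement; your claim that one may take $\tilde G^k=G^k$ would require exactly the polynomial structure that is unavailable.
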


\begin{proof}[Proof of Lemma~\ref{L:tilde Gk}]
To simplify our notation somewhat, we will say that a subset $\tilde G^k \subseteq G^k$ constitutes the \textit{vast majority} of $G^k$ if $|G^k \setminus \tilde G^k| < D^{-1}\eps^D|G^k|$, with $D=D(N,A)$ as small as we like.  

Taking intersections, it suffices to establish the lemma for a single index $1 \leq i \leq n-1$.  We recall that $w_i \neq (1)$.  Fix a permutation $\sigma \in S_n$ such that $\sigma(n) = i$.  By construction, $G^k \subseteq B^{I_\sigma}(x^{j_0k},c'\eps^{C'}\alpha^{j_0k})$.  By Lemma~\ref{L:Gk}, 
$$
|B^{I_\sigma}(x^{j_0k},c'\eps^{C'}\alpha^{j_0k}) \cap \scriptC_+| \gtrapprox_A |B^{I_\sigma}(x^{j_0k};c'\eps^{C'}\alpha^{j_0k})|,
$$
so our Jacobian bound, $|\det D\Phi_{x^{j_0k}}^{I_\sigma}| \approx |\lambda_I(x^{j_0k})|$ on $Q^{I_\sigma}_{c'\eps^{C'}\alpha^{j_0k}}$, implies that for the vast majority of points $x \in G^k$, $e^{tX_w}(x) \in \scriptC_+$ for all $t \in E_x$, $E_x$ some set of measure $|E_x| \gtrapprox_A (\alpha^{j_0k})^{\deg w_i}$.  

By Lemmas~\ref{L:intersect cylinder} and~\ref{L:proj gamma}, $E_x$ can be written as a union of a bounded number of intervals on which each component of $\tfrac{d}{dt} \tilde\pi_1(e^{tX_{w_i}}(x))$ is single signed.  Thus, using the semigroup property of exponentiation, we see that for the vast majority of $x \in G^k$, there exists an interval $J_x \ni 0$, of length $|J_x| \gtrapprox_A (\alpha^{j_0k})^{\deg w_i}$, such that $e^{tX_{w_i}}(x) \in \scriptC_+$ and the components of $\tfrac{d}{dt} \tilde\pi_1(e^{tX_{w_i}}(x))$ do not change sign on $J_x$.  

Let $x \in G^k$ be one of these majority points.  By the Fundamental Theorem of Calculus and
$$
\tilde\pi_1(e^{tX_{w_i}}(x)) \subseteq \tilde\pi_1(\scriptC_+) \subseteq \{u \in \R^{n-1}:|u| < 1\}, \qquad t \in J_x,
$$
combined with the above non-sign-changing condition,
$$
\int_{J_x}|\tfrac{d}{dt} \tilde\pi_1(e^{tX_{w_i}}(x))|\, dt \sim |\int_{J_x}\tfrac{d}{dt} \tilde\pi_1(e^{tX_{w_i}}(x))\, dt| < 1.
$$
Thus on the vast majority of $J_x$, 
$$
|\tfrac{d}{dt} \tilde\pi_1(e^{tX_{w_i}}(x))| \lessapprox_A |J_x|^{-1} \lessapprox_A (\alpha^{j_0k})^{-\deg w_i}.
$$
The conclusion of the lemma follows from the Chain Rule and our Jacobian estimate on $\Phi_{x^{j_0k}}^{I_\sigma}$.  
\end{proof}

Finally, we come to the main step in deriving the promised contradiction.  

\begin{lemma}  \label{L:wedges approx 1}
Under the hypotheses and notation of Lemma~\ref{L:tilde Gk}, there exist a point $y^0 \in B^{k_0}$ and times $t^{j_0k} \in \R$, $k \in \scriptK'$ such that for any $1 \leq j \leq n$ and any choice of $1 \leq i_1 < \cdots < i_j \leq n-1$ and $k \in \scriptK'$,
\begin{equation} \label{E:wedges approx 1}
|\bigwedge_{l=1}^j D\tilde\pi_1(e^{t^{j_0k}X_1}(y^0)) (\alpha^{j_0k})^{\deg w_{i_l}}X_{w_{i_l}}(e^{t^{j_0k}X_1}(y^0))| \approx_A 1.
\end{equation}
\end{lemma}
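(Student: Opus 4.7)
The plan is to first produce the point $y^0 \in B^{k_0}$ and the times $t^{j_0k}$ by a measure-theoretic argument on the projections, and then to verify the estimate $\approx_A 1$ at the resulting points $z^k := e^{t^{j_0k}X_1}(y^0)$ using \eqref{E:Xw lesssim 1} for the upper bound and the Jacobian identity $\det(X_{w_1},\ldots,X_{w_{n-1}},X_1) = \lambda_I$ for the lower bound.

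For the first step, I would exploit the fact that $\tilde\pi_1$ is constant along $X_1$-integral curves in $\scriptC_+$ (Lemma~\ref{L:tilde pi} combined with the description in Lemma~\ref{L:scriptC}). Thus the requirement that the $X_1$-orbit through $y^0 \in B^{k_0} \cap \scriptC_+$ meet $\tilde G^k$ is equivalent to $\tilde\pi_1(y^0) \in \tilde\pi_1(\tilde G^k)$. From Lemma~\ref{L:tilde pi}, the coarea formula gives that on $\scriptC_+$ a set of measure $\mu$ projects to a set of measure $\approx \mu/\alpha_1^{j_0 k}$ after composing with $F^{-1}$, so the bound $|G^k\setminus\tilde G^k|<D^{-1}\eps^D|G^k|$ from Lemma~\ref{L:tilde Gk} gives $|\tilde\pi_1(G^k)\setminus\tilde\pi_1(\tilde G^k)|\lessapprox D^{-1}\eps^D |\tilde\pi_1(G^k)|$. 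Combined with \eqref{E:U plus} and choosing $D=D(N,A)$ large enough, we get
$$\Bigl|\tilde\pi_1(B^{k_0}\cap\scriptC_+)\cap \bigcap_{k\in\scriptK'}\tilde\pi_1(\tilde G^k)\Bigr|\gtrapprox_A 1,$$
which provides the required $y^0$ and, by traveling along $X_1$, the times $t^{j_0k}$.

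For the upper bound on every wedge, inequality \eqref{E:Xw lesssim 1} applied at each $z^k \in \tilde G^k$ immediately yields $\bigl|\bigwedge_{l=1}^{j} D\tilde\pi_1(z^k)(\alpha^{j_0 k})^{\deg w_{i_l}} X_{w_{i_l}}(z^k)\bigr| \lessapprox_A 1$. For the lower bound in the case $j=n-1$ with $(i_1,\ldots,i_{n-1})=(1,\ldots,n-1)$, use that $w_n=(1)$ so $D\tilde\pi_1\cdot X_1=0$; expressing the full wedge via Laplace expansion of $\det(X_{w_1}(z^k),\ldots,X_{w_{n-1}}(z^k),X_1(z^k))=\lambda_I(z^k)$, dividing by $|X_1(z^k)|$, and multiplying by the transverse-slice Jacobian of $\tilde\pi_1$ (which by Lemma~\ref{L:tilde pi} equals $\approx |X_1(z^k)|/|E_1^{j_0}|$), one obtains
$$(\alpha^{j_0k})^{\sum_{i=1}^{n-1}\deg w_i}\cdot\frac{|\lambda_I(z^k)|}{|E_1^{j_0}|}\approx\frac{(\alpha^{j_0k})^{b-\deg w_n}}{|E_1^{j_0}|}=\frac{(\alpha^{j_0k})^b}{\alpha_1^{j_0k}|E_1^{j_0}|}\approx 1,$$
using $|\lambda_I(z^k)|\approx(\alpha^{j_0k})^{b-\deg I}$ from Proposition~\ref{P:quasiex}(ii) and $\alpha_1^{j_0k}|E_1^{j_0}|=|\Omega^{j_0k}|\approx(\alpha^{j_0k})^b$.

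The hard part will be propagating the lower bound $\gtrapprox_A 1$ to every subfamily $\{i_1,\ldots,i_j\}\subsetneq\{1,\ldots,n-1\}$. For this I would mimic the Jacobi-identity propagation argument from the proof of Lemma~\ref{L:minimal I}: if some sub-wedge were abnormally small at $z^k$, then the minimality of $I$ would force the remaining generators to lie, after $D\tilde\pi_1$, in a proper subspace, which after bracketing and expanding in the rescaled frame $Y_w=(\Phi^{I}_{x^{j_0k}})^* X_w$ from Lemma~\ref{L:Phi^I one to one} would contradict the full-wedge estimate just established. Quantifying this contradiction in a way that respects the $(\alpha^{j_0k})^{\deg w_i}$ scalings at a point $z^k$ that lies in $B^k$ but not necessarily in the "core" where $\Phi^I_{x^{j_0k}}$ is most straightforward to analyze is the delicate step; it should however proceed by pulling everything back to the rescaled frame in which the $Y_{w_i}$ are uniformly $C^N$-bounded and the determinant identities of Section~\ref{S:CC balls} become polynomial identities of controlled degree.
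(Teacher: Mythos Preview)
Your first three steps---producing $y^0$ and the times via the projected-measure argument, the upper bound from \eqref{E:Xw lesssim 1}, and the full $(n{-}1)$-fold wedge computation via $\lambda_I$---are correct and essentially match the paper's proof (your bookkeeping through $1/|E_1^{j_0}|$ is equivalent to the paper's, which tracks the $k_0$-dependent normalization in $\tilde\pi_1$ explicitly).

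The gap is in your fourth step. You treat the propagation to proper sub-wedges as the hard part and propose a Jacobi-identity argument \`a la Lemma~\ref{L:minimal I}. This is unnecessary and the paper's route is a one-line piece of linear algebra. Set $v_i := D\tilde\pi_1(z^k)\,(\alpha^{j_0k})^{\deg w_i} X_{w_i}(z^k)$ for $1\le i\le n-1$. You have already established $|v_i|\lessapprox_A 1$ for every $i$ (your upper-bound step) and $|v_1\wedge\cdots\wedge v_{n-1}|\approx_A 1$ (your full-wedge step). The Hadamard-type inequality
\[
|v_1\wedge\cdots\wedge v_{n-1}| \le |v_{i_1}\wedge\cdots\wedge v_{i_j}|\,\prod_{l\notin\{i_1,\dots,i_j\}} |v_l|
\]
then forces $|v_{i_1}\wedge\cdots\wedge v_{i_j}|\gtrapprox_A 1$, while the product bound $|v_{i_1}\wedge\cdots\wedge v_{i_j}|\le \prod_l|v_{i_l}|\lessapprox_A 1$ gives the matching upper bound. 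This is exactly what the paper means by ``and by \eqref{E:Xw lesssim 1}, this is possible only if \eqref{E:wedges approx 1} holds.'' No minimality of $I$, no bracket propagation, and no pullback to the $Y_w$ frame is needed here.
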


\begin{proof}[Proof of Lemma~\ref{L:wedges approx 1}]
Let $\tilde G^k$ be as in Lemma~\ref{L:tilde Gk}.  For $k \in \scriptK'$, and $D = D(N,A)$ as large as we like,
$$
|\tilde \pi_1(G^k \setminus \tilde G^k)| \lessapprox (\alpha^{j_0k}_1)^{-1}|G^k \setminus \tilde G^k| < D^{-1}\eps^D (\alpha^{j_0k}_1)^{-1} |G^k| \lessapprox D^{-1}\eps^D.
$$
Thus if $D = D(N,A)$ is sufficiently large, 
$$
|\bigcup_{k \in \scriptK'} \tilde\pi_1(G^k \setminus \tilde G^k)| < \tfrac12 A^{-1}\eps^A,
$$
so $\bigcap_{k \in \scriptK'} \tilde\pi_1(\tilde G^k)$ is nonempty.  Thus there exists a point $y^0 \in \tilde G^{k_0}$ and times $\{t^{j_0k}\}$ such that $e^{t^{j_0k}X_1}(y^0) \in \tilde G^k$, $k \in \scriptK'$.  We may assume that $t^{j_0k_0}=0$, and we set $y^{k}:= e^{t^{j_0k}X_1}(y^0)$.  

By the Chain Rule and basic linear algebra, and then our Jacobian estimate on $\det D\Phi_{x^0}^I$,
\begin{align*}
&|\bigwedge_{j=1}^{n-1} D\tilde\pi_1(y^k)X_{w_j}(y^k)| \\
&\qquad\qquad = \alpha_1^{j_0k_0}(\alpha^{j_0k_0})^{-\deg I}|\det D(\Phi_{x^0}^I)^{-1}(y^k)||\det(X_{w_1}(y^k),\ldots,X_{w_n}(y^k))|\\
&\qquad\qquad\sim \alpha_1^{j_0k_0}(\alpha^{j_0k_0})^{-\deg I} \frac{|\lambda_I(y^k)|}{|\lambda_I(x^0)|}.
\end{align*}
By (ii) of Proposition~\ref{P:quasiex} and the definition of $\alpha^{jk}$, 
$$
(\alpha^{j_0k})^{\deg I} |\lambda_I(y^k)| \approx (\alpha^{j_0k})^b \approx |\Omega^{j_0k}| = \alpha^{j_0k}_1|E^{j_0}_1|,
$$
for all $k$, so
\begin{align*}
\alpha_1^{j_0k_0}(\alpha^{j_0k_0})^{-\deg I} \frac{|\lambda_I(y^k)|}{|\lambda_I(x^0)|} \approx \alpha_1^{j_0k}(\alpha^{j_0k})^{-\deg I}.
\end{align*}
Putting these inequalities together
$$
|\bigwedge_{i=1}^{n-1} D\tilde\pi_1(y^k) (\alpha^{j_0k})^{\deg w_i}X_{w_i}(y^k)| \approx 1,
$$
and by \eqref{E:Xw lesssim 1}, this is possible only if \eqref{E:wedges approx 1} holds.  
\end{proof}

Finally, we are ready to complete the proof of Lemma~\ref{L:M balls}.  

\begin{proof}[Proof of Lemma~\ref{L:M balls}]
Let 
$$
\gamma(t) := D\tilde\pi_1(e^{tX_1}(y^0))X_2(e^{tX_1}(y_0)) = D\tilde\pi_1(y^0)De^{-tX_1}(e^{tX_1}(y^0)) X_2(e^{tX_1}(y^0)).
$$
Then $\gamma$ is a polynomial, and 
$$
\gamma'(t) = D\tilde\pi_1(y^0)De^{-tX_1}(e^{tX_1}(y^0)) X_{12}(e^{tX_1}(y^0)) = D\tilde\pi_1(e^{tX_1}(y^0))X_{12}(e^{tX_1}(y_0)).
$$
Thus by Lemma~\ref{L:wedges approx 1},
\begin{equation} \label{E:contradicts gamma || gamma'}
|\gamma(t^k)| \approx_A (\alpha_2^{j_0k})^{-1}, \qquad |\gamma(t^k) \wedge \gamma'(t^k)| \approx_A |\gamma(t^k)||\gamma'(t^k)|.
\end{equation}
By the definition of $\scriptL$ and a bit of arithmetic,
$$
\alpha_2^{j_0k} \sim \eps \eta_1^{\frac1{p_1}} 2^{-j_0}\eta_2^{-\frac1{p_2'}}2^{k\frac{p_2}{p_2'}},
$$
and thus for $B$ sufficiently large, \eqref{E:contradicts gamma || gamma'} contradicts Lemma~\ref{L:gamma || gamma'}.
\end{proof}

\section{Adding up the torsion scales} \label{S:ST full}


In this section, we add up the different torsion scales, $\rho \sim 2^{-m}$, thereby completing the proof of Theorem~\ref{T:main}.

As in the previous section, we consider functions
$$
f_i = \sum_k 2^k \chi_{E_i^k}, \qquad \|f_i\|_{p_i} \sim 1, \qquad i=1,2,
$$
with the $E^k_i$ pairwise disjoint (as $k$ varies) for each $i$.  For $m \in \Z$, we define $U_m := \{\rho \sim 2^{-m}\}$.  By rescaling Proposition~\ref{P:local strong type}, we know that
$$
\scriptB_m(f_1,f_2) := \int_{U_m} f_1 \circ \pi_1 (x) \, f_2 \circ \pi_2(x)\, \rho(x)\, dx \lesssim 1.
$$
For $0 < \delta \lesssim 1$, define
$$
\scriptM(\delta) := \{m : \scriptB_m(f_1,f_2)  \sim \delta\}.
$$

Define $\theta := (p_1^{-1}+p_2^{-1})^{-1}$.  Then $0 < \theta < 1$.  We will prove that for each $0 < \delta \lesssim 1$,
\begin{equation} \label{E:Bm theta}
\sum_{m \in \scriptM(\delta)} \scriptB_m(f_1,f_2)^\theta \lesssim (\log \delta^{-1})^C.
\end{equation}
Thus 
$$
\sum_{m \in \scriptM(\delta)} \scriptB_m(f_1,f_2) \lesssim \delta^{1-\theta}(\log\delta^{-1})^C,
$$
which implies Theorem~\ref{T:main}.

The remainder of this section will be devoted to the proof of \eqref{E:Bm theta} for some fixed $\delta > 0$.  We will use the notation $A \lessapprox B$ to mean that $A \leq C\delta^{-C} B$ for some $C$ depending on $N$.  

For $m \in \scriptM(\delta)$ and $\eps,\eta_1,\eta_2 \lesssim 1$, define
\begin{align*}
\scriptL_m(\eps,\eta_1,\eta_2):= \{(j,k) : &\scriptB_m(\chi_{E_1^j},\chi_{E_2^k}) \sim \eps |E_1^j|^{\frac1{p_1}} |E_2^k|^{\frac1{p_2}}, \\
&\qquad 2^{jp_1}|E^j_1| \sim \eta_1, \: 2^{kp_2} |E^k_2| \sim \eta_2\}.
\end{align*}
By \eqref{E:sum on L}, the sum over all $(j,k)$ lying in any $\scriptL_m(\eps,\eta_1,\eta_2)$ with $\eps$, $\eta_1$, or $\eta_2$ much smaller than $\delta^C$ contributes a negligible amount to $\scriptB_m(f_1,f_2)$:
$$
\sum_{\min\{\eps,\eta_1,\eta_2\}<c\delta^C} \sum_{(j,k) \in \scriptL_m(\eps,\eta_1,\eta_2)}2^{j+k}\scriptB_m(\chi_{E_1^j},\chi_{E_2^k}) <  c\delta < \tfrac12 \scriptB_m(f_1,f_2).
$$
Thus the majority of each $\scriptB_m(f_1,f_2)$ is contributed by the $C(\log\delta^{-1})^3$  parameters $\eps,\eta_1,\eta_2 \approx 1$.  By the triangle inequality and pigeonholing, there exists (at least) one such triple for which 
\begin{align*}
\sum_{m \in \scriptM(\delta)}\scriptB_m(f_1,f_2)^\theta 
&\lesssim (\log \delta^{-1})^3 \sum_{m \in \scriptM(\delta)} \sum_{(j,k) \in \scriptL_m(\eps,\eta_1,\eta_2)} \scriptB_m(2^j \chi_{E^j_1},2^k \chi_{E^k_2})^\theta.
\end{align*}
Henceforth, we will abbreviate $\scriptL_m := \scriptL_m(\eps,\eta_1,\eta_2)$, for this choice of $\eps,\eta_1,\eta_2$.  

For $m \in \scriptM(\delta)$ and $(j,k) \in \scriptL_m$, we set
$$
\Omega^{jkm}:= U_m \cap \pi_1^{-1}(E^j_1) \cap \pi_2^{-1}(E^k_2), \qquad \alpha^{jkm}:= \bigl(\tfrac{|\Omega^{jk}_m|}{|E^j_1|},\tfrac{|\Omega^{jk}_m|}{|E^k_2|}\bigr).  
$$
There exist finite sets $\scriptA^{jkm} \subseteq \Omega^{jkm}$, satisfying the conclusions of Proposition~\ref{P:quasiex}, appropriately rescaled.  We proceed under the assumption that the minimal $n$-tuple $I$ for all of these sets are the same; the general case follows by taking a sum over all possible minimal $n$-tuples.  We set
$$
\tilde\Omega^{jkm} := \bigcup_{x \in \scriptA^{jkm}} \bigcap_{\sigma \in S_n} B^{I_\sigma}(x;c\delta^C\alpha^{jkm}).
$$
We recall that on these balls,
$$
(\alpha^{jkm})^{\deg I}|\lambda_I| \approx (\alpha^{jkm})^b 2^{-m(|b|-1)} \approx |\Omega^{jkm}| \sim |\tilde\Omega^{jkm}|.
$$
(The factor $|b|-1$ in the exponent is due to the form of the weight $\rho$.)  

As in the preceding section, we let $q_i:=\theta^{-1}p_i$.  By the definition of $\Omega^{jkm}$, $|\tilde\Omega^{jkm}| \gtrsim |\Omega^{jkm}|$, the restricted weak type inequality \eqref{E:RWT}, and H\"older's inequality, 
\begin{align*}
&\sum_{m \in \scriptM(\delta)} \sum_{(j,k) \in \scriptL_m} \scriptB_m(2^j\chi_{E^j_1},2^k\chi_{E^k_2})^\theta\\
&\qquad \lesssim \sum_{m \in \scriptM(\delta)} \sum_{(j,k) \in \scriptL_m} (2^{j+k}|\pi_1(\tilde\Omega^{jkm})|^{\frac1{p_1}}|\pi_2(\tilde\Omega^{jkm})|^{\frac1{p_2}})^\theta\\
&\qquad \lesssim (\sum_{m \in \scriptM(\delta)} \sum_{(j,k) \in \scriptL_m} 2^{jp_1}|\pi_1(\tilde\Omega^{jkm})|)^{\frac1{q_1}} (\sum_{m \in \scriptM(\delta)} \sum_{(j,k) \in \scriptL_m} 2^{kp_2}|\pi_2(\tilde\Omega^{jkm})|)^{\frac1{q_2}} .
\end{align*}
Thus the inequalities
\begin{gather} \label{E:many scales pi1 disj}
\sum_{m \in \scriptM(\delta)} \sum_{k : (j,k) \in \scriptL_m} |\pi_1(\tilde\Omega^{j,k,m})| \lesssim (\log \delta^{-1})^C |E_1^j|, \qquad j \in \Z\\
\label{E:many scales pi2 disj}
\sum_{m \in \scriptM(\delta)} \sum_{j: (j,k) \in \scriptL_m} |\pi_2(\tilde\Omega^{j,k,m})| \lesssim (\log \delta^{-1})^C |E_2^k|, \qquad k \in \Z,
\end{gather}
together would imply \eqref{E:Bm theta}.  The rest of the section will be devoted to the proof of \eqref{E:many scales pi1 disj}, the proof of \eqref{E:many scales pi2 disj} being similar.  

The proof is similar to the proof of Lemma~\ref{L:exist disjoint}; so we will just review that argument, giving the necessary changes.  Let $\scriptK \subseteq \Z^2$ be a finite set such that $(j,k) \in \scriptL_m$ for all $(k,m) \in \scriptK$ and such that the following sets are all $(B \log \delta^{-1})$-separated for some $B = B(N)$ sufficiently large for later purposes:
\begin{gather*}
\{k : (k,m) \in \scriptK, \ctc{for some m$\}$,} \qquad \{m : (k,m) \in \scriptK, \ctc{for some k$\}$,} \\
 \{m+\tfrac{p_2}{p_2'}k : (k,m) \in \scriptK\}.
\end{gather*}
(In the case of the last set, we recall that $\tfrac{p_2}{p_2'}$ is rational.)  It suffices to prove that
\begin{equation} \label{E:km in K}
\sum_{(k,m) \in \scriptK} |\pi_1(\tilde\Omega^{jkm})| \lesssim |E_1^j|.
\end{equation}
By the proof of Lemma~\ref{L:exist disjoint}, failure of \eqref{E:km in K} implies that there exists a subset $\scriptK' \subseteq \scriptK$ of cardinality $\#\scriptK' \geq M$, with $M=M(N)$ sufficiently large for later purposes, and points $x^{jkm} \in \scriptA^{jkm}$ such that
\begin{equation} \label{E:M km balls}
|\bigcap_{(k,m) \in \scriptK'} \pi_1(\bigcap_{\sigma \in S_n} B^{I_\sigma}(x^{jkm},c\delta^C\alpha^{jkm}))| \gtrapprox |E^j_1|,
\end{equation}
with $I = (w_1,\ldots,w_n)$ minimal and $w_n = (1)$.  By rescaling Lemma~\ref{L:M balls} to torsion scale $\rho \sim 2^{-m}$, for each $m$, $\#(\Z \times \{m\}) \cap \scriptK \lesssim 1$.  Thus we may assume that 
$$
\scriptK' = \{(k_1,m_1),\ldots,(k_M,m_M)\},
$$
with the $m_i$ all distinct.  Set $\alpha^i:= \alpha^{jk_im_i}$.  

As in the proof of Lemma~\ref{L:M balls}, we can construct a submersion $\tilde\pi_1$ and find points $y^i = e^{t^i X_1}(y)$ such that
\begin{gather} \label{E:rho y^i}
2^{-m_i(|b|-1)} \sim \rho(y^i)^{|b|-1} \approx (\alpha^i)^{-b} \max_{I'}(\alpha^i)^{\deg I'} |\lambda_{I'}(y^i)|,\\
\label{E:wedge y^i}
|\bigwedge_{s=1}^L D\tilde\pi_1(y) De^{t^i X_1}(y^i)(\alpha^i)^{\deg w_{l_s}} X_{w_{i_l}}(y^i)| \approx 1,
\end{gather}
for all $1 \leq i \leq M$ and $1 \leq l_1 < \cdots < l_L \leq n-1$.

By construction, the $m_i$ are all $(B \log \delta^{-1})$-separated.  Thus by Lemma~\ref{L:lambda t sim lambda 0} and \eqref{E:rho y^i}, for $B$ sufficiently large, 
\begin{equation} \label{E:ts separated}
|t^i-t^{i'}| \gtrapprox \alpha^i_1 + \alpha^{i'}_1, \qquad \text{for each $i \neq i'$};  
\end{equation}
otherwise, two distinct balls would share a point in common, whence $2^{m_i} \approx 2^{m_{i'}}$, a contradiction.  
With $\gamma(t) := D\tilde\pi_1(y) De^{t X_1}(e^{tX_1}(y)) X_2(e^{tX_1}(y))$, \eqref{E:wedge y^i} gives
\begin{equation} \label{E:gamma approx alpha2}
|\gamma(t^i)| \approx (\alpha_2^i)^{-1}, \quad |\gamma'(t^i)| \approx (\alpha^i_1\alpha^i_2)^{-1}, \quad |\gamma'(t^i) \wedge \gamma'(t^i)| \approx |\gamma(t^i)||\gamma'(t^i)|.
\end{equation}
Since 
$$
\alpha_2^i \sim \eps \eta_1^{\frac1{p_1}}\eta_2^{-\frac1{p_2'}}2^{-j} 2^{m_i+k_i\frac{p_2}{p_2'}},
$$
and the set of values $m_i+k_i\frac{p_2}{p_2'}$ takes on is $(B \log \delta^{-1})$-separated, by Lemma~\ref{L:gamma || gamma'}, we may assume that $m_i+k_i \frac{p_2}{p_2'}$ is constant as $i$ varies.  Thus we may fix $\alpha_2$ so that $\alpha_2^i \sim \alpha_2$ for all $i$.  

We note that 
$$
\alpha_1^i \sim \eps \eta_1^{-\frac1{p_1'}}\eta_2^{\frac1{p_2}}2^{-j\frac{p_1}{p_1'}}2^{m_i-k_i}.  
$$
Since 
$$
m_i-k_i = -\tfrac{p_2'}{p_2}(m_i+k_i\tfrac{p_2}{p_2'}) + p_2' m_i= -\tfrac{p_2'}{p_2}(m_1+k_1\tfrac{p_2}{p_2'}) + p_2' m_i,
$$
our prior deductions imply that the $m_i-k_i$ are all distinct, $(B \log \delta^{-1})$-separated.  Reindexing, we may assume that $m_1-k_1 < \cdots < m_M-k_M$.  Thus $\alpha_1^1 < \cdots < \alpha_1^M$.  

By Lemma~\ref{L:gamma sim t^k} (after a harmless time translation), we may assume that all of the $t^i$ lie within a single interval $I \subseteq (0,\infty)$ on which
\begin{equation} \label{E:monomial dominates}
|\tfrac1{k!} \gamma^{(k)}(0)t^k| < c_N|\tfrac1{k_0!} \gamma^{(k_0)}(0)t^{k_0}|, \qquad k \neq k_0,
\end{equation}
with $c_N$ sufficiently small.  As we have seen, $|\gamma(t^i)| \approx \alpha_2^{-1}$, for all $i$.  On the other hand, for $c_N$ sufficiently small, and any subinterval $I' \subseteq I$,
$$
|\int_{I'} \gamma'(t)\, dt| \sim |I|\max_{t \in I} |\gamma'(t)|.
$$
(We can put the norm outside of the integral by \eqref{E:monomial dominates}.)  
Specializing to the case when $I'$ has endpoints $t_1,t_2$, and using \eqref{E:ts separated}, 
$$
\alpha_1^2 (\alpha_1^1 \alpha_2)^{-1} \lessapprox |t_1-t_2| |\gamma'(t_2)| \lesssim |\gamma(t_2)-\gamma(t_1)| \approx (\alpha_2)^{-1},
$$
i.e.\ $\alpha_1^2 \lessapprox \alpha_1^1$, which is impossible for $B$ sufficiently large.  Thus we have a contradiction, and tracing back, \eqref{E:many scales pi1 disj} must hold.  This completes the proof of Theorem~\ref{T:main}.


\section{Nilpotent Lie algebras and polynomial flows} \label{S:Nilpotent}


In the next section, we will generalize Theorem~\ref{T:main} by relaxing the hypothesis that the flows of the vector fields $X_j$ must be polynomial.  In this section, we lay the groundwork for that generalization by reviewing some results from Lie group theory.  In short, we will see that if $M$ is a smooth manifold and $\mathfrak g_M \subseteq \scriptX(M)$ is a nilpotent Lie algebra, then there exist local coordinates for $M$ in which the flows of the elements of $\mathfrak g$ are polynomial.  These results have the advantage over the analogous results in \cite{GressmanPoly} that the lifting of the vector fields is by a local diffeomorphism, rather than a submersion; this will facilitate the global results in the next section.  

Throughout this section, $M$ will denote a connected $n$-dimensional manifold, and $\mathfrak g_M \subseteq \scriptX(M)$ will denote a Lie subalgebra of the space $\scriptX(M)$ of smooth vector fields on $M$.  We assume throughout that $\mathfrak g_M$ is nilpotent, and we let $N:=\dim \mathfrak g_M$.  We further assume that the elements of $\mathfrak g_M$ span the tangent space to $M$ at every point.  We will say that a quantity is bounded if it is bounded by a finite, nonzero constant depending only on $N$, and our implicit constants will continue to depend only on $N$.  

For the moment, we will largely forget about the manifold $M$.  

Let $G$ denote the unique connected, simply connected Lie group with Lie algebra $\mathfrak g_M$.  For clarity, we denote the Lie algebra of right invariant vector fields on $G$ by $\mathfrak g$, and we fix an isomorphism $X \mapsto \hat X$ of $\mathfrak g_M$ onto $\mathfrak g$.  Under the natural identification of $G$ as a subgroup of $\rm{Aut}(G)$, $G = \exp(\mathfrak g)$, and the group law is given by $e^{\hat X} \cdot e^{\hat Y} = e^{\hat X} \circ e^{\hat Y} = e^{\hat X * \hat Y}$, where $X*Y$ a Lie polynomial in $X$ and $Y$, which is given explicitly by the Baker--Campbell--Hausdorff formula.  

Let $S$ be a Lie subgroup of $G$.  The Lie algebra $\mathfrak z$ of $S$ is a Lie subalgebra of $\mathfrak g$, and $Z:= \exp(\mathfrak z)$ is the connected component of $S$ containing the identity.  In addition, $Z$ is a normal subgroup of $S$.  Let $n:=N-\dim \mathfrak z$.  (Later on, we will set $\mathfrak z = \mathfrak z_{x_0} := \{\hat X \in \mathfrak g : X(x_0)=0\}$ and $S=S_{x_0} := \{e^{\hat X} : e^X(x_0)=x_0\}$.)  

Let $\Pi:G \to G/Z$ denote the quotient map.  For $g \in G$ and $s \in S$, left multiplication by $g$ and right multiplication by $s$ have well-defined pushforwards; in other words, there exist automorphisms $\Pi_*l_g$, $\Pi_*r_s$ on $G/Z$ such that
$$
(\Pi_*l_g)(hZ) = (gh)Z, \qquad (\Pi_* r_s)(hZ) = (hs)Z,
$$
for every $h \in G$.  

Our next task is to find good coordinates on $G$.  

\begin{lemma}[{\cite[Theorem 1.1.13]{Corwin}}] \label{L:exists malcev}
There exists an ordered basis $\{\hat X_1,\ldots,\hat X_N\}$ of $\mathfrak g$, such that for each $k$, the linear span $\mathfrak g_k$ of $\{\hat X_{k+1},\ldots,\hat X_N\}$ is a Lie subalgebra of $\mathfrak g$ and such that $\mathfrak g_n = \mathfrak z$.  
\end{lemma}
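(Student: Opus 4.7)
The plan is to construct a descending chain of Lie subalgebras
$$
\mathfrak g = \mathfrak g_0 \supset \mathfrak g_1 \supset \cdots \supset \mathfrak g_N = 0
$$
with $\dim \mathfrak g_k = N-k$ for every $k$ and $\mathfrak g_n = \mathfrak z$; once such a chain exists, choosing any $\hat X_k \in \mathfrak g_{k-1} \setminus \mathfrak g_k$ manifestly produces the required basis. I would build the chain in two halves that meet at $\mathfrak z$.

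The portion below $\mathfrak z$ is routine: since $\mathfrak z$ is itself nilpotent, I would induct on $\dim \mathfrak z$. A nontrivial nilpotent Lie algebra has nontrivial center, so one picks a nonzero $Z \in Z(\mathfrak z)$, applies the induction hypothesis to $\mathfrak z/\R Z$ to obtain a descending flag of subalgebras whose dimensions drop by one at each step, and lifts this flag back to $\mathfrak z$, appending $\R Z \supset 0$ at the bottom.

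The portion above $\mathfrak z$ is the real content. I would build it one dimension at a time using the normalizer condition: for every proper subalgebra $\mathfrak h \subsetneq \mathfrak g$, the normalizer
$$
N_{\mathfrak g}(\mathfrak h) := \{ Y \in \mathfrak g : [Y,\mathfrak h] \subseteq \mathfrak h \}
$$
strictly contains $\mathfrak h$. Granted this, any $Y \in N_{\mathfrak g}(\mathfrak h) \setminus \mathfrak h$ makes $\mathfrak h + \R Y$ a subalgebra of dimension one greater (because $[Y,Y]=0$, $[Y,\mathfrak h] \subseteq \mathfrak h$, and $[\mathfrak h, \mathfrak h] \subseteq \mathfrak h$), and iterating from $\mathfrak h = \mathfrak z$ yields $\mathfrak z = \mathfrak g_n \subset \mathfrak g_{n-1} \subset \cdots \subset \mathfrak g_0 = \mathfrak g$.

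The main obstacle is thus the normalizer condition, which I would prove by induction on $\dim \mathfrak g$. Since $\mathfrak g$ is nilpotent, $Z(\mathfrak g) \neq 0$. If $Z(\mathfrak g) \not\subseteq \mathfrak h$, any $Z \in Z(\mathfrak g) \setminus \mathfrak h$ lies in $N_{\mathfrak g}(\mathfrak h) \setminus \mathfrak h$ because $[Z,\cdot] \equiv 0$, and we are done. Otherwise $Z(\mathfrak g) \subseteq \mathfrak h$, so $\mathfrak h/Z(\mathfrak g)$ is a proper subalgebra of the lower-dimensional nilpotent Lie algebra $\mathfrak g/Z(\mathfrak g)$; the inductive hypothesis produces a $\bar Y$ in its normalizer but outside it, and any lift of $\bar Y$ to $\mathfrak g$ satisfies $Y \notin \mathfrak h$ and $[Y,\mathfrak h] \subseteq \mathfrak h + Z(\mathfrak g) = \mathfrak h$.
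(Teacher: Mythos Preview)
Your argument is correct. The normalizer condition for nilpotent Lie algebras (every proper subalgebra is strictly contained in its normalizer) is exactly the right tool, and your inductive proof of it via the center is the standard one; iterating it from $\mathfrak z$ upward and separately building the flag inside $\mathfrak z$ gives precisely the required chain of codimension-one subalgebras.

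There is nothing to compare against: the paper does not supply its own proof of this lemma, but simply cites it as Theorem~1.1.13 of Corwin--Greenleaf and writes ``We will not replicate the proof.'' Your argument is in fact essentially the proof given in that reference.
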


We will not replicate the proof.  

Such a basis is called a weak Malcev basis of $\mathfrak g$ through $\mathfrak z$. As we will see, the utility of weak Malcev bases is that they give coordinates for $G$ and $G/Z$ in which the flows of our vector fields are polynomial.  We will say that a function $q$ is a polynomial diffeomorphism on $\R^N$ if $q:\R^N \to \R^N$ is a polynomial having a well defined inverse $q^{-1}:\R^N \to \R^N$ that is also a polynomial.  Polynomial diffeomorphisms must have constant Jacobian determinant; we will say that they are volume-preserving if this constant equals 1.  

Fix a weak Malcev basis $\{\hat X_1,\ldots,\hat X_N\}$ for $\mathfrak g$ through $\mathfrak z$.  For convenience, we will use the notation $x \cdot \vec X := \sum_{j=1}^N x_j \hat X_j$, for $x \in \R^N$.  Define 
$$
\psi(x) := e^{x_1 \hat X_1} \cdots e^{x_N \hat X_N}.
$$

\begin{lemma}\label{L:malcev good} 
There exists a polynomial diffeomorphism $p$ on $\R^N$ such that $\psi(x) = \exp(p(x) \cdot \vec X)$.  In particular, $\psi$ is a diffeomorphism of $\R^N$ onto $G$.  In these coordinates, the right and left exponential maps are polynomial.  More precisely, for $x^1,x^2 \in \R^N$,
$$
e^{x^2 \cdot \vec X} \psi(x^1) = \psi(q(x^1,x^2)), \qquad \psi(x^1)e^{x^2 \cdot \vec X} = \psi(r(x^1,x^2)),
$$
where $q,r:\R^{2N} \to \R^N$ are polynomials, $q(\cdot,x^2)$ and $r(\cdot,x^2)$ are volume-preserving polynomial diffeomorphisms for each $x^2$, and for each $1 \leq i \leq N$, $q_i(x^1,x^2)$ only depends on $x_1^1,\ldots,x_i^1$, and $x^2$.
\end{lemma}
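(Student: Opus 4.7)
The plan rests on two facts: the Baker--Campbell--Hausdorff (BCH) product $X*Y$ is a Lie polynomial of bounded degree in $X,Y$ because $\mathfrak g$ is nilpotent, and each $\mathfrak g_{k-1}$ in the Malcev flag is a subalgebra, so a BCH product of elements of $\mathfrak g_{k-1}$ stays in $\mathfrak g_{k-1}$. For the claim $\psi(x) = \exp(p(x) \cdot \vec X)$ with $p$ polynomial, I would induct on $k$ descending from $N$, showing that $\psi_k(x_k, \ldots, x_N) := e^{x_k \hat X_k} \cdots e^{x_N \hat X_N}$ equals $\exp(Y^{(k)}(x_k, \ldots, x_N))$ for some polynomial $\mathfrak g_{k-1}$-valued map $Y^{(k)}$. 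The base $k=N$ is immediate, and for the inductive step BCH gives $Y^{(k-1)} = x_{k-1} \hat X_{k-1} * Y^{(k)}$, a polynomial landing in $\mathfrak g_{k-2}$ since $\mathfrak g_{k-2}$ is a subalgebra containing both summands. Reading off the $\hat X$-coordinates of $Y^{(1)}$ yields the polynomial $p$.

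That $\psi$ is a global diffeomorphism $\R^N \to G$ is a standard fact for weak Malcev bases (see \cite{Corwin}), so $p$ is bijective; the delicate point is polynomiality of $p^{-1}$, which I would prove by induction on $N$ along the flag. Given $y \in \R^N$ and $g = \exp(y \cdot \vec X)$, we seek $x_1$ so that $e^{-x_1 \hat X_1} g \in \exp(\mathfrak g_1)$. By BCH the $\hat X_1$-coordinate of $(-x_1 \hat X_1) * (y \cdot \vec X)$ has the form $-x_1 + y_1 + x_1 R(x_1, y)$ for some polynomial $R$ of bounded total degree. Because every monomial in $R$ arises from an iterated BCH bracket and the nilpotent structure bounds the number of such brackets, iterated self-substitution solves this equation in finitely many steps, producing $x_1$ as a polynomial in $y$. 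Once $x_1(y)$ is found, $e^{-x_1 \hat X_1} g = \exp(y' \cdot \vec X')$ with $\vec X' := (\hat X_2, \ldots, \hat X_N)$ a weak Malcev basis for $\mathfrak g_1$ and $y'$ polynomial in $y$; the inductive hypothesis recovers $x_2, \ldots, x_N$ as polynomials in $y$.

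For the multiplication formulas, $q$ and $r$ exist and are unique by bijectivity of $\psi$; polynomiality of $q$ follows from $e^{x^2 \cdot \vec X} \psi(x^1) = \exp((x^2 * p(x^1)) \cdot \vec X)$ composed with the polynomial $p^{-1}$, and $r$ is handled analogously. The triangular dependence of $q_i$ comes from the fact that $\exp(\mathfrak g_i)$ is a subgroup (since $\mathfrak g_i$ is a subalgebra) containing the tail $e^{x_{i+1}^1 \hat X_{i+1}} \cdots e^{x_N^1 \hat X_N}$, so the right coset $e^{x^2 \cdot \vec X} \psi(x^1) \exp(\mathfrak g_i)$ equals $e^{x^2 \cdot \vec X} e^{x_1^1 \hat X_1} \cdots e^{x_i^1 \hat X_i} \exp(\mathfrak g_i)$; this coset depends only on $x^2$ and $x_1^1, \ldots, x_i^1$, and uniquely determines $(q_1, \ldots, q_i)$. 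For volume preservation of $q(\cdot, x^2)$ and $r(\cdot, x^2)$: once polynomiality is in hand, each $\hat X \in \mathfrak g$ has polynomial flow in $\psi$-coordinates, so Lemma~\ref{L:div free} applies to show $\hat X$ is divergence-free; thus left and right translation by $e^{x^2 \cdot \vec X}$ are compositions of volume-preserving flows, and $q(\cdot, x^2), r(\cdot, x^2)$ are their coordinate expressions.

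The main obstacle will be the polynomial inversion in the second paragraph. The equation $x_1 = y_1 + x_1 R(x_1, y)$ need not have a polynomial solution for a generic polynomial $R$; one must carefully exploit the filtration from the lower central series of $\mathfrak g$ (which caps the degree of each BCH contribution) to guarantee that the iterative substitution terminates after a bounded number of steps and yields a polynomial in $y$.
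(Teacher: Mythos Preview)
Your proposal is correct and largely parallels the paper's argument, with two differences worth noting.

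First, where you reprove the existence of the polynomial diffeomorphism $p$ (and especially its polynomial inverse) by an inductive BCH argument, the paper simply cites Proposition~1.2.8 of \cite{Corwin}. Your identification of the inversion step as the crux is accurate, and your remark that the lower central series filtration is what makes the iteration terminate is exactly the mechanism in the Corwin--Greenleaf proof; so you are essentially reconstructing that reference rather than giving a genuinely different route.

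Second, for volume preservation the paper argues more directly than you do: since $r(\cdot,-x^2)$ is a polynomial inverse to $r(\cdot,x^2)$, the product of their Jacobian determinants is identically~$1$, both determinants are polynomials in $(x^1,x^2)$ and hence constant, and evaluating at $x^2=0$ gives the constant~$1$. Your route via Lemma~\ref{L:div free} (polynomial flows are divergence-free, hence translations are volume-preserving) is also valid, but note it requires first having the polynomiality of $q$ and $r$ in hand, so the logical order matters.

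The coset argument you give for the triangular dependence of $q_i$ is exactly the paper's: the quotient $G/G_i$ is well-defined for the subgroup $G_i=\exp(\mathfrak g_i)$, left multiplication pushes forward to it, and in the coordinates $\phi_i(y)=\psi(y,0)G_i$ the projection is just truncation to the first $i$ coordinates.
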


\begin{proof}
The assertion on $p$ is just Proposition~1.2.8 of \cite{Corwin}.  That $q$ and $r$ are polynomial just follows by taking compositions:  
$$
\exp(q(x^1,x^2)) = \exp(x^2 \cdot \vec X) \psi(x^1) = \exp((x^2 \cdot \vec X)*p(x^1)) = \psi(p^{-1}((x^2 \cdot \vec X)*p(x^1)));
$$
similarly for $r$.  

The inverse of $r(\cdot,x^2)$ is $r(\cdot,-x^2)$, also a polynomial.  Since $r(r(x^1,x^2),-x^2) \equiv x^1$, $\det (D_{x^1} r)(r(x^1,x^2),-x^2) \det D_{x^1} r(x^1,x^2) \equiv 1$, and since both determinants are polynomial in $x^1$ and $x^2$, both must be constant.  Finally, since $r(x^1,0)$ is the identity, this constant must be 1.  

We turn to the dependence of $q_i$ on $x^2$ and the first $i$ entries of $x^1$.  Set $G_k:= \exp(\mathfrak g_k)$ (in the notation of Lemma~\ref{L:exists malcev}).  Our coordinates $\psi$ on $G$ give rise to diffeomorphisms
$$
\phi_k:\R^k \to G/G_k, \qquad \phi_k(y) := \psi(y,0) G_k.
$$
In these coordinates, the projections $\Pi_k:G \to G/G_k$ may be expressed as coordinate projections:  $\phi_k^{-1} \circ \Pi_k \circ \psi(y,z) = y$.  Since left multiplication pushes forward via $\Pi_k$,
\begin{align*}
(q_1,\ldots,q_i)(y,z,x^2) &= \phi_k^{-1} \circ \Pi_k (l_{e^{x^2 \cdot \vec X}} \psi(y,z)) = \phi_k^{-1}((\Pi_k)_* l_{e^{x^2 \cdot \vec X}} \Pi_k \psi(y,z)) \\ &= \phi_k^{-1}((\Pi_k)_* l_{e^{x^2 \cdot \vec X}} \phi_k(y)),
\end{align*}
which is independent of $z$.  
\end{proof}

Recalling that $Z=G_n$, we set $\phi := \phi_n$.  The pushforwards $\Pi_* \hat X$, $\hat X \in \mathfrak g$, are well-defined and have polynomial flows; indeed, 
$$\exp(\Pi_*(x \cdot \vec X))(\phi(y)) = \phi(q_1((y,0),x),\ldots,q_n((y,0),x)).
$$
Furthermore, $\Pi_*$ is a Lie group homomorphism of $\mathfrak g$ onto a Lie subgroup of $\mathcal X(G/Z)$, and, since $\Pi_*$ is a submersion and $\mathfrak g$ spans the tangent space to $\R^N$ at every point, $\Pi_* \mathfrak g$ spans the tangent space to $\R^n$ at every point.  

Next we examine the pushforwards $\Pi_* r_s$ of right multiplication by $s \in S$.  First, a preliminary remark.  Since $Z$ is a normal subgroup of $S$, $S$ acts on $Z$ by conjugation.  Replacing $G$ with $Z$, Lemma~\ref{L:malcev good} implies that the pushforward $\psi_* dz$ of $(N-n)$-dimensional Hausdorff measure on $Z$ is a bi-invariant Haar measure on $Z$.  We may uniquely extend this to a bi-invariant Haar measure on $S$.  Both $Z$ and this Haar measure on $S$ are invariant under the conjugation action, so $\psi_*dz$ is invariant under the conjugation action of $S$.  

\begin{lemma} \label{L:rs volume preserving}
In the coordinates given by $\phi$, the pushfoward $\Pi_* r_s$ is a volume-preserving polynomial diffeomorphism.
\end{lemma}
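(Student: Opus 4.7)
The plan is to reduce everything to the polynomial diffeomorphism $r$ of Lemma~\ref{L:malcev good} and exploit normality of $Z$ in $S$. Since $G=\exp\mathfrak g$, write $s=e^{\tilde x\cdot\vec X}$ for some $\tilde x\in\R^N$; then Lemma~\ref{L:malcev good} yields the volume-preserving polynomial diffeomorphism $(y,z)\mapsto r((y,z),\tilde x)$ representing right multiplication by $s$ in the Malcev coordinates $\psi$. The goal is to show that, under the extra hypothesis $s\in S$, this polynomial descends to a polynomial map $F:\R^n\to\R^n$ on the $y$-factor and that $F$ itself is volume-preserving; read through the definition of $\phi$, this $F$ is precisely $\Pi_*r_s$ in $\phi$-coordinates.

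First I would establish the key factorization
$$
\psi(y,z)\,s=\psi\bigl(F(y),\,k(y,z)\bigr).
$$
Write $\psi(y,z)=\psi(y,0)\,\psi(0,z)$ and insert $s\,s^{-1}$ to get $\psi(y,z)s=\psi(y,0)\,s\cdot(s^{-1}\psi(0,z)s)$. Because $\psi(0,z)\in Z$, $s\in S$, and $Z$ is normal in $S$, the conjugate $s^{-1}\psi(0,z)s$ lies in $Z$, hence equals $\psi(0,h(z))$ for a polynomial $h:\R^{N-n}\to\R^{N-n}$ (a composition of two applications of Lemma~\ref{L:malcev good}). Writing $\psi(y,0)\,s=\psi(F(y),G(y))$ by Lemma~\ref{L:malcev good} and multiplying the two resulting $Z$-factors produces the displayed identity, with $F$ depending only on $y$ by the final clause of Lemma~\ref{L:malcev good}. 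The analogous construction for $s^{-1}$ in place of $s$ supplies a polynomial inverse for $F$, so $F$ is a polynomial diffeomorphism.

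For volume preservation, the block structure $(y,z)\mapsto(F(y),k(y,z))$ renders $Dr$ block lower triangular, so
$$
1=\det Dr(\cdot,\tilde x)=\det D_yF(y)\cdot\det D_zk(y,z).
$$
For fixed $y$, the map $z\mapsto k(y,z)$ is, in the Malcev coordinates of $Z$, the composition of left translation in $Z$ by $\psi(0,G(y))$ with the restriction of conjugation by $s^{-1}$ to $Z$. The first is volume-preserving by Lemma~\ref{L:malcev good} applied inside $Z$; the second is volume-preserving by the remark immediately preceding the lemma that $\psi_*dz$ on $Z$ is invariant under the conjugation action of $S$. Hence $\det D_zk\equiv 1$, and therefore $\det D_yF\equiv 1$.

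The only genuine subtlety is pinning down signs: the lemma demands Jacobian $+1$, not merely $\pm 1$. This is immediate since in a connected nilpotent group $\mathrm{Ad}(s^{-1})$ is unipotent and so has determinant $+1$, which, after the change of coordinates between exponential and Malcev coordinates on $Z$, forces the same sign for the constant polynomial $\det D_zk$ and hence for $\det D_yF$.
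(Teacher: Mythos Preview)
Your proof is correct and takes a genuinely different route from the paper's for the volume-preservation part. Both arguments obtain the polynomial-diffeomorphism claim in essentially the same way, by reading off the first $n$ components of $r(\cdot,\tilde x)$ from Lemma~\ref{L:malcev good}. For volume preservation, however, the paper gives a measure-theoretic argument: it fixes a unit-volume $B\subseteq Z$, uses the coarea formula to write $|\Pi_*r_s U|=|\sigma(\Pi_*r_s U)\,(s^{-1}Bs)|$, and then unwinds this to $|\sigma(U)B|=|U|$ using conjugation-invariance of Haar measure on $Z$ together with the fact that right multiplication by $s$ on $G$ is volume-preserving. Your argument instead exploits the block lower-triangular structure of $r(\cdot,\tilde x)$ (coming from normality of $Z$ in $S$) to factor $\det Dr=\det D_yF\cdot\det D_zk$ and then evaluates the second factor directly. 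Your approach is more hands-on and has the added benefit of pinning down the sign $\det D_yF=+1$ via unipotency of $\mathrm{Ad}(s^{-1})$, whereas the paper's measure computation literally only yields $|\det D_yF|=1$. One small wording issue: you write that $F$ depends only on $y$ ``by the final clause of Lemma~\ref{L:malcev good},'' but that clause concerns $q$ (left multiplication), not $r$; the independence from $z$ here really follows from your own factorization, since right-multiplying $\psi(F(y),G(y))$ by an element of $Z$ does not change the coset in $G/Z$.
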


\begin{proof}
By Lemma~\ref{L:malcev good}, for each $s \in S$, there exists a polynomial $r^s:\R^N \to \R^N$ such that $r_s(\psi(x)) = \psi(r^s(x))$.  From the definition of the pushforward,
$$
\Pi_* r_s(\phi(y)) = \Pi(r_s(\psi(y,0))) = \Pi(\psi(r^s(y,0))) = \phi((r^s_1,\ldots,r^s_n)(y,0)),
$$
and taking the composition with $\phi^{-1}$ yields a polynomial.  Since $(r^s)^{-1} = r^{-s}$, this is also a polynomial diffeomorphism.  It remains to verify that this diffeomorphism is volume-preserving.

For simplicity, we will use vertical bars to denote the pushforward by $\phi$ of Lebesgue measure on $\R^n$ to $G/Z$ and also the pushforwards by $\psi$ of Lebesgue measure on $\R^N$ to $G$ and Hausdorff measure on $\R^{N-n} \times \{0\}$ to $Z$.  Fix an open, unit volume set $B \subseteq Z$.  By the remarks preceding the statement of Lemma~\ref{L:rs volume preserving}, $|s^{-1}Bs| = |B| = 1$.  Let $U \subseteq G/Z$ be measurable, and let $\sigma:G/Z \to G$ denote the section $\sigma(u) := \psi(\phi^{-1}(u),0)$. By the coarea formula,
$$
|\Pi_*r_s U| = |\sigma(\Pi_* r_s U) (s^{-1} B s)|.
$$
Of course, $\sigma(\Pi_* r_s U) (s^{-1} B s) = (\sigma(U) B)s$, so using the fact that right multiplication by $s$ is volume-preserving, and using the coarea formula a second time,
$$
|\Pi_*r_s U| = |\sigma(U) B| = |U|.
$$
\end{proof}

Now we are ready to return to our $n$-dimensional manifold $M$ from the opening of this section.  Fix $x_0 \in M$, and set $\mathfrak z =\mathfrak z_{x_0} := \{\hat X \in \mathfrak g : X(x_0) = 0\}$ and $Z=Z_{x_0} := \exp(\mathfrak z)$.  

We consider the smooth manifold $H = H_{x_0} :=\R^n \times M$, and view $\mathfrak g\simeq \mathfrak g_H$ as a tangent distribution on $H$, with elements $(\phi^*\Pi_* \hat X) \oplus X \in \mathfrak g_H$.  By the Frobenius theorem, there exists a smooth submanifold $(0,x_0) \in L=L_{x_0} \subseteq H$ whose tangent space equals the span of the elements of $\mathfrak g_H$ at each point.  The dimension of this leaf equals $n$; indeed, the map $\phi^*\Pi_* \hat X(0) \mapsto X(x_0)$ is an isomorphism, so its graph, $T_{(0,x_0)}H$, has dimension $n$.  

We let $p_1:L \to \R^n$ and $p_2:L \to M$ denote the restrictions to $L$ of the coordinate projections of $H$ onto $\R^n$ and $M$, respectively.  These restrictions are smooth, because $\phi^*\Pi_*\mathfrak g$ and $\mathfrak g_M$ span the tangent spaces to $\R^n$ and $M$, respectively, at every point.  For this same reason, they are in fact submersions, and hence local diffeomorphisms.  Composition of $p_2$ with a local inverse for $p_1$ immediately yields the following.

\begin{lemma} \label{L:local poly flows}
Let $x_0 \in M$ and fix a weak Malcev basis $\{\hat X_1,\ldots,\hat X_N\}$ of $\mathfrak g$ through $\mathfrak z_{x_0}$.  Then there exist neighborhoods $V_{x_0}$ of $0$ in $\R^n$ and $U_{x_0}$ of $x_0$ such that the map
$$
\Phi_{x_0}(y):= e^{y_1 X_1} \cdots e^{y_n X_n}(x_0)
$$
is a diffeomorphism of $V_{x_0}$ onto $U_{x_0}$, and, moreover, the pullbacks $\tilde X := (\Phi_{x_0})^*X$, $X \in \mathfrak g_M$ may be extended to globally defined vector fields on $\R^n$ for which each exponentiation $(t,x) \mapsto e^{t\hat X}(x_0)$ is a polynomial of bounded degree.
\end{lemma}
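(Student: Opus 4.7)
The plan is to identify $\Phi_{x_0}$ with the local diffeomorphism $p_2 \circ p_1^{-1}$ arising from the Frobenius leaf $L_{x_0}\subseteq \R^n\times M$ constructed in the paragraph preceding the lemma, and then to transport the polynomial structure on $G/Z_{x_0}$ given by Lemma~\ref{L:malcev good} through this identification.

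First I would track a carefully chosen flow inside $L_{x_0}$ from $(0,x_0)$. Its tangent space at any point $(y,x)$ is spanned by vectors of the form $\phi^*\Pi_*\hat X \oplus X$ with $X\in\mathfrak g_M$. Successively flowing from $(0,x_0)$ along $(\phi^*\Pi_*\hat X_i)\oplus X_i$ for time $y_i$, with $i$ running $n, n-1, \ldots, 1$, produces a point in $L_{x_0}$ whose second coordinate is exactly $\Phi_{x_0}(y)$ by construction. Because $\hat X_i$ is right invariant, its flow on $G$ is left translation by $\exp(y_iX_i)$, and this left translation descends through $\Pi$ (since $\Pi_* l_g$ is well defined). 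Iterating, the first coordinate of the endpoint in $\R^n$ is $\phi^{-1}(\Pi(\exp(y_1 X_1)\cdots\exp(y_n X_n))) = \phi^{-1}(\Pi(\psi(y,0))) = \phi^{-1}(\phi(y)) = y$. Thus $(y,\Phi_{x_0}(y))\in L_{x_0}$ for $y$ near $0$, which yields $p_1(y,\Phi_{x_0}(y))=y$, $p_2(y,\Phi_{x_0}(y))=\Phi_{x_0}(y)$, and hence $\Phi_{x_0}=p_2\circ p_1^{-1}$ locally. Since $p_1$ and $p_2$ are local diffeomorphisms, so is $\Phi_{x_0}$.

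The same parametrization $y\mapsto (y,\Phi_{x_0}(y))$ of $L_{x_0}$, together with the form $\phi^*\Pi_*\hat X\oplus X$ of its tangent vectors, immediately gives $\Phi_{x_0}^*X = \phi^*\Pi_*\hat X$ on $V_{x_0}$. Since $\phi:\R^n\to G/Z_{x_0}$ is a global diffeomorphism and $\Pi_*\hat X$ is a globally defined vector field on $G/Z_{x_0}$, the right-hand side makes sense on all of $\R^n$ and provides the required global extension $\tilde X$. Polynomiality of its flow then follows from Lemma~\ref{L:malcev good}: writing $X = x^2\cdot\vec X$, the flow of $\hat X$ on $G$ in $\psi$-coordinates is $x^1\mapsto q(x^1,tx^2)$, where $q$ is polynomial, and since $q_i(x^1,\cdot)$ depends only on $x^1_1,\ldots,x^1_i$ for $i\le n$, this descends cleanly to the polynomial flow $e^{t\tilde X}(y)=(q_1,\ldots,q_n)((y,0),tx^2)$ in $\phi$-coordinates, of degree bounded in terms of $N$.

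The main obstacle is the careful unwinding of the leaf structure to match the successive exponentials defining $\Phi_{x_0}$ to the factorization $\psi(y,0)=\exp(y_1X_1)\cdots\exp(y_n X_n)$ modulo $Z_{x_0}$. This step relies essentially on the choice of a \emph{weak Malcev} basis through $\mathfrak z_{x_0}$: placing the subalgebra at the end of the basis is exactly what makes $\exp(y_1 X_1)\cdots\exp(y_n X_n)Z_{x_0}$ equal to $\phi(y)$, so that $\phi^{-1}$ recovers $y$. Once this identification is in place, the global extension and the polynomial bound on the flow are immediate consequences of Lemma~\ref{L:malcev good}.
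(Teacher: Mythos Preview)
Your proposal is correct and takes essentially the same approach as the paper: the paper's proof consists of the single sentence ``Composition of $p_2$ with a local inverse for $p_1$ immediately yields the following,'' and you have simply filled in the details of why $\Phi_{x_0}=p_2\circ p_1^{-1}$ and why the pullbacks coincide with $\phi^*\Pi_*\hat X$, invoking Lemma~\ref{L:malcev good} for polynomiality exactly as intended.
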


We would like to remove the restriction to small neighborhoods of points in $M$ from the preceding.  

\begin{lemma}\label{L:p_2 covering}
The projection $p_2:L_{x_0} \to M$ is a covering map.
\end{lemma}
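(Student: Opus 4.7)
The plan is to verify the standard criterion that a local diffeomorphism is a covering map provided it is surjective and each base point has a neighborhood that is evenly covered. The key input, which handles the possible lack of completeness of individual elements of $\mathfrak{g}_M$, is the following observation: for each $X \in \mathfrak{g}_M$ and each $\ell \in L_{x_0}$, the flow of $V_X := (\phi^* \Pi_* \hat X) \oplus X$ on $L_{x_0}$ starting at $\ell$ is defined on an interval $[0,T]$ precisely when $e^{tX}$ is defined on $[0,T]$ starting from $p_2(\ell)$. Indeed, the $\R^n$-component of $V_X$ has the globally defined polynomial flow supplied by Lemma~\ref{L:malcev good}, so the constraint comes entirely from the $M$-component, and the resulting curve stays in $L_{x_0}$ because $V_X$ is tangent to the foliation.

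Surjectivity of $p_2$ follows immediately: since $\mathfrak{g}_M$ spans $TM$ and $M$ is connected, any $x \in M$ is reachable from $x_0$ by a piecewise-exponential path $e^{t_k X_k} \circ \cdots \circ e^{t_1 X_1}(x_0)$, and the matching composition of $V_{X_j}$-flows applied to $(0,x_0) \in L_{x_0}$ produces a point of $L_{x_0}$ whose $p_2$-image is $x$.

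For even covering, fix $x \in M$, choose $X_1,\dots,X_n \in \mathfrak{g}_M$ whose values at $x$ form a basis of $T_x M$, and set
$$
\Phi(t) := e^{t_n X_n} \circ \cdots \circ e^{t_1 X_1}(x),
$$
a diffeomorphism from a small neighborhood $V \subseteq \R^n$ of $0$ onto an open neighborhood $U$ of $x$. For each $\ell \in p_2^{-1}(x)$ I would define the lift
$$
\tilde\Phi_\ell(t) := e^{t_n V_{X_n}} \circ \cdots \circ e^{t_1 V_{X_1}}(\ell),
$$
which by the completeness observation is defined on all of $V$, takes values in $L_{x_0}$, and satisfies $p_2 \circ \tilde\Phi_\ell = \Phi$. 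Since $p_2$ is a local diffeomorphism and $\Phi$ is an injective immersion, $\tilde\Phi_\ell$ is an injective immersion between manifolds of equal dimension, hence a diffeomorphism onto an open neighborhood $W_\ell \subseteq L_{x_0}$ of $\ell$ on which $p_2$ restricts to a diffeomorphism onto $U$.

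It remains to verify $p_2^{-1}(U) = \bigsqcup_{\ell \in p_2^{-1}(x)} W_\ell$. For disjointness, if $\tilde\Phi_{\ell_1}(t_1) = \tilde\Phi_{\ell_2}(t_2)$ then applying $p_2$ and invoking injectivity of $\Phi$ forces $t_1 = t_2$, after which unwinding the (well-defined) reverse flows yields $\ell_1 = \ell_2$. For exhaustion, given $\ell \in p_2^{-1}(U)$ with $p_2(\ell) = \Phi(t)$, set $\ell' := e^{-t_1 V_{X_1}} \circ \cdots \circ e^{-t_n V_{X_n}}(\ell)$; the reverse $X_j$-flows starting from $\Phi(t)$ are defined for exactly the required times because the forward ones from $x$ are defined in the construction of $\Phi$, so $\ell' \in p_2^{-1}(x)$ and $\tilde\Phi_{\ell'}(t) = \ell$. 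The main technical care throughout is in tracking which reverse flows are defined at which points, and this is handled uniformly by the completeness observation of the first paragraph.
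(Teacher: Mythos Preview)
Your proof is correct and follows essentially the same approach as the paper: establish surjectivity via Chow's theorem, then trivialize $p_2^{-1}(U)$ by lifting a local exponential chart around $x$ via the combined vector fields $V_X = (\phi^*\Pi_*\hat X)\oplus X$. The only cosmetic difference is that the paper builds the chart from a weak Malcev basis through $\mathfrak z_x$ (invoking Lemma~\ref{L:local poly flows}), whereas you take any $n$ elements of $\mathfrak g_M$ spanning $T_xM$ and appeal directly to the inverse function theorem; your explicit verification of disjointness and exhaustion fills in details the paper leaves to the reader.
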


\begin{proof}[Proof of Lemma~\ref{L:p_2 covering}]
That $p_2$ is surjective follows from H\"ormander's condition and connectedness of $M$.  Indeed, any point of the form $e^{X_1} \cdots e^{X_K}(x_0)$ (here we assume that each of the exponentials is defined) is in the range of $p_2$, and the set of such points is both open and closed in $M$.  (This is Chow's theorem.)

Let $x \in M$.  Fix a weak Malcev basis $\{\hat W_1,\ldots,\hat W_N\}$ of $\mathfrak g$ through $\mathfrak z_x$.  Then there exist neighborhoods $0 \in V_x \subseteq \R^n$ and $x \in U_x \subseteq M$ such that 
$$
\Phi_x(w):= e^{w_1 W_1} \cdots e^{w_nW_n}(x)
$$
is a diffeomorphism of $V_x$ onto $U_x$, so 
$$
p_2^{-1}(U_x) = \bigcup_{y: (y,x) \in L_{x_0}} \{(e^{w_1\tilde W_1} \cdots e^{w_n \tilde W_n}(y), \Phi_x(w)):w \in V\},
$$
where $\tilde W_n:= \phi^*\Pi_* \hat W_n$, and the restriction of $p_2$ to each set in this union is a diffeomorphism.
\end{proof}

\begin{lemma} \label{L:p_1 injective}
Assume that the exponential $e^X(x_0)$ is defined for every $X \in \mathfrak g_M$.  Then the projection $p_1:L_{x_0} \to G/Z_{x_0}$ is one-to-one.  
\end{lemma}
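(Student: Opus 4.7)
The plan is to realize $L$ as the graph of a smooth map $\bar\Psi\circ\phi: \R^n\to M$, where $\bar\Psi:G/Z_{x_0}\to M$ is a globally defined ``orbit map'' through $x_0$; injectivity of $p_1$ then becomes automatic.

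The key technical step is the Baker--Campbell--Hausdorff (BCH) identity
\begin{equation*}
e^{sY}(e^X(x_0))=e^{(s\hat Y*\hat X)^\vee}(x_0),\qquad X,Y\in\mathfrak g_M,\ s\in\R,
\end{equation*}
where $\vee$ inverts the isomorphism $X\mapsto\hat X$. The right-hand side is globally defined by hypothesis. For $X$ and $s$ small enough that both sides lie in the chart $\Phi_{x_0}:V_{x_0}\to U_{x_0}$ from Lemma~\ref{L:local poly flows}, the identity reduces to the $G/Z_{x_0}$-level identity $e^{s\hat Y}e^{\hat X}Z_{x_0}=e^{s\hat Y*\hat X}Z_{x_0}$, because in this chart $\mathfrak g_M$-flows are transported to the polynomial left-action of $G$ on $G/Z_{x_0}$. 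To extend to all $s\in\R$, I observe that both sides satisfy the ODE $\dot z=Y(z)$: the right-hand side by differentiating the shifted local identity and invoking associativity of $*$. Since the right-hand side is globally defined, ODE uniqueness propagates the equality to all $s$; to remove the smallness restriction on $X$, I iterate the argument at successive base points along $t\mapsto e^{tX}(x_0)$, applying Lemma~\ref{L:local poly flows} at each new base point.

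Granted the identity, I define $\Psi:G\to M$ by $\Psi(e^{\hat Z}):=e^Z(x_0)$ for $Z\in\mathfrak g_M$; this is well-defined because $\exp:\mathfrak g\to G$ is a global diffeomorphism (nilpotent, simply connected), and the identity yields $\Psi(g_1g_2)=e^{\hat g_1}(\Psi(g_2))$. Since $\mathfrak z_{x_0}$ annihilates $x_0$, $\Psi$ is trivial on $Z_{x_0}$, so $\Psi(gz)=\Psi(g)$ and $\Psi$ descends to a smooth $\bar\Psi:G/Z_{x_0}\to M$. The graph $L':=\{(\phi^{-1}(gZ_{x_0}),\Psi(g)):g\in G\}\subseteq H$ is a connected $n$-dimensional smooth embedded submanifold containing $(0,x_0)$, whose tangent space at each point is spanned by $(\phi^*\Pi_*\hat X,X)$ for $\hat X\in\mathfrak g$ (since $\bar\Psi(e^{t\hat X}\bar g)=e^{tX}(\bar\Psi(\bar g))$), i.e.\ coincides with the Frobenius distribution defining $L$. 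By uniqueness of the maximal integral manifold through $(0,x_0)$, $L=L'$, and $p_1$ is one-to-one by the graph property.

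The principal obstacle is the first step: the hypothesis grants completeness of $\mathfrak g_M$-flows only at $x_0$, and promoting this to the global BCH identity (equivalently, to completeness along the iterated orbit) requires a careful interplay between the local graph description of Lemma~\ref{L:local poly flows} at varying base points and global ODE uniqueness.
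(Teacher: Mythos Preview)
Your overall strategy—establish the BCH identity $e^{sY}(e^X(x_0)) = e^{(s\hat Y * \hat X)^\vee}(x_0)$ on $M$ first, then realize $L$ as a graph—is sound and genuinely different from the paper's route. The paper works inside the leaf $L$ throughout: both $s\mapsto e^{sX}e^Y(x_0)$ and $s\mapsto e^{(sX)*Y}(x_0)$ lift to curves in $L$ with identical $p_1$-projections (a pure $G/Z$ computation); since $p_1$ is a local diffeomorphism and is already injective on each compact curve $\Gamma_t=\{e^{s\hat Y_t}(0,x_0):s\in[0,1]\}$, compactness yields a tubular neighborhood on which $p_2\circ p_1^{-1}$ is defined, and this transfers the $G/Z$ identity to $M$. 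Your approach trades this geometric picture for an ODE argument on $M$, which would in fact make your steps 1 and 3 redundant once step 2 is done correctly.

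The gap is in step 2. You need $z(s):=e^{(s\hat Y*\hat X)^\vee}(x_0)$ to satisfy $\dot z=Y(z)$ for \emph{all} $s$, but the ``shifted local identity'' from Lemma~\ref{L:local poly flows}—at whatever base point—only gives this when $W(s_0):=(s_0\hat Y*\hat X)^\vee$ is small enough to fit in the chart; that is exactly the regime where step 1 already gives $z=w$. For large $W(s_0)$, neither the chart at $x_0$ nor the chart at $z(s_0)$ lets you express $e^{W(s_0+t)}(x_0)$ in chart coordinates, so associativity of $*$ alone does not close the loop, and the open-closed argument stalls (knowing $\dot z=Y(z)$ on $[0,s^*]$ does not give the ODE past $s^*$). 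A clean fix is to compute $\dot z(s_0)$ directly via the variation formula for flows: writing $c(t)=(t\hat Y*\hat W(s_0))^\vee$, one has
\[
\tfrac{d}{dt}\big|_{t=0}e^{c(t)}(x_0)=\int_0^1 \bigl(e^{(1-u)W(s_0)}\bigr)_*\,c'(0)\;du\Big|_{z(s_0)},
\]
and since $(e^{vW})_*=e^{-v\,\mathrm{ad}_W}$ on $\mathfrak g_M$ (nilpotent, so a finite sum) while $c'(0)=\tfrac{\mathrm{ad}_{W(s_0)}}{1-e^{-\mathrm{ad}_{W(s_0)}}}Y$ by the BCH derivative, the two operators cancel and the right side is $Y(z(s_0))$. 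This uses only that $e^{uW}(x_0)$ is defined for $u\in[0,1]$ and $W$ near $W(s_0)$, which your hypothesis supplies.
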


\begin{proof}
The projection $p_1$ fails to be one-to-one if and only if there exist $\hat X_1,\ldots,\hat X_K \in \mathfrak g$ such 
$$
e^{X_1} \cdots e^{X_K}(x_0)
$$
is defined and not equal to $x_0$, but $\hat X_1 * \cdots * \hat X_K = 0$.  Thus it suffices to show that if $e^{X_1} \cdots e^{X_K}(x_0)$ is defined, it equals $e^{X_1*\cdots*X_K}(x_0)$.  By induction, it suffices to prove this when $K=2$.  

Assume that $e^Xe^Y(x_0)$ is defined, and let 
$$
E:= \{t \in [0,1]:e^{sX}e^Y(x_0) = e^{(sX)*Y}(x_0),\: s \in [0,t]\}.  
$$
Let $Y_t:=(tX)*Y$, $t \in [0,1]$.  It suffices to prove that there exists $\delta > 0$ such that for each $t \in [0,1]$ and $0 <s<\delta$, $e^{sX}e^{Y_t}(x_0) = e^{(sX)*Y_t}(x_0)$.  From our initial remark, $p_1$ is one-to-one on each of the sets
$$
\Gamma_t:= \{e^{s \hat Y_t}(0,x_0) : s \in [0,1]\}, \qquad \hat Y_t:=\tilde Y_t \oplus Y_t \in \mathfrak g_H.
$$
Since $p_1$ is a local diffeomorphism, the $\Gamma_t$ are compact, and $t$ varies in a compact interval, there exists $\delta > 0$ such that $p_1$ is a diffeomorphism on the neighborhoods 
$$
N_\delta(\Gamma_t):= \{e^{\hat Z}(z) : \hat Z \in \mathfrak g_H, \, |\hat Z| < \delta, \, z \in \Gamma_t\}.
$$
Since $p_2 \circ p_1^{-1}|_{N_\delta(\Gamma_t)}$ is a diffeomorphism, for $s$ sufficiently small (independent of $t$),
$$
e^{sX}e^{Y_t}(x_0) = p_2 \circ p_1^{-1}(e^{s\tilde X}e^{\tilde Y_t}(0)) = p_2 \circ p_1^{-1}(e^{(s\tilde X)*\tilde Y_t}(0)) = e^{(sX)*Y_t}(x_0).
$$
\end{proof}

Taking the composition $p_2 \circ p_1^{-1}$, we obtain the following.

\begin{proposition} \label{P:Rn covers M}
Let $x_0 \in M$, and assume that $e^X(x_0)$ is defined for each $X \in \mathfrak g_M$.  Fix a weak Malcev basis $\{\hat X_1,\ldots,\hat X_N\}$ of $\mathfrak g$ through $\mathfrak z_{x_0}$.  Then the map
$$
\Phi_{x_0}(y) := e^{y_1 X_1} \cdots e^{y_n X_n}(x_0)
$$
is a local diffeomorphism of $\R^n$ onto $M$, which is also a covering map.  For each $X \in \mathfrak g_M$, the flow $(t,x) \mapsto e^{t\tilde X}(x)$ of the pullback $\tilde X := \Phi^*_{x_0}X$ is polynomial.  Finally, the covering is regular, and elements of the deck transformation group are volume-preserving.  
\end{proposition}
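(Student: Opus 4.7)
My plan is to express $\Phi_{x_0}$ as the composition $p_2\circ p_1^{-1}\circ\phi$, with $p_1,p_2$ the projections from the integral leaf $L_{x_0}\subseteq H_{x_0}=\R^n\times M$, and then to read off the three assertions of the proposition from properties of this factorization that have already been proved in the section.

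First, I would argue that $p_1$ is a global diffeomorphism of $L_{x_0}$ onto $\R^n$. It is a local diffeomorphism by the discussion preceding Lemma~\ref{L:local poly flows}, and one-to-one by Lemma~\ref{L:p_1 injective}. For surjectivity, given $y\in\R^n$, I would use that $G$ is nilpotent and simply connected to write $\psi(y,0)=e^{\hat X}$ for a single $\hat X\in\mathfrak g$; the hypothesis that $e^{sX}(x_0)$ is defined for every $s$ then lets me flow along $\phi^*\Pi_*\hat X\oplus X$ from $(0,x_0)$ for unit time, producing a point of $L_{x_0}$ whose $p_1$-image is $y$. Comparing the same construction, carried out iteratively along the Malcev directions $\hat X_n,\hat X_{n-1},\ldots,\hat X_1$, with the definition of $\Phi_{x_0}$ verifies $\Phi_{x_0}=p_2\circ p_1^{-1}\circ\phi$. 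Combined with Lemma~\ref{L:p_2 covering}, this exhibits $\Phi_{x_0}$ as a local diffeomorphism and covering map of $\R^n$ onto $M$.

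The polynomial-flow claim follows because the defining distribution of $L_{x_0}$ matches each $X\in\mathfrak g_M$ with $\phi^*\Pi_*\hat X\in\scriptX(\R^n)$; hence $\Phi_{x_0}^*X=\phi^*\Pi_*\hat X$, and the flow of the right-hand side is polynomial by the explicit formulas in Lemma~\ref{L:malcev good}. For the deck-transformation statement, each $s\in S_{x_0}$ yields a candidate deck transformation $\tau_s:=\phi^{-1}\circ\Pi_*r_s\circ\phi$, which is a volume-preserving polynomial diffeomorphism of $\R^n$ by Lemma~\ref{L:rs volume preserving}; since $s\cdot x_0=x_0$, a short calculation gives $\Phi_{x_0}\circ\tau_s=\Phi_{x_0}$. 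Conversely, any two fiber-mates $y_1,y_2\in\Phi_{x_0}^{-1}(x)$ correspond via the factorization to elements $g_1,g_2\in G$ with $g_i\cdot x_0=x$; hence $g_1^{-1}g_2\in S_{x_0}$, and $S_{x_0}/Z_{x_0}$ acts transitively on every fiber, giving regularity. The main obstacle to making this rigorous will be the surjectivity of $p_1$ under only pointwise completeness, for which the single-exponential argument above is essential; the remainder is bookkeeping atop the lemmas established earlier in the section.
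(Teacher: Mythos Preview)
Your proposal is correct and follows essentially the same route as the paper: factor $\Phi_{x_0}$ through the leaf $L_{x_0}$ as $p_2\circ p_1^{-1}$, invoke Lemmas~\ref{L:p_1 injective} and~\ref{L:p_2 covering} for the covering statement, read off polynomiality of the pulled-back flows from Lemma~\ref{L:malcev good}, and identify the deck group with the pushforwards $\phi^*\Pi_*r_s$, $s\in S_{x_0}$, to obtain regularity and volume preservation via Lemma~\ref{L:rs volume preserving}. Two small remarks: your explicit argument for surjectivity of $p_1$ (writing $\psi(y,0)=e^{\hat X}$ and flowing) is a point the paper leaves implicit, so it is a useful addition; conversely, to conclude that \emph{every} deck transformation is volume-preserving you need $\mathrm{Aut}(\Phi_{x_0})=\scriptS$, not just $\scriptS\subseteq\mathrm{Aut}(\Phi_{x_0})$ with transitive action---this follows immediately since deck transformations of a connected cover are determined by their value at a single point, which the paper spells out in Lemma~\ref{L:deck transformations}.
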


Much of the proposition has already been proved; our main task is the following.

\begin{lemma}\label{L:deck transformations}
Let $S:=\{e^{\hat X} \in G : e^X(x_0) = x_0\}$.  Then the deck transformation group $\rm{Aut}(\Phi_{x_0})$ of $\Phi_{x_0}$ coincides with the group $\scriptS \subseteq \rm{Diff}(\R^n)$ whose elements are the pushforwards $\hat r_s := \phi^*\Pi_* r_s$ of right multiplication by elements of $S$.  
\end{lemma}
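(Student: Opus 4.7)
The plan is to prove the two inclusions separately, with all bookkeeping carried out on $G/Z_{x_0}$ via the identification $\phi$. The key preparatory observation, which follows immediately from Lemma~\ref{L:p_1 injective} together with the surjectivity of $\Phi_{x_0}$, is that one may define a well-defined right action of $G$ on the $G$-orbit of $x_0$ in $M$ by setting $g \cdot x_0 := e^{Y_1} \cdots e^{Y_K}(x_0)$ whenever $g = e^{\hat Y_1} \cdots e^{\hat Y_K}$: the right-hand side depends only on $g$ (by Lemma~\ref{L:p_1 injective}) and descends to an action of $G/Z_{x_0}$ because $Z_{x_0}$ fixes $x_0$ by definition. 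In this language, the factorization $\Phi_{x_0} = p_2 \circ p_1^{-1}$ from Proposition~\ref{P:Rn covers M} reads simply $\Phi_{x_0}(y) = g \cdot x_0$ whenever $\phi(y) = gZ_{x_0}$.

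For the first inclusion $\scriptS \subseteq \mathrm{Aut}(\Phi_{x_0})$, I would fix $s \in S$ and verify that $\Phi_{x_0}(\hat r_s(y)) = \Phi_{x_0}(y)$. The pushforward $\Pi_* r_s$ sends the coset $gZ_{x_0}$ to $gsZ_{x_0}$, which is well-defined thanks to the normality of $Z_{x_0}$ in $S$ noted earlier in the section; hence $\phi(\hat r_s(y)) = gsZ_{x_0}$ and so $\Phi_{x_0}(\hat r_s(y)) = gs \cdot x_0 = g \cdot (s \cdot x_0) = g \cdot x_0 = \Phi_{x_0}(y)$, where the associativity step uses Lemma~\ref{L:p_1 injective} once more and the middle equality is the very definition of $S$.

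For the reverse inclusion $\mathrm{Aut}(\Phi_{x_0}) \subseteq \scriptS$, I would appeal to the standard rigidity of deck transformations: since $\R^n$ is connected and $\Phi_{x_0}$ is a covering map, any $\tau \in \mathrm{Aut}(\Phi_{x_0})$ is determined by its value at $0$. Setting $y^* := \tau(0)$, we have $y^* \in \Phi_{x_0}^{-1}(x_0)$, and computing this fiber gives $\phi(y^*) \in \{gZ_{x_0} : g \cdot x_0 = x_0\} = \Pi(S)$, since every $g \in G$ equals $e^{\hat Y}$ for some $\hat Y$ and so $g \cdot x_0 = e^Y(x_0)$. Choosing $s \in S$ with $\phi(y^*) = sZ_{x_0}$ yields $\hat r_s(0) = y^*$; since $\hat r_s \in \scriptS$ was already shown to be a deck transformation, uniqueness forces $\tau = \hat r_s$.

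The main (and essentially the only) obstacle is making the "$G$-action on the orbit of $x_0$" well-defined — that is, ensuring that distinct words in the $\hat X_j$ representing the same element of $G$ produce the same point of $M$ — and the heavy lifting for this is already contained in Lemma~\ref{L:p_1 injective}. No additional analytic input is required: the previously established Lemma~\ref{L:rs volume preserving} then supplies that each $\hat r_s$ is a volume-preserving polynomial diffeomorphism, completing the proof of the last sentence of Proposition~\ref{P:Rn covers M}.
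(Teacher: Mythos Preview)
Your proof is correct and follows essentially the same route as the paper's: verify that each $\hat r_s$ commutes with $\Phi_{x_0}$, check that $\scriptS$ acts transitively on the fiber $\Phi_{x_0}^{-1}(x_0)$, and then invoke the uniqueness of a deck transformation given its value at a single point. The paper's argument is more bare-handed---it computes $\Phi_{x_0}\circ\phi^{-1}\circ(\Pi_* r_s)\circ\phi(y) = e^{y_1 Y_1}\cdots e^{y_n Y_n}e^X(x_0)$ directly rather than passing through your $G$-action language, and it spells out the open-and-closed connectedness argument rather than citing ``standard rigidity''---but the content is identical. One cosmetic point: what you call a ``right action'' is used as a left action in your step $(gs)\cdot x_0 = g\cdot(s\cdot x_0)$; this does not affect correctness.
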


\begin{proof}[Proof of Lemma~\ref{L:deck transformations}]
Let $s = e^{\hat X} \in S$.  Then 
$$
\Phi_{x_0} \circ \phi^{-1} \circ (\Pi_* r_s) \circ \phi(y) = e^{y_1 Y_1} \cdots e^{y_nY_n}e^X(x_0) = \Phi_{x_0}(y),
$$
so $\scriptS \subseteq \rm{Aut}(\Phi_{x_0})$.  If $y_0 \in \Phi_{x_0}^{-1}(x_0)$, then we may write $y_0 = e^{\tilde X}(0)$, with $e^{\hat X} \in S$, so $\scriptS$ acts transitively on the fiber $\Phi_{x_0}^{-1}(x_0)$.  

Let $f \in \rm{Aut}(\Phi_{x_0})$, and set $y_0:=f(0)$.  By the preceding, there exists an element $r \in \scriptS$ such that $r(0) = y_0$.  We claim that $f=r$.  The set of points where the maps coincide is closed by continuity.  If $f(y)=r(y)$, then the maps must coincide on a neighborhood of $y$, because $\Phi_{x_0}$ is a covering map.  Thus the set of points where the maps coincide is also open.  Since $f(0) = r(0)$, $f \equiv r$.  
\end{proof}

\begin{proof}[Proof of Proposition~\ref{P:Rn covers M}]
It remains to prove that the covering $\Phi_{x_0}$ is regular, and that the elements of its deck transformation group are volume-preserving.  By Lemma~\ref{L:rs volume preserving}, the deck transformations are all volume-preserving, and as seen in the proof of Lemma~\ref{L:deck transformations}, $\rm{Aut}(\Phi_{x_0})$ acts transitively on $\Phi_{x_0}^{-1}(x_0)$, which is to say that $\Phi_{x_0}$ is regular.  
\end{proof}

\section{Generalizations of Theorem~\ref{T:main}} \label{S:Extensions}


In \cite{GressmanPoly}, which sparked our interest in this problem, Gressman established unweighted, local, endpoint restricted weak type inequalities, subject to the hypotheses that the $\pi_j:\R^n \supseteq U \to \R^{n-1}$ are smooth submersions and that there exist smooth, nonvanishing vector fields $Y_1,Y_2$ on $U$ that are tangent to the fibers of the $\pi_j$ and generate a nilpotent Lie algebra.  Thus the results of \cite{GressmanPoly} are more general than Theorem~\ref{T:main} in two respects:  The hypotheses are made on vector fields parallel to the fibers, and these vector fields are only assumed to generate a nilpotent Lie algebra, not to have polynomial flows.  In this section, we address both of these generalizations.  

\subsection*{Changes of variables, changes of measure, and the affine arclengths}

The above mentioned generalizations will be achieved by using the results of the previous section, so we begin by observing how the weights $\rho_\beta$ transform under compositions of the $\pi_j$ with diffeomorphisms.  We note that the same computations also show how the $\rho_\beta$ transform under smooth changes of the measures on $M$ and the $N_j$. (Changes of measure change the vector fields associated to the maps $\pi_1,\pi_2$ by the coarea formula.)  

Let $F:\R^n \to \R^n$ be a diffeomorphism, and let $G_j:\R^{n-1} \to \R^{n-1}$ be a smooth map, $j=1,2$.  Define $\hat\pi_j := G_j \circ \pi_j \circ F$.  These maps give rise to associated vector fields $\hat X_j$, and a simple computation shows that
\begin{equation} \label{E:hat Xj coords}
\hat X_j = [(\det DG_j) \circ \pi_j \circ F] (\det DF)  F^*X_j, 
\end{equation}
where $F^*$ denotes the pullback $F^* X_j := (DF)^{-1} X_j \circ F$.  We continue to let $\Psi_{F(x_0)}(t)$ denote the map obtained by iteratively flowing along the $X_i$ and let $\hat\Psi_{x_0}(t)$ denote the map obtained by iteratively flowing along the $\hat X_i$.  

By naturality of the Lie bracket and the Chain Rule, we thus have for any multiindex $\beta$ that
\begin{equation} \label{E:rho beta coords}
\partial^\beta \det D\hat\Psi_{x_0}(0) = \sum_{\beta' \preceq \beta} G^\beta_{\beta'}(F(x_0)) \partial^{\beta'}\det D\Psi_{F(x_0)}(0).
\end{equation}
Here `$\preceq$' denotes the coordinate-wise partial order on multiindices, 
$$
G_\beta^\beta(F(x_0)) := (\det DF(x_0))^{b_1+b_2-1}(\det DG_1 \circ \pi_1 \circ F(x_0))^{b_1}(\det DG_2 \circ \pi_2 \circ F(x_0))^{b_2},
$$
and for $\beta' \prec \beta$, $G^\beta_{\beta'}$ is a smooth function involving derivatives of the Jacobian determinants $\det DF$, $\det DG_i$.  

This allows us to bound the weight associated to the maps $\hat\pi_1,\hat\pi_2$ and multiindex $\beta$:
\begin{equation} \label{E:bound rho beta hat}
\begin{aligned}
|\hat \rho_\beta| \leq &|\det DF| |(\det DG_1) \circ \pi_1 \circ F|^{\frac1{p_1}} |(\det DG_2)\circ \pi_2 \circ F|^{\frac1{p_2}} \rho_\beta \circ F \\
&\qquad + \sum_{\beta' \prec \beta} g_\beta^{\beta'} \rho_{\beta'}^{\frac{|b'|-1}{|b|-1}} \circ F,
\end{aligned}
\end{equation}
where the $g_\beta^{\beta'}$ are continuous and equal zero if $\det DF$, $\det DG_1$, and $\det DG_2$ are constant, and $b' = b(\beta')$ and $\rho_{\beta'}$ are as in \eqref{E:def b}, \eqref{E:def rho}, respectively, $p$ is as in \eqref{E:def p}, and vertical bars around $b$'s denote the $\ell^1$ norm.

We turn to an estimate for 
$$
\int \prod_{j=1}^2 |f_j \circ \hat \pi_j| \hat \rho_\beta\, a(x)\, dx,
$$
with $|a| \leq 1$ a cutoff function (possibly identically 1).  We begin with the contribution from the main term of \eqref{E:bound rho beta hat}.  Assuming \eqref{E:main}, the change of variables formula gives
\begin{equation} \label{E:main term after coord change}
\int \bigl(\prod_{j=1}^2 |f_j \circ \hat \pi_j|\, |\det DG_j \circ \pi_j \circ F|^{\frac1{p_j}}\bigr) |\det DF| \, \rho_\beta \circ F\, a \, dx \lesssim \prod_{j=1}^2 \|f_j\|_{p_j}.
\end{equation}

Now we turn to the error terms.  Fix $\beta' \prec \beta$ and assume that $a$ has compact support.  The analogue of \eqref{E:main}, with $\beta'$ in place of $\beta$, together with the change of variables formula, yields 
\begin{equation} \label{E:Lp pi hat}
\bigl|\int \bigl(\prod_{j=1}^2 f_j \circ \hat\pi_j\bigr)(g_\beta^{\beta'})^{1/\theta} \rho_{\beta'} \circ F \,a\, dx\bigr| \lesssim_{F,G_1,G_2} \prod_{j=1}^2 \|f_j\|_{q_j},
\end{equation}
where $q = p(b') = (\tfrac{|b'|-1}{b_1},\tfrac{|b'|-1}{b_2})$ and $\theta = \tfrac{|b'|-1}{|b|-1}$.  Provided that the $\pi_j$ are submersions on the support of $a$, H\"older's inequality gives
\begin{equation} \label{E:holder pi hat}
\bigl|\int \bigl(\prod_{j=1}^2 f_j \circ \hat\pi_j\bigr) \,a\, dx\bigr| \lesssim_{F,G_1,G_2,\pi_1,\pi_2} \diam(\supp a) \prod_{j=1}^2 \|f_j\|_{r_j},
\end{equation}
where $(r_1,r_2) = (\frac{|b|-|b'|}{b_1-b_1'},\frac{|b|-|b'|}{b_2-b_2'})$.  Since $(p_1^{-1},p_2^{-1}) = \theta(q_1^{-1},q_2^{-1})+(1-\theta)(r_1^{-1},r_2^{-1})$, complex interpolation gives
\begin{equation} \label{E:error term after coord change}
\bigl|\int \bigl(\prod_{j=1}^2 f_j \circ \hat\pi_j\bigr)g_\beta^{\beta'} (\rho_{\beta'} \circ F)^\theta \,a\, dx\bigr|  \lesssim_{F,G_1,G_2,\pi_1,\pi_2} \diam(a)^{1-\theta}\prod_{j=1}^2 \|f_j\|_{p_j},
\end{equation}
so the error terms are harmless for sufficiently local estimates \textbf{in the special case} that the $\pi_j$ are submersions on the support of $a$.  

\subsection*{Uniform local estimates}

For simplicity, we will give our local estimates in coordinates.  Let $U \subseteq \R^n$ be an open set, let $\pi_1,\pi_2:U \to \R^n$ be smooth maps, and let $X_1,X_2$ denote the vector fields associated to the $\pi_j$ by \eqref{E:def X}.  Assume that:\\
(i) For $j=1,2$ and a.e.\ $y \in \pi_j(U)$, $\pi_j^{-1}(y)$ is contained in a single integral curve of $X_j$;\\
(ii) The Lie algebra generated by $X_1,X_2$ spans the tangent space to $\R^n$ at every point of $U$;\\
(iii) There exist smooth, nonvanishing functions $h_1,h_2$ such that the vector fields $Y_j:= h_j X_j$, $j=1,2$, generate a nilpotent Lie algebra of step at most $N$.

We note that even if one knows that (i-iii) hold, it may be very difficult to find $h_1,h_2$.  Our next proposition allows one to use the ``wrong'' vector fields (the $X_j$), at least locally, and for certain $\beta$.  

\begin{proposition}\label{P:local version}
Fix $x_0 \in U$.  If $\beta$ is minimal in the sense that $\beta' \prec \beta$ implies $\rho_{\beta'} \equiv 0$, or if $d\pi_1(x_0)$ and $d\pi_2(x_0)$ both have full rank, then there exists a neighborhood $U_{x_0}$ of $x_0$, depending on $x_0$ and the $\pi_j$, such that for all $f_1,f_2 \in C^0(U)$,
\begin{equation} \label{E:local version}
|\int_{U_{x_0}} \prod_{j=1}^2 f_j \circ \pi_j(x) \, \rho_\beta(x)\, dx| \leq C_N \prod_{j=1}^2 \|f_j\|_{p_j};
\end{equation}
here $\rho_\beta$ is the weight \eqref{E:def rho}, defined using the $X_j$, not the $Y_j$.  
\end{proposition}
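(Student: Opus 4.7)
The plan is to reduce locally to Theorem~\ref{T:main} by using Section~\ref{S:Nilpotent} to straighten the nilpotent pair $Y_1,Y_2$ into polynomial vector fields, constructing auxiliary maps $\hat\pi_j$ for which Theorem~\ref{T:main} applies, and then transferring back via the weight transformation identities \eqref{E:hat Xj coords}--\eqref{E:error term after coord change}. Since $Y_1,Y_2$ generate a nilpotent Lie algebra of step at most $N$ by hypothesis (iii), Lemma~\ref{L:local poly flows} furnishes a diffeomorphism $F\colon V\to U_{x_0}'\subseteq U$ with $F(0)=x_0$ such that the pullbacks $\tilde Y_j:=F^*Y_j$ extend to polynomial vector fields on $\R^n$ satisfying hypothesis (i) of Theorem~\ref{T:main}.

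The main step is to produce local diffeomorphisms $G_j$ of $\R^{n-1}$ near $\pi_j(x_0)$ so that $\hat\pi_j:=G_j\circ\pi_j\circ F$ has its associated vector field (via \eqref{E:def X}) equal to $\tilde Y_j$. By \eqref{E:hat Xj coords}, the vector field associated to $\pi_j\circ F$ is $c_j\tilde Y_j$ with $c_j:=\det DF/(h_j\circ F)$, so finding $G_j$ reduces to solving $\det DG_j\circ(\pi_j\circ F)=c_j^{-1}$; this is possible locally provided $c_j$ is constant along fibers of $\pi_j\circ F$. This constancy is where we use the special nature of the $Y_j$: their pullbacks $\tilde Y_j$ are divergence free on $V$ (a consequence of the bi-invariance of Haar measure on the simply-connected nilpotent group underlying $\mathfrak g_M$, as in Proposition~\ref{P:Rn covers M}), and since $\pi_j\circ F$ is constant on integral curves of $\tilde Y_j$, the $(n-1)$-form $d(\pi_j\circ F)^1\wedge\cdots\wedge d(\pi_j\circ F)^{n-1}=c_j\,i_{\tilde Y_j}\omega$ is $\tilde Y_j$-invariant, whence a short Lie derivative computation yields $\tilde Y_j c_j=0$. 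I expect this to be the main technical obstacle, and it is essential that we straighten $Y_j$ rather than $X_j$: only the $Y_j$ are assumed to generate a nilpotent algebra, hence only their pullbacks are guaranteed to be divergence free.

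With $\hat\pi_j$ in hand, Theorem~\ref{T:main} applies and yields $|\int f_1\circ\hat\pi_1\,f_2\circ\hat\pi_2\,\hat\rho_\beta\,dy|\le C_N\|f_1\|_{p_1}\|f_2\|_{p_2}$. By \eqref{E:rho beta coords}, $\hat\rho_\beta$ equals a smooth positive factor (built from $\det DF$ and $h_j\circ F$, bounded above and below on compact sets) times $\rho_\beta\circ F$, plus lower-order terms involving $\rho_{\beta'}$ for $\beta'\prec\beta$. Changing variables $x=F(y)$ then converts $\int f_1\circ\pi_1\,f_2\circ\pi_2\,\rho_\beta\,dx$ restricted to a sub-neighborhood $U_{x_0}$ of $U_{x_0}'$ into the main term plus these errors.

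The two alternative hypotheses dispose of the error terms in different ways. In case (a), the minimality condition forces $\rho_{\beta'}\equiv 0$ for every $\beta'\prec\beta$, so the error terms vanish identically and the transfer is clean. In case (b), both $\pi_j$ (hence $\hat\pi_j$) are submersions in a neighborhood of $x_0$, so the interpolation argument from \eqref{E:Lp pi hat}--\eqref{E:error term after coord change}, combined with the inequalities at lower index $\beta'$ supplied by applying Theorem~\ref{T:main} to $\hat\pi_j$ at $\beta'$, bounds each error term by $C\,\diam(U_{x_0})^{1-\theta}\prod_j\|f_j\|_{p_j}$ for some $0<\theta<1$; shrinking $U_{x_0}$ then makes the total error harmlessly small and yields \eqref{E:local version}.
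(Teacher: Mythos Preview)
Your proposal follows essentially the same route as the paper: straighten $Y_1,Y_2$ via Lemma~\ref{L:local poly flows}, establish that the scaling factor relating the pulled-back $X_j$ to $\tilde Y_j$ is constant along $\tilde Y_j$-integral curves using the divergence-free property of $\tilde Y_j$, apply Theorem~\ref{T:main} in the straightened coordinates, and transfer back via \eqref{E:rho beta coords}--\eqref{E:error term after coord change}. Your Lie-derivative argument for $\tilde Y_j c_j=0$ is precisely the content of the paper's Lemma~\ref{L:push forward hj}.

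There is one technical difference. You construct an explicit local diffeomorphism $G_j$ of $\R^{n-1}$ with prescribed Jacobian, whereas the paper avoids $G_j$ entirely: it observes that $\hat h_j=h_j\circ F$ pushes forward to a function $g_j$ on $\pi_j(U_{x_0})$, then invokes the \emph{proof} of Theorem~\ref{T:main} with the weighted measure $g_j\,dy$ on the target (cf.\ \eqref{E:upstairs}). Your construction of $G_j$ requires $\pi_j\circ F$ to be a submersion near $0$ so that $c_j^{-1}$ descends to a smooth function on an open subset of $\R^{n-1}$; this is assumed in case~(b) but not in the minimal case~(a), where $d\pi_j(x_0)$ may fail to have full rank. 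The paper's change-of-measure approach sidesteps this, since $g_j$ need only be defined a.e.\ on the image, which follows from the fiber hypothesis alone. A related point: even after constructing $\hat\pi_j$, you will need the proof of Theorem~\ref{T:main} rather than its statement, since $\hat\pi_j$ is only defined on $V$ and hypothesis~(ii) of that theorem is global.
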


Proposition~\ref{P:local version} implies a uniform, strong type endpoint version of the restricted weak type result in \cite{GressmanPoly}.  We remark that uniform bounds are impossible if we define the weight $\rho_\beta$ using the $Y_i$.  This can be seen by replacing $Y_i$ with $\lambda Y_i$ and sending $\lambda \to \infty$.  In Section~\ref{S:Examples}, we will give a counter-example showing the impossibility of global bounds under these hypotheses in the case that $\beta$ is non-minimal. 

\begin{proof}[Proof of Proposition~\ref{P:local version}]
By Lemma~\ref{L:local poly flows}, we may find neighborhoods $U_{x_0}$ of $x_0$ and $V_{x_0}$ of $0$, and a diffeomorphism $\Phi_{x_0}:V_{x_0} \to U_{x_0}$ such that the pullbacks $\hat Y_j$ of the $Y_j$ with respect to $\Phi_{x_0}$ extend to global vector fields with polynomial flows.  Let $\hat Z_j$ denote the vector field associated to $\hat \pi_j := \pi_j \circ \Phi_{x_0}$, via the natural analogue of \eqref{E:def X}.  Then 
$$
\hat Z_j = (\det D\Phi_{x_0}) \Phi_{x_0}^* X_j  = \tfrac1{\hat h_j} \hat Y_j, \qquad \hat h_j = h_j \circ \Phi_{x_0}.
$$

\begin{lemma} \label{L:push forward hj}
There exist functions $g_j$ on $\pi_j(U_{x_0})$ such that $\hat h_j = g_j \circ \hat \pi_j$, a.e.\ on $U_{x_0}$.  
\end{lemma}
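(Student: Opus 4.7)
My plan is to prove that $\hat h_j$ is constant on (almost every) fiber of $\hat \pi_j$; then defining $g_j(y) := \hat h_j(x)$ for any $x \in \hat \pi_j^{-1}(y)$ produces the required representation. The underlying mechanism is that $\hat Y_j$ and $\hat Z_j$ are nonvanishing, parallel vector fields on the full-measure subset where $\hat \pi_j$ is a submersion, and both are divergence-free, so their ratio $\hat h_j$ must be a first integral of their common flow.

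To establish the divergence-free properties, I would argue separately for each vector field. For $\hat Z_j$, the computation is direct from its definition via the Hodge star: with $\omega := d\hat \pi_j^1 \wedge \cdots \wedge d\hat \pi_j^{n-1}$, the flat of $\hat Z_j$ equals $\star \omega$, and $\omega$ is closed, being a wedge product of exact $1$-forms. Since $\operatorname{div}(V) = \star d \star V^\flat$ in Euclidean space, this gives $\operatorname{div}(\hat Z_j) = \pm \star d\omega = 0$. For $\hat Y_j$, I would invoke Lemma~\ref{L:local poly flows} to pass to the global extension of $\hat Y_j$ on $\R^n$ with polynomial flow and apply Lemma~\ref{L:div free} to this extension; since the extension agrees with $(\Phi_{x_0})^* Y_j$ on $V_{x_0}$, we obtain $\operatorname{div}(\hat Y_j) = 0$ there as well.

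With both divergences vanishing and $\hat Y_j = \hat h_j \hat Z_j$ on the submersion set, the Leibniz rule for divergence yields
$$
0 = \operatorname{div}(\hat Y_j) = \hat h_j \operatorname{div}(\hat Z_j) + \hat Z_j(\hat h_j) = \hat Z_j(\hat h_j),
$$
so $\hat h_j$ is constant along integral curves of $\hat Z_j$. Since $\hat Z_j$ is a scalar multiple of $(\Phi_{x_0})^* X_j$, the two share integral curves; pulling back hypothesis (i) through the diffeomorphism $\Phi_{x_0}$ then tells us that for a.e.\ $y \in \hat \pi_j(V_{x_0})$, $\hat \pi_j^{-1}(y)$ is contained in a single such integral curve. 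Hence $\hat h_j$ is constant on a.e.\ fiber, which is exactly the claim.

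The main obstacle is really just bookkeeping: the divergence-free property of $\hat Y_j$ must be extracted via Lemma~\ref{L:div free} through the polynomial-flow extension rather than directly from the structure of $\mathfrak g_M$ itself, and one has to work on the full-measure submersion set so that $\hat h_j$ is well-defined as a ratio of parallel nonvanishing vectors. Once these points are attended to, the conclusion is a one-line Leibniz-rule computation.
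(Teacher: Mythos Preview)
Your argument is correct and follows essentially the same route as the paper: both vector fields are shown to be divergence-free (the paper phrases this as ``$\hat Y_j$ is polynomial, hence divergence-free'' and ``$\hat Z_j$ is defined by \eqref{E:def X}, hence divergence-free''), and then the Leibniz rule forces $\hat h_j$ to be annihilated by the common direction. The only cosmetic difference is that the paper expands $\operatorname{div}(\hat Z_j) = \operatorname{div}(\tfrac{1}{\hat h_j}\hat Y_j)$ to get $\hat Y_j(\tfrac{1}{\hat h_j})=0$, whereas you expand $\operatorname{div}(\hat Y_j)=\operatorname{div}(\hat h_j \hat Z_j)$ to get $\hat Z_j(\hat h_j)=0$; these are equivalent.
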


\begin{proof}[Proof of Lemma~\ref{L:push forward hj}]
Since $\hat Y_j$ is polynomial, it is divergence free, and since $\hat Z_j$ is defined by \eqref{E:def X}, it is also divergence free.  Since
$$
0=\rm{div}\, \hat Z_j = \tfrac1{\hat h_j} \rm{div}\,\hat Y_j + \hat Y_j (\tfrac1{\hat h_j}) = \hat Y_j (\tfrac1{\hat h_j}),
$$
$\hat h_j$ is constant on the integral curves of $\hat Y_j$.  By our hypothesis on the fibers of the $\pi_j$, the lemma follows.
\end{proof}

If $\Omega \subseteq V_{x_0}$,
\begin{align*}
|\Omega| &= \int_{\hat\pi_j(\Omega)} \int_{\pi_j^{-1}(y)}\chi_\Omega(t) \, |\hat Z_j(t)|^{-1}d\scriptH^1(t)\, dy\\
& = \int_{\hat\pi_j(\Omega)} \int_{\pi_j^{-1}(y)}\chi_\Omega(t) \, |\hat Y_j(t)|^{-1}d\scriptH^1(t)\,g_j(y)\, dy.
\end{align*}
Thus the change of variables formula and the proof of Theorem~\ref{T:main} (c.f.\ the argument leading to \eqref{E:upstairs}) imply that
\begin{equation} \label{E:Vx0 to Ux0}
\begin{aligned}
|\int_{U_{x_0}} \prod_{j=1}^2 f_j \circ \pi_j \, \hat \rho_\beta \circ \Phi_{x_0}^{-1}\,|\det D\Phi_{x_0}|^{-1} \, dx| 
&=|\int_{V_{x_0}} \prod_{j=1}^2 f_j \circ \hat\pi_j \,  \hat \rho_\beta \, dx| \\
& \lesssim \prod_{j=1}^2 \|f_j\|_{L^{p_j}(g_j\, dy)},
\end{aligned}
\end{equation}
where $\hat \rho_\beta$ is defined using $\hat Y_1$ and $\hat Y_2$.  We have seen that $\tfrac1{h_j} = g_j \circ \pi_j$, so computations similar to those leading up to \eqref{E:bound rho beta hat} give
\begin{align*}
|\rho_\beta| \leq |\det D\Phi_{x_0}|^{-1} |g_1 \circ \pi_1|^{\frac1{p_1}} |g_2 \circ \pi_2|^{\frac1{p_2}} \hat \rho_\beta \circ \Phi_{x_0}^{-1} + \sum_{\beta \prec \beta'} g_\beta^{\beta'} \rho_{\beta'}^{\frac{|b'|-1}{|b|-1}} \circ \Phi_{x_0}^{-1},
\end{align*}
where the $g_\beta^{\beta'}$ are continuous and involve derivatives of $\det D\Phi_{x_0}$, $g_1$, and $g_2$.  

Finally, \eqref{E:local version} follows from \eqref{E:main term after coord change} and \eqref{E:error term after coord change} in the case that $\beta$ is minimal or $d\pi_1(x_0)$ and $d\pi_2(x_0)$ both have full rank.  
%
\end{proof}

\subsection*{A ``global'' version on manifolds} 

Let $M$ be a smooth $n$-dimensional manifold, let $P_1,P_2$ be smooth $(n-1)$-dimensional manifolds, and assume that $\pi_j:M \to P_j$ are smooth maps with a.e.\ surjective differentials.  Assume that we are given measures $\mu, \nu_1,\nu_2$ on $M, P_1,P_2$ that have smooth, nonvanishing densities in local coordinates.  

For instance, in the setting of \eqref{E:linear formulation} and \eqref{E:bilinear formulation}, we are given Riemannian manifolds $(P_1,h_1),(P_2,h_2)$ and a map 
$$
P_2 \times \R \ni (x,t) \mapsto \gamma_x(t) \in P_1;  
$$
here the measures $\nu_1,\nu_2$ are the Riemannian volume elements,  the manifold $M$ is simply $M = P_2 \times \R$, and $d\mu = |\gamma_{x_2}'(t)|_{h_1}\, d\nu_2\, dt$.  

 By \eqref{E:hat Xj coords}, we may define (up to a sign) vector fields $X_1,X_2 \in \scriptX(M)$ such that in any choice of local coordinates,
$$
X_j = d\pi_j^1 \wedge \cdots \wedge d\pi_j^{n-1}\,(\tfrac{d\nu_j}{dy} \circ \pi_j)\,(\tfrac{dx}{d\mu}).
$$
We observe that $\mu$ is invariant under the flow of the $X_j$, and hence is also invariant under the flow of elements of the Lie algebra $\mathfrak g_M$ generated by $X_1$ and $X_2$.  We assume that:  \\
(i) The Lie algebra $\mathfrak g_M$ generated by $X_1,X_2$ is nilpotent of step $N$, and the flows of its elements are complete;\\
(ii) For a.e.\ $y \in P_j$, $\pi_j^{-1}(y)$ is contained in a single integral curve of $X_j$.  

Let $M_0$ denote the (open) submanifold of $M$ on which $\mathfrak g_M$ spans the tangent space to $M$, and decompose $M_0$ into its connected components, $M_0 = \bigcup_k M_{0,k}$.  By the Frobenius Theorem, $\mathfrak g_M \subseteq \scriptX(M_{0,k})$ for each $k$.  We now put local coordinates on $M_0$ by fixing points $x_k \in M_{0,k}$ and letting $\Phi_k:=\Phi_{x_k}:\R^n \to M_{0,k}$ be the covering map guaranteed by Proposition~\ref{P:Rn covers M}.  

Fix $k$.  Then $\Phi_k$ is a local diffeomorphism, and the pullbacks of vector fields in $\mathfrak g_M$ by $\Phi_k$ have polynomial flows.  By composing $\Phi_k$ with an isotropic dilation, we may assume that for $U_k \subseteq \R^n$ open with $\Phi_k|_{U_k}$ one-to-one, $(\Phi_k|_{U_k})_*(dx) = d\mu$ on $\Phi_k(U_k)$.  (Such a dilation exists because $d\mu$ and the pushforward of $dx$ are both invariant under the flows of the $X_j$ and hence differ from one another by a constant by Chow's theorem.)  The vector fields $\hat X_j := \Phi_k^*X_j$ are divergence-free and tangent to the fibers of $\hat \pi_1:=\pi_1\circ \Phi_k$ and $\hat \pi_2 := \pi_2 \circ \Phi_k$, respectively.  For $\beta$ a multiindex, the $\hat X_j$ give rise to a measure $\hat \rho_\beta\, dx$ on $\R^n$.  If $r \in \rm{Aut}(\Phi)$ is an element of the deck transformation group, then $\hat\rho_\beta \circ r = \hat \rho_\beta$, and thus we can define a measure $\mu_\beta$ on $M_{0,k}$ by setting $\mu_\beta|_{\Phi_k(U_k)} := (\Phi_k)_*(\hat\rho_\beta\, dx|_{U_k})$ whenever $\Phi_k|_{U_k}$ is a diffeomorphism.  We extend this to a measure on $M$ by setting $\mu_\beta = 0$ on $M \setminus M_0$.  

The measure $\mu$ plays a slightly lesser role than the $\nu_j$ in the construction of the $\mu_\beta$.  The measure $\mu$ affects the definition of the vector fields $X_j$, and hence the nilpotency hypothesis, but in the minimal case (that $\rho_{\beta'} \equiv 0$ for all $\beta' \prec \beta$), all choices of $\mu$ lead to the same definition of $\mu_\beta$ by \eqref{E:rho beta coords}.  Moreover, in the case that $\beta$ is minimal, by \eqref{E:rho beta coords}, the analogous construction carried out with respect to any choice of local coordinates on $M$ would give rise to the same measure $\mu_\beta$.  When $\beta$ is non minimal, the measure depends on the choice of coordinates, but in any coordinates, the analogue of $\mu_\beta$ would vanish on $M \setminus M_0$.

\begin{theorem} \label{T:mfold version}
Under the notation and hypotheses above, let $V \subseteq M$ be an open set.  For each $k$, let $V_k:=V\cap M_{0,k}$, let $U_k \subseteq \R^n$ be an open set, and assume that for a.e.\ $x \in V_k$, 
$\#(\Phi_k^{-1}(x) \cap U_k) \geq A_k$, and for a.e.\ $y \in \pi_j(V_k)$, $U_k \cap \Phi_k^{-1}(\pi_j^{-1}\{y\})$ is contained in the union of at most $B_{j,k}$ integral curves of $\hat X_j$, with $0 < A_k,B_{1,k},B_{2,k} < \infty$.  Then 
\begin{equation} \label{E:mfold version}
|\int_V f_1 \circ \pi_1(x) f_2 \circ \pi_2(x)\, d\mu_\beta(x)| \lesssim \bigl(\sup_k \tfrac{B_{1,k}^{1/{p_1}}B_{2,k}^{1/{p_2}}}{A_k}\bigr) \|f_1\|_{L^{p_1}(P_1;\nu_1)}\|f_2\|_{L^{p_2}(P_2;\nu_2)}.
\end{equation}
Here the exponent pair $p = (p_1,p_2)$ is defined as in \eqref{E:def p}.  
\end{theorem}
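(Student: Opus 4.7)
The plan is to reduce Theorem~\ref{T:mfold version} to Theorem~\ref{T:main} on $\mathbb R^n$ component by component, via the polynomial covering maps $\Phi_k:\mathbb R^n\to M_{0,k}$ constructed in Proposition~\ref{P:Rn covers M}. Since $\mu_\beta$ vanishes on $M\setminus M_0$ and $M_0=\bigsqcup_k M_{0,k}$ is a disjoint union of open components, I split the integral as $\int_V=\sum_k\int_{V_k}$. On the $k$th component, the measure $\mu_\beta$ is by construction the pushforward of $\hat\rho_\beta\,dx$ under $\Phi_k$ wherever $\Phi_k$ is a local diffeomorphism, so $\hat\rho_\beta\,dx$ is invariant under the deck group of $\Phi_k$; combined with the hypothesis $\#(\Phi_k^{-1}(x)\cap U_k)\ge A_k$ for a.e.\ $x\in V_k$, this yields the lifting inequality
$$A_k\int_{V_k}(f_1\circ\pi_1)(f_2\circ\pi_2)\,d\mu_\beta\;\le\;\int_{U_k\cap\Phi_k^{-1}(V_k)}(f_1\circ\hat\pi_1)(f_2\circ\hat\pi_2)\,\hat\rho_\beta\,dx,$$
where $\hat\pi_j=\pi_j\circ\Phi_k$ and $\hat\rho_\beta$ is built from the pullback vector fields $\hat X_j=\Phi_k^*X_j$.

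Next I would apply a lifted version of Theorem~\ref{T:main} to the right-hand integral. Proposition~\ref{P:Rn covers M} gives that the $\hat X_j$ generate a nilpotent Lie algebra of step at most $N$ with polynomial flows of bounded degree, so hypothesis~(i) of Theorem~\ref{T:main} holds. Hypothesis~(ii), that a.e.\ fiber of $\hat\pi_j$ lies on a single integral curve of $\hat X_j$, fails on $U_k$ because such a fiber can meet up to $B_{j,k}$ curves; but this multiplicity enters the proof of Proposition~\ref{P:RWT} only through the coarea step in the iterative refinement for $\hat X_j$, and so, when traced through the extraction of the exponent $1/p_j$ in the restricted weak type bound and then through the summation of torsion scales in Sections~\ref{S:ST scale 1}--\ref{S:ST full}, contributes precisely the factor $B_{j,k}^{1/p_j}$. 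The target space being $P_j$ rather than $\mathbb R^{n-1}$ is handled by a partition of unity and local coordinates on $P_j$; the change-of-variable computations \eqref{E:hat Xj coords}--\eqref{E:error term after coord change} show that, because $X_j$ was normalized so that $\tfrac{d\nu_j}{dy}\circ\pi_j$ is already a factor in its definition, the weight $\hat\rho_\beta$ is compatible with $L^{p_j}(P_j;\nu_j)$ norms modulo lower-order error terms absorbed by complex interpolation as in \eqref{E:error term after coord change}. The outcome is
$$\int_{V_k}(f_1\circ\pi_1)(f_2\circ\pi_2)\,d\mu_\beta\;\lesssim\;\frac{B_{1,k}^{1/p_1}B_{2,k}^{1/p_2}}{A_k}\,\|f_1\|_{L^{p_1}(\pi_1(V_k);\nu_1)}\|f_2\|_{L^{p_2}(\pi_2(V_k);\nu_2)}.$$

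To pass from the per-component estimate to \eqref{E:mfold version}, I would invoke hypothesis~(ii) on $M$ once more: for a.e.\ $y\in P_j$ the fiber $\pi_j^{-1}(y)$ is contained in a single connected integral curve of $X_j$, and the distinct connected components $M_{0,k}$ of the open set $M_0$ can be entered from one another only across $M\setminus M_0$, so generically $\pi_j^{-1}(y)\cap V$ lies in a single $V_k$. Consequently the projections $\pi_j(V_k)$ are essentially pairwise $\nu_j$-disjoint in $P_j$, and summation over $k$ produces the supremum rather than the sum of the ratios $B_{1,k}^{1/p_1}B_{2,k}^{1/p_2}/A_k$. The principal obstacle is the second paragraph: one must carry the fiber multiplicities $B_{j,k}$ through the iterative refinement of Proposition~\ref{P:RWT} and through the quasi-extremal / Carnot--Carath\'eodory ball machinery of Sections~\ref{S:Quasiex}--\ref{S:ST full} while verifying that they produce precisely the exponent $1/p_j$ (rather than the exponent $1$ that would result from a cruder ``split $U_k$ into $B_{j,k}$ subsets on which hypothesis~(ii) holds, and sum'' approach, which would destroy the endpoint).
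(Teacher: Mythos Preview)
Your overall architecture---lift through $\Phi_k$, prove a per-component estimate, then sum---matches the paper, as does the lifting inequality with the factor $A_k^{-1}$ and the observation that the projections $\pi_j(M_{0,k})$ are $\nu_j$-almost disjoint (the paper makes the H\"older step explicit, using $p_1^{-1}+p_2^{-1}>1$ to pass from $\sum_k\|f_1\|_{L^{p_1}(P_{k,1})}\|f_2\|_{L^{p_2}(P_{k,2})}$ to the product of global norms; you should record this).

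The real divergence is in how the factor $B_{j,k}^{1/p_j}$ is produced. You propose to trace the fiber multiplicity $B_{j,k}$ through the entire machinery of Sections~\ref{S:RWT}--\ref{S:ST full}, and you correctly flag this as the principal obstacle. The paper sidesteps it entirely by a factoring trick: rather than working with $\hat\pi_j=\pi_j\circ\Phi_k:\R^n\to P_j$, whose fibers may meet several $\hat X_j$-integral curves, the paper introduces the quotient $\tilde P_j:=\R^n/[x\sim e^{t\hat X_j}(x)]$ with quotient map $\tilde\pi_j$ and the induced map $\tilde\Phi_j:\tilde P_j\to P_j$, so that $\hat\pi_j=\tilde\Phi_j\circ\tilde\pi_j$. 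By construction each $\tilde\pi_j$-fiber is a single integral curve, so hypothesis~(ii) holds \emph{exactly}, and the proof of Theorem~\ref{T:main} (which, the paper observes, never used the global or algebraic structure of $\R^{n-1}$, only its local smooth structure) applies verbatim to give
\[
\int_{\R^n}\prod_j|\tilde f_j\circ\tilde\pi_j|\,\hat\rho_\beta\,dx\;\lesssim\;\prod_j\|\tilde f_j\|_{L^{p_j}(\tilde P_j;\tilde\nu_j)}.
\]
The hypothesis on $B_{j,k}$ says exactly that $\tilde\Phi_j$ is at most $B_{j,k}$-to-one over $\tilde\pi_j(U_k)$, so setting $\tilde f_j:=(f_j\circ\tilde\Phi_j)\chi_{\tilde\pi_j(U_k)}$ gives $\|\tilde f_j\|_{L^{p_j}(\tilde P_j)}^{p_j}\le B_{j,k}\|f_j\|_{L^{p_j}(P_j)}^{p_j}$ by a trivial change of variables, and the exponent $1/p_j$ falls out for free. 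This also makes your partition-of-unity and error-term discussion around \eqref{E:error term after coord change} unnecessary: no local coordinates on $P_j$ are ever chosen.
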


The quantities $A_k,B_{1,k}, B_{2,k}$ count (in the absence of the polynomial hypothesis) oscillations naturally associated to the $\pi_j$.  The pre-image $\Phi_k^{-1}(x)$ is in one-to-one correspondence with $\rm{Aut}(\Phi_k)$ and may be viewed as the set of distinct paths of the form $t \mapsto e^{tX}(x)$, $0 \leq t \leq 1$, $X \in \mathfrak g_M$, that start and end at $x$.  Assume that $\pi_j(x)=y$.  The set of $\hat X_j$ integral curves containing $\Phi_k^{-1}(\pi_j^{-1}(y))$ equals the set of $\hat X_j$ integral curves containing $\Phi_k^{-1}(x)$, and thus is in one-to-one correspondence with the set of distinct paths of the form $t \mapsto \pi_j(e^{tX}(x))$, $0 \leq t \leq 1$, $X \in \mathfrak g_M$, that start and end at $y$.  Both of these sets are either singletons (the trivial loop) or are countable; intersecting with the set $U$ as in the hypothesis of the theorem makes other finite cardinalities possible.  As seen in the next section, the analogue of Theorem~\ref{T:mfold version} without some accounting for oscillations is false.

\begin{proof}[Proof of Theorem~\ref{T:mfold version}]
Let $P_{k,j} := \pi_j(M_{0,k})$.  By hypothesis (ii), for each $j=1,2$, the $P_{k,j}$ have measure zero (pairwise) intersection.  Therefore by H\"older's inequality and $p_1^{-1}+p_2^{-1}>1$, 
\begin{align*}
\sum_k \|f_1\|_{L^{p_1}(P_{k,1};\nu_1)}\|f_2\|_{L^{p_2}(P_{k,2};\nu_2)} 
&\leq \bigl(\sum_k\|f_1\|_{L^{p_1}(P_{k,1};\nu_1)}^{p_1} \bigr)^{\frac1{p_1}}\bigl(\sum_k \|f_2\|_{L^{p_2}(P_{k,2};\nu_2)}^{p_2}\bigr)^{\frac1{p_2}}\\
&=  \|f_1\|_{L^{p_1}(P_1;\nu_1)}\|f_2\|_{L^{p_2}(P_2;\nu_2)},
\end{align*}
so it suffices to prove \eqref{E:mfold version} when $V = V_k$ for some $k$.  This reduces matters to consideration of the special case when $M$ is connected and the elements of $\mathfrak g_M$ span the tangent space to $M$ at every point, and we may henceforth omit the subscript $k$ from the various objects in the setup of the theorem.

Define $\tilde P_j :=  \R^n/ [x \sim e^{t\hat X_j}(x)]$, i.e.\ the set of all $\hat X_j$ integral curves in $\R^n$, let $\tilde \pi_j:\R^n \to \tilde P_j$ denote the quotient map, and endow $\tilde P_j$ with the quotient topology.  The image $\tilde \pi_j(\R^n \setminus \{\hat X_j=0\})$ is then a smooth $(n-1)$-dimensional manifold.  Since $\pi_j \circ \Phi$ is constant along integral curves of $\hat X_j$ (and hence on the level sets of $\tilde \pi_j$), we may define a map $\tilde\Phi_j:\tilde P_j \to P_j$ by $\tilde\Phi_j(\tilde \pi_j(x)) := \pi_j(\Phi(x))$.  We observe that $\tilde\Phi_j$ is a local diffeomorphism from $\tilde \pi_j(\R^n \setminus \{\hat X_j=0\})$ onto $\pi_j(M \setminus \{X_j=0\})$. Our hypothesis on the $B_j$ is precisely the statement that for a.e.\ $y \in \pi_j(V)$, $\#(\tilde\Phi_j^{-1}(y) \cap \tilde\pi_j(U)) \leq B_j$.  

Because the flows of the $\hat X_j$ preserve Lebesgue measure, we may define Borel measures $\tilde \nu_j$ on the $\tilde P_j$ by setting $\tilde\nu_j(\tilde\pi_j(\{\hat X_j = 0\})) = 0$ and, for every finite measure $\Omega \subseteq \{\hat X_j \neq 0\}$,
\begin{align*}
|\Omega| 
&=\int_{\tilde \pi_j(\Omega)}\int_{\tilde\pi_j^{-1}(y)} \chi_\Omega(t)\, |\hat X_j(t)|^{-1} d\scriptH^1(t)\, d\tilde\nu_j(y).
\end{align*}
Equivalently, if $V$ is open and $\tilde\Phi_j|_V$ is a diffeomorphism, $\tilde\nu_j = (\tilde\Phi_j|_V)^{-1}_* \nu_j$.  

By the proof of Theorem~\ref{T:main}, which did not use the global Euclidean structure of $\R^{n-1}$, nor its algebraic properties, nor the specific measure $dx$, but only the local Euclidean structure of $\pi_j(\{X_j \neq 0\})$, 
\begin{equation} \label{E:upstairs}
\int_{\R^n} \prod_{j=1}^2 |\tilde f_j \circ \tilde \pi_j| \, \hat \rho_\beta(x)\, dx \lesssim \prod_{j=1}^2 \|\tilde f_j\|_{L^{p_j}(\tilde P_j; \tilde \nu_j)},
\end{equation}
for all pairs of continuous, compactly supported functions $\tilde f_j$ on $\tilde P_j$, $j=1,2$.  

Taking $V,U,A,B_1,B_2,\Phi$ as in the hypothesis of the theorem and $f_j$ a continuous function with compact support on $P_j$, $j=1,2$, we set $\tilde f_j:=(f_j \circ \tilde \Phi_j) \chi_{\tilde\pi_j(U)}$.  By construction,
\begin{gather*}
|\int_V \prod_{j=1}^2 f_j \circ \pi_j\, d\mu_\beta| \leq \tfrac1A \int_U \prod_{j=1}^2 |\tilde f_j \circ \pi_j \circ \Phi|\, \hat \rho_\beta(x)\, dx
= \tfrac1A \int_U \prod_{j=1}^2| \tilde f_j \circ \tilde\pi_j|\, \hat \rho_\beta(x)\, dx,\\
\qtq{and} \int_{\tilde P_j} |\tilde f_j|^{p_j}\, d\tilde\nu_j \leq B_j \int_{P_j} |f_j|^{p_j}\, d\nu_j.
\end{gather*}
Together with \eqref{E:upstairs}, the preceding two inequalities imply \eqref{E:mfold version}.  
\end{proof}

\section{Examples, counter-examples, and open questions} \label{S:Examples}


%
%

\subsection*{The translation invariant case}  We begin with a concrete example.  The weights $\rho_\beta$ were originally conceived in \cite{BSapde} as a generalization of the affine arclength measure associated to curves, and the results of this article include, as a special case, results on translation invariant averages on curves with affine arclength measure.  Let $\gamma:\R \to \R^d$ be a polynomial of degree at most $N$.  Consider the maps $\pi_j:\R^{d+1} \to \R^d$ given by
$$
\pi_1(x,t) := x, \quad \pi_2(x,t) := x-\gamma(t).
$$
The vector fields associated to these maps are
$$
X_1 = \tfrac{\partial}{\partial t}, \quad X_2 = \tfrac{\partial}{\partial t} + \gamma'(t) \cdot \nabla_x,
$$
and $X_1,X_2$ generate a nilpotent Lie algebra on $\R^{d+1}$ whose elements have polynomial flows.  As discussed in Section~\ref{S:TW}, it is slightly easier to compute determinants of vector fields arising as iterated Lie brackets of the $X_i$, rather than derivatives of Jacobian determinants, so we look to Theorem~\ref{T:TW version}.  Provided that the polytope $\scriptP$ associated to $X_1,X_2$ via \eqref{E:big P} is nonempty, 
$$
\scriptP = \rm{ch}\bigl\{\bigl((d,1+\tfrac{d(d-1)}2)+[0,\infty)^2\bigr) \cup \bigl((1+\tfrac{d(d-1)}2,d)+[0,\infty)^2\bigr)\bigr\}.
$$
Thus minimal elements $b$ of $\scriptP$ lie on the line segment joining $(d,1+\tfrac{d(d-1)}2)$ and $(1+\tfrac{d(d-1)}2,d)$.  The corresponding Lebesgue exponents are those $(p_1,p_2)$ with $(p_1^{-1},p_2^{-1})$ lying on the line segment joining
$$
\bigl(\tfrac{2d}{d(d+1)},\tfrac{2+d(d-1)}{d(d+1)}\bigr), \qquad \bigl(\tfrac{2+d(d-1)}{d(d+1)},\tfrac{2d}{d(d+1)}\bigr),
$$
and the corresponding weights are all equal:
$$
|\lambda_I|^{\frac1{b_1+b_2-1}} = |\det(\gamma'(t),\ldots,\gamma^{(d)}(t))|^{\frac2{d(d+1)}}.
$$
Theorem~\ref{T:TW version} thus states that
$$
|\iint g(x) f(x-\gamma(t))\, |\det(\gamma'(t),\ldots,\gamma^{(d)}(t))|^{\frac2{d(d+1)}}\, dt\, dx| \lesssim_N \|g\|_{p_1}\|f\|_{p_2},
$$
for all $p_1,p_2$ as above, which is precisely the main theorem of \cite{BSjfa}.  One may analogously obtain the main result of \cite{DSjfa}, which considered the X-ray transform restricted to polynomial curves, as a special case of Theorem~\ref{T:TW version}.

\subsection*{Independence and necessity of Hypotheses (i) and (ii)}
Hypothesis (ii) of Theorem~\ref{T:main} certainly does not imply (i); nor does (i) imply (ii), as can be seen by considering, on the domain $U := (1,\infty) \times \R \times \R$, the maps
\begin{equation} \label{E:rotational}
\pi_1(x,y,z) := (y,z), \qquad \pi_2(x,y,z) := (x \cos(y+\frac zx), x \sin(y+\frac zx)),
\end{equation}
for which $X_1 = \partial_x$ and $X_2 = \partial_y-x\partial_z$.  

Hypothesis (ii) can be weakened to the assumption that a bounded number of integral curves constitute each fiber; this can be carried out by factoring the $\pi_j$ through the quotients of $\R^n$ by the integral curves of the $X_j$, as in the proof of Theorem~\ref{T:mfold version}.  The necessity of some hypothesis in this direction follows from the example \eqref{E:rotational} above.  Indeed, with this choice of $\pi_1,\pi_2$, \eqref{E:main} would suggest
$$
|\int_U f_1 \circ \pi_1(x) \, f_2 \circ \pi_2(x)\, dx| \lesssim \|f_1\|_{3/2}\|f_2\|_{3/2},
$$
which can be seen to fail for $f_1 := \chi_{\{|y| < R\}}$, $f_2 := \chi_{\{1 < |y| < 2\}}$, as $R \to \infty$.  

We expect that hypothesis (i) can weakened substantially, though at a cost of losing some uniformity (as will be seen momentarily).  Indeed, in the translation invariant case, this has been done \cite{OberlinLogCvx, DSjfa2, GressSbLvl}.  That being said, the conclusions of the theorem are false if we completely omit this hypothesis.  To see this, we consider first Sj\"olin's \cite{Sjolin} counter-example 
\begin{gather*}
\pi_1(x) := (x_1,x_2) \qquad \pi_2(x) := (x_1,x_2) - (x_3,\phi(x_3)), \\ \phi(x_3) := \sin(x_3^{-k})e^{-1/x_3}, \quad x \in \R^2 \times (0,\infty),
\end{gather*}
for $k$ sufficiently large.  Inequality \eqref{E:main} would suggest
$$
\bigl|\int_{\{0 < x_3 < 1\}} f_1\circ \pi_1(x) \, f_2 \circ \pi_2(x)\, |\phi''(x_3)|^{1/3}\, dx\bigr| \lesssim \|f_1\|_{3/2}\|f_2\|_{3/2},
$$
but this can be seen to fail for the characteristic functions $f_j = \chi_{E_j}$,
\begin{align*}
E_1 &:= \{y \in \R^2 : \delta < y_1 < \delta + \delta^2, \quad |y_2| \leq e^{-1/\delta}\}\\
E_2 &:= \{y \in \R^2 : |y_1| \lesssim \delta^2, \quad |y_2| \lesssim e^{-1/\delta}\},
\end{align*}
as $\delta \searrow 0$.

\subsection*{Malcev coordinates and the linear operator} For simplicity, we consider the Euclidean case.  We recall that we were initially interested in bilinear forms arising in the study of averages on curves, $\scriptB(f_1,f_2) = \langle f_1,Tf_2 \rangle$, where 
$$
Tf_2(x) := \int f_2(\gamma_x(t))\, d\mu_{\gamma_x}(t).
$$
Thus we are particularly interested in the case when $\pi_1$ is a coordinate projection, and dualizing the linear operator corresponds to changing variables so that $\pi_2$ is a coordinate projection.  As we will see, weak Malcev coordinates are sometimes useful in carrying this out.  

Fix a nilpotent Lie algebra $\mathfrak g$ generated by vector fields $X_1,X_2 \in \mathfrak g$.  Let $N$ denote the dimension of $\mathfrak g$, and let $\mathfrak z$ denote an $(N-n)$-dimensional Lie subalgebra of $\mathfrak g$.  As we have seen, there exists a weak Malcev basis $\{W_1,\ldots,W_N\}$ for $\mathfrak g$, with $\{W_{n+1},\ldots,W_N\}$ a basis for $\mathfrak z$, and, in the coordinates
$$
(x_1,\ldots,x_n,z_{n+1},\ldots,z_N) \mapsto e^{x_1 W_1} \cdots e^{x_n W_n} e^{z_{n+1}W_{n+1}} \cdots e^{z_NW_N}
$$
for the associated Lie group $G := \exp(\mathfrak g)$, the flows of the elements of $\mathfrak g$ are polynomial, and, moreover, the projection map $(x,z) \mapsto x$ defines a Lie group isomorphism of $\mathfrak g$ onto a Lie subgroup of $\mathfrak \R^n$, in which $\mathfrak z$ pushes forward to $\mathfrak z_0$, the algebra consisting of vector fields in (the pushforward of) $\mathfrak g$ that vanish at 0.  Thus we may identify $\R^n$ with $G/Z$, where $Z := \exp(\mathfrak z)$.  

If $W_1 = X_1$, then we define $\pi_1(x) := (x_2,\ldots,x_n)$.  (Alternately, there are local coordinates in which $\pi_1$ may be written in this form.)  If there exists another weak Malcev basis $\{\tilde W_1,\ldots,\tilde W_N\}$ for $\mathfrak g$ through $\mathfrak z$ with $\tilde W_1 = X_2$, then the map $F:x \mapsto \tilde x$ is a polynomial diffeomorphism, and so $\pi_2(x) := (\tilde x_2,\ldots,\tilde x_n)$ is also a polynomial.  The map $F$, being a polynomial diffeomorphism, has constant Jacobian determinant.  By scaling the $\tilde W_j$, we may assume that this constant is 1.  Our bilinear form is
\begin{align*}
B(f_1,f_2) &= \int f_1 \circ \pi_1(x) \, f_2 \circ \pi_2(x) \, \rho_\beta(x)\, dx \\
&= \int f_1 \circ \pi_1\circ F^{-1}(x) \, f_2 \circ \pi_2 \circ F^{-1}(x) \, \rho_\beta \circ F^{-1}(x)\, dx.
\end{align*}
Thus the associated linear  and adjoint operators are
$$
Tf(y) = \int f(\pi_2(t,y))\, \rho_\beta(t,y)\, dt, \quad T^*g(y) = \int g(\pi_1 \circ F^{-1}(t,y))\, \rho_\beta \circ F^{-1}(t,y)\, dt,
$$
averages along curves parametrized by polynomials.

It is therefore natural to ask when it is possible to find a weak Malcev basis of $\mathfrak g$ through $\mathfrak z$ whose first element is $X_1$.  

Initially fix any weak Malcev basis $\{W_1,\ldots,W_N\}$.  Let $\mathfrak g^{(2)} := [\mathfrak g,\mathfrak g]$, and let $\mathfrak h := \mathfrak g^{(2)}+\mathfrak z$.  Then $\mathfrak h$ is an ideal in $\mathfrak g$.  In fact, it is a proper ideal, because the linear span $\R W_2 + \cdots + \R W_N$ is an ideal (being a codimension 1 subalgebra) of $\mathfrak g$ that contains both $\mathfrak g^{(2)}$ and $\mathfrak z$.  Since $\mathfrak h \subseteq \R W_2+\cdots+\R W_N$, if $X_1 \in \mathfrak h$, we cannot take $W_1 = X_1$.  If $X_1 \notin \mathfrak h$, then there exists a weak Malcev basis $\{W_k,\ldots,W_N\}$ of $\mathfrak h$ through $\mathfrak z$, which we may complete to a basis $\scriptB := \{X_1,W_2,\ldots,W_N\}$ of $\mathfrak g$.  For each $2 \leq j \leq k$, the linear span $\R W_j + \cdots + \R W_N$ is an ideal in $\mathfrak g$, so $\scriptB$ is a weak Malcev basis of $\mathfrak g$ through $\mathfrak z$.  

Since $X_1,X_2$ generate $\mathfrak g$, both cannot lie in the proper subideal $\mathfrak h$, and so there does exist a weak Malcev basis with either $X_1$ or $X_2$ as the first element.

Malcev coordinates aside, we can ask when it is possible to express $\pi_1$ as a coordinate projection and $\pi_2$ as a polynomial, without changing the Lie algebra.  The authors have not strenuously endeavored to determine necessary and sufficient conditions, but it is clear that it is not possible in general, even locally around points where both maps are submersions.  Indeed, local polynomial maps extend to global ones generating the same Lie algebra, and a necessary condition for $\pi_1$ to be a coordinate projection is that 
$$
X_1 \notin \{X*Z*(-X) : Z \in \mathfrak z_0, \: X \in \mathfrak g\},
$$
since $\mathfrak z_{e^{X}(0)} = \{X * Z * (-X): Z \in \mathfrak z_0\}$.

\subsection*{Optimality of the weight}  It is proved in \cite{BSapde} that if $b$ is an extreme point of the Newton polytope $\scriptP$ defined in \eqref{E:big P}, then the corresponding weight $\rho_\beta$ is (up to summing weights corresponding to the same degree) the largest possible weight for which \eqref{E:main} can hold.  It is also shown that if $b$ is not on the boundary of $\scriptP$, then it is not possible to establish a pointwise bound on weights $\rho$ for which \eqref{E:main} might hold.  

\subsection*{Changes of speed and failure of global bounds}
The analogue of Proposition~\ref{P:local version} with $U_{x_0}$ replaced by the full region $U$ can fail if $\beta$ is not minimal and the Hodge-star vector fields are not themselves nilpotent, even when the Hodge-star vector fields are real analytic and have flows satisfying natural convexity hypotheses.  We see this by considering the example $U:=\{x \in \R^3 : x_3 > 0\}$ and 
$$
\pi_1(x) := (x_1,x_2), \qquad \pi_2(x) := (x_1,x_2) - (\log x_3,(\log x_3)^2).
$$
The Hodge-star vector fields are
$$
X_1 = \partial_3, \qquad X_2 = \frac1{x_3}\partial_1 + \frac 2{x_3} \log x_3 \partial_2 + \partial_3.
$$
Taking $Y_1 := x_3 X_1$ and $Y_2 := x_3 X_2$, we have $Y_{12} = 2 \partial_2$, and all higher order commutators are zero.  Taking $\beta = (0,2,0)$,
$$
\partial_t^\beta|_{t=0} \det D_t \,e^{t_3X_1} \circ e^{t_2X_2} \circ e^{t_1 X_1}(x) = -\frac3{x_3^4}.
$$
Thus \eqref{E:local version} would suggest the bound
\begin{equation} \label{E:log example}
|\int_U f_1 \circ \pi_1(x) \, f_2 \circ \pi_2(x) \, x_3^{-1} \, dx| \lesssim \|f_1\|_2 \|f_2\|_{4/3}.
\end{equation}
Changing variables, \eqref{E:log example} becomes
$$
|\int_{\R^3} f_1(x_1,x_2) \, f_2(x_1-t,x_2-t^2)  \, dt\, dx| \lesssim \|f_1\|_2\|f_2\|_{4/3},
$$
which is easily seen to be false by scaling.

It is still conceivable that global bounds are possible in the real analytic case when $\beta$ is minimal and some convexity/non-oscillation assumption is made.

\subsection*{Failure of strong type bounds in dimension 2}
The hypothesis $n \geq 3$ in Theorem~\ref{T:main} cannot be omitted.  Indeed, consider $\pi_1(x_1,x_2) := x_1$, $\pi_2(x_1,x_2) := x_2^k$.  Then $X_1 = \frac{\partial}{\partial x_2}$, $X_2 = k x_2^{k-1}\frac{\partial}{\partial x_1}$, which together generate a nilpotent Lie algebra with polynomial flows.  Moreover, if we take $\beta = (k-1,0)$, then the corresponding weight is $\rho_\beta \sim 1$, so \eqref{E:main} would suggest
$$
|\int f_1(x_1) f_2(x_2^k)\, dx_1 \, dx_2| \lesssim \|f_1\|_1\|f_2\|_k,
$$
which is false in general (take e.g.\ $f_2(y) = (y^{\frac1k}\log y)^{-1} \chi_{(0,1]}$).  

We recall, however, that the argument in \cite{GressmanPoly} (and also the proof of Proposition~\ref{P:RWT}) did not require the hypothesis $n \geq 3$ to obtain the restricted weak type inequality on the single scale $\rho_\beta \sim 1$.  

\subsection*{Failure of global estimates for an analogue on manifolds}  An interesting question that we do not investigate is whether there are natural, simple hypotheses leading to global estimates in Theorem~\ref{T:mfold version}.  Without further hypotheses, such a result is false, as can be seen in the following counterexample.  

Let $M = \R/\Z \times \R/\Z \times \R$ and $P_1 = P_2 = \R/\Z \times \R/\Z$, all equipped with Lebesgue measure.  Define projections
$$
\pi_1(\theta_1,\theta_2,t) = (\theta_1,\theta_2), \qquad \pi_2(\theta_1,\theta_2,t) = (\theta_1+t,\theta_2+t^2).
$$
Then the $\pi_j$ naturally give rise to the vector fields 
$$
X_1 = \tfrac\partial{\partial t}, \qquad X_2 = \tfrac\partial{\partial t} - \tfrac\partial{\partial \theta_1}-2t\tfrac\partial{\partial \theta_2}.
$$
These generate a nilpotent Lie algebra obeying the H\"ormander condition, and, moreover, each point of $P_j$ has its $\pi_j$-preimage contained in a unique integral curve of $X_j$.  A naive analogue of Theorem~\ref{T:main} might suggest
$$
\int_M |f_1 \circ \pi_1 \, f_2 \circ \pi_2| \lesssim \|f_1\|_{\frac3 2}\|f_2\|_{\frac 3 2},
$$
but this is obviously false, as can be seen by taking $f_1 \equiv f_2 \equiv 1$.  

\subsection*{Multilinear averages on curves}  
In the multilinear case considered in \cite{BSrevista, BSapde}, the natural generalization of the map $\Psi_x$ used to define $\rho$ involves iteratively exponentiating the vector fields in some specified order, and the single-scale restricted weak type inequality is known to hold under the natural analogue of the hypotheses of Theorem~\ref{T:main}.  Indeed, the proof in Section~\ref{S:RWT} readily generalizes.  Unfortunately, the analogy breaks down in Section~\ref{S:Quasiex}, where we need to use the gain coming from nonzero entries of the multiindex $\beta$.  To rule out such examples in the multilinear case would require rather more complicated hypotheses, particularly if we want a theory that includes examples such as the perturbed Loomis--Whitney inequality, where the endpoint bounds are known to hold \cite{BCW}.  

We record here two multilinear examples that may be of interest in future explorations of this topic.  

The first is a Loomis--Whitney inspired variant on the above two-dimensional example.  Define
$$
\pi_i(x) := (x_1,\ldots,\widehat{x_i},\ldots,x_n), \quad 1 \leq i \leq n-1, \qquad \pi_n(x) := (x_1^k,x_2,\ldots,x_{n-1}).
$$
Our vector fields are $X_i = \frac{\partial}{\partial x_i}$, $1 \leq i \leq n-1$, and $X_n = kx_1^{k-1}\frac{\partial}{\partial x_n}$, and the endpoint inequality
$$
|\int \prod_i f_i \circ \pi_i \, dx| \lesssim \prod_{i=1}^n \|f_i\|_{p_i}, \qquad p_1 = \tfrac{n+k-2}{k}, \: p_i = n+k-2, \: i=2,\ldots,n
$$
is false for $k > 1$, as can be seen by considering $f_1 := \chi_{B_1}$ and $f_i(x_1,x') := |x_1|^{-\frac1{n+k-2}} |\log|x_1||^{-\frac1{n-1}} \chi_{B_1}(x)$, where $B_1$ denotes the unit ball.  

The second is a hybrid of a well-studied convolution operator with this example.  For $(x,t,s) \in \R^{n+1+1}$, let $\pi_1(x,t,s) := (x,s)$, $\pi_2(x,t,s) := (x-\gamma(t),s)$, $\pi_3(x,t,s) := (x,t^k)$, where $\gamma(t) := (t,t^2,\ldots,t^n)$.  Our vector fields are $X_1 = \frac{\partial}{\partial t}$, $X_2 = \frac{\partial}{\partial t} - \gamma'(t) \cdot \nabla_x$, $X_3 = kt^{k-1}\frac{\partial}{\partial s}$.  From the preceding examples, we might guess that the endpoint inequality
\begin{gather*}
|\int \prod_{i=1}^3 f_i \circ \pi_i\, dx| \lesssim \prod_{i=1}^3 \|f_i\|_{p_i}, \\
p_1:= \frac{2k+n(n+1)}{2k+n(n-1)}, \qquad p_2:= \frac{2k+n(n+1)}{2n}, \qquad p_3:= \frac{2k+n(n+1)}2
\end{gather*}
fails.  In fact, this inequality is true, as can be seen from H\"older's inequality and Theorem~2.3 of \cite{DSjfa2}.


\section{Appendix:  Polynomial lemmas} \label{S:Polynomials}


In this section, we collect together a number of lemmas on the size and injectivity of polynomials.

The next lemma shows that if a polynomial bounds a monomial, then the monomial must in fact be bounded by two terms of the polynomial; this facilitates a complex interpolation argument used in the deduction of Theorem~\ref{T:TW version} from Theorem~\ref{T:main}.  

\begin{lemma}\label{L:extract 2 terms}
Let $p(t) = \sum_{n=0}^N a_n t^n$ be a polynomial with nonnegative coefficients, and let $k \in \Z_{\geq 0}$. If $t^k \leq p(t)$ for all $t > 0$, then $a_k \gtrsim 1$ or there exist $n_1 < k < n_2$ such that $(a_{n_1})^{\frac{n_2-k}{n_2-n_1}}(a_{n_2})^{\frac{k-n_1}{n_2-n_1}} \gtrsim 1$.  Conversely, if $a_k \geq 1$ or $(a_{n_1})^{\frac{n_2-k}{n_2-n_1}}(a_{n_2})^{\frac{k-n_1}{n_2-n_1}} \geq 1$ for some $n_1 < k < n_2$, then $t^k \leq p(t)$ for all $t > 0$.  
\end{lemma}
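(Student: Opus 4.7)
The plan is to treat the converse and forward directions separately, since the converse is essentially just weighted AM--GM while the forward direction is a convex-envelope argument on the logarithms of the coefficients.

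For the converse, if $a_k \geq 1$ there is nothing to do. Otherwise, setting $\alpha := \tfrac{n_2-k}{n_2-n_1} \in (0,1)$, I would simply note that $\alpha n_1 + (1-\alpha) n_2 = k$ and apply the weighted AM--GM inequality $\alpha u + (1-\alpha) v \geq u^\alpha v^{1-\alpha}$ to $u = a_{n_1} t^{n_1}/\alpha$ and $v = a_{n_2} t^{n_2}/(1-\alpha)$. This gives $a_{n_1} t^{n_1} + a_{n_2} t^{n_2} \geq a_{n_1}^\alpha a_{n_2}^{1-\alpha} t^k$ (the factor $\alpha^{-\alpha}(1-\alpha)^{-(1-\alpha)}$ is at least $1$), whence $p(t) \geq t^k$ under the stated hypothesis.

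For the forward direction, I would substitute $s := \log t$ and let
\[
\Phi(s) := \max_{\,0 \leq n \leq N,\ a_n > 0\,} \bigl[(n-k)s + \log a_n\bigr].
\]
The hypothesis $t^k \leq p(t) \leq (N+1)\max_n a_n t^n$ rewrites as $\Phi(s) \geq -\log(N+1)$ for all $s \in \R$. The function $\Phi$ is the upper envelope of finitely many affine functions, so it is convex and piecewise linear; since it is bounded below, its infimum is either attained at a point or approached along a direction in which the active line has slope $0$.

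I would then split into two cases according to which line realizes the infimum. If an active line has slope $0$, it must correspond to $n=k$, in which case $\log a_k \geq -\log(N+1)$, giving $a_k \gtrsim 1$. Otherwise the infimum is attained at a vertex where two lines $L_{n_1},L_{n_2}$ with slopes of opposite sign (so $n_1 < k < n_2$) both achieve the maximum; a direct computation at the crossing point gives
\[
\Phi(s^*) \;=\; \tfrac{n_2-k}{n_2-n_1}\log a_{n_1} + \tfrac{k-n_1}{n_2-n_1}\log a_{n_2} \;=\; \log\bigl(a_{n_1}^{(n_2-k)/(n_2-n_1)}\,a_{n_2}^{(k-n_1)/(n_2-n_1)}\bigr),
\]
and combining this with $\Phi(s^*) \geq -\log(N+1)$ yields the desired product bound. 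I do not expect any real obstacle here; the only minor point to take care of is checking that the cases in which no $n<k$ or no $n>k$ has $a_n > 0$ force $a_k > 0$ (else $\Phi(s) \to -\infty$), and then the slope-$0$ argument applies to close the proof.
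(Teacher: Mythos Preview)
Your proof is correct, and while it shares the same underlying idea as the paper's, the execution is framed differently.  For the converse, the paper instead sets $t_0 := (a_{n_1}/a_{n_2})^{1/(n_2-n_1)}$, observes that $a_{n_1} t_0^{n_1} = a_{n_2} t_0^{n_2} \geq t_0^k$, and then dominates $t^k$ by $a_{n_1}t^{n_1}$ on $(0,t_0]$ and by $a_{n_2}t^{n_2}$ on $[t_0,\infty)$; your weighted AM--GM accomplishes the same thing in one stroke.  For the forward direction, the paper works directly in $t$ rather than in $s = \log t$: after reducing to $a_k = 0$, it splits $p = p_{lo} + p_{hi}$ into the terms of degree below and above $k$, uses the Intermediate Value Theorem to find the unique $t_0 > 0$ at which $p_{lo}(t_0) = p_{hi}(t_0)$, and then selects $n_1,n_2$ as the dominant monomials of $p_{lo},p_{hi}$ at that point.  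This $t_0$ plays essentially the same role as your crossing point $e^{s^*}$, and the resulting arithmetic is identical.  Your convex-envelope framing makes it transparent \emph{why} two indices suffice (the minimum of a piecewise-linear convex function sits at a vertex), and it handles the boundary case $a_k>0$ uniformly with the rest; the paper's crossover argument is more hands-on but equally short.
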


\begin{proof}
If $t^k \leq p(t)$ for all $t \geq 0$, but $a_k \leq \tfrac12$, then $t^k \leq 2(p(t)-a_k t^k)$, so we may as well assume that $a_k = 0$.  

Let $p_{lo}(t) := \sum_{n < k} a_n t^n$ and $p_{hi}(t) := \sum_{n > k} a_n t^n$.  By considering small $t$, we see that $p_{lo} \not\equiv 0$, and by considering large $t$, we see that $p_{hi} \not \equiv 0$.    By a routine application of the Intermediate Value Theorem, there exists a unique $t_0 > 0$ such that $p_{lo}(t_0) = p_{hi}(t_0)$.  We may choose $n_1 < k < n_2$ such that $p_{lo}(t_0) \sim a_{n_1}t_0^{n_1}$ and $p_{hi}(t_0) \sim a_{n_2}t_0^{n_2}$.  Thus $t_0^k \lesssim a_{n_1}t_0^{n_1} \sim a_{n_2}t_0^{n_2}$, from which we learn that $t_0 \sim (\frac{a_{n_1}}{a_{n_2}})^{\frac{1}{n_2-n_1}}$ and, consequently, $1 \lesssim (a_{n_1})^{\frac{n_2-k}{n_2-n_1}}(a_{n_2})^{\frac{k-n_1}{n_2-n_1}}$.  In the converse direction, if $(a_{n_1})^{\frac{n_2-k}{n_2-n_1}}(a_{n_2})^{\frac{k-n_1}{n_2-n_1}} \geq 1$, then at $t_0 := (\frac{a_{n_1}}{a_{n_2}})^{\frac{1}{n_2-n_1}}$, $t_0^k \leq a_{n_1} t_0^{n_1} = a_{n_2}t_0^{n_2}$, so $t^k \leq a_{n_1} t^{n_1}$ for all $t \leq t_0$ and $t^k \leq a_{n_2} t^{n_2}$ for all $t \geq t_0$.  
\end{proof}

A lemma in \cite{DW} states that if $\scriptP$ is a finite collection of polynomials on $\R$, each of degree at most $N$, then there exists a decomposition $\R = \bigcup_{j=1}^{C(\#\scriptP,N)} I_j$, with each $I_j$ an interval, such that on $I_j$, each $p$ has roughly the same size as some fixed monomial, centered at a point that depends only on $j$, not $p$:
$$
|p(t)| \sim a_{p,j}(t-b_j)^{k_{p,j}}, \qquad a_{p,j} \in [0,\infty), \quad b_j \notin I_j, \quad k_{p,j} \geq 0.  
$$
Our next lemma strengthens this to show that we may take each monomial to be an entry of the Taylor polynomial centered at $b_j$ of the polynomial $p$ and ensures that the other entries of that Taylor polynomial are as small as we like.

\begin{lemma} \label{L:p sim t^k}
Let $\scriptP$ denote a finite collection of polynomials on $\R$, each having degree at most $N$, and let $\eps > 0$.  There exist a collection of nonoverlapping open intervals $I_1,\ldots,I_{N'}$, with $N' = N'(N,\#\scriptP,\eps)$ and $\R = \bigcup_j \overline{I_j}$, and centers $b_1,\ldots,b_{N'}$, with $b_j \notin I_j$, such that for each $j$ and $p \in \scriptP$, there exists an integer $k_{j,p}$ such that
\begin{equation} \label{E:p sim t^k L}
|\tfrac1{k!} p^{(k)}(b_j)(t-b_j)^k| \leq \eps \tfrac1{k_{j,p}!}| p^{(k_{j,p})}(b_j)(t-b_j)^{k_{j,p}}|, \qquad k \neq k_{j,p}, \: t \in I_j.
\end{equation}
\end{lemma}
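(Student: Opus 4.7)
The plan is to iteratively refine the Dendrinos--Wright decomposition recalled just before the statement, exploiting the elementary observation that for any polynomial $p$ of degree at most $N$ and any center $b$, the Taylor monomials $M_k(t) := |p^{(k)}(b)(t-b)^k/k!|$ are literal monomials in $(t-b)$. Hence any pairwise ratio $M_k/M_{k'}$ is a monomial in $|t-b|$, monotone on each side of $b$. This means that for a fixed center $b \notin I$, the set of $t \in I$ where one index $k_0$ $\eps$-dominates every $M_k$ with $k \neq k_0$ is an intersection of half-intervals, hence itself an interval.

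I first handle a single polynomial $p$. On each of $\{t > b\}$ and $\{t < b\}$, the $(N+1)$ sets $D_{k_0} := \{t: M_{k_0}(t) \geq \eps^{-1}M_{k}(t) \text{ for all } k \neq k_0\}$ are intervals whose union covers everything except for at most $N$ ``gap'' intervals $G$; on each gap, two transitional indices $k_-,k_+$ remain within a factor of $\eps^{-1}$ of each other, and the multiplicative length of $G$ in the variable $t-b$ is bounded by $\eps^{-2/|k_+-k_-|} \leq \eps^{-2}$. To resolve such a gap I shift the center to some $b' \notin G$ at multiplicative distance $\sim 1$ from $G$; with respect to $b'$, $G$ has multiplicative length $O(1)$, every Taylor monomial of $p$ at $b'$ varies by only a bounded factor across $G$, and re-applying the previous step at $b'$ either yields $\eps$-dominance directly or produces strictly smaller gaps. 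Iterating at most $O_N(\log \eps^{-1})$ times gives a valid partition for $p$ alone, with a total of $O_N(\eps^{-C_N})$ intervals.

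To pass to the full family $\scriptP$, I induct on $\#\scriptP$. Given a partition $\{(I_j, b_j)\}$ valid for a subfamily $\scriptP'$, I refine it to accommodate one additional $p$ by applying the single-polynomial procedure on each $I_j$ using the existing center $b_j$. Whenever that procedure introduces a new center $b' \neq b_j$ on some sub-interval $G \subseteq I_j$, I must re-verify $\eps$-domination for every $q \in \scriptP'$ at the new center by applying the single-polynomial procedure once more to each such $q$ on $G$; since $G$ has multiplicative length $O(1)$ with respect to $b'$, this adds only $O_N(\log \eps^{-1})$ further refinements per polynomial, and the total interval count remains bounded by a function of $N$, $\#\scriptP$, and $\eps$ alone.

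The main obstacle is ruling out an unbounded cascade of ever-smaller gaps under iterated recentering. This is controlled by the observation that each recentering strictly reduces the multiplicative length of the offending gap by a factor of at least $\eps^{c_N}$, so after $O_N(\log \eps^{-1})$ levels the region is so short that equivalence of norms on the finite-dimensional space of polynomials of degree at most $N$ forces a single Taylor monomial at the current center to $\eps$-dominate, terminating the recursion.
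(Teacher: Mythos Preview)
Your termination argument has a genuine gap. When you recenter at $b'$ with $\dist(b',G)\sim\diam(G)$, the quantity $|t-b'|$ lies in a fixed bounded ratio throughout $G$, so every Taylor monomial $M_k(t)=|p^{(k)}(b')(t-b')^k/k!|$ is essentially a constant $A_k$ on $G$. Whether some $A_{k_0}$ $\eps$-dominates the rest is a question about the specific numbers $|p^{(k)}(b')|\diam(G)^k$, and nothing in your setup forces this: at a generic $b'$ (away from zeros of all derivatives) these are all comparable, and the new ``gap'' is all of $G$. For instance, take $p(t)=t(t-10)$ on a DW interval with center $b=0$; the region where $M_1=10|t|$ fails to $\eps$-dominate $M_2=t^2$ is $G=(10\eps,5)$, and if you recenter at $b'=-5$ you find $M_0\sim 75$, $M_1\sim 100\text{--}200$, $M_2\sim 25\text{--}100$ throughout $G$, with no $\eps$-domination for any index. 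So the claim that ``each recentering strictly reduces the multiplicative length of the offending gap by a factor of at least $\eps^{c_N}$'' is false, and the final appeal to equivalence of norms cannot repair it, since equivalence of norms yields only $\lesssim_N$-comparability, never $\eps$-domination.

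The paper's argument recenters in the opposite regime. On the transition zone near an endpoint of the DW interval (after normalizing to $(\ell,1)$, this is the range $(\eps,1)$) it chops into $\sim\eps^{-2}$ short arithmetic intervals $[j\eps^2,(j+1)\eps^2]$, $\eps^{-1}\le j<\eps^{-2}$, and takes $b_j=j\eps^2$ with $k_{j,p}=0$. The point is that on such a piece $|t-b_j|\le\eps^2\le\eps|b_j|$, while the DW structure (monotonicity of $|p|$ on $(0,1)$, hence $|p^{(m)}(b_j)||b_j|^m\lesssim|p(b_j)|$) gives $M_m(t)\lesssim\eps^m M_0(t)$ for $m\ge1$. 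On the complementary ``deep'' region $(\eps^{-1}\ell,\eps)$ the original DW center and exponent $n_p$ already $\eps$-dominate, by direct comparison of the endpoints. Your scheme could be salvaged by replacing the far-center recentering with this close-center subdivision; as written, the recursion does not terminate.
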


In particular, provided we take $\eps < \frac1{2N}$, 
\begin{equation}\label{E:p sim t^k}
|p(t)| \sim A_{j,p} |t-b_j|^{k_{j,p}}, \: \text{for} \: t \in I_j, \: \text{where}\: A_{j,p} := |\tfrac1{k_{j,p}!} p^{(k_{j,p})}(b_j)|.
\end{equation}

\begin{proof}[Proof of Lemma~\ref{L:p sim t^k}]
We modify the approach from \cite{DW}.  We will allow the integer $N'$ to change from line to line, subject to the constraint $N' = N'(N,\#\scriptP,\eps)$.  

Without loss of generality, all elements of $\scriptP$ are nonconstant, and $\scriptP$ contains all nonconstant derivatives of its elements.  Let $\{z_1,\ldots,z_{N'}\}$ denote the union of the (complex) zero sets of the elements in $\scriptP$.  Set
$$
S_i := \{t \in \R : |t-z_i| \leq |t-z_j|, \: j \neq i\}.
$$
Then $\R = \bigcup_i S_i$.  We will further decompose each $S_i$, and by reindexing, it suffices to further decompose $S_1$.  Reindexing, we may assume that $|z_1-z_2| \leq \cdots \leq |z_1-z_N|$.  Define
\begin{gather*}
T_j := \{t \in S_1 : \tfrac12|z_1-z_j| \leq |t-z_1| < \tfrac12|z_1-z_{j+1}|\}, \qquad j=1,\ldots,N'-1,\\
 T_{N'}:= S_1 \setminus T_{N'-1}.
\end{gather*}
If $t \in T_j$, then by the triangle inequality,
$$
|t-z_1| \leq |t-z_{j'}| \leq 3|t-z_1|, \quad j' \leq j, \qquad \tfrac12|z_1-z_{j'}| \leq |t-z_{j'}| < \tfrac32|z_1-z_{j'}|, \quad j' > j.
$$
Writing $p(t) =  A_p \prod_{j' \in \scriptJ_p} (t-z_j)^{n_{j',p}}$, where $\scriptJ_p$ denotes the set of indices corresponding to zeros of $p$,
$$
|p(t)| \sim |A_p \prod_{j < j' \in \scriptJ_p}(z_1-z_{j'})^{n_{j',p}} \prod_{j > j' \in \scriptJ_p}(t-z_1)^{n_{j',p}}|.
$$ 
Thus $p$ is comparable to a complex monomial.  Let $b_1 := \rm{Re}\, z_1$, $c_1 := |\rm{Im}\, z_1|$.  On $\{|t-b_1| \leq c_1\}$, $|t-z_1| \sim c_1$, and on $\{|t-b_1| \geq c_1\}$, $|t-z_1| \sim |t-b_1|$, so subdividing one more time, we obtain intervals on which each polynomial is comparable to a real monomial.  

More precisely, at this point, we have simply reproved the lemma from \cite{DW}:  There exists a decomposition $\R = \bigcup_{j=1}^{N'} I_j$ such that $|p(t)| \sim a_{p,j}|t-b_j|^{n_{p,j}}$, $p \in \scriptP$ and $t \in I_j$.  We want a bit more, which requires us to subdivide further.  Reindexing, it suffices to subdivide $I_1$.  Translating, we may assume that $b_1 = 0$, and by symmetry, we may assume that $I_1 = (\ell,r) \subseteq (0,\infty)$.  To fix our notation, 
\begin{equation}\label{E:def ap}
|p(t)| \sim a_p |t|^{n_p}, \qquad t \in I:=I_1.
\end{equation}

If $I = (0,\infty)$, then each $p$ must in fact be a monomial, and we are done.  Otherwise, by rescaling, we may assume that either $r=1$ or that $\ell = 1, r=\infty$.  

Case I:  $I = (\ell,1)$.  By construction, $z_1$, which is purely imaginary, is no further from 1 than any zero of any nonzero derivative of any element of $\scriptP$.  Thus no element of $\scriptP$ (nor any nonzero derivative of any element) has a zero inside the disk $\{|z-1| < 1\}$.  Therefore for each $p \in \scriptP$, $|p|$ is monotone on $(0,2)$.  If $|p|$ is decreasing, by equivalence of norms,
$$
|p(0)| = \|p\|_{C^0(0,2)} \sim \|p\|_{C^0(1,2)} = |p(1)|.
$$
Thus either $|p|$ is increasing or $|p| \sim c_p$ on all of $(0,1)$.  In either case, for $t \in (0,1)$,
\begin{equation} \label{E:p inc}
|p(t)| \sim \|p\|_{L^\infty((0,t))} \sim \sum_{j=0}^N \tfrac1{j!}|p^{(j)}(0)|t^j \sim \sum_{j=0}^N \tfrac1{j!}\|p^{(j)}\|_{L^\infty((0,t))}t^j.
\end{equation}

Let $\eps^{-1} \leq j < \eps^{-2}$, and let $n \geq 1$.  By \eqref{E:p inc}, $|p^{(n)}(j\eps^2)|(j\eps^2)^n \lesssim |p(j\eps^2)|$, so $|p^{(n)}(j\eps^2)|(\eps^2)^n \lesssim \eps^n |p(j\eps^2)|$.  Therefore \eqref{E:p sim t^k L} holds for $t \in [j\eps^2,(j+1)\eps^2]$ with $b_j = j\eps^2$ and $k_{j,p} = 0$.  

It remains to decompose $(\ell,\eps)$, supposing this interval is nonempty.  Evaluating \eqref{E:p inc} at $t=1$, and recalling \eqref{E:def ap},
$$
\sum_{n >n_p} \tfrac1{n!} |p^{(n)}(0)| \lesssim a_p.
$$
Thus for $0<t < \eps$ and $n > n_p$, $|p^{(n)}(0)|t^n < \eps a_pt^{n_p}$.  Evaluating \eqref{E:p inc} at $t=\ell$,
$$
\sum_{n < n_p} |\tfrac1{n!} p^{(n)}(0)|\ell^n \lesssim a_p \ell^{n_p},
$$
so for $t > \eps^{-1}\ell$ and $n < n_p$, $|p^{(n)}(0)|t^n < \eps a_p t^{n_p}$.  Therefore \eqref{E:p sim t^k L} holds on $(\eps^{-1}\ell,\eps)$ with $b_j = 0$ and $k_{j,p} = n_p$.  This leaves us to decompose $(\ell,\eps^{-1}\ell)$.  By scaling, this is equivalent to decomposing $(\eps,1)$, which we have already shown how to do.  

Case II:  $I = (1,\infty)$.  By construction, $z_1$ is nearer to each $t > 1$ than any zero of any derivative of any element of $\scriptP$.  Thus no element of $\scriptP$ has a zero with positive real part, so each $|p(t)|$ is nonvanishing with nonvanishing derivative on $(0,\infty)$, and thus must be increasing on $(0,\infty)$.  Therefore \eqref{E:p inc} holds for each $t \in (0,\infty)$.    Taking limits, we see that for $0 \neq p \in \scriptP$, $n_p = \deg p$ and $a_p = \tfrac1{n_p!}p^{(n_p)}(0)$.  Evaluating at 1, $\sum_n \tfrac1{n!}|p^{(n)}(0)| \lesssim \tfrac1{n_p!}|p^{(n_p)}(0)|$, so for $t > \eps^{-1}$ and $n < n_p$, $|p^{(n)}(0)| t^n < \eps |p^{(n_p)}(0)|t^{n_p}$.  This leaves us to decompose $(1,\eps^{-1})$, which rescales to $(\eps,1)$, so the proof is complete.  
\end{proof}

The next lemma applies Lemma~\ref{L:p sim t^k} to make precise the heuristic that products of polynomials must vary at least as much as the original polynomials.  

\begin{lemma}\label{L:p sim 1/q}
Let $p_1$ and $p_2$ be polynomials on $\R$ of degree at most $N$, and let $a_1,a_2$ be positive integers.  The number of integers $k$ for which there exists $t_k \in \R$ such that 
\begin{equation} \label{E:p sim 1/q}
|p_1(t_k)| \sim 2^{a_1 k}, \qquad |p_2(t_k)| \sim 2^{-a_2 k}
\end{equation}
is bounded by a constant depending only on $N$.  
\end{lemma}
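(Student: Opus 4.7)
The plan is to apply Lemma~\ref{L:p sim t^k} to the two-element collection $\scriptP = \{p_1, p_2\}$ with $\eps = (4N)^{-1}$, obtaining a partition $\R = \bigcup_{j=1}^{N'} \overline{I_j}$ into $N' = N'(N)$ intervals, together with centers $b_j \notin I_j$ and nonnegative integer exponents $k_{j,1}, k_{j,2}$ for which, by \eqref{E:p sim t^k},
\[
|p_i(t)| \sim A_{j,i}|t - b_j|^{k_{j,i}}, \qquad t \in I_j, \quad i = 1,2,
\]
with $A_{j,i} := \tfrac{1}{k_{j,i}!}|p_i^{(k_{j,i})}(b_j)|$. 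Since $N'$ depends only on $N$, the task reduces to showing that on each $I_j$ only $O_N(1)$ integers $k$ can witness \eqref{E:p sim 1/q}.

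Fix $j$, and assume that neither $p_1$ nor $p_2$ vanishes identically on $I_j$ (otherwise the interval contributes no admissible $k$ at all), so that $A_{j,1}, A_{j,2} > 0$. Substituting the monomial approximation into \eqref{E:p sim 1/q}, raising the two resulting relations to the powers $k_{j,2}$ and $k_{j,1}$ respectively, and dividing eliminates $|t_k - b_j|$ and yields
\[
\frac{A_{j,1}^{k_{j,2}}}{A_{j,2}^{k_{j,1}}} \sim 2^{(a_1 k_{j,2} + a_2 k_{j,1})k}.
\]
Because $a_1, a_2$ are positive and $k_{j,1}, k_{j,2}$ are nonnegative, the exponent $a_1 k_{j,2} + a_2 k_{j,1}$ vanishes precisely when $k_{j,1} = k_{j,2} = 0$. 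When it is positive, the displayed relation pins $k$ down to an interval of bounded length, giving only $O(1)$ admissible $k$.

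The degenerate case $k_{j,1} = k_{j,2} = 0$ is handled separately: there $|p_1|$ is comparable to the positive constant $A_{j,1}$ throughout $I_j$, so the single constraint $|p_1(t_k)| \sim 2^{a_1 k}$ restricts $a_1 k$ to a bounded range and hence $k$ to $O(1)$ values. Summing the $O_N(1)$ contribution from each of the $N'$ intervals yields the claimed bound. I do not anticipate a genuine obstacle: Lemma~\ref{L:p sim t^k} does all the structural work by reducing a pair of general polynomials to a pair of monomials on each piece, after which the counting is an elementary logarithm.
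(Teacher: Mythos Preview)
Your proposal is correct and follows essentially the same route as the paper: invoke Lemma~\ref{L:p sim t^k} to reduce both polynomials simultaneously to monomials on each of boundedly many intervals, then observe that the conclusion is elementary for monomials. The paper's proof is a one-liner (``trivial for monomials, so by Lemma~\ref{L:p sim t^k} it follows for arbitrary polynomials''), and you have simply spelled out the monomial case by eliminating $|t_k-b_j|$ and handling the degenerate case $k_{j,1}=k_{j,2}=0$ separately; this is exactly the content hidden behind the word ``trivial.''
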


\begin{proof}[Proof of Lemma~\ref{L:p sim 1/q}]
The conclusion is trivial for monomials, so by Lemma~\ref{L:p sim 1/q}, it follows for arbitrary polynomials.  
\end{proof}

The next lemma extends Lemma~\ref{L:p sim t^k} to polynomial curves $\gamma:\R \to \R^n$, allowing us to closely approximate a given polynomial curve by a constant vector multiple of a monomial. 

\begin{lemma} \label{L:gamma sim t^k}
Let $N$ be an integer and let $\eps > 0$.  Let $\gamma:\R \to \R^n$ be a polynomial of degree at most $N$.  There exist nonoverlapping open intervals $I_1,\ldots,I_{N'}$, with $N'= N'(N,n,\eps)$ and $\R = \bigcup_j \overline{I_j}$, and centers $b_1,\ldots,b_{N'}$, with $b_j \notin I_j$, such that for each $j$, there exists an integer $k_j$ such that
\begin{equation} \label{E:gamma t^k}
|\tfrac1{k!} \gamma^{(k)}(b_j)(t-b_j)^k| \leq \eps |\tfrac1{k_j!} \gamma^{(k_j)}(b_j)(t-b_j)^{k_j}|, \qquad k \neq k_j, t \in I_j.
\end{equation}
\end{lemma}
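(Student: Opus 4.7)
The strategy is to apply Lemma~\ref{L:p sim t^k} iteratively, reducing the vector-valued problem to two scalar problems.  First, I would apply Lemma~\ref{L:p sim t^k} to the collection of components $\{\gamma_1, \ldots, \gamma_n\}$ with a small parameter $\eps_1 = \eps_1(\eps, N, n)$ to be determined.  This yields a decomposition $\R = \bigcup_j \overline{I_j^{(1)}}$ with centers $b_j^{(1)} \notin I_j^{(1)}$ and per-component dominant indices $k_{j,i}$ such that on each $I_j^{(1)}$ the $k_{j,i}$-th Taylor monomial of $\gamma_i$ at $b_j^{(1)}$ is $\eps_1$-dominant.  Setting $A_{j,i} := |\tfrac{1}{k_{j,i}!}\gamma_i^{(k_{j,i})}(b_j^{(1)})|$, the scalar envelope
\[
R_j(t) := \sum_{i=1}^n A_{j,i}(t - b_j^{(1)})^{k_{j,i}}
\]
is a polynomial of degree at most $N$ with nonnegative coefficients whose monomials control, componentwise, the sizes of the vector Taylor monomials $|\tfrac{1}{k!}\gamma^{(k)}(b_j^{(1)})(t-b_j^{(1)})^k|$.

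Second, I would apply Lemma~\ref{L:p sim t^k} to the singleton $\{R_j\}$ with parameter $\eps_2 = c\eps/n$ to further subdivide each $I_j^{(1)}$.  The sub-intervals so obtained, together with their centers $b_\ell^{(2)}$ and dominant indices $k_\ell^{(2)}$, form (after intersection with $I_j^{(1)}$) the desired partition for Lemma~\ref{L:gamma sim t^k}, with $b_j := b_\ell^{(2)}$ and $k_j := k_\ell^{(2)}$.  To verify \eqref{E:gamma t^k} on a final sub-interval, I would expand $\tfrac{1}{k!}\gamma^{(k)}(b_j)$ via the Taylor re-expansion identity in terms of $\tfrac{1}{m!}\gamma^{(m)}(b_j^{(1)})$ for $m \geq k$, apply the componentwise bounds from the first step to each coordinate, and use the $\eps_2$-dominance of the $k_j$-th Taylor monomial of $R_j$ at $b_j$ to bound the non-dominant vector Taylor monomials of $\gamma$ from above.

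The main obstacle will be obtaining a matching lower bound on the dominant vector Taylor monomial $|\tfrac{1}{k_j!}\gamma^{(k_j)}(b_j)(t-b_j)^{k_j}|$.  The scalar envelope $R_j$ controls absolute values componentwise, so cancellations in the vector sum $\tfrac{1}{k_j!}\gamma^{(k_j)}(b_j) = \sum_{m \geq k_j} \binom{m}{k_j}\tfrac{1}{m!}\gamma^{(m)}(b_j^{(1)})(b_j - b_j^{(1)})^{m-k_j}$ could in principle cause the vector norm to be much smaller than the envelope predicts.  To resolve this, I would exploit the sharp componentwise estimates implicit in Lemma~\ref{L:p sim t^k} (whose proof incorporates all derivatives of members of $\scriptP$):  on $I_j^{(1)}$, each $|\tfrac{1}{l!}\gamma_i^{(l)}(t)|$ is comparable to a specific monomial in $|t - b_j^{(1)}|$, so evaluating at $b_j \in \overline{I_j^{(1)}}$ and summing the squared componentwise lower bounds yields a matching vector lower bound.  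A careful choice of parameters $\eps_1 \ll \eps_2 \ll \eps$ absorbs the cross-term errors and completes the verification of~\eqref{E:gamma t^k}.
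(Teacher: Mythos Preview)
Your two-pass strategy is natural, but the argument has a concrete gap at exactly the point you flag as ``the main obstacle.''  A second black-box application of Lemma~\ref{L:p sim t^k} to the envelope $R_j$ produces centers $b_\ell^{(2)}$ with $b_\ell^{(2)}\notin I_\ell^{(2)}$, but there is no reason whatsoever for $b_\ell^{(2)}$ to lie in $\overline{I_j^{(1)}}$.  The centers in Lemma~\ref{L:p sim t^k} are real parts of complex zeros of $R_j$ and its derivatives; for $R_j(s)=s^2+s^5$, for instance, these zeros have real parts $0,\,-1,\,\tfrac12$ (and similar values for the derivatives), which will generically miss a first-step interval such as $I_j^{(1)}=(10,100)$.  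Your resolution (``evaluating at $b_j\in\overline{I_j^{(1)}}$ and summing the squared componentwise lower bounds'') explicitly presupposes $b_j\in\overline{I_j^{(1)}}$, so it does not apply.  Without that containment, the componentwise monomial comparisons from step one give you no direct handle on $|\gamma^{(k_j)}(b_j)|$, and the re-expansion $\gamma_i^{(k_j)}(b_j)=\sum_{m\geq k_j}\tfrac{1}{(m-k_j)!}\gamma_i^{(m)}(b_j^{(1)})(b_j-b_j^{(1)})^{m-k_j}$ can indeed suffer cancellation.

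The paper avoids this by \emph{not} re-applying Lemma~\ref{L:p sim t^k}.  After the first application, one further subdivides so that a single coordinate, say $\gamma_1$ with index $k_1$, has the largest dominant monomial on $I$; this immediately gives the vector lower bound $|\tfrac1{k_1!}\gamma^{(k_1)}(b)(t-b)^{k_1}|\sim|\gamma(t)|$ with no cancellation issue, because the first coordinate alone already realizes the size.  The remaining difficulty is that the other vector Taylor monomials are only $\lesssim$ rather than $\leq\eps$ times the dominant one.  The paper handles this by an explicit hand-made subdivision of $I=(\ell,r)$: on the ``interior'' piece $(\eps^{-1}\ell,\eps r)$ one keeps the center $b$ and the exponent $k_1$, the monotonicity of the monomial ratios upgrading $\lesssim$ to $\leq\eps$; on the short ``boundary'' pieces near $\ell$ and $r$ one re-centers at explicit points $b_j$ \emph{inside} $\overline I$ where $\gamma$ is essentially constant, so $k_j=0$ works.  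This explicit construction is what guarantees the new centers lie where the first-step estimates are valid---precisely the property your second black-box application cannot deliver.
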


\begin{proof}[Proof of Lemma~\ref{L:gamma sim t^k}]
By Lemma~\ref{L:p sim t^k}, it suffices to decompose an interval $I \subseteq \R$ for which there exists a point $b \notin I$ and integers $0 \leq k_1,\ldots,k_n \leq N$ such that the coordinates of $\gamma$ satisfy
\begin{equation} \label{E:gamma_i sim t^k}
|\tfrac1{k!} \gamma_i^{(k)}(b)(t-b)^k| < \eps |\tfrac1{k_i!} \gamma_i^{(k_i)}(b)(t-b)^{k_i}|, \qquad t \in I, \: k \neq k_i.
\end{equation}

Making a finite decomposition of $I$ and reindexing our coordinates if needed, we may assume that 
$$
|\tfrac1{k_1!}\gamma_1^{(k_1)}(b)(t-b)^{k_1}| \geq |\tfrac1{k_i!}\gamma_i^{(k_i)}(b)(t-b)^{k_i}|, \qquad i=2,\ldots,n, \: t \in I.
$$
Thus for $t \in I$,
\begin{equation} \label{E:gamma k1 big}
|\tfrac1{k!} \gamma^{(k)}(b)(t-b)^k| \lesssim |\tfrac1{k_1!} \gamma^{(k_1)}(b)(t-b)^{k_1}|, \qquad 
 |\gamma(t)| \sim |\tfrac1{k_1!}\gamma^{(k_1)}(b)(t-b)^{k_1}|.
\end{equation}
Translating, reflecting, and rescaling, we may assume that $b=0$ and that  $I=(\ell,r)$.

Define curves
$$
\gamma_{lo}(t) := \sum_{k < k_1} \tfrac1{k!} \gamma^{(k)}(0) t^k, \qquad \gamma_{hi}(t) := \sum_{k > k_1} \tfrac1{k!} \gamma^{(k)}(0) t^k, \qquad \gamma_{k_1}(t) := \tfrac1{k_1!} \gamma^{(k_1)}(0) t^{k_1}
$$
and intervals
\begin{gather*}
I_{lo}^0 := (\ell \eps^{-1},r), \qquad I_{lo}^j := (\ell \eps^2 j,\ell\eps^2(j+1)), \quad \eps^{-2} \leq j < \eps^{-3}\\
I_{hi}^0 := (\ell ,r\eps), \qquad I_{hi}^j := (r \eps^2 j,r\eps^2(j+1)), \quad \eps^{-1} \leq j < \eps^{-2}.
\end{gather*}
Since $I$ is a bounded union of `$lo$' intervals and also a bounded union of `$hi$' intervals, $I$ may be written as a bounded union of intersections of one `$lo$' interval with one `$hi$' interval.  We will show that such intersections have the properties claimed in the lemma.  

Evaluating \eqref{E:gamma k1 big} at $\ell$, $\tfrac1{k!}|\gamma^{(k)}(0)|\ell^k \lesssim \tfrac1{k_1!}|\gamma^{(k_1)}(0)| \ell^{k_1}$.  Therefore
$$
|\gamma_{lo}(t)| \lesssim \eps |\gamma_{k_1}(t)|, \qquad t \in I_{lo}^0.
$$
By \eqref{E:gamma k1 big} and a Taylor expansion of $\gamma^{(m)}$ about 0, 
$$
|\gamma^{(m)}(t)|t^m \lesssim |\gamma^{(k_1)}(0)| t^{k_1} \lesssim |\gamma(t)|, \qquad t \in I.
$$
Thus for $m \geq 1$, $\eps^{-2} \leq j \leq \eps^{-3}$ and $t \in I_{lo}^j$, 
$$
|\gamma^{(m)}(\ell \eps^2 j)|(t-\ell \eps^2 j)^m \leq j^{-m}|\gamma^{(m)}(\ell \eps^2 j)|(\ell \eps^2 j)^m \lesssim \eps |\gamma(\ell \eps^2 j)|.
$$
Arguing analogously, 
\begin{gather*}
|\gamma_{hi}(t)| \lesssim \eps |\gamma_{k_1}(t)|, \qquad t \in I_{hi}^0,\\
|\gamma^{(m)}(r\eps^2 j)|(t-r\eps^2 j)^m  \lesssim \eps|\gamma(r\eps^2j)|, \qquad m \geq 1,\: t \in I^j_{hi}, \: \eps^{-1} \leq j < \eps^{-2}. 
\end{gather*}
Putting these inequalities together, \eqref{E:gamma t^k} holds:\\
-On $I^0_{lo} \cap I^0_{hi}$ with center $b_0 = 0$ and $k_0 = k_1$\\
-On $I_{lo}^{j_1} \cap I_{hi}^{j_2}$, for $(j_1,j_2) \neq (0,0)$, with center $b_j = \ell \eps^2 j$ and $k_0 = 0$.\\
\end{proof}

The next lemma applies Lemma~\ref{L:gamma sim t^k} to make precise the heuristic that, for $\gamma:\R \to \R^n$, since the derivative $\gamma'$ drives the curve forward, $\gamma$ and $\gamma'$ are typically almost parallel.  This result is crucial to proving Proposition~\ref{P:local strong type}.  

\begin{lemma} \label{L:gamma || gamma'}
There exists $M=M(N)$ sufficiently large that for all $\eps > 0$, there exists $\delta > 0$ such that if 
\begin{equation} \label{E:gamma many scales}
|\gamma(t_i)| < \delta|\gamma(t_{i+1})|, \qquad i=1,\ldots,M-1,
\end{equation}
then
\begin{equation} \label{E:gamma || gamma'}
|\gamma(t_i) \wedge \gamma'(t_i)| < \eps |\gamma(t_i)| |\gamma'(t_i)|,
\end{equation}
for some $1 \leq i \leq M$.
\end{lemma}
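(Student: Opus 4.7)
The plan is a contrapositive argument. Assume $|\gamma(t_i)\wedge\gamma'(t_i)|\ge\eps|\gamma(t_i)||\gamma'(t_i)|$ at all $M$ points $t_i$; we will derive an upper bound on $M$ depending only on $N$ (and $n$), provided $\delta$ is a sufficiently small power of $\eps$. The degenerate case $\gamma\wedge\gamma'\equiv 0$ is vacuous, since then no $t_i$ can satisfy the assumption.

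First, apply Lemma~\ref{L:p sim t^k} simultaneously, via a common refinement, to the three scalar polynomials $|\gamma|^2$, $|\gamma'|^2$, and $|\gamma\wedge\gamma'|^2$ with a small but fixed parameter $\eps_0=\eps_0(N,n)$. This yields a decomposition $\R=\bigcup_{j=1}^{N'}\overline{I_j}$ into $N'=N'(N,n)$ intervals, centers $b_j\notin I_j$, and nonnegative integer exponents $a_j,e_j,c_j$ such that on $I_j$,
\begin{equation*}
|\gamma|^2\sim A_j|t-b_j|^{a_j},\quad |\gamma'|^2\sim B_j|t-b_j|^{e_j},\quad |\gamma\wedge\gamma'|^2\sim C_j|t-b_j|^{c_j}.
\end{equation*}
Set $\mu_j:=c_j-a_j-e_j$. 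The parallelism-failure condition $|\gamma\wedge\gamma'|^2\ge\eps^2|\gamma|^2|\gamma'|^2$ becomes $|t-b_j|^{\mu_j}\gtrsim\eps^2A_jB_j/C_j$ on the ``good set'' $G_j\subseteq I_j$ where it holds, while the Lagrange identity $|\gamma\wedge\gamma'|^2\le|\gamma|^2|\gamma'|^2$ supplies the reverse bound $|t-b_j|^{\mu_j}\lesssim A_jB_j/C_j$ throughout $I_j$. Hence $|t-b_j|^{\mu_j}$ varies on $G_j$ by a factor $\lesssim\eps^{-2}$; when $\mu_j\neq 0$ this forces $|\gamma|^2\sim A_j|t-b_j|^{a_j}$ to vary on $G_j$ by a factor at most $\eps^{-2a_j/|\mu_j|}\le\eps^{-C_N}$ for some $C_N=C_N(N,n)$.

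The hard part is the borderline case $\mu_j=0$, which a priori gives no $\eps$-dependent control on $|\gamma|$. This is resolved by showing $\mu_j=0$ forces either $a_j=0$ (so $|\gamma|^2$ is essentially constant on $I_j$) or $I_j$ has aspect ratio bounded in terms of $N,n$. Indeed, if $I_j$ extends to $\pm\infty$ then, writing the leading coefficient of $\gamma$ as $v_d$ and using $v_d\wedge v_d=0$, the next-order expansion of $\gamma\wedge\gamma'$ yields $c_j\le 4d-4$ while $a_j+e_j=4d-2$, so $\mu_j\le -2$. If instead $I_j$ abuts $b_j$ with $a_j\ge 1$, then $\gamma(b_j)=0$; expanding $\gamma(t)=\sum_{k\ge k_1}v_k(t-b_j)^k$ with $v_{k_1}\neq 0$ and letting $k_2>k_1$ be the smallest index for which $v_{k_1}\wedge v_{k_2}\neq 0$ (which exists unless $\gamma\wedge\gamma'\equiv 0$), a direct computation gives $c_j=2(k_1+k_2-1)$ and hence $\mu_j=2(k_2-k_1)\ge 2$. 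Thus $\mu_j=0$ precludes unbounded aspect ratio, and in every sub-case $|\gamma|$ varies on $G_j$ by a factor bounded purely in terms of $N,n$.

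Combining both cases, on every $G_j$ the ratio of the largest to the smallest $|\gamma(t)|$ is at most $D\eps^{-C_N}$ with $D,C_N$ depending only on $N,n$. The growth hypothesis~\eqref{E:gamma many scales} then allows at most $1+\log(D\eps^{-C_N})/\log(\delta^{-1})$ of the $t_i$ to lie in each $G_j$; summing over the $N'$ intervals yields
\begin{equation*}
M\le N'\Bigl(1+\frac{\log(D\eps^{-C_N})}{\log(\delta^{-1})}\Bigr).
\end{equation*}
Setting $M:=2N'+1$ (depending only on $N$ and $n$) and choosing $\delta$ as a small enough power of $\eps$, for instance $\delta:=c_N\eps^{2C_N}$ with $c_N=c_N(N,n)$ small, forces the right-hand side to be strictly less than $M$, giving the desired contradiction.
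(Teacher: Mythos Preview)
Your contrapositive strategy and the use of Lemma~\ref{L:p sim t^k} on the three scalar polynomials $|\gamma|^2$, $|\gamma'|^2$, $|\gamma\wedge\gamma'|^2$ is a natural idea, and your treatment of the case $\mu_j\neq 0$ is fine. The gap is in the $\mu_j=0$ case.

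Your case analysis there has only two sub-cases: $I_j$ extends to $\pm\infty$ (where the dominant exponents are the degrees, giving $\mu_j\le -2$), and $I_j$ abuts $b_j$ with $a_j\ge 1$ (where the dominant exponents are the orders of vanishing at $b_j$, giving $\mu_j\ge 2$). But these do not exhaust the intervals produced by Lemma~\ref{L:p sim t^k}. That lemma also yields bounded intervals that are bounded away from their center --- in the notation of its proof, intervals of the form $(\eps_0^{-1}\ell,\eps_0)$ with center $0$ and $\ell>0$ --- whose aspect ratio $\eps_0^2/\ell$ depends on the particular polynomial and can be arbitrarily large. On such an interval the dominant exponent $a_j$ is neither the degree of $|\gamma|^2$ nor its order of vanishing at $b_j$; in particular, $a_j\ge 1$ does \emph{not} force $\gamma(b_j)=0$ (the inequality \eqref{E:p sim t^k L} only gives $|\gamma(b_j)|^2\le\eps_0 A_j(\dist(b_j,I_j))^{a_j}$), so your expansion $\gamma(t)=\sum_{k\ge k_1}v_k(t-b_j)^k$ with $v_0=\cdots=v_{k_1-1}=0$ is not available. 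Hence neither exponent computation applies, and the assertion ``$\mu_j=0$ precludes unbounded aspect ratio'' is not justified.

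The paper's proof sidesteps this entirely by using the vector-valued decomposition Lemma~\ref{L:gamma sim t^k} rather than the scalar Lemma~\ref{L:p sim t^k}. That lemma gives intervals on which the curve $\gamma$ itself is dominated by a single vector monomial $\tfrac1{k_0!}\gamma^{(k_0)}(b_j)(t-b_j)^{k_0}$; then $\gamma'$ is dominated by the parallel vector $\tfrac{k_0}{k_0!}\gamma^{(k_0)}(b_j)(t-b_j)^{k_0-1}$, and $\gamma\wedge\gamma'$ is immediately $O(\eps)|\gamma||\gamma'|$ once the sub-dominant Taylor terms are $\eps$-small. Your reduction to the scalar polynomials $|\gamma|^2,|\gamma'|^2,|\gamma\wedge\gamma'|^2$ discards the directional information in $\gamma$ and $\gamma'$, and the $\mu_j=0$ case is precisely where that lost information is needed.
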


\begin{proof}[Proof of Lemma~\ref{L:gamma || gamma'}]
Performing a harmless translation and applying Lemma~\ref{L:gamma sim t^k}, it suffices to prove that there exists $M$ such that \eqref{E:gamma || gamma'} holds whenever \eqref{E:gamma many scales} holds with all $t_i$ lying in some interval $I$ on which
\begin{equation} \label{E:k0 1-dominates}
|\tfrac1{k!} \gamma^{(k)}(0) t^k| \leq |\tfrac1{k_0!} \gamma^{(k_0)}(0) t^{k_0}| \sim |\gamma(t)|,
\end{equation}
for all $0 \leq k \leq N$ and some $0 \leq k_0 \leq N$.  Moreover, by \eqref{E:gamma many scales}, we may assume that $k_0 \neq 0$.  

For $\delta>0$ sufficiently small, and each $k_0 \neq k$, by \eqref{E:gamma many scales} and \eqref{E:k0 1-dominates} the inequality
$$
\eps|\tfrac1{k_0!} \gamma^{(k_0)}(0) t_i^{k_0}| < |\tfrac1{k!} \gamma^{(k)}(0) t_i^k| 
$$
can only hold for a bounded number of $t_i$, so we may assume further that
$$
|\tfrac1{k!}\gamma^{(k)}(0)t_i^k| < \eps |\tfrac1{k_0!} \gamma^{(k_0)}(0) t_i^{k_0}|, \qquad k \neq k_0.
$$
Therefore
\begin{align*}
|\gamma(t_i) \wedge \gamma'(t_i)| &\leq |(\sum_{k \neq k_0} \tfrac1{k!}\gamma^{(k)}(0)t_i^k) \wedge \tfrac{k_0}{k_0!}\gamma^{(k_0)}(0)t_i^{k_0-1}| \\
&\qquad\qquad+ |\tfrac1{k_0!}\gamma^{(k_0)}(0)t_i^{k_0} \wedge (\sum_{k \neq k_0}\tfrac{k}{k!}\gamma^{(k)}(0)t_i^k)|\\
&\qquad\qquad + |(\sum_{k \neq k_0} \tfrac1{k!}\gamma^{(k)}(0)t_i^k) \wedge (\sum_{k \neq k_0}\tfrac{k}{k!}\gamma^{(k)}(0)t_i^k)|\\
&\lesssim \eps |\gamma(t_i)||\gamma'(t_i)|.
\end{align*}
\end{proof}

Next, we use basic facts from algebraic geometry to prove several lemmas about polynomials of $n$ variables of degree at most $N$.  We say that a quantity is bounded if it is bounded from above by a constant depending only on the dimension $n$ and the degree $N$, not on the particular polynomials in question.

Our main tool for lemmas below is the following theorem from algebraic geometry.

\begin{theorem}[\cite{Ful98}] \label{T:quantitative ag}
Let $f_1,\ldots,f_k:\C^n \to \C$ be polynomials of degree at most $N$ and let $Z \subseteq \C^n$ be the associated variety, i.e.\
$$
Z := \{z \in \C^n : f_1(z) = \cdots = f_k(z) = 0\}.
$$
Then we may decompose
\begin{equation} \label{E:decompose Z}
Z = \bigcup_{i=1}^{C(k,n,N)} Z_i,
\end{equation}
where each $Z_i$ is an irreducible variety. 
\end{theorem}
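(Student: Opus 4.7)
\smallskip

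\noindent\emph{Proof proposal.} The plan is to reduce the statement to a uniform bound on the number of irreducible components of an affine variety defined by polynomials of bounded degree, and then to invoke Bezout-type inequalities from intersection theory. First, I would pass to the projective closure, viewing $Z$ inside $\mathbb P^n$ by homogenizing the $f_i$; this has no effect on the number of irreducible components, and will allow us to apply classical intersection-theoretic bounds.

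Next, I would aim to bound the \emph{degree} $\deg(Z) := \sum_C \deg(C)$ of $Z$, where the sum is over the irreducible components $C$. The key input is Fulton's refined Bezout inequality, which says that if $Z = V(f_1,\ldots,f_k)$ with $\deg f_i \leq N$, then
\begin{equation*}
\sum_{C} \deg(C) \;\leq\; N^n,
\end{equation*}
where the sum is over the irreducible components of $Z$ (in the case $k\le n$ this is the classical Bezout theorem; for general $k$ one intersects successively and uses that each intersection contributes at most a factor of $N$ to the total degree, while excess-dimensional components only decrease the count). Since every irreducible component has degree at least $1$, the number of components in the decomposition \eqref{E:decompose Z} is then at most $\deg(Z) \leq N^n$. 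Setting $C(k,n,N) := N^n$ gives the desired bound, which is in fact independent of $k$.

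The main obstacle in writing this out carefully is handling the dependence on $k$ when $k > n$: one cannot literally apply the classical projective Bezout theorem to more hypersurfaces than the ambient dimension. The cleanest resolution is to replace $f_1,\ldots,f_k$ by at most $n+1$ generic linear combinations $g_1,\ldots,g_{n+1}$ of them, use that $V(g_1,\ldots,g_{n+1}) \supseteq V(f_1,\ldots,f_k)$ set-theoretically while having the same degree bound, and then observe that the generic combinations can only increase the number of components. Alternatively, one may iterate an inequality of Heintz, which directly asserts $\deg V(f_1,\ldots,f_k) \leq \prod \deg f_i$ with the convention that only the geometrically significant intersections are counted; this form is stated in Fulton's book and is exactly what is needed here. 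Either approach produces a bound of the desired shape, completing the proof.
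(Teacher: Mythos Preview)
Your approach is essentially the same as the paper's: the paper simply cites Fulton's refined Bezout theorem (Example~12.3.1 of \cite{Ful98}) to obtain $\sum_i \deg(Z_i) \leq \prod_{i=1}^k \deg(f_i) \leq N^k$, and then notes that $\deg Z_i \geq 1$ for each $i$, so the number of irreducible components is at most $N^k$. Your discussion of the case $k > n$ and the generic-linear-combination trick is unnecessary, since Fulton's refined Bezout already applies to arbitrarily many hypersurfaces; the paper is content with the $k$-dependent bound $C(k,n,N) = N^k$ rather than the sharper $N^n$ you aim for.
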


In particular, the decomposition in \eqref{E:decompose Z} involves a bounded number of dimension zero irreducible subvarieties.  We recall, and will repeatedly use the fact that the irreducible subvariety containing an isolated point of $Z$ must be a singleton.

Theorem~\ref{T:quantitative ag} follows from the refined version of Bezout's Theorem, Example 12.3.1 of \cite{Ful98}, which implies that $\sum_{i=1}^s \deg(Z_i) \leq \prod_{i=1}^k \deg(f_i)$.  Since $\deg Z_i \geq 1$ for each $i$, this suffices.  

\begin{lemma} \label{L:finite to one}
Let $P:\R^n \to \R^n$ be a polynomial.  Then, with respect to Lebesgue measure on $\R^n$, almost every point in $P(\R^n)$ has a bounded number of preimages.
\end{lemma}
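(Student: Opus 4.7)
The plan is to combine Sard's theorem with the quantitative Bezout-type estimate from Theorem~\ref{T:quantitative ag}. First, I would split into two cases depending on whether the Jacobian determinant $J(x) := \det DP(x)$ vanishes identically. If $J \equiv 0$, then every point of $\R^n$ is a critical point of $P$, so by Sard's theorem the image $P(\R^n)$ has Lebesgue measure zero, and the conclusion is vacuously true. Otherwise the critical set $\{J=0\}$ is a proper real-algebraic subvariety of $\R^n$, and Sard's theorem again gives that the set of critical values $P(\{J=0\})$ has Lebesgue measure zero. Consequently almost every $y \in P(\R^n)$ is a regular value, meaning $DP(x)$ is invertible at every real preimage $x$.

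Second, I would fix such a regular value $y$ and pass to the complex setting. Let $V_y \subseteq \C^n$ denote the complex affine variety cut out by the $n$ polynomial equations $P_i(z) - y_i = 0$, $i=1,\ldots,n$, each of degree at most $\deg P$. At every $x \in P^{-1}(y) \subseteq \R^n$, the complex differential $DP(x)$ is the same matrix as the real differential and is invertible, so by the holomorphic inverse function theorem $P$ is a biholomorphism from a complex neighborhood of $x$ onto a neighborhood of $y$. In particular $x$ is an isolated point of $V_y$.

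Third, I would apply Theorem~\ref{T:quantitative ag} to the polynomials $P_1-y_1, \ldots, P_n-y_n$, which gives a decomposition $V_y = \bigcup_{i=1}^{C(n,\deg P)} Z_i$ into irreducible components, with $C(n,\deg P)$ independent of $y$. An isolated point of $V_y$ must be an irreducible component of $V_y$ all by itself, so the number of isolated points (complex or real) is at most $C(n,\deg P)$. Since $P^{-1}(y)$ consists entirely of isolated real points of $V_y$, this yields $\#P^{-1}(y) \leq C(n,\deg P)$ for almost every $y \in P(\R^n)$.

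The main technical point to be careful about is the step that real preimages at which $DP$ is invertible are isolated in the complex variety, not merely in the real variety; this follows from the holomorphic inverse function theorem applied to $P$ regarded as a polynomial map $\C^n \to \C^n$. Everything else is either a direct citation (Sard's theorem, Theorem~\ref{T:quantitative ag}) or a routine observation, so I do not anticipate any serious obstacle.
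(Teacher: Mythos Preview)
Your proof is correct and follows essentially the same approach as the paper: reduce via Sard's theorem to regular values, use the (holomorphic) inverse function theorem to see that real preimages are isolated in the complex fiber, and then invoke Theorem~\ref{T:quantitative ag} to bound the number of isolated points. You are slightly more explicit than the paper (handling the $J\equiv 0$ case separately and naming the holomorphic inverse function theorem), but the argument is the same.
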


\begin{proof}
It suffices to show that if $y \notin P(\{\det DP \neq 0\})$, then $y$ has a bounded number of preimages.  For such a point $y$, define
$$
Z_y := \{z \in \C^n : P(z) -y = 0\}.
$$
By the Inverse Function Theorem and our hypothesis on $y$, real points $x \in Z_y \cap \R^n$ are isolated (complex) points of $Z_y$.  By Theorem~\ref{T:quantitative ag} and the fact that dimension zero irreducible varieties are singletons, $Z_y$ contains a bounded number of isolated points.  
\end{proof}

\begin{lemma}\label{L:intersect cylinder}
Let $T$ denote the tube
$$
T:= \{x=(x',x_n) \in \R^{n} : |x'| < 1\}.
$$
Let $P:\R^n \to \R^n$ be a polynomial of degree at most $N$ and assume that $\det DP$ is nonvanishing on $\overline T$.  If $\gamma:\R \to \R^n$ is a polynomial of degree at most $N$, then $\gamma^{-1}[P(T)]$ is a union of a bounded number of intervals.  
\end{lemma}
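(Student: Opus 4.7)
The plan is to reduce the statement to bounding the number of boundary points of $\gamma^{-1}[P(T)]$ in $\R$, and then to invoke Theorem~\ref{T:quantitative ag} on a complex variety capturing those boundary points. Since $\det DP$ is nonvanishing on $\overline T$, the inverse function theorem makes $P$ a local diffeomorphism on $\overline T$, so $P(T)$ is open and $\gamma^{-1}[P(T)]$ is an open subset of $\R$, hence a countable disjoint union of open intervals. It therefore suffices to show $|\partial\gamma^{-1}[P(T)]| \leq C(N,n)$. First I would establish via a short growth-at-infinity argument---based on the observation that for each fixed $y'$ with $|y'|\leq 1$ the polynomial $y_n\mapsto P(y',y_n)$ cannot be uniformly bounded without producing a vanishing column in $DP$---that $\overline{P(T)}\setminus P(T) \subseteq P(\partial T)$, so that every boundary point $t_0$ satisfies $\gamma(t_0)=P(y_0)$ for some $y_0$ with $|y_0'|^2=1$.

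The main step is then to form the complex algebraic variety
$$B := \{(t,y)\in\C^{n+1} : \gamma_i(t)-P_i(y)=0 \text{ for } i=1,\ldots,n,\; y_1^2+\cdots+y_{n-1}^2=1\},$$
cut out by $n+1$ polynomials of degree at most $\max(N,2)$ in $n+1$ unknowns. Theorem~\ref{T:quantitative ag} decomposes $B$ into at most $C(N,n)$ irreducible components, each of which projects to $\C_t$ either as a single point---contributing at most one real value of $t$---or dominantly onto $\C_t$. The first case contributes only $O_{N,n}(1)$ candidate boundary points to $\partial\gamma^{-1}[P(T)]$ directly.

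The hard part will be handling the dominant components $B_j$. On such a $B_j$, one has $\gamma(t)=P(y(t))$ for an algebraic function $y(t)$ identically satisfying $y_1(t)^2+\cdots+y_{n-1}(t)^2=1$, so on the real trace $\pi_t(B_j\cap\R^{n+1})$ the curve $\gamma$ lies entirely in $P(\partial T)$ and therefore does not itself create a transition between $\gamma^{-1}(P(T))$ and its complement. Any genuine transition along such an interval must instead be produced by a distinct branch of $P^{-1}\circ\gamma|_{\overline T}$ exiting the interior $T$ through $\partial T$; since the number of branches is $O_{N,n}(1)$ by Lemma~\ref{L:finite to one}, and each branch's exit times form the zero set of an auxiliary algebraic condition $|\tilde y(t)'|^2-1=0$, the problem reduces to the previously handled zero-dimensional case. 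Summing contributions over the $O_{N,n}(1)$ irreducible components of $B$ and over the $O_{N,n}(1)$ branches then gives $|\partial\gamma^{-1}[P(T)]|\leq C(N,n)$. The most delicate point will be to verify, via a resultant/discriminant argument applied to $\gamma(t)-P(y)=0$ jointly with $|\tilde y(t)'|^2-1=0$, that the polynomial in $t$ governing each branch's exit times is not identically zero and has degree $O_{N,n}(1)$, so that this reduction is legitimate.
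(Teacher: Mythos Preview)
Your approach is essentially the paper's: both form the same complex variety (your $B$ is the paper's $Z$), invoke Theorem~\ref{T:quantitative ag} to decompose it into boundedly many irreducible components, and argue that boundary points of $\gamma^{-1}[P(T)]$ correspond to zero-dimensional pieces.

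The paper's treatment of the one-dimensional components is shorter and avoids your branches/resultant machinery. At a boundary point $t_0$, choose the witness $y_0\in\partial T$ as a subsequential limit of interior preimages $w_k\in T$ satisfying $P(w_k)=\gamma(s_k)$, $s_k\to t_0$. If $(t_0,y_0)$ were a regular point of a one-dimensional component of $B$, the implicit function theorem would give a single analytic branch $y(s)$ near $t_0$ with $P(y(s))=\gamma(s)$ and $|y(s)'|^2\equiv 1$; but uniqueness of the local inverse of $P$ near $y_0$ forces $y(s_k)=w_k\in T$, contradicting $|y(s_k)'|=1$. Hence $(t_0,y_0)$ must be an isolated point of $B$ (or a singular point of a one-dimensional component, of which there are also boundedly many). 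Your resultant step is therefore unnecessary, and as literally described it has a problem: if some branch of $P^{-1}\circ\gamma$ lies identically on $\partial T$ (precisely the one-dimensional case), the resultant eliminating $y$ from $P(y)=\gamma(t)$ and $|y'|^2=1$ is identically zero and gives no information about other branches.

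One step you address that the paper elides is the containment $\overline{P(T)}\setminus P(T)\subseteq P(\partial T)$, i.e., properness of $P|_{\overline T}$. Your sketch shows that for each fixed $y'$ the curve $y_n\mapsto P(y',y_n)$ is nonconstant and hence proper; that is correct, but it does not by itself rule out sequences $w_k\in T$ with $(w_k)'\to y_0'$, $|(w_k)_n|\to\infty$, and $P(w_k)$ bounded, since the rate at which $|P(y',y_n)|\to\infty$ is not a priori uniform in $y'$. This can be repaired, but it needs more than the pointwise observation you gave.
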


\begin{proof}
Consider the complex varieties 
\begin{align*}
C &:= \{v \in \C^n : \sum_{i = 1}^{n-1} v_i^2 = 1\}\\
Z &:= \{(u,v) \in \C^{1+n} : \gamma(u) = P(v), \: v \in C\}.
\end{align*}
Suppose that $(t,x) \in Z \cap \R^n$ is a regular point of some subvariety $Z' \subseteq Z$, with $\dim Z' > 0$.  If $\det DP(x) \neq 0$, then by the implicit function theorem, $Z'$ can have complex dimension at most one, and, moreover, if the dimension of $Z'$ is one, then there exists a complex neighborhood $U$ of $t$ such that $\gamma(U) \subseteq P(C)$.  Shrinking $U$ if necessary, and again using $\det DP(x) \neq 0$, $\gamma(U \cap \R) \subseteq P(C \cap \R^n)=P(\partial T)$.  Thus a boundary point of $\gamma^{-1}[P(T)]$ must be a regular point of a dimension zero subvariety $Z' \subseteq  Z$, so by Theorem~\ref{T:quantitative ag}, the number of boundary points is bounded.  
\end{proof}

\begin{lemma}\label{L:proj gamma}
Let $P:\R^n \to \R^n$, $\gamma:\R \to \R^n$, and $Q:\R^n \to \R^n$ be polynomials of degree at most $N$, and assume that $P^{-1}$ is defined and differentiable on a neighborhood of the image $\gamma(I)$, for some open interval $I$.  Then no coordinate of the vector $[(DP^{-1})\circ \gamma](Q \circ \gamma)$ can change sign more than a bounded number of times on $I$.
\end{lemma}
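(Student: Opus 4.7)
The plan is to reduce the sign-change count for each coordinate $v_i(t) := [(DP^{-1})(\gamma(t))\,Q(\gamma(t))]_i$ to a count of zeros of a genuine univariate polynomial of bounded degree. Setting $x(t) := P^{-1}(\gamma(t))$, Cramer's rule rewrites
$$v_i(t) = \frac{R_i(x(t))}{\det DP(x(t))}, \qquad R_i(x) := \bigl[\mathrm{adj}(DP(x))\,Q(P(x))\bigr]_i,$$
where $R_i$ is a polynomial in $x$ of degree bounded in $N$ and $n$. The denominator is continuous and nonvanishing on the connected set $\{x(t):t \in I\}$, so it has constant sign on $I$, and the sign changes of $v_i$ coincide with those of the real-analytic function $w_i(t) := R_i(x(t))$.

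Next I would study the complex variety $X := \{(t,x) \in \C^{n+1} : P(x) = \gamma(t)\}$. At every point of the real curve $\Gamma := \{(t,x(t)) : t \in I\}$ one has $\det DP \neq 0$, so by the holomorphic implicit function theorem $X$ is a smooth one-dimensional complex submanifold in a complex neighborhood of $\Gamma$. Consequently, the finitely many irreducible components $X_1,\ldots,X_k$ of $X$ that meet $\Gamma$ have complex dimension one, with $k$ and each $\deg X_j$ bounded via Theorem~\ref{T:quantitative ag} and Bezout. For each such $X_j$, the morphism $(t,x) \mapsto (t, R_i(x))$ sends $X_j$ onto a constructible subset of $\C^2$ of dimension at most one; this image is not a vertical fiber $\{t = \mathrm{const}\}$ because $X_j$ dominates the $t$-axis (it contains an arc of $\Gamma$), so its Zariski closure is cut out by a nonzero irreducible polynomial $F_j(t,s)$ of bounded degree. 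The product $F := \prod_j F_j$ then satisfies $F(t, w_i(t)) \equiv 0$ on $I$.

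To finish, factor $F(t,s) = s^m G(t,s)$ with $G(t,0) \not\equiv 0$. Since $w_i$ is real-analytic on the connected open set $I$, either $w_i \equiv 0$ on $I$ (and there are no sign changes) or the zero set of $w_i$ is discrete. In the latter case $G(t, w_i(t))$ vanishes on the dense open complement of this zero set, hence by continuity $G(t, w_i(t)) \equiv 0$ on $I$, and evaluating at any zero $t_0$ of $w_i$ forces $G(t_0,0) = 0$. Thus the zeros of $w_i$ on $I$ lie in the zero set of the nonzero univariate polynomial $G(\cdot, 0)$, whose degree is bounded in $N$ and $n$, so $w_i$ changes sign at most boundedly many times. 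The main obstacle will be the elimination-theoretic second step: producing $F_j$ on each irreducible component with a degree bound independent of the particular $P$, $\gamma$, $Q$. This rests on Chevalley's theorem on constructible images together with the quantitative Bezout bound packaged in Theorem~\ref{T:quantitative ag}, applied to the system defining $X$ augmented by $s = R_i(x)$.
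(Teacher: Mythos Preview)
Your proposal is correct, and the initial Cramer's-rule reduction is exactly what the paper does. But from that point on the paper takes a shorter path that avoids your elimination step entirely.

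Rather than seeking an algebraic relation $F(t,s)$ with $F(t,w_i(t))\equiv 0$ and then bounding $\deg F$, the paper considers directly the variety
\[
Z := \{(u,v,w) \in \C \times \C^n \times \C^n : \gamma(u) = P(v) = w,\; R(v)\cdot Q(w)=0\},
\]
where $R$ arises from the Cramer's-rule numerator. Any zero $t_0$ of the function gives a point $(t_0,x(t_0),\gamma(t_0))\in Z$ at which $\det DP\neq 0$. By the implicit function theorem, either this point lies on a zero-dimensional irreducible component of $Z$ (and Theorem~\ref{T:quantitative ag} bounds the number of such points), or the additional equation $R(v)\cdot Q(w)=0$ holds on a full complex neighborhood in the graph $\{P(v)=\gamma(u)\}$, forcing the function to vanish on a real neighborhood of $t_0$ and hence identically on $I$ by analyticity. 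Either way, the number of sign changes is bounded.

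What the paper's route buys is precisely the elimination of your ``main obstacle'': there is no need to produce $F_j$ with controlled degree, because the zeros of $w_i$ are themselves seen as zero-dimensional components of a variety cut out by boundedly many polynomials of bounded degree, and Bezout handles that directly. Your approach is sound (the degree bound on $F_j$ does follow from Bezout applied to the augmented system $\{P(x)=\gamma(t),\, s=R_i(x)\}$ and standard projection estimates), but it is the long way around: you are projecting to $\C^2$ and then intersecting with $\{s=0\}$, whereas the paper intersects with $\{R_i=0\}$ upstairs from the start.
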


\begin{proof}  
Multiplying the vector $[(DP^{-1})\circ \gamma](Q \circ \gamma)$ by $(\det DP)^2$ and using Cramer's rule, it is enough to prove that if $R:\R^n \to \R^n$ is a polynomial of bounded degree, then 
\begin{equation} \label{E:RPQ}
(R \circ P^{-1} \circ \gamma)\cdot(Q \circ \gamma)
\end{equation}
changes sign a bounded number of times on $I$.  

We consider the complex variety
$$
Z:= \{(u,v,w) \in \C \times \C^n \times \C^n : \gamma(u) = P(v) = w, \: R(v) \cdot Q(w) = 0\}.
$$
If \eqref{E:RPQ} vanishes at $t \in I$, then $(t,P^{-1}(\gamma(t)),\gamma(t))=:(t,x,y) \in Z$ and $\det DP(x) \neq 0$.  

Let $Z' \subseteq Z$ denote an irreducible subvariety from the decomposition \eqref{E:decompose Z} for which $(t,x,y)$ is a regular point.  By the Implicit Function Theorem and $\det DP(x) \neq 0$, either $Z'$ has dimension zero, or $Z'$ has (complex) dimension one and \eqref{E:RPQ} vanishes on a (complex) neighborhood of $t$.  Only a bounded number of points can lie on dimension zero subvarieties, and if \eqref{E:RPQ} vanishes on a neighborhood of $t$, then it vanishes on all of $I$ by analyticity.  Either way, the number of sign changes is bounded.  
\end{proof}



\end{document}